\documentclass[11pt, reqno]{amsart}

\usepackage{pstricks} 

\usepackage[all,arc]{xy}
\usepackage{enumerate}
\usepackage{mathrsfs}

\usepackage{amsmath}
\usepackage{amsthm}
\usepackage{amsfonts}
\usepackage{amssymb}
\usepackage{amsbsy}
\usepackage{graphicx}
\usepackage{enumitem}
\usepackage{physics}
\usepackage{hyperref}
\usepackage{bbm}
\usepackage{longtable}

\newcommand{\ms}[1]{\mathscr{#1}}
\newcommand{\mc}[1]{\mathcal{#1}}

\newcommand{\floor}[1]{ \lfloor #1 \rfloor }

\newcommand{\varep}{ \varepsilon }

\newcommand{\sse} {\subseteq}

\newcommand{\Z}{\mathbb{Z}}
\newcommand{\R}{\mathbb{R}}

\newcommand{\C}{\mathbb{C}}
\newcommand{\E}{\mathbb{E}}

\newcommand{\ra}{\rightarrow}
\newcommand{\toinf}{\ra \infty}

\newcommand{\beq}{\begin{equation}}
\newcommand{\eeq}{\end{equation}}
\newcommand{\mbf}[1]{\mathbf{#1}}
\newtheorem{theorem}{Theorem}
\newtheorem{prop}[theorem]{Proposition}
\newtheorem{lemma}[theorem]{Lemma}
\newtheorem{cor}[theorem]{Corollary}
\theoremstyle{definition}
\newtheorem{definition}[theorem]{Definition}
\newtheorem{remark}[theorem]{Remark}

\newcommand{\p}{\mathbb{P}}

\numberwithin{equation}{section}
\numberwithin{theorem}{subsection}

\newcommand{\largebox}{\Lambda}
\newcommand{\lbox}{\largebox}
\newcommand{\vertices}{\lbox_0}
\newcommand{\edges}{{\lbox_1}}
\newcommand{\plaquettes}{{\lbox_2}}
\newcommand{\wloop}{\gamma}
\newcommand{\supp}{\mathrm{supp}}
\newcommand{\mbbm}[1]{\mathbbm{#1}}
\newcommand{\ind}{\mbbm{1}}
\newcommand{\oneskel}{S_1(\lbox)}
\newcommand{\twoskel}{S_2(\lbox)}
\newcommand{\twoskelminus}[1]{S_2(\plaquettes \backslash #1)}
\newcommand{\Hom}{\mathrm{Hom}}
\newcommand{\homsym}{\psi}
\newcommand{\twoskelhom}{\zeta}
\newcommand{\groupid}{1}
\newcommand{\upath}{w}
\newcommand{\randomv}{\mathbf{\Gamma}}
\newcommand{\elemmatrix}{A_\beta}
\newcommand{\knotbound}{10^{24}}

\newcommand{\betathreshnonab}{126 \frac{(\knotbound \alpha_\beta)^7}{1 - \knotbound \alpha_\beta}}

\newcommand{\plaqset}{P}
\newcommand{\vortex}{V}
\newcommand{\scale}{s}
\newcommand{\vbdconst}{20e}
\newcommand{\betaexprone}[1]{20e^{#1+1} \alpha_\beta}
\newcommand{\betaexprtwo}[1]{\frac{420e^{#1 +1} \alpha_\beta}{1 - \betaexprone{#1}}}

\begin{document}

\title[Wilson loop expectations for finite gauge groups]{Wilson loop expectations in lattice gauge theories with finite gauge groups}
\author{Sky Cao}
\address{\newline Department of Statistics \newline Stanford University\newline Sequoia Hall, 390 Jane Stanford Way \newline Stanford, CA 94305\newline \newline \textup{\tt skycao@stanford.edu}}
\thanks{Research was supported by NSF grant DMS-1501767}
\keywords{Lattice gauge theory, Wilson loop, cluster expansion, Stein's method, Poisson approximation.}
\subjclass[2010]{70S15, 81T13, 81T25, 82B20}

\begin{abstract}
Wilson loop expectations at weak coupling are computed to first order, for four dimensional lattice gauge theories with finite gauge groups which satisfy some mild additional conditions. This continues recent work of Chatterjee, which considered the case of gauge group $\Z_2$. The main steps are (1) reducing the first order computation to a problem of Poisson approximation, and (2) using Stein's method to carry out the Poisson approximation. 
\end{abstract}


\maketitle

\tableofcontents

\section{Introduction}\label{section:intro}

\subsection{Motivation}

Lattice gauge theories are models from physics which are obtained by discretizing continuous spacetime using a lattice. The effect of this discretization is that path integrals which were originally over infinite dimensional spaces become ordinary integrals over finite dimensional spaces. This ensures that lattice gauge theories are mathematically well-defined, unlike their continuum counterparts, Euclidean Yang-Mills theories. However, ultimately the goal is to rigorously construct a Euclidean Yang-Mills theory (and then show that it can be used to construct a Quantum Yang-Mills theory). Naturally, one might hope to do so by discretizing spacetime by a lattice, then sending the lattice mesh size to zero (so that the lattice ``converges" to continuous spacetime), and finally taking the limit of the corresponding lattice gauge theories. This is analogous to constructing Brownian motion by taking a limit of random walks. In order for this approach to work, various properties of lattice gauge theories must be very well understood. Building on recent work of Chatterjee \cite{Ch2019}, this paper seeks to improve our understanding of one particular property of lattice gauge theories, to be described shortly. For more background and a review of the existing results involving lattice gauge theories, see Chatterjee's survey \cite{Ch2018} and the references therein, in particular Seiler's monograph \cite{SEI1982} and the book by Glimm and Jaffe \cite{GJ1987}.

\subsection{Main result}

We first define lattice gauge theories. Let $G$ \label{notation:G} be a compact group, with the identity denoted by $\groupid$\label{notation:group-id}. We will commonly refer to $G$ as the gauge group. Let $\rho$\label{notation:rho} be a unitary representation of $G$, with dimension $d$, and let $\chi$\label{notation:chi} be the character of $\rho$. Take a finite lattice 
\[ \lbox\label{notation:Lambda} := ([a_1, b_1] \times \cdots \times [a_4, b_4])  \cap \Z^4, \]
where $b_i - a_i$ is the same for all $1 \leq i \leq 4$. Let $E(\lbox)$ be the set of directed nearest-neighor edges in $\lbox$. Let $\Sigma(\lbox)$ be the set of functions $\sigma : E(\lbox) \ra G$, with the constraint that for any edge $(x, y) \in E(\lbox)$, we have $\sigma_{(x, y)} = \sigma_{(y, x)}^{-1}$. We will commonly refer to $\sigma$ as an edge configuration. By a ``plaquette" $p$\label{notation:plaquette} in $\lbox$, we mean a unit square whose four boundary edges are in $\lbox$. Let $P(\lbox)$ be the set of plaquettes in $\lbox$. For $p \in P(\lbox)$, suppose the vertices of $p$ are $x_1, x_2, x_3, x_4$, in (say) counter-clockwise order. In an abuse of notation, for $\sigma \in \Sigma(\lbox)$, define
\beq\label{eq:sigma-p-def} \sigma_p := \sigma_{(x_1, x_2)} \sigma_{(x_2, x_3)} \sigma_{(x_3, x_4)} \sigma_{(x_4, x_1)}. \eeq
Define
\[ S_\lbox(\sigma) := \sum_{p \in P(\lbox)} \Re (\chi(1) - \chi(\sigma_p)). \]
Let $\mu_\lbox$ be the product Haar measure on $\Sigma(\lbox)$. For $\beta \geq 0$\label{notation:beta}, let $\mu_{\lbox, \beta}$\label{notation:mu-lbox-beta} be the probability measure defined by
\beq\label{eq:def-lgt} d\mu_{\lbox, \beta}(\sigma) := Z_{\lbox, \beta}^{-1} ~e^{-\beta S_\lbox(\sigma)} d\mu_\lbox(\sigma), \eeq
where $Z_{\lbox, \beta}$ is the normalizing constant.
We say that $\mu_{\lbox, \beta}$ is the lattice gauge theory with gauge group $G$, on $\lbox$, with inverse coupling constant $\beta$. In this paper we will work in the large $\beta$ regime, which is also known as the weak coupling regime.

The choices of the gauge group $G$ which are most relevant to physics include $U(1)$, $SU(3)$, and $SU(3) \times SU(2) \times U(1)$. However, in this paper, we will be forced to make a mathematical simplification, and assume that $G$ is finite. This simplification may lessen the direct relevance to physics, but finite gauge groups have in fact been previously studied in both the mathematics and physics literature; see e.g. \cite{AF1984, Borgs1984, Ch2019, CJR1979, LMR1989, MP1979, SV1989, WEG1971} for an incomplete list.
It remains to be seen whether the methods of this paper may be extended to continuous groups. The difficulty that continuous groups pose will be described at the end of this subsection, after a brief proof sketch of the main result is given.

Hereafter, assume $G$ is finite. In this case, we can specify that the Haar measure $\mu_\lbox$ is simply counting measure. Having defined lattice gauge theories, we now turn to the key objects of interest. Let $\wloop$\label{notation:wloop} be a closed loop in $\lbox$, denoted by its sequence of directed edges $e_1, \ldots, e_n$. The length of $\wloop$ is the number of edges in $\wloop$. We say that $\wloop$ is a self avoiding loop if no edge is repeated (ignoring orientation, so that if $(x, y)$ is in $\wloop$, then $(y, x)$ cannot also be in $\wloop$). Given an edge configuration $\sigma \in \Sigma(\lbox)$, define the Wilson loop variable $W_\wloop$\label{notation:W-wloop} by
\[ W_\wloop = W_\wloop(\sigma) := \chi(\sigma_{e_1} \cdots \sigma_{e_n}). \]
We will also commonly write
\beq\label{eq:ordered-product} \prod_{e \in \wloop} \sigma_e := \sigma_{e_1} \cdots \sigma_{e_n},\eeq
i.e. the product over the edges in $\wloop$ is understood to be an ordered product.

Now given $\lbox, \beta, \wloop$, let $\langle W_\wloop \rangle_{\lbox, \beta}$ be the expectation of $W_\wloop$ under the lattice gauge theory $\mu_{\lbox, \beta}$. Ideally, we want to take $\lbox \uparrow \Z^4$, and work with an infinite volume limit of the lattice gauge theories. However, as mentioned in \cite{Ch2018}, in general it is not known whether such infinite volume limits are unique. Failing this, we will work with subsequential limits, which always exist, by a compactness argument (given in Section 6 of \cite{Ch2019} for the case $G = \Z_2$, and which is easily generalized to finite groups). Hereafter, let $\langle W_\wloop \rangle_\beta$ denote the expectation of $W_\wloop$ under some subsequential limit of the finite volume lattice gauge theories $\mu_{\lbox, \beta}$, as $\lbox \uparrow \Z^4$.
Actually, in the case that the gauge group is Abelian, infinite volume limits do always exist, at least if the unitary representation $\rho$ is one dimensional. This seems to be well known, and was e.g. pointed out by Seiler \cite{SEI1982}, as well as by Fr\"{o}hlich and Spencer \cite{FS1982}. One proceeds by using the Ginibre inequality \cite{GIN1970} to show that $\langle W_\wloop \rangle_{\lbox, \beta}$ is monotone in $\lbox$.

Chatterjee \cite{Ch2019} recently obtained a first order expression for the Wilson loop expectation $\langle W_\wloop\rangle_\beta$ in the case $G = \Z_2 = \{\pm 1\}$, at large $\beta$. We extend this result to any finite group $G$, under certain mild conditions, to be given shortly. The motivation for performing this calculation is that one approach to defining a continuum limit is to define Wilson loop expectations in the continuum by taking a limit of Wilson loop expectations in lattice gauge theories (see Section 5 of \cite{Ch2018}). In order to do so, we must have a precise understanding of the latter. Also, the constraint that $\beta$ be large might not be too restrictive, since it is believed that for physically relevant lattice gauge theories such as $G = SU(3)$, as the lattice mesh size is taken to zero, we must simultaneously take $\beta \ra \infty$ (see Section 3 of \cite{Ch2018}).

We are almost ready to state the main result. First, define
\[ \Delta_G \label{notation:Delta-G} := \min_{g \neq 1} \Re (\chi(1) - \chi(g)). \]
Here the minimum is also implicitly over $g \in G$. Note the condition $\Delta_G > 0$ is equivalent to the condition that $\rho$ is faithful. Now for $\beta \geq 0$, $g \in G$, define
\beq\label{eq:phi-beta-def}  \varphi_\beta(g) := \exp(-\beta\Re (\chi(1) - \chi(g))), \eeq\label{notation:varphi-beta} 
\beq\label{eq:r-beta-def}  r_\beta := \sum_{g \neq 1} \varphi_\beta(g)^6, \eeq \label{notation:r-beta}
\beq\label{eq:elemmatrix-def} \elemmatrix := r_\beta^{-1} \sum_{g \neq 1} \rho(g) \varphi_\beta(g)^6. \eeq \label{notation:elemmatrix}
Note since $\rho$ is a unitary representation, we have that $\elemmatrix$ is Hermitian, and moreover $\norm{\elemmatrix}_{op} \leq 1$ for all $\beta \geq 0$. Let $\lambda_1(\beta), \ldots, \lambda_d(\beta)$ be the eigenvalues of $\elemmatrix$. Let $G_0 := \{g \in G : g \neq 1, \Re (\chi(1) - \chi(g)) = \Delta_G \}$, and let
\[ A  := \frac{1}{|G_0|} \sum_{g \in G_0} \rho(g). \]
Observe $\lim_{\beta \toinf} \elemmatrix = A$ entrywise, and consequently $A$ is also Hermitian.

\begin{theorem}\label{thm:main-result}
Let $G$ be a finite group, and let $\rho$ be a unitary representation of $G$ of dimension $d$. Suppose $\rho$ is faithful, so that $\Delta_G > 0$. Suppose
\beq\label{eq:beta-thresh-main-result} \beta \geq \frac{1}{\Delta_G} (1000 +  14\log \abs{G}), \eeq
and also suppose $\norm{A_\beta}_{op} < 1$. Let
\beq\label{eq:main-result-c-beta-def} c_\beta := \min(0.15, 2^{-19} \log \norm{\elemmatrix}_{op}^{-1}, 1 - \norm{\elemmatrix}_{op}). \eeq
For any self avoiding loop $\wloop$ in $\Z^4$ of length $\ell$\label{notation:ell}, we have
\beq\label{eq:main-result-wilson-loop-bound} \bigg|\langle W_\wloop \rangle_\beta - \sum_{i=1}^d e^{-\ell r_\beta (1 - \lambda_i(\beta))}\bigg| \leq (2e +2) d e^{-\beta \Delta_G c_\beta / (3 + 2c_\beta)}. \eeq
\end{theorem}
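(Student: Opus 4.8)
The plan is the two-step program announced in the abstract: an exact rewriting that concentrates $\mu_{\lbox,\beta}$ on a dilute gas of ``vortices'' and reduces $\langle W_\wloop\rangle_\beta$ to the expectation of a functional of the vortices that link $\wloop$, followed by a Poisson approximation, via Stein's method, for that vortex configuration. First I would prove the estimate for $\langle W_\wloop\rangle_{\lbox,\beta}$ with $\lbox$ containing $\wloop$ and constants independent of $\lbox$, then pass to the subsequential limit. Using $e^{-\beta S_\lbox(\sigma)}=\prod_{p\in P(\lbox)}\varphi_\beta(\sigma_p)$, write $\langle W_\wloop\rangle_{\lbox,\beta}$ as a ratio of sums over $\Sigma(\lbox)$. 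Choosing a spanning tree of $\lbox$ that contains every edge of $\wloop$ but one, say $e^\ast$, and gauge-fixing the tree edges to $\groupid$ (legitimate since $W_\wloop$ is gauge invariant), we get $W_\wloop=\chi(\sigma_{e^\ast})$. For $\beta$ past the threshold \eqref{eq:beta-thresh-main-result} the weight $\prod_p\varphi_\beta(\sigma_p)$ concentrates on configurations whose non-trivial plaquettes form a disjoint union of small ``vortices''; the minimal one is carried by a single non-trivial edge $e$ with label $g\ne\groupid$, has activity $\varphi_\beta(g)^6$ (the six plaquettes through $e$), and is ``threaded'' by $\wloop$ --- equivalently modifies $W_\wloop$ --- exactly when $e$ is one of the $\ell$ edges of $\wloop$, in which case it inserts a $\rho(g)^{\pm1}$ into the holonomy recorded by $\sigma_{e^\ast}$. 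A cluster-expansion estimate (the total activity $|E(\lbox)|\,r_\beta$ being tiny by \eqref{eq:beta-thresh-main-result} and $r_\beta\le|G|e^{-6\beta\Delta_G}$) shows that non-minimal vortices, clusters of vortices, and all vortices not threaded by $\wloop$ cancel between numerator and denominator up to an error exponentially small in $\beta$, leaving $\langle W_\wloop\rangle_{\lbox,\beta}=\E\bigl[\operatorname{tr}\rho(g_1\cdots g_K)\bigr]+(\text{error})$, where $K$ is the number of minimal vortices threaded by $\wloop$ and $g_1,\dots,g_K$ their labels, ordered along $\wloop$.

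Now to the Poisson step. The $\ell$ edges of $\wloop$ are the ``sites'': site $e$ hosts a minimal vortex nearly independently of distant sites (two vortices interact only when their supports are within bounded distance), with occupation probability $\approx r_\beta$ and conditional label law $\varphi_\beta(g)^6/r_\beta$ on $g\ne\groupid$. I would apply the Chen--Stein method to compare the marked point process of minimal vortices along $\wloop$ with the model in which each site independently carries a $\mathrm{Poisson}(r_\beta)$ number of i.i.d.\ such marks; the Stein bound for Poisson process approximation gives a total-variation error controlled by sums over nearby pairs of products of activities, of order $\ell r_\beta^2$ up to a factor depending on $|G|$, and since $|\operatorname{tr}\rho|\le d$ this transfers to an $O(d\,\ell\,r_\beta^2)$ error on the expectation. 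Under the idealized model a direct computation finishes it: each mark has mean $r_\beta^{-1}\sum_{g\ne\groupid}\varphi_\beta(g)^6\rho(g)=\elemmatrix$, so a $\mathrm{Poisson}(r_\beta)$ collection of independent marks contributes $\sum_{k\ge0}e^{-r_\beta}\tfrac{r_\beta^k}{k!}\elemmatrix^k=\exp(-r_\beta(I-\elemmatrix))$ in expectation, the $\ell$ sites are independent, and the total expectation is $\operatorname{tr}\exp(-\ell r_\beta(I-\elemmatrix))=\sum_{i=1}^d e^{-\ell r_\beta(1-\lambda_i(\beta))}$ --- the conjugations in the true ordered product are invisible here because every mark has the same mean $\elemmatrix$.

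Combining, $\bigl|\langle W_\wloop\rangle_\beta-\sum_i e^{-\ell r_\beta(1-\lambda_i(\beta))}\bigr|$ is at most $d\,\ell\,r_\beta^2$ plus the cluster-expansion remainder, which suffices only for $\ell$ not too large; for large $\ell$ both quantities are already small --- $|\sum_i e^{-\ell r_\beta(1-\lambda_i(\beta))}|\le d\,e^{-\ell r_\beta(1-\norm{\elemmatrix}_{op})}$ since $1-\lambda_i\ge1-\norm{\elemmatrix}_{op}>0$, and the vortex-gas analysis bounds $|\langle W_\wloop\rangle_\beta|$ by $d$ times a comparable decaying factor --- so the inequality holds trivially past a crossover $\ell_0$. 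Optimizing over $\ell_0$, inserting $r_\beta\le|G|e^{-6\beta\Delta_G}$ and the hypothesis \eqref{eq:beta-thresh-main-result}, and bookkeeping the numerical constants (the $3+2c_\beta$, the $2^{-19}$, etc.) yields \eqref{eq:main-result-wilson-loop-bound} with $c_\beta$ as in \eqref{eq:main-result-c-beta-def}, whose three-way minimum records a crude a priori bound, the decay rate $\log\norm{\elemmatrix}_{op}^{-1}$, and the spectral gap $1-\norm{\elemmatrix}_{op}$.

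The main obstacle is making the first step rigorous for non-abelian $G$: without the clean abelian change of variables to plaquette values, one must handle the non-abelian cocycle constraint and the conjugation factors in the holonomy, and the dilute-gas/cluster estimate must be uniform in $|G|$ and in $|E(\lbox)|$ so that it survives the infinite-volume limit. A close second is calibrating the Chen--Stein dependency neighbourhoods so that the Poisson error genuinely comes out as a power of $r_\beta$ with tolerable $|G|$-dependence; the Poisson computation itself and the large-$\ell$ regime are comparatively routine.
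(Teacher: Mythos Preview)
Your outline matches the paper's: reduce to a random vortex configuration, show that on a high-probability event $E$ only minimal vortices on edges of $\wloop$ contribute and the conditional Wilson loop equals $\mathrm{Tr}(A_\beta^{N_\wloop})$, Poisson-approximate $N_\wloop$ via Stein (Chen--R\"ollin), and interpolate between the resulting short-loop estimate and a long-loop bound coming from $\|A_\beta\|_{op}<1$. One correction: your parenthetical that ``the total activity $|E(\lbox)|\,r_\beta$ is tiny'' is false for large $\lbox$ and in any case not what is used --- cluster expansion requires only a \emph{local} (Koteck\'y--Preiss/Dobrushin) condition on the activity of a single polymer, and the cancellation of vortices disjoint from $\wloop$ between numerator and denominator is an exact identity, not a perturbative one.

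For the obstacle you correctly flag --- that $\Phi$ need not factor over merely compatible vortices when $G$ is non-Abelian --- the paper's resolution (after Szlach\`anyi--Vecserny\`es) is to reinterpret gauge-fixed configurations as homomorphisms $\pi_1(\oneskel)\to G$ and to replace the vortex decomposition by a coarser \emph{knot decomposition}: $\Phi$ does factor when the pieces are minimal vortices or are ``well separated'' by a cube, because the relevant sub-$2$-complexes and their intersections are simply connected (Lemmas~\ref{lemma:minimal-vortex-activity-factor-nonabelian} and~\ref{lemma:well-separated-vortices-activity-decomp}). The spanning trees used are adapted to the \emph{vortices} (they contain spanning trees of the shells $\partial S_e$), not to $\wloop$; your choice $T\supset\wloop\setminus\{e^\ast\}$ yields a neat formula for $W_\wloop$ but does nothing for factorization. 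The same knot machinery also drives the long-loop bound: one cannot factor out all $N_\wloop$ threaded minimal vortices at once, only a subcollection whose shells $S_e$ are pairwise disjoint, of size $\gtrsim 2^{-17}N_\wloop$ (Lemma~\ref{lemma:wilson-loop-bound-general-non-abelian}), which is the origin of the $2^{-19}$ in $c_\beta$.
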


\begin{remark}
To be clear, the bound \eqref{eq:main-result-wilson-loop-bound} holds for any subsequential limit of the finite volume lattice gauge theories $\mu_{\lbox, \beta}$, as $\lbox \uparrow \Z^4$. 
Also, the $\beta$ threshold \eqref{eq:beta-thresh-main-result} is very loose. It should be possible to tighten this somewhat by a more careful argument.
\end{remark}

\begin{remark}
If $\norm{A}_{op} < 1$, then $\norm{A_\beta}_{op} < 1$ for all large enough $\beta$, and also $c_\beta$ will stay bounded away from zero as $\beta$ goes to infinity. Somewhat ironically, this theorem does not directly cover the case $G = \Z_2$, since this group has only a single non-identity element, which leads to $\|A\|_{op} = 1$. However, if in the error bound \eqref{eq:main-result-wilson-loop-bound} we allow ourselves an extra error term that accounts for the ``roughness" of the loop $\wloop$, then by a small additional argument, we can handle this case (note Theorem 1.1 of \cite{Ch2019} also includes this roughness penalty).

For groups with more than one non-identity element, i.e. groups of order at least 3, there always exists a faithful unitary representation for which $\norm{A}_{op} < 1$; see Lemma \ref{lemma:exist-unitary-rep}. Additionally, the condition $\norm{A}_{op} < 1$ is satisfied for any pair $(G, \rho)$ such that $G$ is of order at least 3, and $\rho$ is an irreducible faithful unitary representation of $G$; see Lemma \ref{lemma:faithful-unitary-irrep}.

Furthermore, in the case that $\rho$ is irreducible, $A$ becomes a very simple matrix. This is because $G_0$ must be a union of conjugacy classes of $G$, which implies that for any $h \in G$, we must have $\rho(h) A \rho(h^{-1}) = A$, i.e. $A$ is an intertwiner. Thus by Schur's lemma, $A$ must be a multiple $\lambda$ of the identity matrix. This value $\lambda$ may then be calculated from the character table of $\rho$. The matrix $A_\beta$ may be similarly calculated.
\end{remark}

\begin{remark}
I expect that the methods of this paper can be adapted to other dimensions besides four, and possibly also other types of lattices, though the specific formulas for Wilson loop expectations will likely change. 
\end{remark}

\begin{remark}
There is also a recent article by Forsstr\"{o}m, Lenells, and Viklund \cite{FLV2020}, which handles the case of finite Abelian gauge groups. They do obtain a much better $\beta$ threshold in this setting.
\end{remark}


To interpret Theorem \ref{thm:main-result}, suppose we are taking $\beta \toinf$. Then in order for $\langle W_\wloop \rangle_\beta$ to have nontrivial behavior (i.e. not tending to 0 or $d$), we must take $\ell \sim \alpha r_\beta^{-1}$ for some $\alpha > 0$, which gives
\[ \langle W_\wloop \rangle_\beta \sim \sum_{i=1}^d e^{- \alpha (1 - \lambda_i(\beta))}. \]
Letting $\lambda_1, \ldots, \lambda_d$ be the eigenvalues of $A$, and recalling $\elemmatrix \ra A$ entrywise, we then have
\[ \langle W_\wloop \rangle_\beta  \sim \sum_{i=1}^d e^{-\alpha (1 - \lambda_i)}.\]

It should be noted that Theorem 1.1 of \cite{Ch2019} additionally covers the case of multiple disjoint self avoiding loops. In this paper, for simplicity, we will just focus on a single self avoiding loop. When the gauge group is Abelian, the proof of the more general case is essentially the same, but when the gauge group is non-Abelian, there are some additional complications that affect both the statement and the proof of the main result. I do however anticipate that these complications are secondary to the main arguments behind the first order computation.

Instead of immediately proving Theorem \ref{thm:main-result} for general finite groups, I have decided to first consider the case where the gauge group is Abelian, at the expense of making the paper longer. This is because it turns out that the main probabilistic insights needed are already all present in the Abelian case. The extension from Abelian to non-Abelian then mainly consists of verifying that the main insights still hold, even if the gauge group is non-Abelian.

I will now try to give a quick preview of the proof, starting in the Abelian case. First, at large $\beta$, it turns out that it is more natural to study the finite volume lattice gauge theory $\mu_{\lbox, \beta}$ not in terms of the edge configuration $\sigma$, but rather in terms of a certain random collection of surfaces. This is analogous to studying low temperature Ising models on $\Z^2$ in terms of a random collection of loops. Now here is the key insight, which is already present in Chatterjee's work \cite{Ch2019}: to first order, the Wilson loop expectation is the moment generating function of a Poisson random variable, and this we know how to exactly evaluate. The key step in showing this claim is showing that a certain random variable defined in terms of the random collection of surfaces is approximately Poisson. To do so, we use the dependency graph approach to Stein's method for Poisson approximation \cite{CR2013}, and to verify the conditions of Stein's method, we use cluster expansion \cite{BFP2010}. Indeed, here is why it is so useful to work in terms of the random collection of surfaces: the cluster expansion allows us to show that the random collection of surfaces has ``a lot of independence", in a sense to be made precise in the proof. It is this independence that makes it possible to perform a first order calculation.

If we now consider general gauge groups, we can still express the Wilson loop expectation in terms of a random collection of surfaces. Except now, to first order, Wilson loop expectations are traces of expectations of a fixed matrix raised to a Poisson random variable, but this is still exactly evaluable. As before, the key to the Poisson approximation is to take advantage of the fact that the random collection of surfaces has a lot of independence. However, the non-Abelianness of the situation makes verifying this independence more difficult, due to the presence of additional topological considerations. Central to the handling of these additional topological considerations is the observation by Szlach\`{a}nyi and Vecserny\`{e}s \cite{SV1989} that the difficulties which arise can be understood in terms of algebraic topology.

Finally, the reason that we are forced to consider finite gauge groups is as follows. When the gauge group is continuous, the random collection of surfaces is actually not random at all: with probability 1, it will consist of every plaquette in the lattice $\lbox$. Thus in this case there is no information contained about Wilson loop expectations. 

As we begin the rest of the paper, let me note that there is an index of notation at the end, which contains much of the notation introduced in the course of the paper. 


\section{Discrete exterior calculus}\label{section:dec}

This section collects the basic statements about discrete exterior calculus which will be needed throughout this paper. We mostly follow Section 2 of \cite{Ch2019}. 

\subsection{The lattice cell complex in four dimensions}\label{section:lattice-cell-complex}

Let $x \in \Z^4$. There are four edges coming out of $x$ in a positive direction. Denote these edges $dx_1, dx_2, dx_3, dx_4$. For $1 \leq k \leq 4$ and $1 \leq i_1 < \cdots < i_k \leq 4$, the edges $dx_{i_1}, \ldots, dx_{i_k}$ determine a positively oriented $k$-cell of $\Z^4$ (which can be visualized as a $k$ dimensional unit cube). Denote this $k$-cell by
\[ dx_{i_1} \wedge \cdots \wedge dx_{i_k}.\]
Denote the negatively oriented version of this $k$-cell by $-dx_{i_1} \wedge \cdots \wedge dx_{i_k}$. More generally, for $1 \leq i_1, \ldots, i_k \leq 4$, define $dx_{i_1} \wedge \cdots \wedge dx_{i_k}$ to be zero if the $i_1, \ldots, i_k$ are not all distinct, and otherwise define it to be $(-1)^m dx_{j_1} \wedge \cdots \wedge dx_{j_k}$, where $j_1 < \cdots < j_k$ is the sorted version of $i_1, \ldots, i_k$, and $m$ is the sign of the permutation that takes $i_1, \ldots, i_k$ to $j_1, \ldots, j_k$. In the case $k = 0$, the positively oriented 0-cell associated to $x$ will be denoted $x$, while the negatively oriented 0-cell will be denoted $-x$ (not to be confused with the vertex obtained by multiplying all coordinates by $-1$). To help visualize, a 0-cell is a vertex, a 1-cell is an edge, and a 2-cell is a plaquette.

A rectangle $B$ in $\Z^4$ is a set of the form
\[ ([a_1, b_1] \times \cdots \times [a_4, b_4]) \cap \Z^4, \]
where the $a_i$'s and $b_i$'s are all integers. If the side lengths $b_i - a_i$ are the same for all $i$, then $B$ is said to be a cube. Now let $c$ be a $k$-cell. We say that $c$ is in $B$ if every vertex of $c$ is a vertex of $B$. We say that $c$ is on the boundary of $B$ if every vertex of $c$ is a boundary vertex of $B$. We say that $c$ is in the interior of $B$ if $c$ is in $B$, but is not on the boundary of $B$. We say that $c$ is outside $B$ if there is at least one vertex of $c$ that is not a vertex of $B$.
For rectangles $B, B'$ in $\Z^4$, we say that $B$ is contained in $B'$, or $B$ is in $B'$, or $B'$ contains $B$, if every vertex of $B$ is in $B'$.

\subsection{Discrete differential forms, derivative, and coderivative}

Let $G$ be an additive Abelian group. A $G$-valued $k$-form $f$ is an odd $G$-valued function on the set of oriented $k$-cells of $\Z^4$. Here, ``odd" means that for any oriented $k$-cell $c$, we have $f(-c) = -f(c)$. 
We may also define $k$-forms on a cube $B$. I.e., a $G$-valued $k$-form on $B$ is an odd $G$-valued function on the oriented $k$-cells of $B$. 


Now let $1 \leq k \leq 4$. We begin to define the exterior derivative of a $k$-form. First, given a positively oriented $k$-cell $c$, we want to express the collection of oriented $(k-1)$-cells which are ``contained" in $c$. Denote $c = dx_{i_1} \wedge \cdots \wedge dx_{i_k}$, where $i_1 < \cdots < i_4$. For $1 \leq j \leq k$, let
\[ c^{(j)} := (-1)^j ~ dx_{i_1} \wedge \cdots \wedge \widehat{dx_{i_j}} \wedge \cdots \wedge dx_{i_k}\]
denote the oriented $(k-1)$-cell obtained from $c$ by omitting the edge $dx_{i_j}$, and adjusting the orientation by the factor $(-1)^{j}$. For $1 \leq \ell \leq k$, let $e_\ell$ be the $\ell$th standard basis vector, i.e. the vector with 1 in the $\ell$th coordinate, and 0 in all other coordinates. Let $x^{(\ell)} := x + e_\ell$, and define $c_{i_j} := dx^{(i_j)}_{i_1} \wedge \cdots \wedge dx^{(i_j)}_{i_k}$.

For $2 \leq k \leq 4$, the oriented $(k-1)$-cells 
\[ c^{(i_1)}, \ldots, c^{(i_k)}, -c^{(i_1)}_{i_1}, \ldots, -c^{(i_k)}_{i_k} \]
are said to be contained in $c$, respecting orientation. If $k = 1$, then $c = dx_i$ for some $1 \leq i \leq 4$. The oriented $0$-cells which are contained in $dx_i$, respecting orientation, are then defined to be $x + e_i$, and $-x$. I.e., the incoming vertex of $dx_i$ is positively oriented, while the outgoing vertex of $dx_i$ is negatively oriented.

As an example, given a positively oriented plaquette, there are two positively oriented edges and two negatively oriented edges contained in this plaquette. These four edges naturally form a closed loop on the boundary of the plaquette. Now given a positively oriented $k$-cell $c$, and an oriented $(k-1)$-cell $c'$, define
\[ I(c', c) := \begin{cases} 1 & \text{$c'$ is contained in $c$, respecting orientation} \\ -1 & \text{$-c'$ is contained in $c$, respecting orientation} \\ 0 & \text{otherwise}. \end{cases}\]
If $c$ is negatively oriented, define $I(c', c) := I(-c', -c)$. Observe also that $I(c', c) = -I(-c', c)$, and thus also $I(c', c) = -I(c', -c)$. Throughout this paper, we will often not worry about orientation, and we will say that $c'$ is contained in $c$, or $c$ contains $c'$, if $I(c', c) \neq 0$.

Fix a cube $B$. For $0 \leq k \leq 3$, and a $G$-valued $k$-form $f$ on $B$, define the exterior derivative of $f$, denoted by $df$, as the following $G$-valued $(k+1)$-form on $B$. Given an oriented $(k+1)$-cell $c$, define
\[\label{notation:df} (df)(c) := \sum_{\substack{c' \text{ a pos. orient.} \\ \text{$k$-cell in $B$}}} I(c', c) f(c'). \]
One may verify that $df(-c) = -df(c)$, so that $df$ is in fact odd. Observe that this definition of the exterior derivative coincides with the definition given in Section 2.3 of \cite{Ch2019} (in the special case $n = 4$, and $0 \leq k \leq 3$), although the formulas are stated with different notation. This different notation will be useful in Section \ref{section:stokes} when we state and prove a discrete Stokes' theorem. We thus have the following results from \cite{Ch2019}.

\begin{lemma}[Lemma 2.1 of \cite{Ch2019}]\label{lemma:exact-implies-closed}
Let $G$ be an additive Abelian group, $B$ a cube, and $0 \leq k \leq 3$. Let $f$ be a $G$-valued $k$-form on $B$. Then $ddf = 0$.
\end{lemma}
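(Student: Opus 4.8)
The plan is to reduce the statement to a purely combinatorial identity about the incidence numbers $I(\cdot,\cdot)$, and then establish that identity by the classical two-term cancellation that proves $\partial^2 = 0$ for a cell complex. First I would dispose of the trivial case: if $k=3$ then $df$ is a $4$-form and $ddf$ would be a $5$-form, but there are no oriented $5$-cells in $\Z^4$, so the assertion is vacuous. So assume $0 \le k \le 2$ and fix a positively oriented $(k+2)$-cell $c$ in $B$. Applying the definition of the exterior derivative twice, and interchanging the two finite sums (legitimate because $G$ is an additive group), gives
\[
(ddf)(c) = \sum_{c''} I(c'',c)\,(df)(c'') = \sum_{c''} I(c'',c)\sum_{c'} I(c',c'')\, f(c') = \sum_{c'} \Bigl(\sum_{c''} I(c'',c)\, I(c',c'')\Bigr) f(c'),
\]
where $c''$ ranges over positively oriented $(k+1)$-cells in $B$ and $c'$ over positively oriented $k$-cells in $B$. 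Since $I(c'',c)\neq 0$ forces $c''$ to be, up to orientation, a face of $c$, and since every face of a cell of $B$ is again a cell of $B$, nothing is lost by dropping the restriction ``in $B$''. It therefore suffices to prove, for every positively oriented $(k+2)$-cell $c$ and every positively oriented $k$-cell $c'$, the identity $\sum_{c''} I(c'',c)\,I(c',c'') = 0$, the sum being over all positively oriented $(k+1)$-cells $c''$.

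For this identity, observe that a summand is nonzero only if, up to orientation, $c'$ is a face of $c''$ and $c''$ is a face of $c$; in particular $c'$ must then be a codimension-two face of $c$, and if it is not, the sum vanishes termwise. Suppose $c'$ is a codimension-two face of $c$. Writing $c = dx_{i_1}\wedge\cdots\wedge dx_{i_{k+2}}$ with $i_1 < \cdots < i_{k+2}$, the codimension-one faces of $c$ are, for each edge $dx_{i_j}$, a ``bottom'' face (delete $dx_{i_j}$, keep the base vertex) and a ``top'' face (delete $dx_{i_j}$ and translate one unit in direction $e_{i_j}$) — these are, up to sign, exactly the cells $c^{(j)}$ and the translated cells $c^{(j)}_{i_j}$ of Section \ref{section:lattice-cell-complex}. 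A codimension-two face is obtained by deleting two directions $i_a < i_b$ and choosing top or bottom in each; to descend from $c$ to such a face through a $(k+1)$-cell $c''$, one deletes one of the two directions (with its chosen top/bottom) and then the other. Hence there are exactly two eligible $c''$, corresponding to the two orders of deletion, and the claim is that $I(c'',c)\,I(c',c'')$ takes opposite values on them. This last point is the heart of the matter and the step I expect to require the most care: one must check that the $(-1)^j$ normalization in the definition of $c^{(j)}$, the reindexing of the surviving edges caused by deleting an edge, the extra signs attached to the ``top'' faces, and the convention $I(c',c) = I(-c',-c)$ all conspire to flip the sign. I would organize this as a short case analysis according to how many of the two deleted directions give ``top'' faces ($0$, $1$, or $2$), each case reducing to the elementary alternating-sign identity for iterated deletion from a wedge product.

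An alternative that sidesteps the casework would be to recognize $(I(c',c))$ as the incidence numbers of the cellular chain complex of the standard CW decomposition of $\R^4$ (or of $B$) into unit cubes, note that $d$ is precisely the cellular coboundary applied to the cochain $f$ extended oddly over orientations, and invoke $\partial^2 = 0$ for cube complexes — which itself follows by induction from the Leibniz rule $\partial(a\times b) = (\partial a)\times b + (-1)^{\dim a}\, a\times(\partial b)$ together with $\partial^2 = 0$ on a single interval. Either route makes clear that only the additive group structure of $G$ and the integrality of $I$ are used, so the lemma holds for every additive Abelian $G$.
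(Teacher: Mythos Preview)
Your argument is correct and is the standard proof that $d^2=0$ for the cubical cochain complex: expand $(ddf)(c)$ by the definitions, interchange the finite sums, and reduce to the integer identity $\sum_{c''} I(c'',c)\,I(c',c'') = 0$, which is the two-term cancellation for codimension-two faces. Your identification of the sign bookkeeping as the delicate point is accurate, and either of your two proposed routes (direct case analysis, or invoking $\partial^2=0$ for the cubical chain complex via the product Leibniz rule) closes the argument.

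There is nothing to compare against here: the paper does not supply its own proof of this lemma but simply quotes it as Lemma~2.1 of \cite{Ch2019}. Your write-up is a faithful reconstruction of what such a proof must contain, and matches the standard treatment of discrete exterior calculus on the cubical lattice.
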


\begin{lemma}[Poincar\'{e} lemma, Lemma 2.2 of \cite{Ch2019}]
Let $G$ be an additive Abelian group, $B$ a cube, and $1 \leq k \leq 3$. For any $G$-valued $k$-form $f$ on $B$ such that $df = 0$, there exists a $G$-valued $(k-1)$-form $h$ on $B$ such that $f = dh$. Moreover, if $G$ is finite, then the number of such $h$ is the same for any such $f$. Finally, if $f$ vanishes on the boundary of $B$, then $h$ may be taken to also vanish on the boundary of $B$.
\end{lemma}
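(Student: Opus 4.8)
The plan is to read the complex of $G$-valued forms on $B$, equipped with $d$, as the cellular cochain complex of $B$ with coefficients in $G$. Indeed, an ``odd'' $G$-valued function on oriented $k$-cells is exactly a homomorphism $C_k(B;\Z) \to G$, where $C_k(B;\Z)$ is the free abelian group on positively oriented $k$-cells in $B$; the formula $(df)(c) = \sum_{c'} I(c',c) f(c')$ identifies $d$ with the cellular coboundary (the $I(c',c)$ being the usual cubical incidence numbers), and $d \circ d = 0$ by Lemma \ref{lemma:exact-implies-closed}, so $(C^\bullet(B;G), d)$ is a genuine cochain complex. In this language the first assertion is the vanishing of $H^k(B;G)$ for $1 \le k \le 3$. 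Rather than quote that a cube is contractible, I would produce an explicit cochain contraction so that everything stays elementary: induct on $\sum_i (b_i - a_i)$. In the inductive step, if $b_j > a_j$ for some $j$, let $B^-$ be $B$ with the $j$-th coordinate range shortened to $[a_j, b_j - 1]$, so that $B$ deformation retracts onto $B^-$ (collapse the outermost layer of cells in direction $j$); the standard prism homotopy operator for this collapse gives a linear map $K$ on forms with $dK + Kd = \mathrm{id} - r^*(\,\cdot\,|_{B^-})$, where $r \colon B \to B^-$ is the retraction and $r^*$ is pullback. Applying this to a closed $f$ gives $f = dKf + r^*(f|_{B^-})$; since $f|_{B^-}$ is closed on $B^-$, the inductive hypothesis yields $f|_{B^-} = dh'$, and hence $f = d(Kf + r^* h')$. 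The base case, $\sum_i (b_i - a_i) = 0$, is a single vertex, which has no positively oriented $k$-cell for $k \ge 1$, so every such $f$ is $0$.

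The counting statement is then immediate: for a closed $k$-form $f$ the solution set $\{h : dh = f\}$ is nonempty (by the first part) and, since any two solutions differ by an element of $Z^{k-1}(B;G) := \ker(d \colon C^{k-1}(B;G) \to C^k(B;G))$, it is a coset of that group. When $G$ is finite, $C^{k-1}(B;G)$ is a finite group (a finite product of copies of $G$), hence so is $Z^{k-1}(B;G)$; therefore the number of $h$ with $dh = f$ equals $|Z^{k-1}(B;G)|$, which is manifestly independent of $f$. (For $k = 1$ this number is $|G|$, the number of constant $0$-forms on the connected set $B$.)

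The boundary-vanishing refinement is the step I expect to be the real obstacle, and it is where the hypothesis $k \le 3$ is used essentially. First note that the cells of $B$ lying on $\partial B$ form a subcomplex --- any face of a boundary cell is again a boundary cell, by the definition of ``on the boundary of $B$'' --- so restriction of forms to $\partial B$ commutes with $d$. The crux is the sub-claim that every closed $(k-1)$-form $\omega$ on $\partial B$ is the restriction to $\partial B$ of some closed $(k-1)$-form on $B$. Granting this: given a closed $f$ on $B$ with $f|_{\partial B} = 0$, pick $h_0$ on $B$ with $dh_0 = f$ via the absolute statement; then $h_0|_{\partial B}$ is closed because $d(h_0|_{\partial B}) = f|_{\partial B} = 0$, choose a closed $\tilde g$ on $B$ with $\tilde g|_{\partial B} = h_0|_{\partial B}$, and set $h := h_0 - \tilde g$, so that $dh = f$ and $h|_{\partial B} = 0$. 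To prove the sub-claim one uses that $\partial B$ of a four-dimensional cube is a combinatorial $3$-sphere, so $\widetilde H^{k-1}(\partial B;G) = 0$ for $k-1 \in \{0,1,2\}$: for $k - 1 = 0$ a closed $0$-form on $\partial B$ is constant (connectedness) and extends to $B$ by that constant; for $k - 1 \in \{1, 2\}$ a closed $\omega$ on $\partial B$ is exact, $\omega = dg$, and extending the $(k-2)$-form $g$ by zero on all cells of $B$ not lying on $\partial B$ and then applying $d$ gives a closed form on $B$ restricting to $\omega$, using again that every face of a boundary cell lies on $\partial B$. The only genuinely topological ingredient is the low-degree cohomology of the cubical $3$-sphere $\partial B$, which may be cited or re-derived (for instance by Mayer--Vietoris from two contractible half-boundaries, or by a collapse argument like the one above). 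I would regard settling on the cleanest self-contained treatment of that ingredient as the crux; and it is worth noting that the sub-claim, hence the refinement, genuinely fails at $k = 4$, since there $H^3(\partial B; G) = G \neq 0$ --- which is exactly why the statement is restricted to $k \le 3$.
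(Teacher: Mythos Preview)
The paper does not actually prove this lemma: it is quoted verbatim as ``Lemma 2.2 of \cite{Ch2019}'' and left unproved, as are the neighboring Lemmas 2.1 and the Poincar\'e lemma for the coderivative. So there is no in-paper argument to compare against; your write-up supplies what the paper outsources.

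Your argument is correct in substance. The identification of $G$-valued odd $k$-forms with cellular cochains $C^k(B;G)$ and of $d$ with the cubical coboundary is exactly right, and the prism/collapse homotopy $K$ gives the absolute statement cleanly. One small point: your induction is on $\sum_i (b_i - a_i)$ and passes through $B^-$, which is a \emph{rectangle}, not a cube; since the lemma as stated assumes $B$ is a cube, you should say explicitly that you are proving the (equally easy) rectangle version and then specializing. The counting claim via the coset description is unimpeachable.

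Your treatment of the boundary refinement is the interesting part, and it is sound. The two facts you lean on --- that boundary cells form a subcomplex (so restriction commutes with $d$) and that faces of boundary cells are boundary cells (so extending $g$ by zero and applying $d$ really does restrict to $dg$ on $\partial B$) --- are exactly the combinatorial checks needed, and both follow from the paper's definition that a cell is ``on the boundary'' when all its vertices are boundary vertices. The cohomological input $\widetilde H^{k-1}(\partial B;G)=0$ for $k-1\in\{0,1,2\}$ is correct for the cubical $S^3$ and for arbitrary abelian $G$ (e.g.\ by universal coefficients, since $H_1(S^3)=H_2(S^3)=0$). You might add a one-line remark covering the degenerate cubes of side length $\le 1$, where $\partial B = B$ and the claim is trivial. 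Your closing observation that the refinement fails at $k=4$ because $H^3(\partial B;G)=G$ is a nice sanity check on the hypothesis $k\le 3$.
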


We now begin to define the coderivative. Let $1 \leq k \leq 4$. Let $f$ be a $G$-valued $k$-form. Define the coderivative of $f$ to be the $G$-valued $(k-1)$-form, denoted $\delta f$, defined as follows. For an oriented $(k-1)$-cell $c$, define
\[ \label{notation:delta-f} (\delta f)(c) := \sum_{\substack{c' \text{ a pos.} \\ \text{orient. $k$-cell}}} I(c, c') f(c'). \]
Again, this definition of $\delta f$ coincides with the definition given in Section 2.5 of \cite{Ch2019}. We thus have the following result from \cite{Ch2019}.

\begin{lemma}[Poincar\'{e} lemma for the coderivative, Lemma 2.7 of \cite{Ch2019}]
Let $G$ be an additive Abelian group. Let $1 \leq k \leq 3$. Let $f$ be a $G$-valued $k$-form which is zero outside of a finite region. Suppose $\delta f = 0$. Then there is a $G$-valued $(k+1)$-form $h$ such that $f = \delta h$. Moreover, if $f$ is zero outside a cube $B$, then $h$ may be taken to be zero outside $B$ as well.
\end{lemma}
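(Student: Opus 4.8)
The plan is to deduce this from the Poincar\'e lemma for the exterior derivative (the preceding Lemma 2.2) via a discrete Hodge duality, exploiting the fact that on $\Z^4$ the coderivative $\delta$ acting on $k$-forms is, up to a sign, the exterior derivative $d$ acting on $(4-k)$-forms. First I would set up a Hodge star: fix the dual lattice $\Z^4 + (\tfrac12, \ldots, \tfrac12)$ and a bijection $\star$ sending each positively oriented $k$-cell of $\Z^4$ (based at $x$, spanning directions $i_1 < \cdots < i_k$) to the positively oriented $(4-k)$-cell of the dual lattice spanning the complementary directions and based at the dual-lattice vertex nearest the center of the primal cell; extend $\star$ to oriented cells by $\star(-c) := -\star c$. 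Given a $G$-valued $k$-form $f$ on $\Z^4$, transport it to the $(4-k)$-form $\tilde f$ on the dual lattice defined by $\tilde f(\star c) := f(c)$ on positively oriented $c$ and extended to be odd.

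The computational core is the \textbf{incidence identity}
\[ I(c, c') = \epsilon_k\, I(\star c', \star c), \]
valid for every positively oriented $k$-cell $c'$ and oriented $(k-1)$-cell $c$, where $\epsilon_k \in \{+1,-1\}$ depends only on $k$, $\star c'$ is a positively oriented $(4-k)$-cell, and $\star c$ an oriented $(5-k)$-cell of the dual lattice. In words: ``$c$ is a face of $c'$ respecting orientation'' is equivalent, under $\star$, to ``$\star c'$ is a face of $\star c$ respecting orientation,'' up to one global sign. Verifying this, by unwinding the orientation conventions encoded in $c^{(j)}$, $c_{i_j}$, and $I$, and matching the primal $(k-1)\subset k$ incidences with the dual $(4-k)\subset (5-k)$ incidences, is where essentially all of the real work lies, and I expect it to be the \textbf{main obstacle}; the surrounding sign bookkeeping (e.g.\ the sign in $\star\circ\star$) is routine. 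Granting the identity, summing against $f$ gives $\widetilde{\delta f} = \epsilon_k\, d\tilde f$ as $(5-k)$-forms, so in particular $\delta f = 0$ iff $d\tilde f = 0$; and for a $(k+1)$-form $h$, with transported $(3-k)$-form $\tilde h$, one has $\delta h = f$ iff $d\tilde h = \tilde f$ up to the sign $\epsilon_k$.

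With this dictionary in hand the rest is routine. Since $1 \le k \le 3$, the form $\tilde f$ has degree $4-k \in \{1,2,3\}$, is closed, and vanishes outside a finite region. Pick a dual-lattice cube $B'$ large enough that $\tilde f$ vanishes on $\partial B'$ and outside $B'$; Lemma 2.2 produces a $G$-valued $(3-k)$-form $\tilde h$ on $B'$ with $\tilde f = d\tilde h$ on $B'$ and $\tilde h$ vanishing on $\partial B'$. Extend $\tilde h$ by zero; the boundary-vanishing makes $d\tilde h = \tilde f$ hold globally, since the only faces that could ``leak'' across $\partial B'$ are themselves faces lying in $\partial B'$, where $\tilde h = 0$. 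Setting $h := \star^{-1}\tilde h$ (absorbing $\epsilon_k$ into the definition) yields a $G$-valued $(k+1)$-form on $\Z^4$ with $\delta h = f$, supported on $(k+1)$-cells inside the primal cube dual to $B'$.

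For the ``moreover,'' if $f$ is supported on $k$-cells in a cube $B$, I would arrange the dual lattice so that the $\star$-images of the $k$-cells in $B$ are exactly the $(4-k)$-cells of a dual cube $B^\star$, apply Lemma 2.2 on $B^\star$ with no enlargement, and transport back; the resulting $h$ is then supported on $(k+1)$-cells in $B$. One still checks $\delta h = f$ outside $B$ by the same ``no leaking across the boundary'' argument, which requires $\tilde h$ to vanish on $\partial B^\star$; this should follow from tracking how the cells of $B$ sit inside $B^\star$ under $\star$. Pinning down this support bookkeeping cleanly is the one remaining delicate point.
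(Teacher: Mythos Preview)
The paper does not actually prove this lemma; it is simply quoted verbatim as Lemma 2.7 of \cite{Ch2019}, with no proof given here. So there is nothing in the present paper to compare your argument against.

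That said, your approach via discrete Hodge duality is exactly the standard route and is how Chatterjee proves it in \cite{Ch2019}: transport $f$ to a $(4-k)$-form $\tilde f$ on the dual lattice, check that $\delta$ on primal $k$-forms becomes $\pm d$ on dual $(4-k)$-forms, invoke the Poincar\'e lemma for $d$, and transport back. Your identification of the incidence identity $I(c,c') = \epsilon_k\, I(\star c', \star c)$ as the computational core is accurate; once that is nailed down the rest is formal.

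On the ``moreover'' clause, your instinct that the support bookkeeping is the remaining delicate point is correct, and your sketch is slightly too quick. The clean way to organize it is to note that under $\star$ the cells \emph{inside} the primal cube $B$ correspond exactly to cells \emph{not on the boundary} of an appropriately chosen dual cube $B^\star$ (and vice versa), so that ``$f$ vanishes outside $B$'' translates precisely to ``$\tilde f$ vanishes on the boundary of $B^\star$ (and outside it).'' This is what lets you invoke the boundary-vanishing clause of Lemma~2.2 on $B^\star$ without enlargement, and what guarantees that the resulting $\tilde h$, once extended by zero, pulls back to an $h$ supported in $B$. You should state and verify this inside/boundary correspondence explicitly rather than leave it as ``tracking how the cells sit''; it is a one-line combinatorial check once the dual cube is chosen correctly (take $B^\star$ to be the dual-lattice cube whose $4$-cells are exactly the duals of the vertices of $B$), but it is the step that makes the argument airtight.
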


\subsection{Discrete Stokes' theorem}\label{section:stokes}

Let $G$ be an additive Abelian group. Let $B$ be a cube. Let $f$ be a $G$-valued 1-form on $B$. Let $g$ be a $\Z$-valued 1-form on $\Z^4$, which is zero outside of $B$, and such that $\delta g = 0$. By the Poincar\'{e} lemma for the coderivative, there is a $\Z$-valued 2-form $h$ such that $\delta h = g$, and $h$ is zero outside of $B$. Define
\[ \label{notation:inner-product-f-g} \langle f, g \rangle = \langle f, \delta h \rangle := \sum_{\substack{e \text{ a pos. orient.} \\ \text{1-cell of $B$}}} \delta h(e) f(e)\]
Similarly, define
\[ \langle df, h \rangle := \sum_{\substack{p \text{ a pos. orient.} \\ \text{2-cell of $B$}}} h(p) df(p). \]
The following result is a discrete version of Stokes' theorem.

\begin{lemma}[Stokes' theorem]
We have $\langle f, \delta h\rangle = \langle df, h\rangle$.
\end{lemma}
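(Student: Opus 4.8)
The plan is to verify the identity by a direct double-counting argument, reducing both sides to a sum over incidence pairs $(e,p)$ where $e$ is a positively oriented $1$-cell of $B$ and $p$ is a positively oriented $2$-cell of $B$. First I would expand the right-hand side using the definition of $df$: we have
\[ \langle df, h \rangle = \sum_{p} h(p)\, df(p) = \sum_{p} h(p) \sum_{e} I(e, p) f(e) = \sum_{e} f(e) \sum_{p} I(e, p) h(p), \]
where both sums over cells are over positively oriented cells of $B$ and the interchange of the finite sums is harmless. The inner sum $\sum_{p} I(e, p) h(p)$ is, by the very definition of the coderivative, exactly $(\delta h)(e)$ — note the slots of $I$ are in the order $I(\text{lower cell}, \text{higher cell})$ in both the definition of $\delta$ and the expression above. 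Hence $\langle df, h\rangle = \sum_{e} f(e)\, \delta h(e)$, which is precisely the definition of $\langle f, \delta h\rangle$.

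So the argument is really just: unwind the definition of $df(p)$ inside $\langle df, h\rangle$, swap the two finite summations, and recognize the resulting inner sum as $\delta h(e)$. The only genuine content is bookkeeping about orientations: one must make sure that when $p$ ranges only over \emph{positively} oriented $2$-cells, the quantity $\sum_p I(e,p) h(p)$ still agrees with $(\delta h)(e)$ as defined in the excerpt (whose sum is also over positively oriented $2$-cells), and similarly that $df(p)$ for positively oriented $p$ is the relevant term. Since $h$ is odd and $I(c',c) = I(-c',-c)$, a negatively oriented cell contributes the same product $I(e,p)h(p)$ as its positive counterpart, so restricting to positive orientations is consistent on both sides; this is the point to be careful about but it is routine.

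The main (minor) obstacle is therefore purely notational: keeping the arguments of $I$ in the correct order and confirming that the sums in the two definitions — $(df)(c) = \sum_{c'} I(c',c) f(c')$ and $(\delta f)(c) = \sum_{c'} I(c,c') f(c')$ — line up so that Fubini on the finite double sum produces exactly $\delta h$ and not some sign-twisted variant. There is no analytic difficulty and no need for the Poincaré lemma beyond knowing that such an $h$ with $\delta h = g$ exists so that the pairing $\langle f, g\rangle$ is well-defined; one should perhaps also remark (or it will be remarked elsewhere) that the value $\langle f, \delta h \rangle$ does not depend on the choice of $h$, but that is not needed for this particular equality.
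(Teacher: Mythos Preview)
Your approach is correct and is exactly the paper's: expand one side, swap the finite double sum, and recognize the other. The one point you underemphasize (while overemphasizing orientation bookkeeping, which is not where any subtlety lies) is that $\delta h(e)$ is defined as a sum over \emph{all} positively oriented $2$-cells of $\Z^4$, not just those in $B$; it is the hypothesis that $h$ vanishes outside $B$ that lets you identify $\sum_{p\text{ in }B} I(e,p)h(p)$ with $(\delta h)(e)$, and this is the only line the paper's proof bothers to write out.
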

\begin{proof}
Since $h$ is zero outside of $B$, for any oriented 1-cell $e$ of $B$, we have
\[ \delta h (e) = \sum_{\substack{p \text{ a pos. orient.} \\ \text{$2$-cell of $B$}}} I(e, p) h(p).\]
Now proceed by exchanging the order of summation. 
\end{proof}

\section{The Abelian case}\label{section:abelian-case}

\subsection{Setup and notation}

We first introduce notation which is slightly different from the notation of Section \ref{section:intro}. Let $G$ be an Abelian group, now with addition as the group operation, and the identity denoted by $0$. Since we are considering a special case anyway, let us also assume that $\rho$ is a one-dimensional unitary representation of $G$. In this case $\chi = \rho$, $\chi(0) = 1$, and $\chi$ is multiplicative, i.e. $\chi(g + g') = \chi(g) \chi(g')$ for all $g, g' \in G$. For an example to keep in mind, take $G$ to be the integers modulo $n$, and take $\rho$ to be the representation given by $\rho(k) = e^{2\pi i k / n}$. Observe that we may write
\beq\label{eq:delta-g-def-abelian} \Delta_G = \min_{g \neq 0} \Re (1 - \chi(g)),\eeq
\[ \elemmatrix = r_\beta^{-1} \sum_{g \neq 0} \chi(g) \varphi_\beta(g)^6,\]
\[ A = \frac{1}{|G_0|} \sum_{g \in G_0} \chi(g). \]
Moreover, $A \in [-1, 1]$, and $A_\beta \in [-1, 1]$ for all $\beta \geq 0$. 

Let $\largebox$ be a cube. Let $\edges$ be the set of positively oriented edges of $\largebox$, and let $\plaquettes$ be the set of positively oriented plaquettes of $\largebox$. Note any $\sigma \in G^\edges$ may be naturally extended to a $G$-valued 1-form on $\lbox$, by setting $\sigma_{-e} := - \sigma_e$ for $e \in \edges$. Thus in a slight abuse of notation, we will often think of $\sigma$ as an actual 1-form. Similarly, we will think of elements $q \in G^\plaquettes$ as actual 2-forms.

For $\sigma \in G^\edges$, $p \in \plaquettes$, observe that $\sigma_p$ (recall equation \eqref{eq:sigma-p-def}) is the same as $(d\sigma)_p$. Thus for $\beta \geq 0$, we have that $\mu_{\largebox, \beta}$ may be viewed as a probability measure on $G^\edges$, given by (recall equation \eqref{eq:phi-beta-def} for the definition of $\varphi_\beta$, and also recall that the base measure $\mu_\lbox$ is specified to be counting measure)
\[ \mu_{\largebox, \beta} (\sigma) = Z_{\largebox, \beta}^{-1} \prod_{p \in \plaquettes} \varphi_\beta((d\sigma)_p), \]
where 
\[ Z_{\lbox, \beta} = \sum_{\sigma \in G^\edges}\prod_{p \in \plaquettes} \varphi_\beta((d\sigma)_p). \]

Now let $\wloop = e_1 \cdots e_n$ be a self avoiding loop in $\lbox$ of length $\ell$. In an abuse of notation, we will identify $\wloop$ with its induced $\Z$-valued 1-form: for any oriented 1-cell $e$ in $\Z^4$, we have $\wloop_e = 1$ if $e$ appears in $\wloop$, $\wloop_e = -1$ if $-e$ appears in $\wloop$, and $\wloop_e =0$ otherwise (note as $\wloop$ is self avoiding, we have $\wloop_e \in \{-1, 0, 1\}$ for all $e$). By the expression ``$e \in \wloop$", we mean that $\wloop_e \neq 0$, i.e. the edge $e$ (in some orientation) is in the loop $\wloop$. Similarly, ``$e \notin \wloop$" denotes $\wloop_e = 0$.
Now because $G$ is an additive group, we may express the Wilson loop variable $W_\wloop : G^\edges \ra \C$ by
\[ W_\wloop(\sigma) = \chi(\langle \sigma, \wloop\rangle) . \]
One may think of the expression $\langle \sigma, \wloop \rangle$ as an integral of $\sigma$ over the closed loop $\wloop$. Now as $\wloop$ has length $\ell$, there is a cube $B_\wloop$ contained in $\lbox$ of side length $\ell$, which contains $\wloop$. Consequently, the associated 1-form is zero outside $B_\wloop$. Moreover, since $\wloop$ starts and ends at the same point, we have $\delta \wloop = 0$. Thus the Poincar\'{e} lemma for the coderivative implies that there is a $\Z$-valued 2-form $S$ (``$S$" for surface), which is zero outside $B_\wloop$ (and thus also outside $\lbox$), such that $\delta S = \wloop$. Stokes' theorem then implies
\[ W_\wloop(\sigma) = \chi(\langle d \sigma, S \rangle) = \prod_{p \in \plaquettes} \chi(S_p (d\sigma)_p).  \]
Now to calculate the infinite volume expectation $\langle W_\wloop \rangle_\beta$, it suffices to calculate the finite volume expectation $\langle W_\wloop \rangle_{\lbox, \beta}$, for all large enough $\lbox$. The following theorem does just that. Proving this is the main focus of Section \ref{section:abelian-case}.

\begin{theorem}\label{thm:main-abelian}
Let $G$ be a finite Abelian group, and let $\rho$ be a faithful one-dimensional unitary representation of $G$. (Note since $\rho$ is faithful we have $\Delta_G > 0$.) Let $\lbox$ be a cube in $\Z^4$ with side length $N$\label{notation:N}. Let $\wloop$ be a self avoiding loop of length $\ell$ in $\lbox$, and let $B_\wloop$\label{notation:B-wloop} be a cube in $\lbox$ of side length $\ell$, which contains $\wloop$. Let $L$\label{notation:L} be the $\ell^\infty$ distance between the boundary of $\lbox$ and the boundary of $B_\wloop$, and suppose $L \geq 50$ (say). Suppose
\[ \beta \geq \frac{1}{\Delta_G} (60 +  14\log (\abs{G} - 1)), \]
and also suppose that $|\elemmatrix|< 1$. 
\beq\label{eq:c-beta-def} c_\beta :=  \min(0.15, 0.5 \log |\elemmatrix|^{-1}, 1 - \elemmatrix). \eeq
Then
\[|\langle W_\wloop \rangle_{\lbox, \beta} - e^{-\ell r_\beta (1 - \elemmatrix)}| \leq (2e + 2) (e^{-\beta \Delta_G / 2}+N^4 e^{-\beta L \Delta_G / 2})^{c_\beta / (1.5 + c_\beta)}.\]
\end{theorem}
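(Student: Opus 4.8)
The plan is to realize the Wilson loop expectation as a moment generating function of a random variable defined through the \emph{random surface} (or ``vortex'') representation, and then to compute this MGF via a Poisson approximation. First I would make precise the random collection of surfaces. Since $\langle W_\wloop \rangle_{\lbox,\beta} = \E_{\mu_{\lbox,\beta}} \prod_{p \in \plaquettes} \chi\!\left(S_p (d\sigma)_p\right)$, I would change variables from $\sigma$ to the $G$-valued $2$-form $q = d\sigma$. The image of $d$ on $G^\edges$ is exactly the set of \emph{closed} $2$-forms (those $q$ with $dq = 0$, by Lemma~\ref{lemma:exact-implies-closed} and the Poincar\'e lemma), with each such $q$ hit by the same number of $\sigma$'s. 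Hence the law of $d\sigma$ under $\mu_{\lbox,\beta}$ is proportional to $\prod_p \varphi_\beta(q_p)$ restricted to closed $2$-forms vanishing outside $\lbox$, and $\langle W_\wloop\rangle_{\lbox,\beta} = \E\left[\prod_p \chi(S_p q_p)\right]$ for $q$ drawn from this ``random surface'' measure. Because $\wloop$ is self-avoiding and $\ell$ is small relative to $N$, the relevant support $B_\wloop$ sits deep inside $\lbox$ (this is where $L \geq 50$ enters).

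Next I would identify the leading-order contribution. The dominant plaquette values for $q_p$ are the ``minimal-energy'' nonidentity group elements; at inverse coupling $\beta$, the cost of activating a plaquette to value $g$ is $\varphi_\beta(g)$, and a self-consistent small closed $2$-form costs at least $6$ plaquettes (the boundary of a single $3$-cell), which is why $r_\beta = \sum_{g\neq 0}\varphi_\beta(g)^6$ and $\elemmatrix$ are the natural parameters. I would define a counting random variable $X$ that records, roughly, the number of ``elementary vortices'' (minimal closed surfaces) linking the loop $\wloop$, weighted so that $\prod_p \chi(S_p q_p)$ factors as a product of independent-ish contributions; the key algebraic point (in the Abelian case, multiplicativity of $\chi$) is that $\langle \sigma,\wloop\rangle = \langle d\sigma, S\rangle$ splits over plaquettes. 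Conditioning/pushing forward, $\langle W_\wloop\rangle_{\lbox,\beta}$ becomes $\E[z^X]$-type quantity, where the per-vortex factor is essentially $\elemmatrix = r_\beta^{-1}\sum_{g\neq 0}\chi(g)\varphi_\beta(g)^6$. If $X$ were exactly Poisson with mean $\ell r_\beta$, the MGF would be exactly $\exp(-\ell r_\beta(1 - \elemmatrix))$, which is the claimed main term.

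The heart of the proof is therefore the Poisson approximation: showing $X$ is approximately $\mathrm{Poisson}(\ell r_\beta)$ with total-variation (or test-function) error controlled by $(e^{-\beta\Delta_G/2} + N^4 e^{-\beta L \Delta_G/2})^{c_\beta/(1.5+c_\beta)}$. I would use the dependency-graph version of Stein's method for Poisson approximation (the Chen--Stein / Arratia--Goldstein--Gordon bound as in \cite{CR2013}): write $X = \sum_{i} \ind_i$ over potential locations of elementary vortices along (or linking) $\wloop$, where each $\ind_i$ has $\p(\ind_i = 1) \approx r_\beta$ and distinct $\ind_i$ are ``almost independent'' unless their vortices are geometrically close. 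The cluster expansion for polymer models at small activity \cite{BFP2010} is the tool that makes ``almost independent'' quantitative: one shows the random surface measure has exponential decay of correlations when $\knotbound \alpha_\beta$-type smallness holds, i.e. when $\beta$ exceeds the stated threshold $\tfrac{1}{\Delta_G}(60 + 14\log(|G|-1))$, which guarantees convergence of the polymer expansion with the small parameter essentially $\alpha_\beta \asymp |G| e^{-6\beta\Delta_G}$ or similar. Feeding the exponential-decay estimates into the Stein bound yields the ``local'' error $b_1 + b_2 + b_3$; the $b_3$ (conditional expectation) term is where the cluster expansion really does its work, controlling how much a vortex elsewhere can bias a vortex near $\wloop$. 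The finite-volume correction $N^4 e^{-\beta L\Delta_G/2}$ comes from surfaces that wrap around to feel the boundary of $\lbox$ — bounded by the number of plaquettes times the cost of a surface of diameter $\geq L$.

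I expect the main obstacle to be the cluster-expansion bookkeeping: setting up the polymer model (the ``vortices'' as connected support-components of closed $2$-forms), verifying the Koteck\'y--Preiss / Brydges--Federbush smallness criterion with explicit constants matching the $\beta$-threshold, and then extracting from the convergent expansion precisely the exponential-decay and conditional-correlation bounds that Stein's method demands — all while keeping the errors in the composite form $(\cdot)^{c_\beta/(1.5+c_\beta)}$, whose exponent reflects an optimization between how large a ``neighborhood'' one declares around each vortex (larger neighborhoods: smaller $b_3$ but larger $b_1$). A secondary but genuine subtlety is the exact matching of the combinatorial factor: that the number of elementary surfaces linking a length-$\ell$ loop is $\ell(1+o(1))$ (each edge of $\wloop$ contributes one ``slot'' for a minimal dual vortex), so that the Poisson mean is $\ell r_\beta$ and not some other multiple — this is essentially a discrete-geometry count of unit $3$-cells adjacent to $\wloop$, and it must be done carefully enough that the discrepancy is absorbed into the error term. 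Once the Poisson approximation with the stated error is in hand, the final inequality follows by comparing the MGF of $X$ with that of the Poisson law and using $|\elemmatrix| < 1$ together with the definition of $c_\beta$ to turn the TV-type bound into the multiplicative bound on $|\langle W_\wloop\rangle_{\lbox,\beta} - e^{-\ell r_\beta(1-\elemmatrix)}|$.
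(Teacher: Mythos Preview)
Your overall architecture is right and matches the paper closely: change variables to the closed $2$-form $q=d\sigma$, decompose the support of $q$ into vortices, identify the minimal vortices $P(e)$ for $e\in\wloop$ as the only leading contributions, reduce $\langle W_\wloop\rangle_{\lbox,\beta}$ to $\E[\elemmatrix^{N_\wloop}]$ on a high-probability event $E$, and Poisson-approximate $N_\wloop$ via the dependency-graph Stein bound with cluster-expansion correlation estimates feeding the $b_3$ term.

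However, you have misidentified where the exponent $c_\beta/(1.5+c_\beta)$ comes from, and this hides a genuine gap. It is \emph{not} a neighborhood-size optimization inside the Stein bound; the neighborhoods $B_e=\{e':P(e)\nsim P(e')\}$ are fixed by the vortex compatibility relation and never tuned. The Poisson route, combined with the bound on $\p(E^c)$, yields only an estimate of the shape
\[
\bigl|\langle W_\wloop\rangle_{\lbox,\beta}-e^{-\ell r_\beta(1-\elemmatrix)}\bigr|\ \le\ C\,e^{1.5\,\ell r_\beta}\,e^{-\beta\Delta_G/2}\;+\;N^4 e^{-\beta L\Delta_G/2},
\]
and the factor $e^{1.5\,\ell r_\beta}$ makes this useless when $\ell r_\beta$ is large (both the TV error and $\p(E^c)$ scale with $\ell$). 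The paper supplies a separate \emph{large-loop} estimate: the unconditional inequality $\bigl|\E[W_\wloop\mid \plaqset(\Sigma)]\bigr|\le|\elemmatrix|^{N_\wloop}$, which holds always and not only on $E$, together with a left-tail bound $\p\bigl(N_\wloop\le\tfrac12\ell r_\beta\bigr)\le 2e\,e^{-0.15\,\ell r_\beta}$ proved directly from cluster-expansion ratio estimates (not via Poisson approximation), gives
\[
\bigl|\langle W_\wloop\rangle_{\lbox,\beta}-e^{-\ell r_\beta(1-\elemmatrix)}\bigr|\ \le\ (2e+2)\,e^{-c_\beta\,\ell r_\beta}.
\]
The stated exponent then arises by interpolation: writing $|X|^{1+1.5/c_\beta}\le(\text{small-loop bound})\cdot(\text{large-loop bound})^{1.5/c_\beta}$ cancels the $e^{1.5\,\ell r_\beta}$ growth against the $e^{-1.5\,\ell r_\beta}$ from the large-loop side, and taking the $c_\beta/(1.5+c_\beta)$ power yields the theorem. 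Without the left-tail bound and this interpolation, your argument gives no control for long loops, so the proof as outlined would not establish the theorem uniformly in $\ell$.
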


\begin{remark}
Note if $|A| < 1$, then $|\elemmatrix| < 1$ for all $\beta$ large enough, and also $c_\beta$ will stay bounded away from zero as $\beta \toinf$. Additionally, as we take $\lbox \uparrow \Z^4$, we will have that $L$ is on the order of $N$, so that the term $N^4 e^{-\beta L \Delta_G / 2}$ will disappear. Actually, this term can be done away with entirely, as we will see when we consider the general case in Section \ref{section:non-abelian-case}. However, in the present special case, getting rid of the term requires an additional argument which is not central to the overall picture, and thus I have decided against including this additional argument.
\end{remark}

\begin{remark}
Actually, in order for a finite Abelian group to have a faithful one dimensional representation, it must be cyclic. However, the proof of Theorem \ref{thm:main-abelian} does not directly use this fact, and thus I have decided against explicitly putting this in the theorem assumptions.
\end{remark}

\begin{remark}
The assumption that $\rho$ is one-dimensional is primarily a simplifying assumption, and it can be removed by slight generalizations of the arguments of the present section. However, since consideration of the Abelian case before the fully general case is mainly intended to illuminate the needed probabilistic arguments, I decided to just assume that $\rho$ is one-dimensional to remove any added complexity. In any case, the main result of the paper (Theorem \ref{thm:main-result}) applies even when the dimension of $\rho$ is greater than one.
\end{remark}

\subsection{Setting up the cluster expansion}

We begin by rewriting the partition function so that we can later perform a cluster expansion. Full proofs will be given for completeness, but I should note that this is more or less standard and the ideas can be found e.g. in Section 3 of \cite{SEI1982}. For $q \in G^\plaquettes$, define $\supp(q) := \{p \in \plaquettes: q_p \neq 0\}$. For plaquette sets $\plaqset \sse \plaquettes$\label{notation:plaqset}, define
\[ \Phi(\plaqset)\label{notation:Phi-abelian} := \sum_{\substack{q \in G^\plaquettes \\ dq = 0\\ \supp(q) = \plaqset}} \prod_{p \in \plaqset} \varphi_\beta(q_p).\]
It will also be useful later on to define
\[ \Phi_S(\plaqset) := \sum_{\substack{q \in G^\plaquettes \\ dq = 0\\ \supp(q) = \plaqset}} \prod_{p \in \plaqset} \chi(S_p q_p) \prod_{p \in \plaqset} \varphi_\beta(q_p). \]

Note $\Phi, \Phi_S$ depend on $\lbox$, but we will hide this dependence. For any $q \in G^\plaquettes$ such that $dq = 0$, let $N_1$ be the number of $\sigma \in G^\edges$ such that $q = d \sigma$. Recall by the Poincar\'{e} lemma, $N_1$ does not depend on $q$. 

\begin{lemma}\label{lemma:abelian-cluster-expansion-setup}
We have
\[ Z_{\lbox, \beta} = N_1 \sum_{\plaqset \sse \plaquettes} \Phi(\plaqset). \]
\end{lemma}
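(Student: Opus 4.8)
The plan is a change of variables in the sum defining $Z_{\lbox,\beta}$: group the edge configurations $\sigma$ according to the value of the closed $2$-form $d\sigma$, and then group the closed $2$-forms according to their support.

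First I would record the elementary fact that $\varphi_\beta(0) = \exp(-\beta\Re(\chi(1)-\chi(0))) = 1$, since in the additive notation $0$ is the identity of $G$ and $\chi(0) = \chi(1) = 1$. Consequently, for any $q \in G^\plaquettes$ we have
\[ \prod_{p \in \plaquettes} \varphi_\beta(q_p) = \prod_{p \in \supp(q)} \varphi_\beta(q_p), \]
and in particular the summand $\prod_{p \in \plaquettes}\varphi_\beta((d\sigma)_p)$ in $Z_{\lbox,\beta}$ depends on $\sigma$ only through $d\sigma$.

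Next I would pass from summing over $\sigma \in G^\edges$ to summing over $q = d\sigma$. By Lemma \ref{lemma:exact-implies-closed} every such $q$ satisfies $dq = ddf = 0$, and by the Poincar\'e lemma (with $k=2$) every $2$-form $q$ with $dq = 0$ is of the form $d\sigma$, with the number of preimages $\#\{\sigma : d\sigma = q\}$ independent of the (closed) form $q$; this common value is by definition $N_1$. Hence
\[ Z_{\lbox,\beta} = \sum_{\sigma \in G^\edges}\prod_{p \in \plaquettes} \varphi_\beta((d\sigma)_p) = \sum_{\substack{q \in G^\plaquettes \\ dq = 0}} \#\{\sigma : d\sigma = q\}\prod_{p \in \plaquettes} \varphi_\beta(q_p) = N_1 \sum_{\substack{q \in G^\plaquettes \\ dq = 0}} \prod_{p \in \plaquettes} \varphi_\beta(q_p). \]
Finally, I would split the remaining sum over closed $2$-forms according to the support: setting $\plaqset = \supp(q)$ and using the first step to replace the product over $\plaquettes$ with the product over $\plaqset$,
\[ \sum_{\substack{q \in G^\plaquettes \\ dq = 0}} \prod_{p \in \plaquettes} \varphi_\beta(q_p) = \sum_{\plaqset \sse \plaquettes}\ \sum_{\substack{q \in G^\plaquettes \\ dq = 0 \\ \supp(q) = \plaqset}} \prod_{p \in \plaqset} \varphi_\beta(q_p) = \sum_{\plaqset \sse \plaquettes} \Phi(\plaqset), \]
which is the desired identity.

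I do not expect any genuine obstacle; everything is bookkeeping. The only point that needs a sentence of care is the fibre count, namely that $\#\{\sigma : d\sigma = q\}$ equals the same number $N_1$ for every closed $q$ — this is exactly the content of the Poincar\'e lemma quoted from \cite{Ch2019} (equivalently, that $d\colon G^\edges \to G^\plaquettes$ is a homomorphism whose image is precisely the space of closed $2$-forms, so all nonempty fibres have cardinality $|\ker d|$), and it is the one place the discrete exterior calculus of Section \ref{section:dec} is actually invoked.
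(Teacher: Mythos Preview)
Your proof is correct and follows essentially the same approach as the paper: change variables from $\sigma$ to $q = d\sigma$ using $dd\sigma = 0$ and the Poincar\'e lemma to get the constant fibre count $N_1$, then partition the closed $2$-forms by support and use $\varphi_\beta(0)=1$. The only cosmetic slip is the ``$ddf$'' in place of ``$dd\sigma$''.
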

\begin{proof}
Recall by Lemma \ref{lemma:exact-implies-closed} that for any $\sigma \in G^\edges$, we have $d d\sigma = 0$. Thus
\begin{align*}
Z_{\lbox, \beta} &= \sum_{\sigma \in G^\edges} \prod_{p \in \plaquettes} \varphi_\beta((d\sigma)_p) = \sum_{\substack{q \in G^\plaquettes \\ dq = 0}} \prod_{p \in \plaquettes} \varphi_\beta(q_p) \abs{\{\sigma : d\sigma = q\}} \\
&= N_1 \sum_{\substack{q \in G^\plaquettes \\ dq = 0}} \prod_{p \in \plaquettes} \varphi_\beta(q_p) 
= N_1 \sum_{\plaqset \sse \plaquettes} \sum_{\substack{q \in G^\plaquettes \\ dq = 0\\ \supp(q) = \plaqset}} \prod_{p \in \plaqset} \varphi_\beta(q_p),
\end{align*}
as desired. Note in the final equality, we used the fact that $\varphi_\beta(0) = 1$.
\end{proof}


The objects defined in the following definition will play a fundamental role throughout the rest of the paper, even in the general setting.

\begin{definition}
Given a plaquette set $\plaqset \sse \plaquettes$, we may obtain an undirected graph $G(\plaqset)$ as follows. The vertices of the graph are the plaquettes of $\plaqset$. Place an edge between any two plaquettes $p_1, p_2 \in \plaqset$ such that there is a 3-cell $c$ in $\lbox$ which contains both $p_1, p_2$. 

A vortex is a set $\vortex \sse \plaquettes$\label{notation:vortex} such that $G(\vortex)$ is connected. For general plaquette sets $\plaqset \sse \plaquettes$, we may partition $G(\plaqset)$ into connected components $G_1, \ldots, G_k$, which corresponds to a partition of $\plaqset$ into vortices $\vortex_1, \ldots, \vortex_k$, such that $G_i = G(\vortex_i)$ for all $1 \leq i \leq k$. Observe that as the partition of an undirected graph into connected components is unique, the resulting partition of $\plaqset$ into vortices is also unique. The name ``vortex" is inspired by similar usage in \cite{Ch2019}.

For plaquette sets $\plaqset_1, \plaqset_2 \sse \plaquettes$, denote $\plaqset_1 \sim \plaqset_2$, and call $\plaqset_1, \plaqset_2$ compatible, if there do not exist edges in $G(\plaqset_1 \cup \plaqset_2)$ between the subgraphs $G(\plaqset_1)$ and $G(\plaqset_2)$. Otherwise, denote $\plaqset_1 \nsim \plaqset_2$, and call $\plaqset_1, \plaqset_2$ incompatible. For $n \geq 3$, we will say that $\plaqset_1, \ldots, \plaqset_n$ are compatible if any pair $\plaqset_i, \plaqset_j$ is compatible, $1 \leq i \neq j \leq n$. Observe that if $\plaqset$ is partitioned into vortices $\vortex_1, \ldots, \vortex_k$ as described in the previous paragraph, then $\vortex_1, \ldots, \vortex_k$ are compatible.

Given a plaquette set $\plaqset \sse \plaquettes$, let the unique partition of $\plaqset$ into compatible vortices $\vortex_1, \ldots, \vortex_k$ as previously described be called the vortex decomposition of $\plaqset$. For each $1 \leq i \leq k$, we say that $\vortex_i$ is a vortex of $\plaqset$.
\end{definition}

The following lemma states that $\Phi$ and $\Phi_S$ have factorization properties that respect the notion of compatibility which we just defined. This is the crucial fact which will allow us to use general cluster expansion results later.

\begin{lemma}\label{lemma:activity_decomp}
Suppose $\plaqset_1, \ldots, \plaqset_n \sse \plaquettes$ are compatible. Then
\[\Phi(\plaqset_1 \cup \cdots \cup \plaqset_n) = \Phi(\plaqset_1) \cdots \Phi(\plaqset_n), \]
and
\[ \Phi_S(\plaqset_1 \cup \cdots \cup \plaqset_n) = \Phi_S(\plaqset_1) \cdots \Phi_S(\plaqset_n). \]
\end{lemma}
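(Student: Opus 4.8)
The plan is to prove the factorization by reducing to the case $n = 2$ and then arguing by a bijective decomposition of closed $2$-forms with prescribed support. First I would observe that it suffices to handle $n = 2$: given $\plaqset_1, \dots, \plaqset_n$ compatible, the union $\plaqset_2 \cup \cdots \cup \plaqset_n$ is compatible with $\plaqset_1$ (no edges in $G(\plaqset_1 \cup \cdots \cup \plaqset_n)$ run between $G(\plaqset_1)$ and any $G(\plaqset_i)$, hence none run between $G(\plaqset_1)$ and $G(\plaqset_2 \cup \cdots \cup \plaqset_n)$), so an induction on $n$ collapses the general statement to the two-set case. Also I should note that if $\plaqset_1, \plaqset_2$ are compatible then in particular $\plaqset_1 \cap \plaqset_2 = \emptyset$, since a plaquette lying in both would trivially give an edge (a $3$-cell containing it as one of its faces) between the two subgraphs; so there is no ambiguity in what $\supp(q) = \plaqset_1 \cup \plaqset_2$ means.

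For the two-set case, the heart of the matter is the following bijection claim: the map $q \mapsto (q|_{\plaqset_1}, q|_{\plaqset_2})$ (extended by $0$ off each set) is a bijection from
\[ \{ q \in G^\plaquettes : dq = 0, \ \supp(q) = \plaqset_1 \cup \plaqset_2 \} \]
onto the product of $\{ q_1 : dq_1 = 0, \supp(q_1) = \plaqset_1\}$ and $\{q_2 : dq_2 = 0, \supp(q_2) = \plaqset_2\}$. The support condition is clear from the disjointness just noted; the only real content is that $dq = 0$ if and only if $dq_1 = 0$ and $dq_2 = 0$, where $q_1, q_2$ are the restrictions. The ``if'' direction is immediate since $d$ is linear and $q = q_1 + q_2$. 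For the ``only if'' direction, fix any $3$-cell $c$ in $\lbox$ and consider $(dq)(c) = \sum_{c'} I(c', c) q(c')$, a sum over the six plaquettes $c'$ that are faces of $c$. Here is where compatibility enters: the faces of $c$ that lie in $\plaqset_1 \cup \plaqset_2$ cannot include plaquettes from both $\plaqset_1$ and $\plaqset_2$ — if $p_1 \in \plaqset_1$ and $p_2 \in \plaqset_2$ were both faces of $c$, then the $3$-cell $c$ would witness an edge of $G(\plaqset_1 \cup \plaqset_2)$ between $G(\plaqset_1)$ and $G(\plaqset_2)$, contradicting $\plaqset_1 \sim \plaqset_2$. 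Hence for each $c$, the sum defining $(dq)(c)$ involves only $q_1$-faces or only $q_2$-faces; so $(dq)(c) = (dq_1)(c)$ or $(dq)(c) = (dq_2)(c)$ (the other restriction contributing nothing there), and in either case $0 = (dq)(c)$ forces the relevant $(dq_i)(c) = 0$. Running over all $c$ gives $dq_1 = 0$ and $dq_2 = 0$. Once the bijection is established, the factorization of $\Phi$ follows because the summand $\prod_{p \in \plaqset_1 \cup \plaqset_2} \varphi_\beta(q_p)$ splits as $\big(\prod_{p \in \plaqset_1} \varphi_\beta((q_1)_p)\big)\big(\prod_{p \in \plaqset_2} \varphi_\beta((q_2)_p)\big)$ over the disjoint union, and summing over the product set gives $\Phi(\plaqset_1)\Phi(\plaqset_2)$. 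For $\Phi_S$, the summand carries an extra factor $\prod_{p} \chi(S_p q_p)$ which likewise splits over the disjoint union, so the identical argument applies verbatim.

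The main obstacle — really the only non-bookkeeping point — is the ``only if'' direction above, i.e. seeing clearly that the closedness constraint $dq = 0$ genuinely decouples across $\plaqset_1$ and $\plaqset_2$ under the compatibility hypothesis. This is exactly the step that uses the definition of the graph $G(\plaqset_1 \cup \plaqset_2)$ (edges come from shared $3$-cells) and the definition of compatibility (no cross-edges); everything else is linearity of $d$ and the multiplicativity of a product over a disjoint union. I would take care to state the $3$-cell dichotomy explicitly as a small sub-claim, since it is the pivot of the whole argument, and then let the induction on $n$ and the two factorizations ($\Phi$ and $\Phi_S$) follow routinely.
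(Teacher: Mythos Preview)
Your proposal is correct and follows essentially the same approach as the paper: reduce to $n=2$, establish the bijection $q \leftrightarrow (q_1,q_2)$ via restriction to $\plaqset_1,\plaqset_2$, and check closedness by the $3$-cell dichotomy coming from compatibility. The only organizational difference is that the paper isolates the bijection as a separate lemma (Lemma~\ref{lemma:gauge_eq_decomp}) before invoking it, whereas you do it inline.
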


The key to proving this lemma is the following result.

\begin{lemma}\label{lemma:gauge_eq_decomp}
Suppose $\plaqset_1, \plaqset_2 \sse \plaquettes$ are compatible. Then any 2-form $q \in G^\plaquettes$ such that $dq = 0$, $\supp(q) = \plaqset_1 \cup \plaqset_2$ has a unique decomposition $q = q^1 + q^2$, where $q^i \in G^\plaquettes$, $dq^i = 0$, $\supp(q^i) = \plaqset_i$, $i = 1, 2$.
\end{lemma}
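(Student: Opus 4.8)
The plan is to build $q^1$ and $q^2$ by restriction: set $q^i_p := q_p$ for $p \in \plaqset_i$ and $q^i_p := 0$ otherwise (extending oddly to negatively oriented plaquettes). Existence of a decomposition as $G$-valued functions with the prescribed supports is then immediate, since $\plaqset_1 \cap \plaqset_2 = \emptyset$ (compatible plaquette sets are in particular disjoint, as an edge of $G(\plaqset_1 \cup \plaqset_2)$ would join a plaquette to itself otherwise — or more carefully, one checks that $\plaqset_1 \cap \plaqset_2 \neq \emptyset$ forces an edge between $G(\plaqset_1)$ and $G(\plaqset_2)$). So $q = q^1 + q^2$ with $\supp(q^i) = \plaqset_i$ automatically; and uniqueness of the decomposition with those support constraints is also automatic once we know $q^i$ is forced to be $q$ restricted to $\plaqset_i$. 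Thus the entire content of the lemma is the claim that $dq^1 = 0$ (and then $dq^2 = d q - dq^1 = 0$ follows).

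The key step is therefore: $dq^1 = 0$. I would argue cell-by-cell. Fix a positively oriented $3$-cell $c$ in $\lbox$; we must show $\sum_{p} I(p,c) q^1_p = 0$, the sum over positively oriented plaquettes $p$ in $\lbox$. Only plaquettes contained in $c$ contribute, and there are finitely many such $p$. The crucial observation is that the set of plaquettes of $\plaqset_1 \cup \plaqset_2$ contained in a single $3$-cell $c$ is entirely inside $\plaqset_1$ or entirely inside $\plaqset_2$: if $c$ contained a plaquette $p_1 \in \plaqset_1$ and a plaquette $p_2 \in \plaqset_2$, then by definition $G(\plaqset_1 \cup \plaqset_2)$ would have an edge between $p_1$ and $p_2$, contradicting $\plaqset_1 \sim \plaqset_2$ (here I use the definition of the graph $G(\cdot)$: an edge is placed whenever two plaquettes lie in a common $3$-cell of $\lbox$). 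So for each $c$, either every plaquette of $\supp(q)$ in $c$ lies in $\plaqset_1$ — in which case $q^1$ and $q$ agree on all plaquettes contained in $c$, so $(dq^1)(c) = (dq)(c) = 0$ — or none of them lies in $\plaqset_1$, in which case $q^1$ vanishes on every plaquette contained in $c$, so $(dq^1)(c) = 0$ trivially. Either way $(dq^1)(c) = 0$, and since $c$ was an arbitrary $3$-cell in $\lbox$, $dq^1 = 0$.

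I expect the main (only) subtlety to be bookkeeping around orientations and around the precise meaning of "contained in a common $3$-cell" versus "contributes to $(dq)(c)$": one should check that $I(p,c) \neq 0$ exactly when $p$ is a face of $c$, which is exactly the adjacency relation defining $G(\cdot)$, so the dichotomy above really does partition the support-plaquettes face-by-face of each $3$-cell. There is also a minor point to dispatch at the start — that compatible sets are disjoint — but this is immediate from the definitions. Everything else (extending to $\plaqset_1, \dots, \plaqset_n$ compatible, and deducing the factorization of $\Phi$ and $\Phi_S$ in Lemma \ref{lemma:activity_decomp}) is routine: iterate the two-set case, and note $\prod_{p \in \plaqset_1 \cup \plaqset_2} \varphi_\beta(q_p) = \prod_{p\in\plaqset_1}\varphi_\beta(q^1_p)\prod_{p\in\plaqset_2}\varphi_\beta(q^2_p)$ and likewise for the $\chi(S_p q_p)$ factors, summing over the now-independent choices of $q^1$ and $q^2$.
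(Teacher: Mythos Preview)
Your proof is correct and follows essentially the same approach as the paper: define $q^i$ by restricting $q$ to $\plaqset_i$, use compatibility to see that each 3-cell meets at most one of $\plaqset_1, \plaqset_2$, and conclude $(dq^i)(c) = (dq)(c)$ or $0$ cell by cell; uniqueness follows from disjointness of supports. The paper's argument is identical in substance, though slightly terser on the bookkeeping you flag.
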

\begin{proof}
We first show existence. The natural candidate for the decomposition is 
\[ q^i := q \mbbm{1}_{\plaqset_i}, ~~ i = 1, 2.\]
Then clearly $q =q^1 + q^2$, and $\supp(q^i) = \plaqset_i$, $i = 1, 2$. It remains to show $dq^i = 0$. Take $i = 1$, say. We need to show that for every 3-cell $c$ in $\lbox$, we have
$(dq^1)_c = 0$.
We may split into two cases. In the first case, suppose $c$ contains a plaquette of $\plaqset_1$. Then since $\plaqset_1, \plaqset_2$ are compatible, we must have that $c$ cannot contain a plaquette of $\plaqset_2$. Thus for every plaquette $p$ in $c$, we have $q^1_p = q_p$, and thus $(dq^1)_c = (dq)_c = 0$. On the other hand, if the cube $c$ contains no plaquette of $\plaqset_1$, then $q^1_p = 0$ for every plaquette $p$ in $c$, and thus $(dq^1)_c = 0$.

To show uniqueness, suppose $q = q^1 + q^2 = \tilde{q}^1 + \tilde{q}^2$. Then $q^1 - \tilde{q}^1 = \tilde{q}^2 - q^2$.
We know that $\supp(q^1 - \tilde{q}^1) \sse \plaqset_1$, $\supp(\tilde{q}^2 - q^2) \sse \plaqset_2$, and $\plaqset_1, \plaqset_2$ are disjoint. Thus $q_1 - \tilde{q}^1 = 0$, and $\tilde{q}^2 - q^2 = 0$.
\end{proof}

\begin{proof}[Proof of Lemma \ref{lemma:activity_decomp}]
It suffices to take $n = 2$. I will give the proof for $\Phi$; the proof for $\Phi_S$ is essentially the same. Observe $\Phi(\plaqset_1 \cup \plaqset_2)$ involves a sum over $q$ such that $dq = 0, \supp(q) = \plaqset_1 \cup \plaqset_2$. By Lemma \ref{lemma:gauge_eq_decomp}, we have that any such $q$ has a unique decomposition $q = q^1 + q^2$, with $dq^i = 0$, $\supp(q^i) = \plaqset_i$, $i = 1, 2$. Moreover, for such a decomposition, we have
\[ \prod_{p \in \plaqset_1 \cup \plaqset_2} \varphi_\beta(q_p) = \prod_{p \in \plaqset_1} \varphi_\beta(q^1_p) \prod_{p \in \plaqset_2} \varphi_\beta(q^2_p).\]
From this, we see that the sum over $q$ factors into the product of sums over $q^1, q^2$, and the desired result follows.
\end{proof}

\subsection{Wilson loop expectation in terms of random collections of vortices}\label{section:wilson-loop-exp-abelian}

We now begin to write the Wilson loop expectation in terms of the vortices introduced in the previous subsection. First, we need to make the following key definition, which will play an important role throughout the rest of this paper, even in the general setting.

\begin{definition}
For a positively oriented edge $e$ in $\Z^4$, define $P(e)$ to be the set of positively oriented plaquettes $p$ in $\Z^4$ which contain $e$. We will call $P(e)$ a {minimal vortex}.
\end{definition}

The sense in which $P(e)$ is ``minimal" will become clear later. With $\lbox, \beta$ implicit, let $\Sigma$\label{notation:Sigma} be a random variable which has law $\mu_{\lbox, \beta}$. Define $\plaqset(\Sigma) := \supp(d\Sigma)$\label{notation:P-Sigma}. Given a realization of $\plaqset(\Sigma)$, we may consider the decomposition of $\plaqset(\Sigma)$ into compatible vortices. The set of vortices of $\plaqset(\Sigma)$ is the random collection of surfaces which was alluded to in the introduction. The following lemma expresses the distribution of $\plaqset(\Sigma)$ in terms of the function $\Phi$ which we defined earlier.

\begin{lemma}\label{lemma:v-sigma-law}
For any plaquette set $\plaqset \sse \plaquettes$, we have
\[\p(\plaqset(\Sigma) = \plaqset) = \frac{N_1 \Phi(\plaqset)}{Z_{\lbox, \beta}} = \frac{\Phi(\plaqset)}{\sum_{\plaqset' \sse \plaquettes} \Phi(\plaqset')}. \]
\end{lemma}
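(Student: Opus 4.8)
The plan is to compute the probability directly from the definition of the measure $\mu_{\lbox,\beta}$ on $G^{\edges}$, pushing forward under the map $\sigma \mapsto \plaqset(\Sigma) = \supp(d\sigma)$. Recall that
\[ \mu_{\lbox,\beta}(\sigma) = Z_{\lbox,\beta}^{-1} \prod_{p \in \plaquettes} \varphi_\beta((d\sigma)_p), \]
so that
\[ \p(\plaqset(\Sigma) = \plaqset) = Z_{\lbox,\beta}^{-1} \sum_{\substack{\sigma \in G^{\edges} \\ \supp(d\sigma) = \plaqset}} \prod_{p \in \plaquettes} \varphi_\beta((d\sigma)_p). \]

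First I would reindex this sum over edge configurations in terms of the 2-form $q = d\sigma$. By Lemma \ref{lemma:exact-implies-closed}, any such $q$ satisfies $dq = 0$, and by the Poincar\'{e} lemma the number of $\sigma$ with $d\sigma = q$ is exactly $N_1$, independent of $q$ (provided $dq = 0$). Hence the sum becomes
\[ \p(\plaqset(\Sigma) = \plaqset) = Z_{\lbox,\beta}^{-1} N_1 \sum_{\substack{q \in G^{\plaquettes} \\ dq = 0 \\ \supp(q) = \plaqset}} \prod_{p \in \plaquettes} \varphi_\beta(q_p). \]
Now I use $\varphi_\beta(0) = 1$ (noted already in the proof of Lemma \ref{lemma:abelian-cluster-expansion-setup}): since $q_p = 0$ for all $p \notin \plaqset$ when $\supp(q) = \plaqset$, the product over all of $\plaquettes$ collapses to the product over $\plaqset$, giving exactly $\Phi(\plaqset)$ by its definition. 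This yields $\p(\plaqset(\Sigma) = \plaqset) = N_1 \Phi(\plaqset) / Z_{\lbox,\beta}$, the first claimed equality.

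For the second equality, substitute the formula $Z_{\lbox,\beta} = N_1 \sum_{\plaqset' \sse \plaquettes} \Phi(\plaqset')$ from Lemma \ref{lemma:abelian-cluster-expansion-setup}; the factor $N_1$ cancels. There is no real obstacle here — the statement is essentially a bookkeeping consequence of the change of variables $\sigma \leadsto d\sigma$ together with the two lemmas already proved — but the one point requiring a touch of care is making sure that the reindexing is legitimate, i.e. that for each $q$ with $dq=0$ there really are exactly $N_1$ preimages and for $q$ with $dq \neq 0$ there are none (the latter being vacuous since $d\sigma$ always satisfies $d(d\sigma)=0$), so that the sum splits cleanly as a sum over closed 2-forms $q$ with the prescribed support.
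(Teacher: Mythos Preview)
Your proof is correct and follows exactly the approach the paper has in mind: the paper's own proof simply reads ``Essentially the same steps as in the proof of Lemma \ref{lemma:abelian-cluster-expansion-setup},'' and you have written out precisely those steps (reindex $\sigma \mapsto q = d\sigma$, use $dd\sigma=0$ plus the Poincar\'{e} lemma to count preimages, collapse the product via $\varphi_\beta(0)=1$, then invoke Lemma \ref{lemma:abelian-cluster-expansion-setup} for the second equality).
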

\begin{proof}
Essentially the same steps as in the proof of Lemma \ref{lemma:abelian-cluster-expansion-setup}.
\end{proof}


The following lemmas allow us to express $\E W_\wloop(\Sigma) = \langle W_\wloop \rangle_{\lbox, \beta}$ in terms of the random collection of surfaces $\plaqset(\Sigma)$.

\begin{lemma}\label{lemma:wilson-loop-cond-exp}
For any plaquette set $\plaqset \sse \plaquettes$ such that $\p(\plaqset(\Sigma) = \plaqset) > 0$, we have
\[ \E[W_\wloop(\Sigma) ~|~ \plaqset(\Sigma) = \plaqset] = \frac{\Phi_S(\plaqset)}{\Phi(\plaqset)}.\]
\end{lemma}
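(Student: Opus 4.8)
The plan is to compute the conditional expectation directly from the definition of the measure $\mu_{\lbox,\beta}$, using the two representations of $W_\wloop$ that have already been established. Recall from the setup that $W_\wloop(\sigma) = \prod_{p \in \plaquettes} \chi(S_p (d\sigma)_p)$, where $S$ is a fixed $\Z$-valued 2-form with $\delta S = \wloop$, zero outside $B_\wloop$. Also recall that $\plaqset(\Sigma) = \supp(d\Sigma)$, that $\Sigma$ has law $\mu_{\lbox,\beta}$, and that $\mu_{\lbox,\beta}(\sigma) = Z_{\lbox,\beta}^{-1}\prod_{p}\varphi_\beta((d\sigma)_p)$.

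First I would write out the conditional expectation as a ratio of two sums over edge configurations:
\[ \E[W_\wloop(\Sigma) \mid \plaqset(\Sigma) = \plaqset] = \frac{\sum_{\sigma : \supp(d\sigma) = \plaqset} W_\wloop(\sigma)\, \mu_{\lbox,\beta}(\sigma)}{\sum_{\sigma : \supp(d\sigma) = \plaqset} \mu_{\lbox,\beta}(\sigma)}. \]
The normalizing constants $Z_{\lbox,\beta}$ cancel. Next, exactly as in the proof of Lemma \ref{lemma:abelian-cluster-expansion-setup}, I would reindex each sum over $\sigma$ by $q := d\sigma$. Since $d(d\sigma) = 0$ by Lemma \ref{lemma:exact-implies-closed}, every such $q$ satisfies $dq = 0$, and by the Poincaré lemma the number of $\sigma$ mapping to a given admissible $q$ is the constant $N_1$, independent of $q$. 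So both numerator and denominator pick up a factor $N_1$, which cancels. The denominator then becomes $\sum_{q : dq=0,\,\supp(q)=\plaqset}\prod_{p\in\plaqset}\varphi_\beta(q_p) = \Phi(\plaqset)$ (using $\varphi_\beta(0)=1$ to drop the plaquettes outside $\plaqset$), and the numerator becomes $\sum_{q : dq=0,\,\supp(q)=\plaqset} \big(\prod_p \chi(S_p q_p)\big)\big(\prod_{p\in\plaqset}\varphi_\beta(q_p)\big)$. Here I should note that $\chi(S_p\cdot 0)=\chi(0)=1$, so the product $\prod_{p\in\plaquettes}\chi(S_p q_p)$ restricts to $\prod_{p\in\plaqset}\chi(S_p q_p)$, which is precisely the definition of $\Phi_S(\plaqset)$. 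Dividing gives $\Phi_S(\plaqset)/\Phi(\plaqset)$, as claimed.

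This argument is entirely routine — it is just bookkeeping on top of the change of variables already carried out in Lemma \ref{lemma:abelian-cluster-expansion-setup} and Lemma \ref{lemma:v-sigma-law}. The only point requiring a little care is making sure the $\varphi_\beta(0)=1$ and $\chi(0)=1$ conventions are invoked so that the products over all plaquettes collapse to products over $\plaqset$ in both the numerator and the denominator; there is no genuine obstacle here. One could also phrase the whole thing by noting that, conditioned on $\plaqset(\Sigma)=\plaqset$, the law of $d\Sigma$ is $q \mapsto \Phi(\plaqset)^{-1}\prod_{p\in\plaqset}\varphi_\beta(q_p)$ on the set of closed 2-forms with support exactly $\plaqset$, and then $\E[W_\wloop(\Sigma)\mid\plaqset(\Sigma)=\plaqset] = \sum_q \big(\prod_p \chi(S_p q_p)\big)\,\Phi(\plaqset)^{-1}\prod_{p\in\plaqset}\varphi_\beta(q_p) = \Phi_S(\plaqset)/\Phi(\plaqset)$.
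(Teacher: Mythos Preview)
Your proposal is correct and follows essentially the same approach as the paper: change variables from $\sigma$ to $q = d\sigma$ using the constant multiplicity $N_1$ from the Poincar\'e lemma, then identify the resulting sums with $\Phi_S(\plaqset)$ and $\Phi(\plaqset)$. The only cosmetic difference is that the paper computes $\E[W_\wloop(\Sigma)\,\ind(\plaqset(\Sigma)=\plaqset)] = N_1 Z_{\lbox,\beta}^{-1}\Phi_S(\plaqset)$ first and then divides by $\p(\plaqset(\Sigma)=\plaqset)$ via Lemma~\ref{lemma:v-sigma-law}, whereas you write the ratio directly and cancel $N_1$ and $Z_{\lbox,\beta}$ in one step.
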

\begin{proof}
We have
\begin{align*}
\E [W_\wloop(\Sigma) \ind(\plaqset(\Sigma) = \plaqset)] &= \frac{1}{Z_{\lbox, \beta}}\sum_{\substack{\sigma \in G^\edges \\ \supp(d\sigma) = \plaqset}} \prod_{p \in \plaquettes} \chi(S_p(d\sigma)_p) \prod_{p \in \plaquettes} \varphi_\beta((d\sigma)_p) \\
&= \frac{N_1}{Z_{\lbox, \beta}} \sum_{\substack{q \in G^\plaquettes \\ dq = 0 \\ \supp(q) = \plaqset}} \prod_{p \in \plaqset} \chi(S_p q_p) \prod_{p \in \plaqset} \varphi_\beta(q_p) \\
&= \frac{N_1}{Z_{\lbox, \beta}} \Phi_S(\plaqset). 
\end{align*}
Now conclude by Lemma \ref{lemma:v-sigma-law} and Bayes' rule.
\end{proof}

\begin{lemma}\label{lemma:cond-exp-factor}
For compatible plaquette sets $\plaqset_1, \ldots, \plaqset_n \sse \plaquettes$ such that $\p(\plaqset(\Sigma) = \plaqset_1 \cup \cdots \cup \plaqset_n) > 0$, we have
\[ \E [W_\wloop(\Sigma) ~|~ \plaqset(\Sigma) = \plaqset_1 \cup \cdots \cup \plaqset_n] = \prod_{i=1}^n \E [W_\wloop (\Sigma) ~|~ \plaqset(\Sigma) = \plaqset_i]. \]
\end{lemma}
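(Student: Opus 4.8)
The plan is to combine Lemma \ref{lemma:wilson-loop-cond-exp}, which identifies the conditional expectation of the Wilson loop variable with the ratio $\Phi_S(\plaqset)/\Phi(\plaqset)$, with the multiplicativity of both $\Phi$ and $\Phi_S$ over compatible plaquette sets established in Lemma \ref{lemma:activity_decomp}. Concretely, suppose $\plaqset_1, \dots, \plaqset_n$ are compatible and that $\p(\plaqset(\Sigma) = \plaqset_1 \cup \cdots \cup \plaqset_n) > 0$. I would first observe that the positivity of this probability, together with Lemma \ref{lemma:v-sigma-law}, gives $\Phi(\plaqset_1 \cup \cdots \cup \plaqset_n) > 0$, and hence by Lemma \ref{lemma:activity_decomp} each factor $\Phi(\plaqset_i)$ is nonzero (indeed positive, since $\varphi_\beta > 0$), so that each conditioning event $\{\plaqset(\Sigma) = \plaqset_i\}$ also has positive probability and the right-hand side is well-defined.

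The main computation is then a one-line string of equalities:
\[
\E[W_\wloop(\Sigma) \mid \plaqset(\Sigma) = \plaqset_1 \cup \cdots \cup \plaqset_n] = \frac{\Phi_S(\plaqset_1 \cup \cdots \cup \plaqset_n)}{\Phi(\plaqset_1 \cup \cdots \cup \plaqset_n)} = \frac{\prod_{i=1}^n \Phi_S(\plaqset_i)}{\prod_{i=1}^n \Phi(\plaqset_i)} = \prod_{i=1}^n \frac{\Phi_S(\plaqset_i)}{\Phi(\plaqset_i)} = \prod_{i=1}^n \E[W_\wloop(\Sigma) \mid \plaqset(\Sigma) = \plaqset_i],
\]
where the first and last equalities are Lemma \ref{lemma:wilson-loop-cond-exp}, and the middle equality is Lemma \ref{lemma:activity_decomp} applied to both $\Phi_S$ and $\Phi$ (using compatibility of $\plaqset_1, \dots, \plaqset_n$).

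The only genuine obstacle — and it is a minor bookkeeping point rather than a real difficulty — is making sure all the denominators involved are nonzero so that the ratios and conditional expectations are legitimate; this is exactly what the positivity-of-probability hypothesis is there to guarantee, and it propagates to each $\plaqset_i$ via the factorization of $\Phi$. Everything else is a direct substitution, so I would keep the proof to a few lines.
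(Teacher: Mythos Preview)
Your proposal is correct and matches the paper's own proof, which simply cites Lemmas \ref{lemma:wilson-loop-cond-exp} and \ref{lemma:activity_decomp}. Your added remark about the positivity hypothesis propagating to each $\plaqset_i$ via the factorization of $\Phi$ is a sensible clarification that the paper leaves implicit.
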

\begin{proof}
This follows by Lemmas \ref{lemma:wilson-loop-cond-exp} and \ref{lemma:activity_decomp}.
\end{proof}

Since any plaquette set $\plaqset \sse \plaquettes$ may be decomposed into a compatible collection of vortices, we now want to understand $\E [W_\wloop(\Sigma) ~|~ \plaqset(\Sigma) = \vortex]$ for vortices $\vortex$.

\begin{definition}
Given $\lbox, \wloop$, we say that a vortex $\vortex \sse \plaquettes$ {does not contribute} if either: (1) no plaquette of $\vortex$ is on the boundary of $\lbox$, and there exists a cube $B$ in $\lbox$ which contains $\vortex$ in its interior, such that no edge of $\wloop$ is in $B$, or (2) there is a plaquette of $\vortex$ on the boundary of $\lbox$, but no plaquette of $\vortex$ is in $B_\wloop$. Let $\mc{C}_{\lbox, \wloop}$ be the collection of such vortices.
\end{definition}

\begin{lemma}\label{lemma:non-contributing-vortices}
Let $\vortex \sse \plaquettes$ be a vortex which does not contribute, i.e. $\vortex \in \mc{C}_{\lbox, \wloop}$.  If $\p(\plaqset(\Sigma) = \vortex) > 0$, then
\[ \E [W_\wloop(\Sigma) ~|~ \plaqset(\Sigma) = \vortex] = 1. \]
\end{lemma}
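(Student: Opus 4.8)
The goal is to show that conditioning on the event that $\plaqset(\Sigma)$ equals a single non-contributing vortex $\vortex$ forces $\E[W_\wloop(\Sigma) \mid \plaqset(\Sigma) = \vortex] = 1$. By Lemma~\ref{lemma:wilson-loop-cond-exp}, this conditional expectation equals $\Phi_S(\vortex)/\Phi(\vortex)$, so it suffices to show $\Phi_S(\vortex) = \Phi(\vortex)$, i.e. that in the sum defining $\Phi_S(\vortex)$ the extra factor $\prod_{p \in \vortex} \chi(S_p q_p)$ is identically $1$ for every closed $2$-form $q$ supported exactly on $\vortex$. Since $\chi$ is multiplicative, $\prod_p \chi(S_p q_p) = \chi(\sum_{p \in \vortex} S_p q_p) = \chi(\langle S, q\rangle_\vortex)$, so the crux is to prove $\langle S, q \rangle = 0$ whenever $dq = 0$ and $\supp(q) \subseteq \vortex$.

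\textbf{Key idea: Stokes' theorem on the restricted region.} The plan is to use $\delta S = \wloop$ together with a discrete Stokes/integration-by-parts identity to rewrite $\langle S, q\rangle$ in terms of $\wloop$ and a primitive of $q$. Since $\supp(q) \subseteq \vortex$ and $\vortex$ is contained (in the relevant sense) in a small cube $B$ that avoids $\wloop$ --- this is exactly what ``does not contribute'' gives us in case (1), and case (2) is handled symmetrically using that $q$ vanishes on $B_\wloop$ while $\wloop$ lives in $B_\wloop$ --- we expect $q$ to be exact: $q = dh$ for some $1$-form $h$ supported on $B$ (by the Poincar\'e lemma, since $dq = 0$ on the cube $B$). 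Then $\langle S, q\rangle = \langle S, dh\rangle = \langle \delta S, h\rangle = \langle \wloop, h\rangle$ by Stokes' theorem. But $h$ is supported on $B$ and $\wloop$ has no edge in $B$, so $\langle \wloop, h\rangle = 0$. In case (2), instead write $S$'s defining relation $\delta S = \wloop$ and note that the pairing $\langle S, q \rangle = \sum_{p} S_p q_p$ only involves plaquettes in $\vortex$, none of which lie in $B_\wloop$; one chooses the primitive $h$ of $q$ to be supported away from $B_\wloop$ and pairs against $\wloop$ which is supported in $B_\wloop$, again getting $0$. Once $\langle S, q\rangle = 0$ for every admissible $q$, we get $\Phi_S(\vortex) = \sum_q \prod_p \varphi_\beta(q_p) = \Phi(\vortex)$, and the conditional expectation is $1$.

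\textbf{Main obstacle.} The delicate point is the bookkeeping around the boundary of $\lbox$ and around which cube one works in, together with making sure the primitive $h$ can be chosen with the right support so that $\langle \wloop, h\rangle$ genuinely vanishes. In case (1) one must be careful that ``$\vortex$ in the interior of $B$'' and ``no edge of $\wloop$ in $B$'' together let one localize both $q$ (hence $h$) and the pairing so that $h$'s support and $\wloop$'s support are disjoint; the Poincar\'e lemma gives a primitive vanishing outside $B$, and since every edge of $\wloop$ lies outside $B$, $\langle \wloop, h \rangle = 0$. Case (2) is the genuinely different one: here $\vortex$ touches $\partial\lbox$, so one cannot enclose it in an interior cube, but the hypothesis that no plaquette of $\vortex$ lies in $B_\wloop$ means $q$ vanishes on all plaquettes of $B_\wloop$; since $S$ can be (and was) chosen supported in $B_\wloop$, in fact $\langle S, q\rangle = \sum_{p \in \vortex} S_p q_p = 0$ directly because $S_p = 0$ for every $p \notin B_\wloop$ and $q_p = 0$ for every $p \in B_\wloop$ --- the two supports are disjoint, so the pairing vanishes termwise with no Stokes argument needed. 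So the real work is only in case (1), and it reduces to a clean application of the Poincar\'e lemma and discrete Stokes' theorem; the rest is careful case analysis on whether $\vortex$ meets $\partial\lbox$.
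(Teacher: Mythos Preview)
Your proposal is correct and follows essentially the same approach as the paper: reduce via Lemma~\ref{lemma:wilson-loop-cond-exp} to showing $\langle q, S\rangle = 0$; in case (1) use the Poincar\'e lemma on the cube $B$ (with $\vortex$ in the interior so the primitive vanishes on $\partial B$ and can be extended by zero) followed by Stokes against $\wloop$; in case (2) observe the supports of $S$ and $q$ are disjoint so the pairing vanishes termwise.
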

\begin{proof}
By Lemma \ref{lemma:wilson-loop-cond-exp}, we know
\[ \E [W_\wloop(\Sigma) ~|~ \plaqset(\Sigma) = \vortex] = \frac{\Phi_S(\vortex)}{\Phi(\vortex)}.  \]
Recalling the definitions of $\Phi$ and $\Phi_S$, it suffices to show that for any $q \in G^\plaquettes$ such that $dq = 0, \supp(q) = \vortex$, we have
\[ \prod_{p \in \vortex} \chi(S_p q_p) = 1. \]
To show this, it suffices to show
\[ \sum_{p \in \vortex} S_p q_p = \sum_{p \in \plaquettes} S_p q_p = \langle q, S \rangle = 0.\]
First, assume that no plaquette of $\vortex$ is on the boundary of $\lbox$. Then I claim that there is a 1-form $\sigma$ such that $q = d\sigma$, and $\sigma_e = 0$ for all edges $e$ in $\wloop$. We would then have by Stokes' theorem $\langle q, S \rangle = \langle \sigma, \wloop \rangle = 0$.

To see the claim, first observe that by assumption, there is a cube $B$ in $\lbox$ which contains $\vortex$ in its interior, such that no edge of $\wloop$ is in $B$. Let $\tilde{q}$ be the 2-form obtained by restricting $q$ to $B$. We have that $d \tilde{q} = 0$, and also as $\supp(q) = \vortex$, we have that $\tilde{q}$ vanishes on the boundary of $B$. Thus by the Poincar\'{e} lemma, there exists a 1-form $\tilde{\sigma}$ on $B$, such that $\tilde{q} = d \tilde{\sigma}$, and $\tilde{\sigma}$ vanishes on the boundary of $B$. Now extend $\tilde{\sigma}$ to a 1-form $\sigma$ on $\lbox$, by setting $\sigma_e := 0$ for all edges $e$ not in $B$. For plaquettes $p$ in $B$, we have $(d \sigma)_p = \tilde{q}_p = q_p$. For plaquettes $p$ not in $B$, observe that the edges of $p$ must either be outside of $B$, or on the boundary of $B$, and thus $(d \sigma)_p = 0 = q_p$. Thus $q = d \sigma$, and moreover $\sigma = 0$ outside $B$, so that in particular for all edges $e$ in $\wloop$, we have $\sigma_e = 0$.

Now suppose that $\vortex$ has a plaquette on the boundary of $\lbox$. By assumption, no plaquettes of $\vortex$ are in $B_\wloop$. Thus as $S$ is zero outside of $B_\wloop$, we have $\langle q, S \rangle = 0$. 
\end{proof}

\begin{lemma}\label{lemma:vortex-on-boundary-must-be-large-to-contribute}
Let $\vortex \sse \plaquettes$ be a vortex which has a plaquette on the boundary of $\lbox$. Recall $L$ is the $\ell^\infty$ distance between the boundary of $\lbox$ and the boundary of $B_\wloop$. If $|\vortex| < L$, then no plaquettes of $\vortex$ are in $B_\wloop$, and so $\vortex$ does not contribute.
\end{lemma}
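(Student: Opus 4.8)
The plan is to show that a vortex touching $\partial\lbox$ but consisting of fewer than $L$ plaquettes cannot ``reach'' into $B_\wloop$, because it stays within $\ell^\infty$-distance $<L$ of $\partial\lbox$, whereas $B_\wloop$ sits at $\ell^\infty$-distance $L$ from $\partial\lbox$; this places $\vortex$ in case (2) of the definition of a vortex that does not contribute.

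The first step is a diameter bound for $\vortex$. If two plaquettes are joined by an edge in $G(\vortex)$, they lie in a common $3$-cell, and all eight vertices of a $3$-cell (a $3$-dimensional unit cube) are pairwise within $\ell^\infty$-distance $1$; hence any vertex of one of the two plaquettes is within $\ell^\infty$-distance $1$ of any vertex of the other. Given $p, p' \in \vortex$, connectedness of $G(\vortex)$ gives a path $p = p_0, p_1, \dots, p_k = p'$ with $k \le |\vortex| - 1$; choosing a vertex $v_i$ of each $p_i$ and applying the previous observation to each consecutive pair (with the case $k = 0$ giving the bound $1$), we get $\|u - w\|_\infty \le \max(1, k) \le |\vortex| - 1$ for every vertex $u$ of $p$ and every vertex $w$ of $p'$. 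Since $|\vortex| < L$, every vertex of every plaquette of $\vortex$ lies within $\ell^\infty$-distance $< L$ of every vertex of every other plaquette of $\vortex$.

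The second step records what $L$ buys us. Writing $\lbox = \bigl([a_1,b_1] \times \cdots \times [a_4,b_4]\bigr) \cap \Z^4$ and $B_\wloop = \bigl([a_1',b_1'] \times \cdots \times [a_4',b_4']\bigr) \cap \Z^4$ with $a_i \le a_i' \le b_i' \le b_i$, the $\ell^\infty$-distance from any vertex $v$ of $B_\wloop$ to a boundary vertex of $\lbox$ is at least $\min_i \min(v_i - a_i, b_i - v_i) \ge \min_i \min(a_i' - a_i, b_i - b_i') = L$. So every vertex of $B_\wloop$ is at $\ell^\infty$-distance $\ge L$ from every boundary vertex of $\lbox$.

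Now combine. By hypothesis $\vortex$ contains a plaquette $p^*$ on $\partial\lbox$, so every vertex of $p^*$ is a boundary vertex of $\lbox$; fix one such vertex $v^*$. Let $p$ be any plaquette of $\vortex$ and $v$ any vertex of $p$. The first step gives $\|v - v^*\|_\infty < L$, while the second step shows that if $v$ were a vertex of $B_\wloop$ then $\|v - v^*\|_\infty \ge L$, a contradiction. Hence no vertex of $p$ is a vertex of $B_\wloop$, so $p$ is not in $B_\wloop$; since $p \in \vortex$ was arbitrary, no plaquette of $\vortex$ is in $B_\wloop$. Together with the existence of $p^*$, this is precisely case (2) in the definition of a vortex that does not contribute, so $\vortex \in \mc{C}_{\lbox, \wloop}$. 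I do not expect a genuine obstacle here; the only points needing care are the off-by-one in the path-length/diameter estimate (comfortably absorbed since $L \ge 50$) and fixing the precise meaning of ``$\ell^\infty$ distance between the boundary of $\lbox$ and the boundary of $B_\wloop$'' so that it correctly lower-bounds the distance from vertices of $B_\wloop$ to $\partial\lbox$.
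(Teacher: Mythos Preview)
Your proof is correct and follows exactly the same idea as the paper, which gives only the one-line justification ``This follows because $\vortex$ is a vortex, so that the graph $G(\vortex)$ is connected.'' You have simply spelled out the diameter estimate and the distance inequality that the paper leaves implicit; the minor off-by-one you flag (the case $|\vortex|=1$, where $\max(1,k)=1\not\le |\vortex|-1$) is harmless since $1<L$ anyway.
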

\begin{proof}
This follows because $\vortex$ is a vortex, so that the graph $G(\vortex)$ is connected.
\end{proof}

We now consider the case where $\vortex$ is a minimal vortex.


\begin{lemma}\label{lemma:minimal-vortices-contribution}
Suppose $\vortex \sse \plaquettes$ is a minimal vortex. Let $e_0$ be such that $\vortex = P(e_0)$.
If $e_0 \notin \wloop$, then
\[ \E [W_\wloop(\Sigma) ~|~ \plaqset(\Sigma) = \vortex] = 1.\]
If $e_0 \in \wloop$, then
\[ \E [W_\wloop (\Sigma) ~|~ \plaqset(\Sigma) = \vortex] = \elemmatrix. \]
\end{lemma}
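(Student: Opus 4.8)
The plan is to use Lemma \ref{lemma:wilson-loop-cond-exp}, which tells us that $\E[W_\wloop(\Sigma) \mid \plaqset(\Sigma) = \vortex] = \Phi_S(\vortex)/\Phi(\vortex)$, and then explicitly parametrize the closed $2$-forms supported exactly on a minimal vortex $\vortex = P(e_0)$. First I would observe that since $P(e_0)$ consists of all positively oriented plaquettes containing the edge $e_0$, a natural family of closed $2$-forms supported there is obtained as follows: for each $g \in G$ with $g \neq 0$, let $\eta$ be the $1$-form that is $g$ on $e_0$ and $0$ on every other positively oriented edge, and set $q^{(g)} := d\eta$. Then $dq^{(g)} = 0$ automatically (Lemma \ref{lemma:exact-implies-closed}), and $\supp(q^{(g)})$ is exactly the set of plaquettes containing $e_0$, i.e.\ $P(e_0) = \vortex$, since $g \neq 0$. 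The key combinatorial point is that in $\Z^4$ there are exactly $6$ plaquettes containing a given edge (choose $2$ of the remaining $3$ coordinate directions, times orientation/position bookkeeping that works out to $6$ positively oriented plaquettes), and on each such plaquette $p$, $(d\eta)_p = \pm g$; since $\varphi_\beta$ is even ($\varphi_\beta(g) = \varphi_\beta(-g)$, as $\Re(\chi(1) - \chi(g)) = \Re(\chi(1) - \chi(-g))$ for a unitary character), we get $\prod_{p \in \vortex}\varphi_\beta(q^{(g)}_p) = \varphi_\beta(g)^6$.

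The second point to nail down is that the $q^{(g)}$, $g \neq 0$, are \emph{all} the closed $2$-forms with support exactly $\vortex$. This should follow from the Poincar\'e lemma: any closed $2$-form $q$ supported on $\vortex = P(e_0)$ vanishes outside a small cube around $e_0$ and vanishes on the boundary of that cube, hence $q = d\eta$ for some $1$-form $\eta$; since $q$ is supported precisely on the plaquettes through $e_0$, one checks $\eta$ must be (gauge-equivalent to) a form supported on $e_0$ alone, giving $\eta = g\cdot\mbbm 1_{e_0}$ for some $g \in G$, and $g \neq 0$ because $\supp(q) = \vortex \neq \emptyset$. Granting this, $\Phi(\vortex) = \sum_{g \neq 0}\varphi_\beta(g)^6 = r_\beta$, which matches the definition \eqref{eq:r-beta-def}.

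For $\Phi_S(\vortex)$ I would compute $\prod_{p\in\vortex}\chi(S_p q^{(g)}_p)$ using Stokes' theorem (the discrete Stokes' lemma of Section \ref{section:stokes}): $\sum_{p}S_p q^{(g)}_p = \langle d\eta, S\rangle = \langle \eta, \delta S\rangle = \langle \eta, \wloop\rangle = g\cdot\wloop_{e_0}$. Hence $\prod_{p\in\vortex}\chi(S_p q^{(g)}_p) = \chi(g\,\wloop_{e_0}) = \chi(g)^{\wloop_{e_0}}$. If $e_0\notin\wloop$ then $\wloop_{e_0} = 0$ and each term is $\chi(0) = 1$, so $\Phi_S(\vortex) = \Phi(\vortex)$ and the ratio is $1$. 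If $e_0\in\wloop$, then $\wloop_{e_0} = \pm 1$; choosing the orientation of $e_0$ so that $\wloop_{e_0} = 1$ (the other case is symmetric via $g \mapsto -g$ and $\varphi_\beta$ even), we get $\Phi_S(\vortex) = \sum_{g\neq 0}\chi(g)\varphi_\beta(g)^6$, so by the definition \eqref{eq:elemmatrix-def} (in the Abelian reduction, $\elemmatrix = r_\beta^{-1}\sum_{g\neq 0}\chi(g)\varphi_\beta(g)^6$), the ratio $\Phi_S(\vortex)/\Phi(\vortex)$ is exactly $\elemmatrix$.

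I expect the main obstacle to be the rigorous verification that $\{q^{(g)} : g \neq 0\}$ is \emph{exactly} the set of closed $2$-forms with support equal to $\vortex = P(e_0)$ — in particular ruling out closed forms whose support is $P(e_0)$ but which are not of the form $d(g\,\mbbm 1_{e_0})$. This is where one needs to argue carefully with the Poincar\'e lemma on a suitable small cube and track that the support condition forces the primitive $1$-form to be concentrated on the single edge $e_0$ (up to exact $1$-forms, which don't change $q$). Everything else — the count of $6$ plaquettes per edge, the evenness of $\varphi_\beta$, and the Stokes' theorem computation — is routine.
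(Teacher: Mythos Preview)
Your proposal is correct and follows essentially the same route as the paper: both start from $\Phi_S(\vortex)/\Phi(\vortex)$, parametrize the closed $2$-forms supported exactly on $P(e_0)$ as $d(g\,\ind_{e_0})$ for $g\neq 0$, compute $\Phi(\vortex)=r_\beta$, and then apply Stokes' theorem together with the evenness of $\varphi_\beta$ to evaluate $\Phi_S(\vortex)$. The paper handles the claim that these are \emph{all} such $q$ by asserting ``by direct verification'' rather than invoking the Poincar\'e lemma, but the content is the same.
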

\begin{proof}
We have
\[\E [W_\wloop(\Sigma) ~|~ \plaqset(\Sigma) = \vortex] = \frac{\Phi_S(\vortex)}{\Phi(\vortex)}.\]
By direct verification, any $q \in G^\plaquettes$ such that $dq = 0, \supp(q) = P(e_0)$ must be the exterior derivative of a very simple 1-form -- we may express $q = d\sigma^{g, e_0}$, where $g \in G$, $g \neq 0$, and
\[ \sigma^{g, e_0}_e := \begin{cases} g & e = e_0 \\ 0 & e \neq e_0 \end{cases}.\]
We thus have
\[ \Phi(\vortex) = \sum_{g \neq 0} \varphi_\beta(g)^6 = r_\beta,\]
where the 6 comes from the fact that $P(e_0)$ has 6 plaquettes. Also, by Stokes' theorem, we have
\[ \langle q, S \rangle = \langle \sigma^{g, e_0}, \wloop \rangle. \]
If $e_0 \notin \wloop$, then the above is 0 for any $\sigma^{g, e_0}$, and thus $\Phi_S(\vortex) = \Phi(\vortex)$. If $e_0 \in \wloop$, then $\langle \sigma^{g, e_0}, \wloop \rangle$ is $g$ if $\wloop_{e_0} = 1$ and $-g$ if otherwise $\wloop_{e_0} = -1$ (here we have used the assumption that $\wloop$ is  self avoiding). Now since $\chi$ is the character of a unitary representation of $G$, we have $\varphi_\beta(g) = \varphi_\beta(-g)$. Thus no matter the orientation of $e_0$ in $\wloop$, we have 
\[\Phi_S(\vortex) = \sum_{g \neq 0} \chi(g) \varphi_\beta(g)^6.  \]
Now finish by dividing $\Phi_S(\vortex)$ by $\Phi(\vortex) = r_\beta$.
\end{proof}

\begin{remark}\label{remark:minimal-vortex-constant-activity}
In the course of the proof, we showed that for any minimal vortex $\vortex$ contained in $\lbox$, we have $\Phi(\vortex) = r_\beta$. We will use this fact later.
\end{remark}

Now given a realization of $\plaqset(\Sigma)$, consider its vortex decomposition. Let $E$\label{notation:E-abelian} be the event that any vortex of $\plaqset(\Sigma)$ which is not a minimal vortex does not contribute. Let $N_\wloop$\label{notation:N-wloop-abelian} be the number of edges $e \in \wloop$ such that $P(e)$ is a vortex of $\plaqset(\Sigma)$.

\begin{lemma}\label{lemma:wilson-loop-on-event-E}
On the event $E$, we have
\[ \E[W_\wloop(\Sigma) ~|~ \plaqset(\Sigma)] = \elemmatrix^{N_\wloop}. \]
\end{lemma}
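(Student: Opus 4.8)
The plan is to fix a realization of $\plaqset(\Sigma)$ belonging to the event $E$, decompose it into its compatible vortices, and then combine the factorization Lemma~\ref{lemma:cond-exp-factor} with the explicit single-vortex computations in Lemmas~\ref{lemma:non-contributing-vortices} and~\ref{lemma:minimal-vortices-contribution}.

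Concretely, since $\plaqset(\Sigma)$ is a discrete random variable, it suffices to show that for every plaquette set $\plaqset \sse \plaquettes$ with $\p(\plaqset(\Sigma) = \plaqset) > 0$ for which the realization $\{\plaqset(\Sigma) = \plaqset\}$ lies in $E$, we have $\E[W_\wloop(\Sigma) ~|~ \plaqset(\Sigma) = \plaqset] = \elemmatrix^{N_\wloop}$, where $N_\wloop = N_\wloop(\plaqset)$ counts the edges $e \in \wloop$ such that $P(e)$ is a vortex of $\plaqset$. Let $\vortex_1, \ldots, \vortex_k$ be the vortex decomposition of $\plaqset$. Since $\Phi(\plaqset) = \prod_i \Phi(\vortex_i)$ by Lemma~\ref{lemma:activity_decomp}, and $\p(\plaqset(\Sigma) = \plaqset) > 0$ forces $\Phi(\plaqset) > 0$ by Lemma~\ref{lemma:v-sigma-law}, each $\Phi(\vortex_i) > 0$, hence $\p(\plaqset(\Sigma) = \vortex_i) > 0$ for all $i$. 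Thus Lemma~\ref{lemma:cond-exp-factor} applies and gives
\[ \E[W_\wloop(\Sigma) ~|~ \plaqset(\Sigma) = \plaqset] = \prod_{i=1}^k \E[W_\wloop(\Sigma) ~|~ \plaqset(\Sigma) = \vortex_i]. \]

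Now I would classify the vortices $\vortex_i$. If $\vortex_i$ is not a minimal vortex, then since we are on $E$ it does not contribute, so its factor equals $1$ by Lemma~\ref{lemma:non-contributing-vortices}. If $\vortex_i = P(e)$ is a minimal vortex, then by Lemma~\ref{lemma:minimal-vortices-contribution} its factor equals $\elemmatrix$ when $e \in \wloop$ and equals $1$ when $e \notin \wloop$; here one notes that $e$ is recovered unambiguously from $P(e)$, being the unique edge lying on all six plaquettes of $P(e)$ (two distinct plaquettes of $\Z^4$ share at most one edge). Consequently the product above equals $\elemmatrix^{m}$, where $m$ is the number of indices $i$ with $\vortex_i = P(e)$ for some $e \in \wloop$. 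By the definition of $N_\wloop$ and the uniqueness of the vortex decomposition, $m = N_\wloop$, which is the claim.

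The whole argument is essentially bookkeeping once the three cited lemmas are in hand; the only step that deserves a sentence of care is the well-definedness of $N_\wloop$ — namely that $e \mapsto P(e)$ is injective on undirected edges — so that counting the $\elemmatrix$ factors in the product yields precisely $N_\wloop$ rather than some multiple of it.
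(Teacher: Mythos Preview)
Your proof is correct and follows exactly the approach the paper takes: the paper's proof is a one-liner citing Lemmas~\ref{lemma:cond-exp-factor}, \ref{lemma:non-contributing-vortices}, and \ref{lemma:minimal-vortices-contribution}, and you have simply unpacked that citation carefully, including the check that each $\p(\plaqset(\Sigma)=\vortex_i)>0$ and the injectivity of $e\mapsto P(e)$.
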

\begin{proof}
This follows directly from Lemmas \ref{lemma:cond-exp-factor}, \ref{lemma:non-contributing-vortices}, and \ref{lemma:minimal-vortices-contribution}.
\end{proof}


In the remainder of the subsection, assume that the condition $L \geq 50$ from Theorem \ref{thm:main-abelian} is satisfied.
The proofs of many of the results to be stated shortly will be deferred to Section \ref{section:abelian-proofs}.

\begin{lemma}\label{lemma:only-minimal-vortices-contribute-probability-bound}
Suppose
\[ \beta \geq \frac{1}{\Delta_G} (60 +  14\log (\abs{G} - 1)). \]
Then
\[ \p(E^c) \leq (\ell r_\beta) e^{-\beta \Delta_G / 2} +  (1/2) N^4 e^{-\beta L \Delta_G / 2}.\]
\end{lemma}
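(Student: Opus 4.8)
The plan is to bound $\prob(E^c)$ by a union bound over all "bad" vortices — that is, vortices which are not minimal and which do contribute. A vortex that is not minimal but contributes falls into one of two categories, matching the two clauses in the definition of "does not contribute": either (a) it contains no boundary plaquette of $\lbox$, yet every cube in $\lbox$ containing it in its interior also contains an edge of $\wloop$ — this forces the vortex to lie close to $\wloop$, and since it is not minimal it must have at least some minimal number of plaquettes (I would check that a contributing non-minimal vortex of this type has at least $6$, or some small constant, plaquettes, and more importantly that its "anchor" near $\wloop$ gives only $O(\ell)$ choices); or (b) it contains a boundary plaquette of $\lbox$ and also has a plaquette in $B_\wloop$, in which case Lemma \ref{lemma:vortex-on-boundary-must-be-large-to-contribute} forces $|\vortex| \geq L$. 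So $E^c \subseteq \{$some vortex of type (a) occurs$\} \cup \{$some vortex of type (b) occurs$\}$.

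The key analytic input is a bound on $\prob(\plaqset(\Sigma) = \vortex$ is a vortex of $\plaqset(\Sigma))$, or more usefully on $\prob(\vortex$ is a vortex of $\plaqset(\Sigma))$ summed over a family of $\vortex$. From Lemma \ref{lemma:v-sigma-law} and the factorization in Lemma \ref{lemma:activity_decomp}, the probability that a given $\vortex$ appears as one of the vortices of $\plaqset(\Sigma)$ is at most $\Phi(\vortex)$ divided by a ratio of partition-function-like sums; this ratio is at least $1$ (since the empty set contributes $\Phi(\emptyset)=1$), so $\prob(\vortex$ is a vortex$) \leq \Phi(\vortex)/(\text{contribution of configs compatible with }\vortex) \leq \Phi(\vortex)$. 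Then I would estimate $\Phi(\vortex) \leq (|G|-1)^{?}\, \varphi_\beta(\text{worst } g)^{|\vortex|}$, using that each closed $q$ supported on $\vortex$ takes a nonzero value on every plaquette of $\vortex$, so $\prod_{p\in\vortex}\varphi_\beta(q_p) \leq e^{-\beta \Delta_G |\vortex|}$, and the number of such $q$ is at most $(|G|-1)^{k}$ for $k$ roughly the number of "free" edges — here one uses the Poincaré lemma / structure of closed forms supported on $\vortex$ to bound this count, analogous to the minimal-vortex computation. After absorbing the entropy of choosing $\vortex$ (at most $C^{|\vortex|}$ connected plaquette sets through a given plaquette, times $O(\ell)$ or $O(N^4)$ choices of location), the $\beta$ threshold $\beta \geq \Delta_G^{-1}(60 + 14\log(|G|-1))$ is exactly what makes the geometric series $\sum_{m \geq m_0} (\text{entropy})^m e^{-\beta \Delta_G m}$ converge and be dominated by its first term.

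Carrying this out: for type (a), the smallest non-minimal contributing vortex has size $m_0 \geq 6$ (any vortex strictly larger than a minimal one, or not of the form $P(e)$, needs enough plaquettes to be connected and to reach $\wloop$ — I would pin down the precise constant), giving a bound of the form $C\, \ell \sum_{m \geq 6} q^m \leq \ell\, e^{-\beta\Delta_G/2}$ after plugging in the threshold, where the "$/2$" comes from the threshold leaving a factor of roughly $e^{-\beta\Delta_G m /2}$ of slack per plaquette; summing and using $m \geq 6 \geq 1$ and the extra slack gives the stated $\ell r_\beta\, e^{-\beta\Delta_G/2}$ once one recognizes $r_\beta = \sum_{g\neq 0}\varphi_\beta(g)^6 \geq (|G|-1)^{-1}\cdots$ — actually I expect the cleanest route is to bound the type (a) sum directly by $\ell r_\beta e^{-\beta\Delta_G/2}$ by comparing the per-edge sum to $r_\beta$ and extracting one extra factor $e^{-\beta\Delta_G/2}$ from the surplus plaquettes. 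For type (b), the location can be anywhere (hence the $N^4$, up to constants), but the size is $\geq L$, so the geometric sum starts at $m = L$ and produces $N^4 q^L \leq \tfrac12 N^4 e^{-\beta L \Delta_G/2}$, again via the threshold.

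The main obstacle I anticipate is the combinatorial/topological bookkeeping in step two: getting a clean bound on the number of closed $G$-valued $2$-forms supported exactly on a given connected plaquette set $\vortex$ (this is where "vortex = connected in $G(\plaqset)$" and the discrete exterior calculus of Section \ref{section:dec} really enter), and simultaneously controlling the number of connected plaquette sets of size $m$ through a fixed plaquette by a clean constant $C$ so that $C^m e^{-\beta\Delta_G m}$ is summable under the given threshold. The probabilistic reduction ($\prob(\vortex \text{ appears}) \leq \Phi(\vortex)$) and the final geometric-series arithmetic are routine once that estimate is in hand; I would also need to be slightly careful that "contributes" for type (a) genuinely forces proximity to $\wloop$ with only $O(\ell)$ anchor positions — this should follow from the definition since failing clause (1) means every interior-containing cube meets $\wloop$, which caps the distance from $\vortex$ to $\wloop$.
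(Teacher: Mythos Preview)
Your proposal is correct and follows exactly the paper's approach: a Peierls-type union bound splitting bad vortices into interior and boundary types, the probability bound $\p(\vortex\text{ appears}) \leq \Phi(\vortex) \leq \alpha_\beta^{|\vortex|}$, an entropy count $(\vbdconst)^m$ for connected plaquette sets through a fixed plaquette, and a geometric-series summation under the $\beta$ threshold. The obstacle you flag is a non-issue: the paper bounds the number of closed $2$-forms supported on $\vortex$ by the crude $(|G|-1)^{|\vortex|}$ (Lemma~\ref{lemma:activity-bound}), ignoring $dq=0$ entirely, so no structural analysis of closed forms is needed; also, the size cutoff is $m\geq 7$ (Lemma~\ref{lemma:smaller-than-minimal-vortices-prob-0} kills non-minimal vortices of size $\leq 6$), and the type-(a) location count carries an extra polynomial factor $m^4$ from Lemma~\ref{lemma:contributing-plaquette-bound}, which is harmlessly absorbed into the exponential.
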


Upon combining Lemmas \ref{lemma:wilson-loop-on-event-E} and \ref{lemma:only-minimal-vortices-contribute-probability-bound}, we obtain the following corollary.

\begin{cor}\label{cor:only-minimal-vortices-approximation}
Suppose
\[ \beta \geq \frac{1}{\Delta_G} (60 +  14\log (\abs{G} - 1)). \]
Then
\[ |\E W_\wloop(\Sigma) - \E \elemmatrix^{N_\wloop}| \leq 2(\ell r_\beta) e^{-\beta \Delta_G / 2} + N^4 e^{-\beta L \Delta_G / 2}. \]
\end{cor}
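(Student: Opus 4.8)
The plan is to condition on the random plaquette set $\plaqset(\Sigma)$ and split according to the event $E$ that every non-minimal vortex of $\plaqset(\Sigma)$ does not contribute. The corollary is essentially bookkeeping that packages Lemmas \ref{lemma:wilson-loop-on-event-E} and \ref{lemma:only-minimal-vortices-contribute-probability-bound} together; there is no genuine obstacle, the substance being entirely in those two lemmas.

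First I would record that both random variables being compared have modulus at most $1$. Since $\rho$ is a one-dimensional unitary representation, $|W_\wloop(\Sigma)| = |\chi(\langle \Sigma, \wloop \rangle)| = 1$ pointwise, while $|\elemmatrix| \leq 1$ gives $|\elemmatrix^{N_\wloop}| \leq 1$. Hence $|W_\wloop(\Sigma) - \elemmatrix^{N_\wloop}| \leq 2$ always. Write $D := W_\wloop(\Sigma) - \elemmatrix^{N_\wloop}$.

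Next, observe that both $E$ and the integer $N_\wloop$ are measurable functions of $\plaqset(\Sigma)$. By the tower property, $\E[D \, \ind_E] = \E[\ind_E \, \E[D \mid \plaqset(\Sigma)]]$. On $E$, Lemma \ref{lemma:wilson-loop-on-event-E} gives $\E[W_\wloop(\Sigma) \mid \plaqset(\Sigma)] = \elemmatrix^{N_\wloop}$, so $\E[D \mid \plaqset(\Sigma)] = 0$ there, and therefore $\E[D \, \ind_E] = 0$. On the complement we use the crude bound just established: $|\E[D \, \ind_{E^c}]| \leq 2 \, \p(E^c)$.

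Combining, $|\E W_\wloop(\Sigma) - \E \elemmatrix^{N_\wloop}| = |\E D| \leq 2 \, \p(E^c)$. Finally I would invoke Lemma \ref{lemma:only-minimal-vortices-contribute-probability-bound}, which under the stated lower bound on $\beta$ gives $\p(E^c) \leq (\ell r_\beta) e^{-\beta \Delta_G / 2} + \tfrac{1}{2} N^4 e^{-\beta L \Delta_G / 2}$; multiplying by $2$ yields exactly the claimed inequality. The only mildly substantive point in the whole argument is noticing that $|W_\wloop| \equiv 1$ (a consequence of one-dimensionality of $\rho$ and unitarity) so that the error off $E$ is controlled by $2\p(E^c)$.
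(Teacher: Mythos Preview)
Your proof is correct and is exactly the argument the paper has in mind: the paper simply writes ``upon combining Lemmas \ref{lemma:wilson-loop-on-event-E} and \ref{lemma:only-minimal-vortices-contribute-probability-bound}'' and your conditioning-plus-splitting argument is precisely how one combines them (the non-Abelian analogue, Corollary \ref{cor:only-minimal-vortices-approximation-non-abelian}, spells out the same steps). The observation that $|W_\wloop|\equiv 1$ and $|\elemmatrix|\leq 1$ to get the factor of $2$ is the right way to close the bound.
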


The following proposition allows us to approximately evaluate $\E \elemmatrix^{N_\wloop}$, by showing that $N_\wloop$ is approximately Poisson.

\begin{prop}\label{prop:poisson-approximation-abelian}
Suppose
\[ \beta \geq \frac{1}{\Delta_G} (30 + 7 \log(\abs{G} - 1)). \]
Let $\ms{L}(N_\wloop)$ be the law of $N_\wloop$. Then
\[ d_{TV}(\ms{L}(N_{\wloop}), \mathrm{Poisson}(\ell r_\beta)) \leq 1290 (e^{1.5 \ell r_\beta}) r_\beta. \]
\end{prop}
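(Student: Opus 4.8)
The plan is to apply the dependency-graph version of Stein's method for Poisson approximation (the Chen--Stein / Arratia--Goldstein--Gordon bound, as formulated in \cite{CR2013}) to the random variable $N_\wloop$, after writing it as a sum of indicator variables. For each edge $e \in \wloop$, let $X_e := \ind(P(e) \text{ is a vortex of } \plaqset(\Sigma))$, so that $N_\wloop = \sum_{e \in \wloop} X_e$. The Poisson parameter should be $\lambda := \sum_{e \in \wloop} \E X_e$, and the first task is to show $\lambda$ is close to $\ell r_\beta$. Here one uses Lemma \ref{lemma:v-sigma-law} together with Remark \ref{remark:minimal-vortex-constant-activity} (that $\Phi(P(e)) = r_\beta$ for every minimal vortex in $\lbox$): the event $\{P(e) \text{ is a vortex of } \plaqset(\Sigma)\}$ requires $P(e) \subseteq \plaqset(\Sigma)$, it decomposes off as a compatible component by Lemma \ref{lemma:activity_decomp}, and the ``rest'' of the configuration contributes a ratio of partition-function-like sums that is $1 + O(\text{small})$ by a cluster-expansion estimate. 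So $\E X_e = r_\beta \cdot (1 + \text{error})$, and summing over the $\ell$ edges of $\wloop$ gives $|\lambda - \ell r_\beta| \lesssim \ell r_\beta^2$ or similar; this translates into a total-variation cost of replacing $\mathrm{Poisson}(\lambda)$ by $\mathrm{Poisson}(\ell r_\beta)$ via the standard bound $d_{TV}(\mathrm{Poisson}(a),\mathrm{Poisson}(b)) \le |a-b|$.

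Next I would set up the dependency neighborhoods. For an edge $e$, declare $e' \in B_e$ (the dependency neighborhood) if $P(e)$ and $P(e')$ are ``close'' — specifically if there is some bounded-size overlap or adjacency making $X_e$ and $X_{e'}$ genuinely dependent; because each $P(e)$ consists of $6$ plaquettes localized near $e$, the number of such $e'$ is bounded by an absolute constant times... in fact $|B_e|$ is $O(1)$ (a fixed number depending only on the lattice geometry in dimension $4$). The Chen--Stein bound then reads
\[ d_{TV}(\ms L(N_\wloop), \mathrm{Poisson}(\lambda)) \le b_1 + b_2 + b_3, \]
where $b_1 := \sum_e \sum_{e' \in B_e} \E X_e \, \E X_{e'}$, $b_2 := \sum_e \sum_{e' \in B_e, e' \ne e} \E (X_e X_{e'})$, and $b_3 := \sum_e \E\big| \E[X_e - \E X_e \mid (X_{e'})_{e' \notin B_e}]\big|$. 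The terms $b_1$ and $b_2$ are easy: $b_1 \le \ell \cdot |B_e| \cdot (\max_e \E X_e)^2 \lesssim \ell r_\beta^2$, and $b_2 \le \ell \cdot |B_e| \cdot \max \E X_e \lesssim \ell r_\beta$ (using $\E X_e \lesssim r_\beta$; this is the dominant contribution and is responsible for the $r_\beta$ on the right-hand side, while the $e^{1.5 \ell r_\beta}$ factor and the constant $1290$ will emerge from the constants in the cluster-expansion control of $b_3$).

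The hard part will be controlling $b_3$, the ``local dependence'' term: one must show that $X_e$ is nearly independent of the configuration outside its neighborhood. This is exactly where the cluster expansion \cite{BFP2010} enters. The idea is that conditioning on $(X_{e'})_{e' \notin B_e}$ only pins down whether certain far-away minimal vortices are present; the vortices of $\plaqset(\Sigma)$ factorize over compatible pieces (Lemma \ref{lemma:activity_decomp}), and using Lemma \ref{lemma:v-sigma-law} one expresses $\p(P(e) \text{ is a vortex} \mid \cdots)$ as a ratio of sums over plaquette sets compatible with $P(e)$ and with the conditioning. Cluster-expansion convergence at large $\beta$ — which is where the hypothesis $\beta \ge \frac{1}{\Delta_G}(30 + 7\log(|G|-1))$ is used, to ensure the activities $\Phi(\vortex)$ decay fast enough that the expansion converges — gives that this conditional probability differs from $\E X_e$ by a multiplicative factor $\exp(O(\ell r_\beta))$-close to $1$, or additively by something like $\E X_e \cdot (e^{C \ell r_\beta} - 1) \lesssim \ell r_\beta^2 e^{C\ell r_\beta}$. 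Summing over $e$ produces $b_3 \lesssim \ell^2 r_\beta^2 e^{C \ell r_\beta}$, but more careful bookkeeping (the $e^{1.5\ell r_\beta}$ in the statement) absorbs the polynomial factors into the exponential. I would expect the bulk of the work, and of the deferred proofs in Section \ref{section:abelian-proofs}, to be precisely this cluster-expansion estimate showing ``a lot of independence'' of the random collection of vortices; the Poisson-approximation machinery and the bounds on $b_1, b_2$ are comparatively routine once that is in hand.
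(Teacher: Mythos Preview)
Your overall architecture is right --- Chen--R\"ollin applied to $N_\wloop = \sum_{e\in\wloop} X_e$, then adjust the Poisson mean from $\lambda$ to $\ell r_\beta$ --- but two of your three Stein terms are mishandled, and the role of the exponential factor is misdiagnosed.

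First, $b_2$ is not the dominant term; it is identically zero. The dependency neighborhood should be $B_e := \{e' \in \wloop : P(e) \nsim P(e')\}$. If $e' \in B_e \setminus \{e\}$ then $P(e)$ and $P(e')$ are incompatible vortices, and by definition of the vortex decomposition they cannot both appear as vortices of $\plaqset(\Sigma)$; hence $X_e X_{e'} = 0$ deterministically. Your crude bound $b_2 \lesssim \ell r_\beta$ would in fact be fatal: after multiplying by $\min(1,\lambda^{-1})$ with $\lambda \approx \ell r_\beta$ you would get an $O(1)$ contribution, not $O(r_\beta)$.

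Second, your $b_3$ estimate is far too pessimistic, and this is where the real structure lies. The point is that the conditioning on $(X_{e'})_{e' \notin B_{e_0}}$ \emph{cancels} when you form the conditional probability. Writing the relevant event as $F_{e_0,\wloop'}$ and using Lemma~\ref{lemma:vortex-appear-not-appear-probability}, one finds
\[
\E[X_{e_0}\mid F_{e_0,\wloop'}]
= z^\beta(P(e_0))\,\rho_{\mc{P}_\lbox\setminus\Gamma}\bigl(z^\beta, N_\lbox(P(e_0))\cap(\mc{P}_\lbox\setminus\Gamma)\bigr),
\]
where $\Gamma$ encodes the conditioning. The right-hand side is a \emph{local} reduced correlation over $N_\lbox(P(e_0))$ only --- all dependence on the far-away edges $e'$ sits in the choice of the ambient polymer set $\mc{P}_\lbox\setminus\Gamma$, and the cluster-expansion bound (Lemma~\ref{lemma:partition-function-ratio-minus-neighborhood-minimal-vortex}) controls this uniformly in $\Gamma$. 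The upshot is $|\E[X_{e_0}\mid F_{e_0,\wloop'}] - \p(F_{P(e_0)})| \le \p(F_{P(e_0)})\cdot O(r_\beta)$, with no factor of $\ell$ or $e^{C\ell r_\beta}$. Summing gives $b_3 \lesssim \lambda r_\beta$.

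Finally, the factor $e^{1.5\ell r_\beta}$ in the statement is not coming from any genuine exponential dependence in $b_3$; it is end-of-proof bookkeeping. After Chen--R\"ollin one has $d_{TV}(\ms L(N_\wloop),\mathrm{Poisson}(\lambda)) \lesssim r_\beta + \min(\lambda,\sqrt\lambda)\,r_\beta$, and the mean shift contributes $|\lambda-\ell r_\beta| \lesssim (\ell r_\beta) r_\beta$. These polynomial-in-$\lambda$ terms are then absorbed into a single expression by the crude bounds $\min(\lambda,\sqrt\lambda)\le e^\lambda$ and $\lambda\le 1.5\,\ell r_\beta$.
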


Upon combining Corollary \ref{cor:only-minimal-vortices-approximation} and Proposition \ref{prop:poisson-approximation-abelian}, we obtain the following result.

\begin{lemma}\label{lemma:small-loop-abelian}
Suppose  
\[ \beta \geq \frac{1}{\Delta_G} (60 +  14\log (\abs{G} - 1)). \]
Then
\[ |\E W_\wloop(\Sigma) - e^{-\ell r_\beta(1 - \elemmatrix)}| \leq  3 e^{1.5 \ell r_\beta} e^{-\beta \Delta_G / 2} + N^4 e^{-\beta L \Delta_G / 2}. \]
\end{lemma}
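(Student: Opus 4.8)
The plan is to combine Corollary \ref{cor:only-minimal-vortices-approximation} and Proposition \ref{prop:poisson-approximation-abelian} via a triangle inequality, after identifying the target quantity $e^{-\ell r_\beta(1-\elemmatrix)}$ as the value of a Poisson probability generating function. Let $P$ be a $\mathrm{Poisson}(\ell r_\beta)$ random variable. Since $\elemmatrix$ is a real number with $|\elemmatrix|\le 1$ (as recorded in the setup of this section, where we noted $\elemmatrix \in [-1,1]$), the series $\sum_{k\ge 0} e^{-\ell r_\beta}(\ell r_\beta)^k\elemmatrix^k/k!$ converges absolutely and equals $e^{-\ell r_\beta(1-\elemmatrix)}$, i.e. $\E \elemmatrix^{P} = e^{-\ell r_\beta(1-\elemmatrix)}$. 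Hence
\[ \big| \E W_\wloop(\Sigma) - e^{-\ell r_\beta(1-\elemmatrix)} \big| \le \big| \E W_\wloop(\Sigma) - \E \elemmatrix^{N_\wloop} \big| + \big| \E \elemmatrix^{N_\wloop} - \E \elemmatrix^{P} \big|. \]

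The first term is bounded directly by Corollary \ref{cor:only-minimal-vortices-approximation}, whose hypothesis is exactly that of the present lemma, giving $2\ell r_\beta e^{-\beta\Delta_G/2} + N^4 e^{-\beta L\Delta_G/2}$. For the second term, since $k\mapsto \elemmatrix^k$ has absolute value at most $1$ on $\{0,1,2,\dots\}$, we have $\big|\E \elemmatrix^{N_\wloop} - \E\elemmatrix^{P}\big| \le \sum_{k\ge 0}\big|\p(N_\wloop=k)-\p(P=k)\big| = 2\,d_{TV}(\ms{L}(N_\wloop),\mathrm{Poisson}(\ell r_\beta))$, and Proposition \ref{prop:poisson-approximation-abelian} (whose hypothesis is weaker than the present one) bounds this by $2580\,e^{1.5\ell r_\beta}r_\beta$.

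It then remains to fold the three contributions into the claimed form $3 e^{1.5\ell r_\beta}e^{-\beta\Delta_G/2} + N^4 e^{-\beta L\Delta_G/2}$. The $N^4$ term is already correct. Using the elementary inequality $\ell r_\beta \le e^{1.5\ell r_\beta}$ absorbs $2\ell r_\beta e^{-\beta\Delta_G/2} \le 2 e^{1.5\ell r_\beta}e^{-\beta\Delta_G/2}$, so it suffices to check $2580\,r_\beta \le e^{-\beta\Delta_G/2}$. This follows from the crude bound $r_\beta = \sum_{g\ne 0}\varphi_\beta(g)^6 \le (|G|-1)e^{-6\beta\Delta_G}$ together with the threshold $\beta \ge \Delta_G^{-1}(60 + 14\log(|G|-1))$: the inequality reduces to $\tfrac{11}{2}\beta\Delta_G \ge \log 2580 + \log(|G|-1)$, which the assumed threshold satisfies with enormous room to spare. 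Adding the two bounded pieces yields at most $3 e^{1.5\ell r_\beta}e^{-\beta\Delta_G/2}$, finishing the proof. There is no genuine obstacle here; the only thing requiring attention is the constant bookkeeping in this last step, and it is easy precisely because the $\beta$ threshold is so loose and the passage from a total variation bound to the moment generating function estimate is routine.
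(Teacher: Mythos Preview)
Your proof is correct and follows essentially the same approach as the paper. The paper phrases the passage from the total variation bound to $|\E \elemmatrix^{N_\wloop}-\E\elemmatrix^{P}|$ via an optimal coupling of $N_\wloop$ and $P$, whereas you use the equivalent direct inequality $|\E f(N_\wloop)-\E f(P)|\le \|f\|_\infty\cdot 2\,d_{TV}$; the constant bookkeeping at the end is organized slightly differently but amounts to the same verification.
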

\begin{proof}
There exists a coupling $(\tilde{N}_\wloop, X)$ such that $\tilde{N}_\wloop \stackrel{d}{=} N_\wloop$, $X \sim \mathrm{Poisson}(\ell r_\beta)$, and $\p(\tilde{N}_\wloop \neq X) = d_{TV}(\ms{L}(N_\wloop), \mathrm{Poisson}(\ell r_\beta))$. By Corollary \ref{cor:only-minimal-vortices-approximation} and Proposition \ref{prop:poisson-approximation-abelian}, and also the fact $|A_\beta| \leq 1$, we obtain
\[ |\E W_\wloop(\Sigma) - \E \elemmatrix^X| \leq 2(\ell r_\beta) e^{-\beta \Delta_G / 2} + N^4 e^{-\beta L \Delta_G / 2} + 2 \cdot 1290 (e^{1.5 \ell r_\beta}) r_\beta. \]
Observe
\[ \E \elemmatrix^X = e^{-\ell r_\beta (1 - \elemmatrix)}, \]
\[ r_\beta \leq (\abs{G} - 1) e^{-6 \beta \Delta_G}. \]
The assumption on $\beta$ implies
\[ 2\cdot 1290 (\abs{G} - 1) \leq e^{\beta \Delta_G}.  \]
Finish by combining the above bounds, and noting $\ell r_\beta \leq e^{1.5 \ell r_\beta}$.
\end{proof}

The following proposition and lemma allow us to handle the case of very long loops (i.e. large $\ell$). This is analogous to Lemma 7.12 of \cite{Ch2019}.

\begin{prop}\label{prop:left-tail-bound-abelian}
Suppose 
\[ \beta \geq \frac{1}{\Delta_G} (35 + 7 \log (\abs{G} - 1)).\]
Then we have the left tail bound
\[ \p\bigg(N_\wloop \leq \frac{1}{2} \ell r_\beta\bigg) \leq (2e) e^{- 0.15 \ell r_\beta}. \]
\end{prop}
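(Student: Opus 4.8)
The plan is to convert the desired bound into a Chernoff estimate via the generating function $\E[z^{N_\wloop}]$ with $z = e^{-\lambda}\in(0,1)$, and to evaluate that generating function through a cluster expansion of the gas of vortices, in the spirit of Lemma 7.12 of \cite{Ch2019}.

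First I would record the gas representation. By Lemmas \ref{lemma:v-sigma-law} and \ref{lemma:activity_decomp}, $\plaqset(\Sigma)$ has the law of a gas of pairwise-compatible vortices with \emph{nonnegative} activities $\Phi(\vortex)$: writing $Z := \sum_{\plaqset\sse\plaquettes}\Phi(\plaqset)$, one has $\p(\plaqset(\Sigma)=\plaqset)=\Phi(\plaqset)/Z$ and $\Phi(\plaqset)=\prod_i\Phi(\vortex_i)$ over the vortex decomposition. Since $e\mapsto P(e)$ is injective, $N_\wloop$ counts exactly the vortices of $\plaqset(\Sigma)$ of the form $P(e)$ with $e\in\wloop$; hence, introducing the modified activity $\tilde\Phi(\vortex):=z\,\Phi(\vortex)$ when $\vortex=P(e)$ for some $e\in\wloop$ and $\tilde\Phi(\vortex):=\Phi(\vortex)$ otherwise, a short computation gives $\E[z^{N_\wloop}]=\tilde Z/Z$, where $\tilde Z$ is the gas partition function built from $\tilde\Phi$. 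Because $0<z\le1$, the $\tilde\Phi$-activities are dominated in modulus by the $\Phi$-activities, so the cluster expansion of the vortex gas — convergent under the stated hypothesis on $\beta$, this being the same input used for Proposition \ref{prop:poisson-approximation-abelian} (deferred to Section \ref{section:abelian-proofs}, via \cite{BFP2010}) — applies simultaneously to $Z$ and $\tilde Z$.

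Next I would cluster-expand $\log(\tilde Z/Z)=\log\tilde Z-\log Z$ as a difference of convergent cluster sums. A cluster contributes to the difference only if it contains at least one minimal vortex $P(e)$ with $e\in\wloop$. Since $\wloop$ sits well inside $\lbox$ (we are in the setting $L\ge 50$), each $P(e)$, $e\in\wloop$, is a vortex contained in $\lbox$, so the singleton clusters $\{P(e)\}$ contribute $(z-1)\Phi(P(e))=(z-1)r_\beta$ apiece by Remark \ref{remark:minimal-vortex-constant-activity}; summing over the $\ell$ edges of $\wloop$ produces the main term $(z-1)\ell r_\beta$. For the remaining clusters (size $\ge2$ and through some $P(e)$, $e\in\wloop$), the standard cluster-expansion remainder estimate bounds, for each fixed $e$, the total weight of clusters of size $\ge2$ through $P(e)$ by $r_\beta$ times a quantity $\varepsilon_\beta\to0$ as $\beta\toinf$; the $\beta$-threshold is precisely what makes $\varepsilon_\beta$ small. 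Summing over $e\in\wloop$ gives $\big|\log(\tilde Z/Z)-(z-1)\ell r_\beta\big|\le C\varepsilon_\beta\,\ell r_\beta$, hence $\E[z^{N_\wloop}]\le\exp\!\big((z-1)\ell r_\beta+C\varepsilon_\beta\,\ell r_\beta\big)$.

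Finally, Markov's inequality (using that $x\mapsto z^{x}$ is decreasing) yields $\p(N_\wloop\le\tfrac12\ell r_\beta)=\p(z^{N_\wloop}\ge z^{\ell r_\beta/2})\le z^{-\ell r_\beta/2}\E[z^{N_\wloop}]\le\exp\!\big(\ell r_\beta(\tfrac{\lambda}{2}+e^{-\lambda}-1+C\varepsilon_\beta)\big)$, and choosing $\lambda=\log2$ makes the exponent $\le\ell r_\beta\big(\tfrac{\log2-1}{2}+C\varepsilon_\beta\big)\le-0.15\,\ell r_\beta$ once $\varepsilon_\beta$ is small (note $\tfrac{1-\log2}{2}>0.153$), with the prefactor $2e\ge1$ absorbing any $O(1)$ slack. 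I expect the main obstacle to be the third step: pinning down the cluster-expansion remainder over clusters through a prescribed minimal vortex with explicit constants valid at the given $\beta$-threshold, so that it is genuinely a small multiple of $\ell r_\beta$, and checking cleanly that in $\log\tilde Z-\log Z$ only clusters meeting a $\wloop$-minimal vortex survive while the singletons among them contribute exactly $(z-1)r_\beta$.
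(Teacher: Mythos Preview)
Your approach is valid but takes a genuinely different route from the paper's. You use exponential tilting: modify the activity of each minimal vortex $P(e)$, $e\in\wloop$, by a factor $z=e^{-\lambda}$, recognize $\E[z^{N_\wloop}]=\tilde Z/Z$, and then cluster-expand $\log(\tilde Z/Z)$ to isolate the main term $(z-1)\ell r_\beta$ before applying Chernoff with $\lambda=\log 2$. The paper instead works pointwise: it writes $\p(N_\wloop=k)$ as a sum over $\wloop'\subseteq\wloop$ with $|\wloop'|=k$, uses Lemma~\ref{lemma:vortex-appear-not-appear-probability} to express each summand as $r_\beta^k\,\rho_{\mc{P}_\lbox}(z^\beta,\Gamma)$, and bounds $\rho_{\mc{P}_\lbox}(z^\beta,\Gamma)$ by a telescoping product of one-step ratios, each controlled by Lemma~\ref{lemma:partition-function-ratio-minus-neighborhood-minimal-vortex} with $\varepsilon=0.001$. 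This yields $\p(N_\wloop=k)\le\binom{\ell}{k}r_\beta^k e^{-\ell r_\beta(1-3\varepsilon)}$, after which the paper sums over $k\le\lfloor\ell r_\beta/2\rfloor$ via the incomplete Gamma function and a Gamma-tail Chernoff bound. It is no accident that both arguments land on $\tfrac{1-\log 2}{2}>0.153$: this is the Poisson large-deviation rate at the half-mean.

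What each buys: your Laplace-transform route is cleaner conceptually and avoids the incomplete-Gamma detour, but it relies on the full Mayer expansion $\log Z=\sum_X w(X)$ and an absolute bound on $\sum_{X\ni P(e),\,|X|\ge 2}|w(X)|$, which the paper never states explicitly (the paper only invokes the BFP bounds on $\Theta_V^{\mc{P}}$, equations \eqref{eq:dobrushin-conclusion-1}--\eqref{eq:dobrushin-conclusion-2}). You would need to extract that absolute cluster bound from \cite{BFP2010} directly and verify the constants at the stated $\beta$-threshold, which is exactly the obstacle you flagged. The paper's telescoping argument sidesteps this by reducing everything to the already-proven Lemma~\ref{lemma:partition-function-ratio-minus-neighborhood-minimal-vortex}, at the cost of a slightly longer endgame. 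One minor point to watch in your sketch: when you pass from ``clusters through some $P(e)$'' to ``$\sum_{e\in\wloop}$ (clusters through $P(e)$)'' you may overcount clusters containing several loop-minimal vortices, but since $|z^{m(X)}-1|\le m(X)(1-z)$ this only costs a harmless constant.
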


\begin{lemma}\label{lemma:large-loop-abelian}
Suppose
\[ \beta \geq \frac{1}{\Delta_G} (35 + 7 \log (\abs{G} - 1)), \]
and also suppose $|\elemmatrix| < 1$. Let $c_\beta$ be as in the statement of Theorem \ref{thm:main-abelian}. Then
\[ |\E W_\wloop(\Sigma) - e^{-\ell r_\beta (1 - \elemmatrix)}| \leq (2e + 2) e^{-c_\beta \ell r_\beta}.\]
\end{lemma}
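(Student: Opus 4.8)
The plan is to prove the estimate directly by the triangle inequality. Since this is the long-loop regime (large $\ell r_\beta$), both $\E W_\wloop(\Sigma)$ and $e^{-\ell r_\beta(1-A_\beta)}$ are exponentially small, so I will bound each of them by a constant multiple of $e^{-c_\beta \ell r_\beta}$ and add. The deterministic term is immediate: $A_\beta$ is real with $A_\beta < 1$, so $1 - A_\beta > 0$, and by the definition \eqref{eq:c-beta-def} of $c_\beta$ we have $c_\beta \le 1 - A_\beta$, whence $e^{-\ell r_\beta(1-A_\beta)} \le e^{-c_\beta \ell r_\beta}$.

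The key step is the bound $|\E W_\wloop(\Sigma)| \le \E|A_\beta|^{N_\wloop}$, obtained \emph{without} passing through the event $E$ of Lemma \ref{lemma:wilson-loop-on-event-E} (this is how the term $N^4 e^{-\beta L \Delta_G/2}$ of Lemma \ref{lemma:small-loop-abelian} gets avoided). Fix $\plaqset \sse \plaquettes$ with $\p(\plaqset(\Sigma) = \plaqset) > 0$, and let $\vortex_1, \ldots, \vortex_k$ be its vortices; by Lemmas \ref{lemma:v-sigma-law} and \ref{lemma:activity_decomp} each $\Phi(\vortex_i) > 0$, so each $\p(\plaqset(\Sigma) = \vortex_i) > 0$ and Lemma \ref{lemma:cond-exp-factor} gives $\E[W_\wloop(\Sigma) \mid \plaqset(\Sigma) = \plaqset] = \prod_{i=1}^k \E[W_\wloop(\Sigma)\mid \plaqset(\Sigma) = \vortex_i]$. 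By Lemma \ref{lemma:wilson-loop-cond-exp} each factor is $\Phi_S(\vortex_i)/\Phi(\vortex_i)$, which has modulus at most $1$ since $|\chi(g)| = 1$ for all $g$ and $\varphi_\beta > 0$ (triangle inequality yields $|\Phi_S(\vortex_i)| \le \Phi(\vortex_i)$); and by Lemma \ref{lemma:minimal-vortices-contribution} the factor corresponding to each edge $e \in \wloop$ with $P(e)$ among the $\vortex_i$ equals $A_\beta$, of modulus $|A_\beta| < 1$. Multiplying gives $|\E[W_\wloop(\Sigma)\mid\plaqset(\Sigma)=\plaqset]| \le |A_\beta|^{N_\wloop(\plaqset)}$, where $N_\wloop(\plaqset)$ is the number of such edges; taking expectations and using that $N_\wloop(\plaqset(\Sigma))$ is by definition $N_\wloop$ yields $|\E W_\wloop(\Sigma)| \le \E|A_\beta|^{N_\wloop}$.

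To finish, I would control $\E|A_\beta|^{N_\wloop}$ with the left-tail bound. Splitting according to whether $N_\wloop > \tfrac12 \ell r_\beta$ and using $|A_\beta| \le 1$,
\[ \E|A_\beta|^{N_\wloop} \le |A_\beta|^{\ell r_\beta/2} + \p\big(N_\wloop \le \tfrac12 \ell r_\beta\big). \]
Proposition \ref{prop:left-tail-bound-abelian} applies under the hypothesis of the lemma and bounds the last probability by $2e\, e^{-0.15 \ell r_\beta} \le 2e\, e^{-c_\beta \ell r_\beta}$ (as $c_\beta \le 0.15$); and since $c_\beta \le \tfrac12 \log|A_\beta|^{-1}$ we get $|A_\beta|^{\ell r_\beta/2} = e^{-\frac12 \ell r_\beta \log |A_\beta|^{-1}} \le e^{-c_\beta \ell r_\beta}$. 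Hence $\E|A_\beta|^{N_\wloop} \le (1 + 2e)e^{-c_\beta \ell r_\beta}$, and combining with the deterministic bound through $|\E W_\wloop(\Sigma) - e^{-\ell r_\beta(1-A_\beta)}| \le |\E W_\wloop(\Sigma)| + e^{-\ell r_\beta(1-A_\beta)}$ gives $(2 + 2e)e^{-c_\beta \ell r_\beta}$, as claimed.

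I do not anticipate a genuine obstacle here; the one thing to get right is to bound $|\E W_\wloop(\Sigma)|$ \emph{via the vortex factorization, estimating every factor coming from a non-minimal vortex, a boundary-touching vortex, or a minimal vortex $P(e)$ with $e \notin \wloop$ crudely by $1$ in modulus}, rather than via the event $E$ as in Lemma \ref{lemma:small-loop-abelian}: the latter route would reintroduce the $N^4 e^{-\beta L \Delta_G/2}$ term, which is absent from the statement. All remaining steps are routine manipulations of the definition of $c_\beta$ together with Proposition \ref{prop:left-tail-bound-abelian}.
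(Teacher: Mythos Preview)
Your proposal is correct and follows essentially the same route as the paper: the paper also bounds $|\E[W_\wloop(\Sigma)\mid\plaqset(\Sigma)]|\le |A_\beta|^{N_\wloop}$ via Lemmas \ref{lemma:cond-exp-factor} and \ref{lemma:minimal-vortices-contribution} (your use of $|\Phi_S(\vortex)|\le\Phi(\vortex)$ just makes explicit why the non-minimal factors are bounded by $1$), then splits $\E|A_\beta|^{N_\wloop}$ using Proposition \ref{prop:left-tail-bound-abelian}, and finishes with the triangle inequality and the definition of $c_\beta$. Your emphasis that one must \emph{not} route through the event $E$ here is exactly right and matches the paper's strategy.
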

\begin{proof}
By Lemmas \ref{lemma:cond-exp-factor} and \ref{lemma:minimal-vortices-contribution}, we always have
\[ |\E[W_\wloop(\Sigma) ~|~ \plaqset(\Sigma)]| \leq \abs{\elemmatrix}^{N_\wloop},\]
and so
\[ |\E W_\wloop(\Sigma)| \leq \E |\elemmatrix|^{N_\wloop}. \]
Applying Proposition \ref{prop:left-tail-bound-abelian}, we obtain
\[ \E |\elemmatrix|^{N_\wloop} \leq (2e) e^{-0.15 \ell r_\beta} + |\elemmatrix|^{0.5 \ell r_\beta}, \]
and thus
\[ |\E W_\wloop(\Sigma) - e^{-\ell r_\beta (1 - \elemmatrix)}| \leq (2e) e^{-0.15 \ell r_\beta} + |\elemmatrix|^{0.5 \ell r_\beta} + e^{-\ell r_\beta (1 - \elemmatrix)}.\]
Now bound the right hand side by using the definition of $c_\beta$.
\end{proof}

\begin{proof}[Proof of Theorem \ref{thm:main-abelian}]
By Lemmas \ref{lemma:small-loop-abelian} and \ref{lemma:large-loop-abelian}, we obtain
\[\begin{split}
| \E W_\wloop(\Sigma) - &e^{-\ell r_\beta(1 - \elemmatrix)}|^{1 + 1.5 / c_\beta} \leq \\
&(3 e^{1.5 \ell r_\beta} e^{-\beta \Delta_G / 2} + N^4 e^{-\beta L \Delta_G / 2})(2e+2)^{1.5 / c_\beta} e^{- c_\beta \ell r_\beta (1.5 / c_\beta)}. 
\end{split}\]
Now finish by distributing the above product, and then taking both sides to the power $c_\beta / (1.5 + c_\beta)$.
\end{proof}

\subsection{Proofs}\label{section:abelian-proofs}

We start by proving some combinatorial bounds on the number of vortices satisfying certain constraints.

\begin{lemma}\label{lemma:minimal-vortex-incompatible-bound}
Let $\vortex$ be a minimal vortex. The number of minimal vortices $\vortex'$ for which $\vortex' \nsim \vortex$ is at most 144.
\end{lemma}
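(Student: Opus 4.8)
The plan is to reduce incompatibility of two minimal vortices to an elementary geometric condition on the two underlying edges, and then to count. Throughout, recall that a minimal vortex is of the form $P(e)$ for a positively oriented edge $e$ of $\Z^4$, and that the map $e \mapsto P(e)$ is injective (the edge $e$ is the intersection of the plaquettes of $P(e)$), so bounding the number of incompatible minimal vortices is the same as bounding the number of edges $e'$ with $P(e') \nsim P(e)$.

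First I would unwind the definition of incompatibility. Suppose $P(e') \nsim P(e)$. By definition there is an edge of the graph $G(P(e) \cup P(e'))$ joining the subgraph $G(P(e))$ to the subgraph $G(P(e'))$; that is, there exist plaquettes $p \in P(e)$ and $p' \in P(e')$ and a $3$-cell $c$ in $\lbox$ that contains both $p$ and $p'$. Since $e$ is a boundary edge of $p$, both endpoints of $e$ are vertices of $p$, hence vertices of $c$; because $e$ is a unit edge, two vertices of $c$ differing by a standard basis vector must be joined by an edge of $c$, so $e$ is one of the twelve edges of $c$. The same argument applies to $e'$. Thus $P(e') \nsim P(e)$ forces $e$ and $e'$ to be edges of a common $3$-cell of $\lbox$, in particular of a common unit $3$-cube of $\Z^4$.

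Next I would count the edges of $\Z^4$ that share a $3$-cube with $e$. Say $e$ points in coordinate direction $i$. A unit $3$-cube of $\Z^4$ having $e$ among its edges must be spanned by direction $i$ together with two of the remaining three directions — $\binom{3}{2} = 3$ choices of direction triple — and for each such triple there are exactly $4$ translates of the corresponding $3$-cube having $e$ as an edge (one for each position of $e$ among the four parallel $i$-edges of the cube). Hence $e$ lies in exactly $12$ unit $3$-cubes of $\Z^4$. Each $3$-cube has $12$ edges, so the set of edges $e'$ that share a $3$-cube with $e$ has at most $12 \cdot 12 = 144$ elements; by the previous paragraph this set contains every $e'$ with $P(e') \nsim P(e)$, and since distinct edges give distinct minimal vortices, we conclude that there are at most $144$ minimal vortices $\vortex' \nsim \vortex$.

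The argument is essentially bookkeeping, and the main point that needs care is the reduction in the second paragraph: one must check that ``$p$ and $p'$ lie in a common $3$-cell'' genuinely forces $e$ and $e'$ to be edges (not just pairs of vertices, e.g.\ face or space diagonals) of that $3$-cell, and that an incompatibility edge of $G(P(e) \cup P(e'))$ can always be chosen with one endpoint a plaquette of $P(e)$ and the other a plaquette of $P(e')$ even when $P(e)$ and $P(e')$ overlap. Both are straightforward — the first from the fact that $e$ is a unit edge, the second by noting that if $p \in P(e) \cap P(e')$ then any other plaquette of $P(e')$ through $e'$ is already adjacent to $p$ in the graph — but it is worth stating them explicitly.
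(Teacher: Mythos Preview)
Your proof is correct and follows essentially the same approach as the paper: reduce incompatibility of $P(e)$ and $P(e')$ to the condition that $e$ and $e'$ lie in a common $3$-cell, then count $12$ three-cells through $e$ times $12$ edges per three-cell to get $144$. The paper's proof is a two-line sketch of exactly this argument; your version simply supplies the justification for the reduction step and the count more explicitly.
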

\begin{proof}
If $\vortex' \nsim \vortex$, then there must be a 3-cell $c$ such that both $e, e'$ belong to $c$. The number of 3-cells which contain $e$ is 12, and each 3-cell has 12 edges.
\end{proof}


\begin{lemma}\label{lemma:vortex-combinatorial-bound}
Fix a plaquette $p \in \plaquettes$. For any $m \geq 1$, the number of vortices $\vortex$ of size $m$ which contain $p$ is at most $(\vbdconst)^m$.
\end{lemma}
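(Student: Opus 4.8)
## Proof Proposal for Lemma~\ref{lemma:vortex-combinatorial-bound}

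The plan is to reduce the count of vortices of size $m$ containing a fixed plaquette $p$ to a standard counting estimate for connected subgraphs of a bounded-degree graph. Recall that a vortex $\vortex$ is a plaquette set such that $G(\vortex)$ is connected, where two plaquettes are adjacent in $G(\plaquettes)$ whenever some $3$-cell of $\lbox$ contains both of them. So a vortex of size $m$ containing $p$ is precisely a connected subgraph of the infinite graph $G(\plaquettes(\Z^4))$ on $m$ vertices, one of which is the fixed vertex $p$. The key structural input is that this ambient graph has uniformly bounded degree: each plaquette $p$ in $\Z^4$ is contained in only a bounded number of $3$-cells (there are $2$ axis directions transverse to $p$, each giving $2$ choices of $3$-cell, for $4$ $3$-cells total containing a fixed plaquette), and each $3$-cell contains a bounded number of plaquettes (a $3$-dimensional unit cube has $6$ faces, so $6$ plaquettes). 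Hence every vertex of $G(\plaquettes)$ has degree at most $4 \cdot (6-1) = 20$, which explains the constant $\vbdconst = 20e$ in the statement.

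The main steps I would carry out are: (1) identify vortices of size $m$ containing $p$ with connected vertex sets of size $m$ through $p$ in the bounded-degree graph $G(\plaquettes)$, noting that restricting to plaquettes of $\lbox$ rather than all of $\Z^4$ only decreases the count; (2) invoke (or quickly prove) the classical bound that in a graph of maximum degree $D$, the number of connected subgraphs of size $m$ containing a fixed vertex is at most $(eD)^{m-1}$, or more crudely $(eD)^m$ — this is the tree-counting / Kesten-type lemma, proved by fixing a spanning tree and encoding each such connected set by a walk/exploration of length $\le 2m$ that at each step picks one of at most $D$ neighbors, with an $e^m$-type correction from the Catalan number of tree shapes; (3) substitute $D \le 20$ to get the bound $(20e)^m = (\vbdconst)^m$.

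The routine but slightly fiddly part is pinning down the correct vertex-degree bound $D$ for $G(\plaquettes)$ and making sure the exponent in the subgraph-counting lemma is clean enough that the final constant is literally $(20e)^m$ rather than something off by a constant factor — one has to be a little careful about whether the encoding gives $(eD)^{m-1}$ or $(eD)^m$ and whether adjacency via a shared $3$-cell could inflate the effective degree. The genuinely conceptual content is minimal; the lemma is a direct consequence of bounded geometry of the cell complex plus the standard connected-subgraph count, so I expect no real obstacle, only bookkeeping. If one wanted to avoid citing the subgraph-counting lemma, the self-contained route is: enumerate a depth-first exploration of $G(\vortex)$ starting from $p$, record at each of the $\le 2(m-1)$ steps either ``backtrack'' or ``move to the $j$-th unused neighbor'' with $j \le D$, observe this data determines $\vortex$, and bound the number of such sequences by $\binom{2(m-1)}{m-1} D^{m-1} \le 4^{m-1} D^{m-1}$, then note $4D = 80 > 20e$ forces one instead to use the sharper Catalan bound $C_{m-1} \le 4^{m-1}$ replaced by the refined estimate giving the $e^m$ constant; in the writeup I would simply cite the standard $(e D)^m$ bound to land exactly on $(\vbdconst)^m$.
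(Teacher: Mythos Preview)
Your proposal is correct and takes essentially the same approach as the paper: the paper also bounds the degree of $G(\plaquettes)$ by $20$ (via $4$ containing $3$-cells times $5$ other plaquettes each) and then carries out the tree-counting argument explicitly---summing over labeled spanning trees on $[m]$, using Cayley's formula $m^{m-2}$, bounding the plaquette choices given a tree by $20^{m-1}$, and dividing by $m!$ with Stirling to land on $(20e)^m$. Your plan to cite the standard $(eD)^m$ connected-subgraph bound is exactly this argument packaged as a lemma, so the two proofs are the same in substance.
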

\begin{proof}
It suffices to assume $m \geq 2$. We may express the number in question by the following expression:
\[ \frac{1}{m!} \sum_{\substack{p_1, \ldots, p_m \in \plaquettes \\ \exists i \, : \, p_i = p}} \ind(G(\{p_1, \ldots, p_m\}) \text{ is connected}).\]
Here the sum is also over distinct plaquettes $p_1, \ldots, p_m$. Due to space constraints, we will not write this restriction explicitly. As every connected graph contains a spanning tree, we may upper bound the above by
\[ \frac{1}{m!} \sum_{k=1}^m \sum_{T \text{ on } [m]} \sum_{\substack{p_1, \ldots, p_m \in \plaquettes\\ p_k = p}} \ind(G(\{p_1, \ldots, p_m\}) \supseteq T). \]
Here the sum over $T \text{ on } [m]$ is a sum over trees $T$ with vertex set $[m]$, and the expression $G(\{p_1, \ldots, p_m\}) \supseteq T$ means that $T$ is a subgraph of $G(\{p_1, \ldots, p_m\})$. Fixing $k=1$ say, and a tree $T$, I claim that the number of sequences of distinct plaquettes $p_1, \ldots, p_m \in \plaquettes$ such that $p_1 = p$, and $G(\{p_1, \ldots, p_m\}) \supseteq T$, is at most $20^{m-1}$. Given this claim, we can obtain the further upper bound
\[ \frac{1}{m!} m 20^{m-1} \sum_{T \text{ on } [m]} 1.  \]
By Cayley's formula (see e.g. \cite{BOL1998} Chapter VIII, Theorem 20), we have that the total number of trees $T$ on $[m]$ is $m^{m-2}$. Combining this with the bound $m! \geq m^{m} e^{-m}$, we obtain the desired upper bound of $(\vbdconst)^m$.

To see the claim, fix a tree $T$, and let $i_1, \ldots, i_{d_1}$ be the neighbors of $1$ in $T$. Note $p_{i_1}$ must share a 3-cell with $p_1 = p$, and thus the number of possible choices of $p_{i_1}$ is at most 20, since there are 4 3-cells which contain $p$ and each 3-cell has 5 other plaquettes besides $p$. Similarly, the number of possible choices for each $p_{i_j}$, $2 \leq j \leq d_1$, is at most 20. Continuing in this manner, we may traverse $T$ in breadth-first manner until we have chosen $p_i$ for all $1 \leq i \leq n$. 
\end{proof}

\begin{lemma}\label{lemma:neighbor-vortex-combinatorial-bound}
Let $\vortex$ be a vortex. For any $m \geq 1$, the number of vortices $\vortex'$ of size $m$ which are incompatible with $\vortex$ is at most $21 \abs{\vortex} (\vbdconst)^m$.
\end{lemma}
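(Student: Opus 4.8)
The plan is to reduce the count to Lemma~\ref{lemma:vortex-combinatorial-bound} via a union bound over a small set of ``witness'' plaquettes. First I would unpack the definition of incompatibility: if $\vortex' \nsim \vortex$, then there is an edge of $G(\vortex \cup \vortex')$ joining the subgraph $G(\vortex)$ to the subgraph $G(\vortex')$, which by the definition of $G(\cdot)$ means there exist plaquettes $p \in \vortex$ and $p' \in \vortex'$ lying in a common 3-cell of $\lbox$. In particular, every such $\vortex'$ contains a plaquette $p'$ which either equals a plaquette of $\vortex$ or shares a 3-cell with one (including the case $p' = p$ makes the argument insensitive to whether overlapping vortices are deemed incompatible).

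Next I would bound, for a fixed plaquette $p \in \plaquettes$, the number of plaquettes $p'$ with $p' = p$ or sharing a 3-cell with $p$. A 2-cell in $\Z^4$ is a face of exactly $4$ distinct 3-cells (two remaining coordinate directions, each taken in the $+$ or $-$ orientation), and each 3-cell has $6$ faces, hence $5$ other than $p$; so there are at most $4\cdot 5 = 20$ plaquettes sharing a 3-cell with $p$, and at most $21$ once we also allow $p' = p$. Consequently the set
\[ Q := \{\, p' \in \plaquettes : p' = p \text{ or } p' \text{ shares a 3-cell with some } p \in \vortex \,\} \]
satisfies $|Q| \le 21\,|\vortex|$.

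Finally, since every vortex $\vortex'$ of size $m$ with $\vortex' \nsim \vortex$ must contain at least one plaquette of $Q$, Lemma~\ref{lemma:vortex-combinatorial-bound} gives that the number of such $\vortex'$ is at most $\sum_{q \in Q} (\vbdconst)^m \le 21\,|\vortex|\,(\vbdconst)^m$, which is the claimed bound. I do not anticipate any genuine obstacle here; the only things requiring care are the elementary bookkeeping of how many 3-cells contain a given plaquette and how many plaquettes a 3-cell has (numbers consistent with those already used in the proof of Lemma~\ref{lemma:vortex-combinatorial-bound}), and keeping the union bound honest by using $Q$ rather than trying to enumerate incompatible pairs directly.
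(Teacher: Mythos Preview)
Your proposal is correct and follows essentially the same approach as the paper: both argue that an incompatible $\vortex'$ must contain a plaquette in a set of size at most $21\,|\vortex|$ (from $4$ containing 3-cells times $5$ other faces, plus the plaquette itself), and then invoke Lemma~\ref{lemma:vortex-combinatorial-bound}. You are slightly more explicit about the $p'=p$ case, but the argument is the same.
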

\begin{proof}
In order for a vortex $\vortex'$ to be incompatible with $\vortex$, it must contain a plaquette $p$ which shares a 3-cell with a plaquette of $\vortex$. The number of such plaquettes is at most $21 \abs{\vortex}$, since each plaquette $p'$ of $\vortex$ is contained in 4 3-cells, and every such 3-cell has 5 other plaquettes besides $p'$. Now finish by Lemma \ref{lemma:vortex-combinatorial-bound}.
\end{proof}

\begin{lemma}\label{lemma:vortex-contained-in-cube}
Any vortex of size $m$ is contained in a cube of side length $m$.
\end{lemma}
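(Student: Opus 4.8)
The plan is to exploit the connectivity of $G(\vortex)$ to control the $\ell^\infty$-diameter of the set of vertices touched by the plaquettes of $\vortex$. First I would fix a vortex $\vortex = \{p_1, \dots, p_m\}$ and recall that $G(\vortex)$ is connected; hence we may order the plaquettes $p_1, \dots, p_m$ so that for each $i \geq 2$, the plaquette $p_i$ shares a $3$-cell with some earlier $p_j$, $j < i$. (This is just a breadth-first or depth-first traversal of a spanning tree of $G(\vortex)$, exactly as in the proof of Lemma \ref{lemma:vortex-combinatorial-bound}.) The key geometric observation is that if two plaquettes lie in a common $3$-cell, then every vertex of one is within $\ell^\infty$-distance $1$ of every vertex of the other, since a $3$-cell is a unit cube and all its vertices are mutually within $\ell^\infty$-distance $1$.

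Next I would set $V_i := \bigcup_{j \le i} \{\text{vertices of } p_j\}$ and argue by induction that $V_i$ has $\ell^\infty$-diameter at most $i$. The base case $i = 1$ is clear, since a single plaquette has $\ell^\infty$-diameter $1$. For the inductive step, $p_{i}$ shares a $3$-cell $c$ with some $p_j$, $j < i$; every vertex of $p_i$ is a vertex of $c$, hence within $\ell^\infty$-distance $1$ of every vertex of $p_j \subseteq V_{i-1}$, and $V_{i-1}$ has diameter at most $i-1$ by induction, so by the triangle inequality every vertex of $p_i$ is within $\ell^\infty$-distance $i$ of every vertex of $V_{i-1}$; thus $V_m$ has $\ell^\infty$-diameter at most $m$. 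Finally, a set of vertices of $\ell^\infty$-diameter at most $m$ is contained in the box $\prod_{k=1}^4 [a_k, a_k + m] \cap \Z^4$ where $a_k$ is the minimal $k$-th coordinate appearing; this is a cube of side length $m$, and it contains $\vortex$ since it contains every vertex of every plaquette of $\vortex$.

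The main thing to be careful about is just bookkeeping: ensuring the traversal ordering genuinely gives the ``shares a $3$-cell with an earlier plaquette'' property (immediate from a spanning tree of the connected graph $G(\vortex)$), and tracking whether the diameter bound should be $m$ or $m-1$ — the induction naturally yields diameter $\le m$ after including $m$ plaquettes, and a set of diameter $\le m$ fits in a cube of side length $m$, so the stated constant is correct (indeed slightly wasteful, but that is harmless). I do not expect any real obstacle here; the lemma is essentially a statement that bounded-degree connected growth has bounded diameter, and the only input beyond elementary geometry is the connectivity of $G(\vortex)$, which is the definition of a vortex.
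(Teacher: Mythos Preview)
Your proposal is correct and takes essentially the same approach as the paper: both arguments induct along a spanning tree of $G(\vortex)$, using that two plaquettes sharing a $3$-cell differ by at most $1$ in each coordinate. The only cosmetic difference is that the paper extends the containing cube directly at each step (remove a leaf plaquette, apply the inductive hypothesis, then enlarge the cube by one unit if the new $3$-cell sticks out), whereas you first bound the $\ell^\infty$-diameter of the vertex set and build the cube at the end; these are the same idea in slightly different packaging.
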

\begin{proof}
We proceed by induction. If $m = 1$, then the vortex is a single plaquette, and the claim is true. Now suppose the claim is true for some $m$. Given a vortex $\vortex$ of size $m+1$, there exists a plaquette $p \in \vortex$ such that $V \backslash \{p\}$ is still a vortex. This follows by the following graph theory fact: any connected graph has a vertex whose removal leaves the graph connected. This fact may be seen by e.g. removing a leaf of a spanning tree. Now by assumption, there is a cube $B$ of side length $m$ such that $\vortex \backslash \{p\}$ is contained in $B$. There exists $p' \in \vortex$ such that there is a 3-cell $c$ which contains both $p, p'$. If $c$ is in $B$, then $\vortex$ is contained in $B$, and we are done. Otherwise, since $c$ contains a plaquette of $B$, we may extend $B$ in some direction to ensure that $c$ is in this extended cube, of side length $m+1$.
\end{proof}

\begin{lemma}\label{lemma:contributing-plaquette-bound}
For $m \geq 1$, let $S_\wloop^m$ be the set of plaquettes $p \in \plaquettes$ such that any cube of side length $m+2$ containing $p$ does not contain an edge of $\wloop$. Then
\[ |\plaquettes \backslash S_\wloop^m| \leq  48 \cdot 7^4 \ell m^4.\]
\end{lemma}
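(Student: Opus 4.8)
The plan is to bound the size of the ``bad'' set $\plaquettes \backslash S_\wloop^m$ — the plaquettes $p$ for which \emph{some} cube of side length $m+2$ containing $p$ also contains an edge of $\wloop$ — by charging each such plaquette to one of the $\ell$ edges of $\wloop$. First I would observe that if $p \in \plaquettes \backslash S_\wloop^m$, then there is a cube $B$ of side length $m+2$ with $p \in B$ and some edge $e \in \wloop$ with $e \in B$. Since both $p$ and (a vertex of) $e$ lie in a common cube of side length $m+2$, the $\ell^\infty$ distance between $p$ and $e$ is at most $m+2$; concretely, every vertex of $p$ lies within $\ell^\infty$-distance $m+2$ of a fixed vertex of $e$. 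Hence $p$ lies in the cube of side length $2(m+2)+1 \le 7(m+2)/2$... more cleanly, $p$ is contained in a cube of side length roughly $2(m+2)$ centered at an endpoint of $e$.

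The key steps in order: (1) For each edge $e \in \wloop$, let $\mathcal{P}(e)$ be the set of plaquettes $p$ such that $p$ and $e$ both lie in a common cube of side length $m+2$; then $\plaquettes \backslash S_\wloop^m \subseteq \bigcup_{e \in \wloop} \mathcal{P}(e)$, so $|\plaquettes \backslash S_\wloop^m| \le \sum_{e \in \wloop} |\mathcal{P}(e)| = \ell \cdot \max_e |\mathcal{P}(e)|$. (2) Bound $|\mathcal{P}(e)|$: fix an endpoint $x$ of $e$. If $p$ and $e$ share a cube of side length $m+2$, then every vertex of $p$ is within $\ell^\infty$-distance $m+1 \le m+2$ of $x$ along each of the $4$ coordinates (since the containing cube has side length $m+2$ and contains both $x$ and all of $p$). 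So all vertices of $p$ lie in the cube $C_x$ of side length at most $2(m+2)+1$ — one can be generous here and use side length at most $7(m+2)$ or simply note $2(m+2)+1 \le 7m$ for... actually since $m \ge 1$, $2(m+2)+1 = 2m+5 \le 7m$ fails at $m=1$; instead use $2m+5 \le 7(m+2)$ trivially, or better, bound the vertex-count of $C_x$ by $(2(m+2)+1)^4 \le (2 \cdot 3m + 1)^4$ using $m+2 \le 3m$ for $m\ge 1$, giving at most $(7m)^4$ vertices. (3) Count plaquettes with all vertices in $C_x$: a plaquette is determined by a vertex and a choice of $2$ of the $4$ positive coordinate directions, so the number of plaquettes inside a cube with $V$ vertices is at most $6V$ (there are $\binom{4}{2}=6$ plaquette orientations through each vertex). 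Thus $|\mathcal{P}(e)| \le 6 (7m)^4 = 6 \cdot 7^4 m^4$. (4) Assemble: $|\plaquettes \backslash S_\wloop^m| \le \ell \cdot 6 \cdot 7^4 m^4$, which is better than the claimed $48 \cdot 7^4 \ell m^4$, so the stated bound holds with room to spare. (The factor $48$ in the statement leaves slack to absorb any off-by-one looseness in the side-length estimate, e.g. using side length $2(m+2)+2$ instead.)

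The main obstacle — really the only subtle point — is pinning down the correct side length for the cube $C_x$ containing all vertices of a plaquette $p$ that shares a side-$(m+2)$ cube with $e$, and making sure the geometric claim ``$p$ and $e$ in a common cube of side length $m+2$ $\implies$ $p$'s vertices within bounded distance of $x$'' is argued cleanly in the $\ell^\infty$ metric; once the linear-in-$m$ diameter bound is fixed, the fourth-power volume count and the factor-$6$ for plaquette orientations are routine. I would double-check the constant against the definition of ``side length'' used in Section \ref{section:lattice-cell-complex} (a cube $[a,b]^4 \cap \Z^4$ with $b-a = m$ has $m+1$ vertices per axis), which is why I keep a generous multiplicative buffer rather than chasing the optimal constant; the lemma only needs \emph{some} bound of the form $C \ell m^4$.
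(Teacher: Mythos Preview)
Your proposal is correct and follows essentially the same route as the paper's proof: charge each bad plaquette to an edge (or vertex) of $\wloop$, observe it must lie in the $\ell^\infty$-ball of radius $m+2$ around an endpoint of that edge, and count plaquettes in that ball. Two minor remarks: your worry that $2m+5 \le 7m$ fails at $m=1$ is unfounded (equality holds there, and the paper uses exactly this inequality), and your plaquette count of $6V$ per vertex is tighter than the paper's $48V$ (the paper routes the count through edges: $8$ edges per vertex times $6$ plaquettes per edge), which is why you end up with a smaller constant than the stated $48 \cdot 7^4 \ell m^4$.
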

\begin{proof}
Suppose $p \in \plaquettes \backslash S_\wloop^m$. Then there exists a cube $B$ of side length $m+2$, such that $p$ is in $B$, and there exists an edge $e \in \wloop$ such that $e$ is in $B$. Take a vertex $x$ of $e$. Then $p$ is contained in the $\ell^\infty$ ball of radius $m+2$ centered at $x$. This $\ell^\infty$ ball, when intersected with $\Z^4$, is a cube of side length $2m + 4$. The number of plaquettes in such a cube is at most $6 \cdot8 (2m + 5)^4 = 48 (2m + 5)^4$, since the number of vertices is $(2m+5)^4$, and each vertex is incident to at most $8$ edges, and each edge is incident to at most $6$ plaquettes. As $m \geq 1$, we have $2m +5 \leq 7m$. To finish, observe that the number of vertices in $\wloop$ is at most the number of edges $\ell$.
\end{proof}

The following lemma shows that if a plaquette set $\plaqset$ is not near the boundary of $\lbox$, then in order for $\plaqset$ to appear in the random collection of surfaces $\plaqset(\Sigma)$ with positive probability, it must either be a minimal vortex, or have size strictly greater than 6 (which is the size of a minimal vortex, and hence the name ``minimal vortex").

\begin{lemma}\label{lemma:smaller-than-minimal-vortices-prob-0}
Suppose the plaquette set $\plaqset \sse \plaquettes$ is such that any 3-cell which contains a plaquette of $\plaqset$ is contained in $\lbox$. If $|\plaqset| \leq 5$, then $\Phi(\plaqset) = 0$. If $|\plaqset| = 6$, then $\Phi(\plaqset) \neq 0$ if and only if $\plaqset = P(e)$, for some $e \in \edges$.
\end{lemma}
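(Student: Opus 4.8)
The plan is to first reduce the statement about $\Phi$ to a purely combinatorial fact about $\Z^4$, and then to carry out that combinatorial analysis using the local geometry of the lattice together with the earlier lemmas of this section. Since $\varphi_\beta(g) > 0$ for every $g \in G$, we have $\Phi(\plaqset) \neq 0$ precisely when there exists a $2$-form $q \in G^\plaquettes$ with $dq = 0$ and $\supp(q) = \plaqset$. So it suffices to show (i) no such $q$ exists when $|\plaqset| \leq 5$, and (ii) when $|\plaqset| = 6$, such a $q$ exists if and only if $\plaqset = P(e)$ for some $e \in \edges$. The ``if'' half of (ii) is immediate from Remark \ref{remark:minimal-vortex-constant-activity} (equivalently, $q = d\sigma^{g,e}$ with $g \neq 0$ is closed with support exactly $P(e)$), so the content is the ``only if'' half of (ii) together with (i).

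The key local observation is this: every plaquette lies in exactly four $3$-cells, and -- as one checks from the explicit local geometry -- these four $3$-cells pairwise share no plaquette other than the given one. Now fix $q$ closed with $\supp(q) = \plaqset$, a plaquette $p \in \plaqset$, and one of its four containing $3$-cells $c$. Since $c$ contains a plaquette of $\plaqset$, the hypothesis gives $c \sse \lbox$, so $(dq)_c = 0$; as this is a $\pm$-signed sum of the values $q_{p'}$ over $p' \in \plaqset$ lying in $c$, and $q_p \neq 0$, there must be at least one other plaquette of $\plaqset$ in $c$. Running over the four $3$-cells of $p$ and using that they are pairwise plaquette-disjoint apart from $p$, we obtain four distinct plaquettes of $\plaqset$ adjacent to $p$ in $G(\plaqset)$. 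Hence $G(\plaqset)$ has minimum degree at least $4$; in particular $|\plaqset| \geq 5$, which disposes of $|\plaqset| \leq 4$, and $G(\plaqset)$ is connected (a graph on $\leq 6$ vertices of minimum degree $\geq 4$ is connected), so $\plaqset$ is a vortex and is contained in a small cube by Lemma \ref{lemma:vortex-contained-in-cube}.

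It remains to rule out $|\plaqset| = 5$ and to pin down $|\plaqset| = 6$. For $|\plaqset| = 5$ the degree bound forces $G(\plaqset) = K_5$; moreover every $3$-cell meeting $\plaqset$ meets it in exactly two plaquettes, and every pair of plaquettes of $\plaqset$ shares exactly one $3$-cell (at most one because a plaquette lies in only one of another plaquette's four $3$-cells; at least one by $K_5$). The crucial geometric input is that two plaquettes sharing a $3$-cell have overlapping direction pairs (the two coordinate directions each spans), since both pairs lie inside the $3$-element direction set of the shared $3$-cell. Thus the five direction pairs are pairwise-intersecting $2$-subsets of $\{1,2,3,4\}$, hence take at most three distinct values (a ``star'' $\{ta,tb,tc\}$ or a ``triangle'' $\{ab,bc,ca\}$); in either case all five plaquettes agree in the remaining coordinate, and projecting it out reduces the problem to a configuration of five edges in $\Z^3$ (star case) or five plaquettes in $\Z^3$ (triangle case) that are pairwise coplanar-adjacent, which a short finite check rules out (e.g. at most two of the edges can point in a given direction, making five edges among three directions impossible). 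For $|\plaqset| = 6$ the complement of $G(\plaqset)$ in $K_6$ is a matching: if it is perfect, $G(\plaqset) = K_{2,2,2}$ and the same direction-pair/projection analysis, now combined with the closedness relations, forces $\plaqset = P(e)$; if it is not perfect, some plaquette has degree $5$, so one of its $3$-cells contains three plaquettes of $\plaqset$, and one checks this configuration cannot be completed to a valid closed $q$ of total size $6$.

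The main obstacle is exactly this borderline casework at sizes $5$ and $6$: the minimum-degree argument is clean and immediately handles $|\plaqset| \leq 4$, but on its own it neither rules out $K_5$-type size-$5$ configurations nor singles out $P(e)$ among size-$6$ configurations. One either grinds through the $\Z^3$ reduction above, or -- probably more cleanly -- argues via a $1$-form: writing $q = d\sigma$ on a cube containing $\plaqset$ (where $q$ is closed, hence exact) and taking $\sigma$ of minimal support, one shows the support must be a single edge $e_0$, whence $q = d\sigma^{g,e_0}$ and $\plaqset = P(e_0)$. There the work is in showing that a minimal $\sigma$ supported on two or more edges would force $|\plaqset| \geq 7$: each edge of $\supp(\sigma)$ contributes roughly its six containing plaquettes to $\plaqset$, and the overlap is controlled because two distinct edges lie in at most one common plaquette, with minimality used to rule out large-scale cancellation.
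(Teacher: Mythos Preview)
Your reduction to the existence of a closed $2$-form $q$ with $\supp(q) = \plaqset$ is exactly the paper's starting point, and your minimum-degree-$4$ observation (each plaquette lies in four $3$-cells that pairwise share only that plaquette, so each such $3$-cell must contain a second plaquette of $\plaqset$) is the natural way to begin proving the combinatorial fact that the paper simply \emph{asserts}: namely, that for $|\plaqset| \leq 5$ (or $|\plaqset| = 6$ not of the form $P(e)$) there is always a $3$-cell meeting $\plaqset$ in a single plaquette. The paper offers no justification beyond ``there must exist''; your attempt is therefore more, not less, rigorous in intent.

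That said, your casework for $|\plaqset| \in \{5,6\}$ has genuine gaps. The parenthetical ``at most two of the edges can point in a given direction, making five edges among three directions impossible'' does not work as stated: $2+2+1 = 5$, so the direction count alone cannot exclude the star case, and one must push further into the geometry (e.g.\ analyzing which pairs of parallel edges can share a plaquette while simultaneously both sharing plaquettes with a transverse edge). In the triangle case your claim that all five plaquettes agree in the remaining coordinate tacitly assumes that no two plaquettes with the \emph{same} direction pair share a $3$-cell spanning the fourth direction; this also needs to be checked. The size-$6$ analysis is still more compressed (``one checks'' is doing real work). So while the overall architecture is sound and your degree argument cleanly handles $|\plaqset| \leq 4$, the borderline cases are where the entire content lies, and neither of your two proposed routes---direct casework or the minimal-$\sigma$ argument---is actually carried through. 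The paper sidesteps all of this by treating the combinatorial fact as evident; if you want a self-contained proof you will have to complete the finite check or make the minimal-$\sigma$ bound $|\supp(d\sigma)| \geq 7$ for $|\supp(\sigma)| \geq 2$ precise (which in turn requires controlling cancellation under the gauge freedom $\sigma \mapsto \sigma + dh$).
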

\begin{proof}
I claim that if $|\plaqset| \leq 5$, then there cannot exist $q \in G^\plaquettes$, such that $dq = 0$ and $\supp(q) = \plaqset$. Consequently, $\Phi(\plaqset)$ is an empty sum, and thus is zero. The claim follows because if $|\plaqset| \leq 5$, then there must exist a 3-cell $c$ in $\Z^4$ which contains exactly one element of $\plaqset$, call it $p$. By assumption, $c$ is contained in $\lbox$. Note $(dq)_c = \pm q_p$. But if $\supp(q) = \plaqset$, then $q_p \neq 0$, and thus $dq \neq 0$.

Similarly, if $|\plaqset| = 6$, and if $\plaqset$ is not a minimal vortex, then there must exist a 3-cell $c$ which contains exactly one element of $\plaqset$. Then proceed as before.
\end{proof}

Recall now equation \eqref{eq:delta-g-def-abelian}, which gives the formula for $\Delta_G$ in the Abelian case. Define
\[ \alpha_\beta \label{notation:alpha-beta-abelian} := (\abs{G} - 1)e^{-\beta \Delta_G}. \]

\begin{lemma}\label{lemma:activity-bound}
For any plaquette set $\plaqset \sse \plaquettes$, we have
\[ \Phi(\plaqset) \leq \alpha_\beta^{\abs{\plaqset}}. \]
\end{lemma}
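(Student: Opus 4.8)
The plan is to prove the bound by a crude but sufficient two-step estimate: bound each individual summand in the definition of $\Phi(\plaqset)$, then bound the number of summands, and multiply. Recall that
\[ \Phi(\plaqset) = \sum_{\substack{q \in G^\plaquettes \\ dq = 0 \\ \supp(q) = \plaqset}} \prod_{p \in \plaqset} \varphi_\beta(q_p), \]
so it suffices to control the typical size of $\prod_{p \in \plaqset} \varphi_\beta(q_p)$ and the cardinality of the index set.

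For a fixed $q$ in the range of summation, the constraint $\supp(q) = \plaqset$ forces $q_p \neq 0$ for every $p \in \plaqset$. By the definition of $\Delta_G$ in the Abelian case (equation \eqref{eq:delta-g-def-abelian}), every nonzero group element $g$ satisfies $\Re(1 - \chi(g)) \geq \Delta_G$, hence $\varphi_\beta(q_p) = \exp(-\beta \Re(1 - \chi(q_p))) \leq e^{-\beta \Delta_G}$ for each $p \in \plaqset$. Taking the product over the $\abs{\plaqset}$ plaquettes of $\plaqset$ gives $\prod_{p \in \plaqset} \varphi_\beta(q_p) \leq e^{-\beta \Delta_G \abs{\plaqset}}$, uniformly over all admissible $q$.

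Next I would count the admissible $q$. Any such $q$ vanishes outside $\plaqset$ and takes a value in $G \setminus \{0\}$ at each plaquette of $\plaqset$, so it is specified by a choice of one of $\abs{G} - 1$ nonzero elements at each of the $\abs{\plaqset}$ plaquettes; discarding the closedness constraint $dq = 0$ only enlarges the collection, so there are at most $(\abs{G} - 1)^{\abs{\plaqset}}$ admissible $q$. Combining the two estimates yields
\[ \Phi(\plaqset) \leq (\abs{G} - 1)^{\abs{\plaqset}} e^{-\beta \Delta_G \abs{\plaqset}} = \big((\abs{G}-1) e^{-\beta \Delta_G}\big)^{\abs{\plaqset}} = \alpha_\beta^{\abs{\plaqset}}, \]
which is the claim. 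There is no real obstacle here; the only points requiring care are that $\varphi_\beta$ is defined through the \emph{real part} of $1 - \chi(g)$ so that the per-plaquette bound uses precisely \eqref{eq:delta-g-def-abelian}, and that the value $\varphi_\beta(0) = 1$ is irrelevant since no $q_p$ with $p \in \plaqset$ equals $0$. One could keep the $dq = 0$ constraint to shave the constant, but this is unnecessary for the present purpose.
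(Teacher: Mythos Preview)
Your proof is correct and follows essentially the same approach as the paper: bound each summand by $e^{-\beta\Delta_G|\plaqset|}$ using that $q_p\neq 0$ on $\plaqset$, then bound the number of terms by $(|G|-1)^{|\plaqset|}$ by ignoring the constraint $dq=0$. The paper's remark after the lemma even echoes your closing observation that the bound could in principle be sharpened by retaining the closedness constraint.
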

\begin{proof}
For any $q \in G^\plaquettes$ such that $dq = 0$, $\supp(q) = \plaqset$, we have
\[ \prod_{p \in \plaqset} \varphi_\beta(q_p) \leq e^{-\abs{\plaqset} \beta \Delta_G}. \]
To finish, observe that since $q_p \neq 0$ if and only if $p \in \plaqset$, we have at most $(\abs{G} - 1)^{\abs{\plaqset}}$ possible choices of $q$. 
\end{proof}

\begin{remark}
The bound in Lemma \ref{lemma:activity-bound} may be fairly loose, since when bounding the number of possible $q$ such that $dq = 0, \supp(q) = \plaqset$, we did not even use the restriction $dq = 0$.
\end{remark}

Let $\mc{P}_\lbox$\label{notation:P-lbox} be the set of nonempty vortices $\vortex \sse \plaquettes$.
We will eventually apply a result of \cite{AL2005}, so to make our notation more in line with the notation of \cite{AL2005}, define the function $z^\beta : \mc{P}_\lbox \ra \mc{C}$ as follows:
\[ z^\beta(\vortex) := \Phi(\vortex). \]

To motivate the next definition, first observe that starting from Lemma \ref{lemma:abelian-cluster-expansion-setup}, we may write 
\[ Z_{\lbox, \beta}
= N_1\Bigg(1 +  \sum_{n \geq 1} \sum_{\{\vortex_1, \ldots, \vortex_n\} \sse \mc{P}_\lbox} \prod_{i=1}^n z^\beta(\vortex_i) \prod_{1 \leq i < j \leq n} \ind(\vortex_i \sim \vortex_j) \Bigg). \]
Here we have used Lemma \ref{lemma:activity_decomp}, and the fact that any nonempty plaquette set $\plaqset \sse \plaquettes$ may be decomposed uniquely into a collection of compatible vortices $\vortex_1, \ldots, \vortex_n$, and moreover, by definition, vortices $\vortex_1, \ldots, \vortex_n$ are compatible if and only if they are pairwise compatible.  

Now for a function $z: \mc{P}_\lbox \ra \C$, and $\mc{P} \sse \mc{P}_\lbox$, define
\beq\label{eq:partition-function-gas-of-particles-abelian} \Xi_\mc{P}(z) := 1 + \sum_{n \geq 1} \sum_{\{\vortex_1, \ldots, \vortex_n\} \sse \mc{P}} z(\vortex_1) \cdots z(\vortex_n) \prod_{1 \leq i < j \leq n} \ind(\vortex_i \sim \vortex_j). \eeq \label{notation:Xi-P-abelian}
Observe
\[ Z_{\lbox, \beta} = N_1 \Xi_{\mc{P}_\lbox}(z^\beta). \]
Furthermore, as we will soon see, we may express the probability distribution of $\plaqset(\Sigma)$ in terms of $\Xi_{\mc{P}}(z)$.

Given a vortex $\vortex \in \mc{P}_\lbox$, define the set $N_\lbox(\vortex) := \{\vortex' \in \mc{P}_\lbox : \vortex' \nsim \vortex\}$. Now for a collection of vortices $\vortex_1, \ldots, \vortex_{n_0}$, let
\[\label{notation:N-lbox} N_\lbox(\vortex_1, \ldots, \vortex_{n_0}) := \bigcup_{i=1}^{n_0} N_\lbox(\vortex_i). \]

\begin{lemma}\label{lemma:prob-vortices-appear}
For compatible vortices $\vortex_1, \ldots, \vortex_{n_0}$, we have
\[\p(\vortex_1, \ldots, \vortex_{n_0} \text{ are vortices of } \plaqset(\Sigma)) = \Bigg(\prod_{i=1}^{n_0} z^\beta(\vortex_i) \Bigg)\frac{\Xi_{\mc{P}_\lbox \backslash N_\lbox(\vortex_1, \ldots, \vortex_n)}(z^\beta)}{\Xi_{\mc{P}_\lbox}(z^\beta)}. \]
\end{lemma}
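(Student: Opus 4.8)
The plan is to reduce this to a direct computation using Lemma \ref{lemma:v-sigma-law}, which already gives $\p(\plaqset(\Sigma) = \plaqset) = \Phi(\plaqset) / \sum_{\plaqset'} \Phi(\plaqset')$, together with the factorization property of $\Phi$ from Lemma \ref{lemma:activity_decomp}. The event that $\vortex_1, \ldots, \vortex_{n_0}$ are \emph{all} vortices of $\plaqset(\Sigma)$ means precisely that $\plaqset(\Sigma) = \vortex_1 \cup \cdots \cup \vortex_{n_0} \cup \plaqset'$, where $\plaqset'$ ranges over all plaquette sets whose vortex decomposition consists of vortices each of which is compatible with every $\vortex_i$ (and $\plaqset'$ is disjoint from the $\vortex_i$, but disjointness is implied by compatibility since a vortex sharing a plaquette with some $\vortex_i$ would merge with it in the decomposition). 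In the language of $\mc{P}_\lbox$, such $\plaqset'$ correspond exactly to finite compatible subcollections $\{\vortex_1', \ldots, \vortex_n'\} \sse \mc{P}_\lbox \setminus N_\lbox(\vortex_1, \ldots, \vortex_{n_0})$.

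First I would write, using Lemma \ref{lemma:v-sigma-law},
\[
\p(\vortex_1, \ldots, \vortex_{n_0} \text{ are vortices of } \plaqset(\Sigma)) = \frac{1}{\sum_{\plaqset' \sse \plaquettes} \Phi(\plaqset')} \sum_{\plaqset} \Phi(\plaqset),
\]
where the inner sum is over $\plaqset$ whose vortex decomposition includes all of $\vortex_1, \ldots, \vortex_{n_0}$. Each such $\plaqset$ is uniquely $\vortex_1 \cup \cdots \cup \vortex_{n_0} \cup \vortex_1' \cup \cdots \cup \vortex_n'$ for a unique (possibly empty) compatible collection of vortices avoiding $N_\lbox(\vortex_1, \ldots, \vortex_{n_0})$; this uses uniqueness of the vortex decomposition. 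Since the whole collection $\vortex_1, \ldots, \vortex_{n_0}, \vortex_1', \ldots, \vortex_n'$ is then pairwise compatible, Lemma \ref{lemma:activity_decomp} gives $\Phi(\plaqset) = \prod_{i} \Phi(\vortex_i) \prod_j \Phi(\vortex_j') = \big(\prod_i z^\beta(\vortex_i)\big) \prod_j z^\beta(\vortex_j')$. Summing over all such compatible subcollections reproduces exactly the definition \eqref{eq:partition-function-gas-of-particles-abelian} of $\Xi_{\mc{P}_\lbox \setminus N_\lbox(\vortex_1, \ldots, \vortex_{n_0})}(z^\beta)$ (including the $n=0$ term, which is the $1$), and the denominator $\sum_{\plaqset'} \Phi(\plaqset')$ is $\Xi_{\mc{P}_\lbox}(z^\beta)$ by the displayed identity for $Z_{\lbox,\beta}$ right before the lemma. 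Factoring $\prod_i z^\beta(\vortex_i)$ out of the numerator yields the claimed formula.

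The main obstacle — really the only non-bookkeeping point — is the bijection between plaquette sets $\plaqset$ containing $\vortex_1, \ldots, \vortex_{n_0}$ among their vortices and compatible subcollections of $\mc{P}_\lbox \setminus N_\lbox(\vortex_1, \ldots, \vortex_{n_0})$, and the verification that the remainder $\plaqset \setminus (\vortex_1 \cup \cdots \cup \vortex_{n_0})$ contributes its vortices precisely as an independent compatible family avoiding $N_\lbox$. This is where one must be careful that ``$\vortex_i$ is a vortex of $\plaqset(\Sigma)$'' — meaning $\vortex_i$ is a full connected component of $G(\plaqset(\Sigma))$, not merely a subset — forces every other vortex to be compatible with each $\vortex_i$ (otherwise two components would be joined), and conversely that compatibility suffices to keep $\vortex_i$ an intact component. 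Once this correspondence is pinned down, the rest is substituting the factorization of $\Phi$ and recognizing the definition of $\Xi$.
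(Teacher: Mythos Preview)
Your proposal is correct and follows essentially the same approach as the paper's own proof: decompose the event as a disjoint union over compatible collections $\{\vortex_1',\ldots,\vortex_n'\}\subseteq \mc{P}_\lbox\setminus N_\lbox(\vortex_1,\ldots,\vortex_{n_0})$, apply Lemma~\ref{lemma:v-sigma-law} termwise, factor $\Phi$ via Lemma~\ref{lemma:activity_decomp}, and recognize the resulting sum as the definition of $\Xi$. Your writeup is more explicit about the bijection and the role of ``vortex of $\plaqset(\Sigma)$'' meaning a full connected component, which is a useful clarification the paper leaves implicit.
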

\begin{proof}
Observe that the event that $\vortex_1, \ldots, \vortex_{n_0}$ are vortices of $\plaqset(\Sigma)$ may be written
\[  \bigcup_{n \geq 0} \bigcup_{\{\vortex_1', \ldots, \vortex_n'\} \sse \mc{P}_\lbox \backslash N_\lbox(\vortex_1, \ldots, \vortex_{n_0})} \{\plaqset(\Sigma) = \vortex_1 \cup \cdots \cup \vortex_{n_0} \cup \vortex_1' \cup \cdots \cup \vortex_{n}'\} .\]
In the inner union over vortices $V_1', \ldots, V_{n}'$, we have the additional restriction that the vortices must be compatible, which due to space restrictions we omit. Now finish by observing that the above union is a disjoint union, applying Lemmas \ref{lemma:v-sigma-law} and \ref{lemma:activity_decomp}, and using the definition of $\Xi_{\mc{P}}(z)$.
\end{proof}

\begin{remark}
It was mentioned in the introduction that the random surface $\plaqset(\Sigma)$ has ``a lot of independence". We can now describe in what sense this is true. We will soon show that for certain subsets $\Gamma \sse \mc{P}_\lbox$, the ratio $\Xi_{\mc{P}_\lbox \backslash \Gamma} / \Xi_{\mc{P}_\lbox}$ is essentially 1 when $\beta$ is large. Combining this with Lemma \ref{lemma:prob-vortices-appear}, we then see that for fixed compatible vortices $\vortex_1, \ldots, \vortex_{n_0}$, the events that $\vortex_i$ appears as a vortex in $\plaqset(\Sigma)$, $1 \leq i \leq n_0$, are essentially independent. In other words, vortices which are not very close to each other appear approximately independently.
\end{remark}

Observe as $z^\beta(\vortex) \geq 0$ for all $\vortex \in \mc{P}_\lbox$, we have
\[\frac{\Xi_{\mc{P}_\lbox \backslash N_\lbox(\vortex_1, \ldots, \vortex_n)}(z^\beta)}{\Xi_{\mc{P}_\lbox}(z^\beta)} \leq 1, \]
and thus we have the following corollary of Lemma \ref{lemma:prob-vortices-appear}.

\begin{cor}\label{cor:vortex-probability-bound}
For compatible vortices $\vortex_1, \ldots, \vortex_{n_0}$, we have
\[ \p(\vortex_1, \ldots, \vortex_{n_0} \text{ are vortices of } \plaqset(\Sigma)) \leq \prod_{i=1}^{n_0} z^\beta(\vortex_i).\]
\end{cor}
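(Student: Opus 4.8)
The plan is to read the result directly off the exact formula in Lemma~\ref{lemma:prob-vortices-appear}, so the only work is to bound a ratio of the polymer partition functions $\Xi$ by $1$. First I would invoke Lemma~\ref{lemma:prob-vortices-appear}: for compatible vortices $\vortex_1, \ldots, \vortex_{n_0}$,
\[ \p(\vortex_1, \ldots, \vortex_{n_0} \text{ are vortices of } \plaqset(\Sigma)) = \Bigg(\prod_{i=1}^{n_0} z^\beta(\vortex_i)\Bigg) \frac{\Xi_{\mc{P}_\lbox \backslash N_\lbox(\vortex_1, \ldots, \vortex_{n_0})}(z^\beta)}{\Xi_{\mc{P}_\lbox}(z^\beta)}, \]
so it suffices to show the ratio on the right-hand side is at most $1$. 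Next I would note that $z^\beta$ is a nonnegative function on $\mc{P}_\lbox$: by definition $z^\beta(\vortex) = \Phi(\vortex)$, and $\Phi(\vortex)$ is a (possibly empty) sum of products of the quantities $\varphi_\beta(q_p) = \exp(-\beta\Re(\chi(1) - \chi(q_p)))$, each of which is strictly positive, hence $\Phi(\vortex) \ge 0$.

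With this in hand I would examine the definition \eqref{eq:partition-function-gas-of-particles-abelian} of $\Xi_{\mc{P}}(z)$: every summand is a product of values of $z^\beta$ together with $\{0,1\}$-valued compatibility indicators, hence nonnegative, and the $n = 0$ term contributes $1$. Consequently $\Xi_{\mc{P}}(z^\beta) \ge 1 > 0$ for every $\mc{P} \sse \mc{P}_\lbox$, and the map $\mc{P} \mapsto \Xi_{\mc{P}}(z^\beta)$ is monotone nondecreasing under set inclusion, since enlarging the index set only introduces additional nonnegative terms to the sum. Applying this with $\mc{P}_\lbox \backslash N_\lbox(\vortex_1, \ldots, \vortex_{n_0}) \sse \mc{P}_\lbox$ shows the ratio above is at most $1$, and combining this with the displayed identity proves the corollary.

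There is no substantive obstacle: the statement is a direct corollary of Lemma~\ref{lemma:prob-vortices-appear}. The only point requiring (minor) care is the monotonicity of $\Xi_{\mc{P}}(z^\beta)$ in $\mc{P}$, which rests precisely on the nonnegativity of both the activities $z^\beta = \Phi$ and the compatibility indicators appearing in \eqref{eq:partition-function-gas-of-particles-abelian}.
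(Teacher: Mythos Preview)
Your proposal is correct and follows exactly the paper's approach: the paper observes that since $z^\beta(\vortex) \geq 0$ for all $\vortex \in \mc{P}_\lbox$, the ratio $\Xi_{\mc{P}_\lbox \backslash N_\lbox(\vortex_1, \ldots, \vortex_{n_0})}(z^\beta) / \Xi_{\mc{P}_\lbox}(z^\beta)$ is at most $1$, and then applies Lemma~\ref{lemma:prob-vortices-appear}. Your added details (positivity of each $\varphi_\beta$, the $n=0$ term ensuring $\Xi_{\mc{P}} \ge 1$, monotonicity in $\mc{P}$) simply make explicit what the paper leaves to the reader.
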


We now have enough to prove Lemma \ref{lemma:only-minimal-vortices-contribute-probability-bound}. The proof is essentially a Peierls argument.

\begin{proof}[Proof of Lemma \ref{lemma:only-minimal-vortices-contribute-probability-bound}]
By Lemma \ref{lemma:vortex-contained-in-cube}, any vortex of size $m$ which is not on the boundary of $\lbox$ is contained in the interior of a cube of side length  at most $m+2$. Combining this with Lemma \ref{lemma:vortex-on-boundary-must-be-large-to-contribute}, we have that on the event $E^c$, there must exist some vortex $\vortex$ of $\plaqset(\Sigma)$ which is not a minimal vortex, such that either (1) $\vortex$ is not on the boundary of $\lbox$, and $\vortex$ contains a plaquette of $\plaquettes \backslash S_\wloop^{|V|}$, or (2) there is a plaquette of $\vortex$ on the boundary of $\lbox$, and $|\vortex| \geq L$. By Lemmas \ref{lemma:vortex-combinatorial-bound} and \ref{lemma:contributing-plaquette-bound}, the number of vortices of size $m$ of the first type is at most $48 \cdot 7^4 \ell m^4 (\vbdconst)^m$. Recall $N$ is the side length of $\lbox$, so that the boundary of $\lbox$ has at most $48 \cdot (N+1)^4$ plaquettes. Using this and Lemma \ref{lemma:vortex-combinatorial-bound}, the number of vortices of size $m' \geq L$ of the second type is at most $48 (N+1)^4 (\vbdconst)^{m'}$.

Now by Corollary \ref{cor:vortex-probability-bound} and Lemma \ref{lemma:activity-bound}, we have that for any vortex $\vortex$ of size $m$,
\[ \p(\vortex \text{ is a vortex of $\plaqset(\Sigma)$}) \leq z^\beta(\vortex) \leq \alpha_\beta^m. \]
Moreover, observe that if $\vortex$ contributes, and $\vortex$ has size at most 6, then by the assumption that $L \geq 50$, we have that the vortex $\vortex$ is far enough away from the boundary of $\lbox$, such that the condition of Lemma \ref{lemma:smaller-than-minimal-vortices-prob-0} is satisfied. Thus if we further assume that $\vortex$ is not a minimal vortex, then we must have $z^\beta(\vortex) = 0$. Thus we may consider only vortices with size at least 7. Now by a union bound, we have
\begin{align*}
\p(E^c) &\leq 48 \cdot 7^4 \ell \sum_{m=7}^\infty m^4 (\vbdconst)^m \alpha_\beta^m + 48 (N+1)^4 \sum_{m' = L}^\infty (\vbdconst)^{m'} \alpha_\beta^{m'} \\
&\leq 48 \cdot 7^4 \ell \sum_{m=7}^\infty  (80e\alpha_\beta)^m  + 48 \cdot 2^4 N^4 \frac{(\vbdconst\alpha_\beta)^{L}}{1 - \vbdconst \alpha_\beta}\\
&= 48 \cdot 7^4 \cdot (80e)^7 \ell (\abs{G} - 1)^7 \frac{e^{-7 \beta \Delta_G}}{1 - 80e \alpha_\beta} + 48 \cdot 2^4 N^4 \frac{(\vbdconst\alpha_\beta)^{L}}{1 - \vbdconst\alpha_\beta}.
\end{align*}
Note $e^{-6\beta \Delta_G} \leq r_\beta$, and the assumption on $\beta$ in the lemma statement implies
\[ 80e \alpha_\beta \leq 1/2,\]
\[2 \cdot 48 \cdot 7^4 \cdot (80e)^7 (\abs{G} - 1)^7 \leq e^{\beta \Delta_G / 2},  \]
\[ 2 \cdot 48 \cdot 2^4 \cdot (\vbdconst\alpha_\beta)^L \leq (1/2) e^{-\beta L\Delta_G / 2}. \qedhere\]
\end{proof}

We will need a more general version of Lemma \ref{lemma:prob-vortices-appear}, to be described next. Given a vortex $\vortex$, let $F_\vortex$ be the event that $\vortex$ is a vortex of $\plaqset(\Sigma)$. Given vortices $\vortex_1, \ldots, \vortex_{n_0}$, $\vortex_1', \ldots, \vortex_{k_0}'$, observe that
\[ \bigcap_{i=1}^{n_0} F_{\vortex_i} \cap \bigcap_{j=1}^{k_0} F_{\vortex_j'}^c \]
is the event every vortex $\vortex_i$, $1 \leq i \leq n_0$ is a vortex of $\plaqset(\Sigma)$, and no vortex $\vortex_j'$, $1 \leq j \leq k_0$ is a vortex of $\plaqset(\Sigma)$. For ease of notation, for $\Gamma, \mc{P} \sse \mc{P}_\lbox$, and $z : \mc{P}_\lbox \ra \C$, define the ratio of partition functions (often called the reduced correlations) \label{notation:rho-P-abelian}
\[  \rho_{\mc{P}}(z, \Gamma) := \frac{\Xi_{\mc{P} \backslash \Gamma}(z)}{\Xi_{\mc{P}}(z)}.\]
By essentially the same proof as Lemma \ref{lemma:prob-vortices-appear}, we may obtain the following lemma.

\begin{lemma}\label{lemma:vortex-appear-not-appear-probability}
For compatible vortices $\vortex_1, \ldots, \vortex_{n_0}$, and another collection of (possibly non-compatible) vortices $\vortex_1', \ldots, \vortex_{k_0}'$, we have
\[\begin{split}
\p\bigg(\bigcap_{i=1}^{n_0} F_{\vortex_i} \cap \bigcap_{j=1}^{k_0} F_{\vortex_j'}^c \bigg) &=
\Bigg(\prod_{i=1}^{n_0} z^\beta(\vortex_i)\Bigg) \times \\
&\rho_{\mc{P}_\lbox}(z^\beta, N_\lbox(\vortex_1, \ldots, \vortex_{n_0}) \cup \{\vortex_1', \ldots, \vortex_{k_0}'\}) .
\end{split}\]
\end{lemma}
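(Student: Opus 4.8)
The plan is to repeat the proof of Lemma \ref{lemma:prob-vortices-appear} almost verbatim, the only change being that the forbidden events $F_{\vortex_j'}^c$ further shrink the index set over which the ``extra'' vortices are allowed to range. Concretely, write $\Gamma := N_\lbox(\vortex_1, \ldots, \vortex_{n_0}) \cup \{\vortex_1', \ldots, \vortex_{k_0}'\}$. I first claim that the event $\bigcap_{i=1}^{n_0} F_{\vortex_i} \cap \bigcap_{j=1}^{k_0} F_{\vortex_j'}^c$ equals the disjoint union, over $n \geq 0$ and over compatible collections $\{\vortex_1'', \ldots, \vortex_n''\} \sse \mc{P}_\lbox \bs \Gamma$, of the events $\{\plaqset(\Sigma) = \vortex_1 \cup \cdots \cup \vortex_{n_0} \cup \vortex_1'' \cup \cdots \cup \vortex_n''\}$. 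This holds because the vortex decomposition of any plaquette set is unique (so the union is disjoint and each realization of $\plaqset(\Sigma)$ lies in at most one piece), because requiring all of $\vortex_1, \ldots, \vortex_{n_0}$ to be vortices of $\plaqset(\Sigma)$ forces $\plaqset(\Sigma)$ to be their union together with some further vortices each compatible with every $\vortex_i$ (equivalently, lying outside $N_\lbox(\vortex_1, \ldots, \vortex_{n_0})$), and because forbidding each $\vortex_j'$ to be a vortex of $\plaqset(\Sigma)$ means precisely that none of the extra vortices may equal a $\vortex_j'$ (a $\vortex_j'$ incompatible with some $\vortex_i$ is already excluded automatically, while a $\vortex_j'$ compatible with every $\vortex_i$ is removed by the explicit deletion of $\{\vortex_1', \ldots, \vortex_{k_0}'\}$); the two constraints together are exactly ``lying in $\mc{P}_\lbox \bs \Gamma$''.

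Granting the claim, I would sum the probabilities of the pieces using Lemma \ref{lemma:v-sigma-law}, which gives $\p(\plaqset(\Sigma) = \plaqset) = \Phi(\plaqset)/\Xi_{\mc{P}_\lbox}(z^\beta)$ after the $N_1$ cancellation, together with Lemma \ref{lemma:activity_decomp}, which factors $\Phi(\vortex_1 \cup \cdots \cup \vortex_{n_0} \cup \vortex_1'' \cup \cdots \cup \vortex_n'') = \prod_{i=1}^{n_0} z^\beta(\vortex_i) \prod_{t=1}^n z^\beta(\vortex_t'')$ (recall $z^\beta = \Phi$ on vortices, and that the $\vortex_i$ and $\vortex_t''$ are mutually compatible by construction). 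Factoring out $\prod_{i=1}^{n_0} z^\beta(\vortex_i)$ and $1/\Xi_{\mc{P}_\lbox}(z^\beta)$, what remains is $\sum_{n \geq 0} \sum_{\{\vortex_1'', \ldots, \vortex_n''\} \sse \mc{P}_\lbox \bs \Gamma} \prod_{t=1}^n z^\beta(\vortex_t'') \prod_{s < t} \ind(\vortex_s'' \sim \vortex_t'')$, which is exactly the definition \eqref{eq:partition-function-gas-of-particles-abelian} of $\Xi_{\mc{P}_\lbox \bs \Gamma}(z^\beta)$. Dividing through by $\Xi_{\mc{P}_\lbox}(z^\beta)$ turns this into $\rho_{\mc{P}_\lbox}(z^\beta, \Gamma)$, yielding the asserted identity.

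The only genuinely delicate point, and hence the step I would be most careful about, is the set-bookkeeping in the claim: checking that imposing ``compatible with every $\vortex_i$'' and ``distinct from every $\vortex_j'$'' on the extra vortices is equivalent to restricting them to $\mc{P}_\lbox \bs \Gamma$, with no piece of the union double-counted or omitted. (One should also note the harmless degeneracy that if some $\vortex_j'$ coincides with some $\vortex_i$ the left-hand event is empty; this case does not arise in the intended applications, so I would simply exclude it.) Everything else is a direct transcription of the argument for Lemma \ref{lemma:prob-vortices-appear}.
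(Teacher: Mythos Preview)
Your proposal is correct and follows exactly the approach the paper indicates: the paper's own proof consists of the single sentence ``By essentially the same proof as Lemma \ref{lemma:prob-vortices-appear}, we may obtain the following lemma,'' and you have simply spelled out what that means. Your bookkeeping for the set $\Gamma$ and your observation about the degenerate case where some $\vortex_j'$ coincides with some $\vortex_i$ are both on point (the paper tacitly excludes this case, as it never arises in the applications).
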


For $z : \mc{P}_\lbox \ra \C$, $\vortex \in \mc{P} \sse \mc{P}_\lbox$, define
\[ \Theta^{\mc{P}}_\vortex(z) := -\log \rho_{\mc{P}}(z, \{\vortex\}) = \log \Xi_{\mc{P}}(z) - \log \Xi_{\mc{P} \backslash \{\vortex\}}(z).  \]
We now quote the following cluster expansion result from \cite{BFP2010}, adapted to our situation.

\begin{theorem}[Theorem 2.1 of \cite{BFP2010}]\label{thm:dobrushins-cluster-expansion}
Suppose $\nu : \mc{P}_\lbox \ra \R$, and moreover $\nu$ is nonnegative, i.e. $\nu(\vortex) \geq 0$ for all $\vortex \in \mc{P}_\lbox$. Suppose that for all $\vortex \in \mc{P}_\lbox$, we have
\[ z^\beta(\vortex) \prod_{\vortex' \in N_\lbox(\vortex)} (1 + \nu(\vortex')) \leq \nu(\vortex). \]
Then for any $\vortex\in \mc{P} \sse \mc{P}_\lbox$, we have
\beq\label{eq:dobrushin-conclusion-1} |\Theta^{\mc{P}}_\vortex(z^\beta)| \leq \log (1 + \nu(\vortex)),\eeq
\beq\label{eq:dobrushin-conclusion-2} |\Theta^{\mc{P}}_\vortex(z^\beta) - z^\beta(\vortex)| \leq \log(1 + \nu(\vortex)) - z^\beta(\vortex). \eeq
\end{theorem}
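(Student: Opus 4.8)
The statement to prove is Theorem~\ref{thm:dobrushins-cluster-expansion}, the Dobrushin-style cluster expansion criterion adapted to our gas of vortices. Since the paper explicitly cites this as Theorem~2.1 of \cite{BFP2010}, the only work here is to check that our setting fits the hypotheses of the cited theorem and to translate notation. The plan is therefore to \emph{verify the abstract framework applies}, not to reprove the cluster expansion from scratch.

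First I would recall the abstract setup of \cite{BFP2010}: one has a set of ``polymers'' with a symmetric binary ``incompatibility'' relation, a complex-valued activity function, and one studies the polymer partition function $\Xi$ together with its restrictions to subsets of polymers. In our case the polymers are the nonempty vortices $\mc{P}_\lbox$, the incompatibility relation is $\nsim$ (which is symmetric by definition, and a vortex is always incompatible with itself since $G(\vortex)$ is connected hence has an edge when $|\vortex|\geq 2$, and for singletons one checks $P\nsim P$ directly because any plaquette shares a $3$-cell with itself), the activity is $z^\beta(\vortex) = \Phi(\vortex)\geq 0$, and the partition functions $\Xi_\mc{P}(z)$ defined in \eqref{eq:partition-function-gas-of-particles-abelian} are exactly the polymer partition functions of \cite{BFP2010}. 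The neighborhoods $N_\lbox(\vortex)$ are precisely the incompatibility neighborhoods used in the Dobrushin condition. Thus the hypothesis $z^\beta(\vortex)\prod_{\vortex'\in N_\lbox(\vortex)}(1+\nu(\vortex')) \leq \nu(\vortex)$ is verbatim the Dobrushin condition of \cite{BFP2010}, and $\Theta^\mc{P}_\vortex(z^\beta) = -\log\rho_\mc{P}(z^\beta,\{\vortex\})$ is their ``one-point'' quantity controlling the effect of removing a single polymer.

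Next I would simply invoke Theorem~2.1 of \cite{BFP2010}, whose conclusions are exactly \eqref{eq:dobrushin-conclusion-1} and \eqref{eq:dobrushin-conclusion-2} after the notational dictionary above; in particular \cite{BFP2010} gives convergence of the cluster expansion for $\log\Xi_\mc{P}$ uniformly in $\mc{P}$ together with the stated bounds on the increments, so no additional estimate is needed on our end. One small point to address explicitly: the hypothesis of \cite{BFP2010} is typically stated for real nonnegative activities (or more generally with $|z|$ in place of $z$); here $z^\beta = \Phi \geq 0$ is genuinely nonnegative by its definition as a sum of $\prod\varphi_\beta(q_p) > 0$, so the real-activity version applies directly, which is why we may write the inequalities without absolute values around $z^\beta(\vortex)$ on the right-hand side of \eqref{eq:dobrushin-conclusion-2}.

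The main (and essentially only) obstacle is bookkeeping: making sure that the finiteness of $\mc{P}_\lbox$, the self-incompatibility of each vortex, and the symmetry of $\nsim$ are all consistent with the hypotheses of \cite{BFP2010}, and that ``$\Xi_{\mc{P}\backslash\{\vortex\}}$'' and our $\rho_\mc{P}$ match their reduced-correlation notation. None of this requires new ideas; it is purely a matter of confirming the translation, so the proof can reasonably be a one- or two-line citation once the dictionary is in place.
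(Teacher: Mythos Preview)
Your approach is essentially the same as the paper's: this theorem is quoted from \cite{BFP2010}, so the only ``proof'' is to match notation and cite. Your dictionary (polymers $=$ vortices in $\mc{P}_\lbox$, incompatibility $=$ $\nsim$, activities $=$ $z^\beta = \Phi \geq 0$, partition functions $=$ $\Xi_\mc{P}$) is exactly right, and the observation that $z^\beta \geq 0$ justifies dropping absolute values is a nice touch.

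There is one small inaccuracy worth flagging. You write that Theorem~2.1 of \cite{BFP2010} gives \emph{both} \eqref{eq:dobrushin-conclusion-1} and \eqref{eq:dobrushin-conclusion-2} directly. In fact, as the paper notes in the remark following the theorem, Theorem~2.1 of \cite{BFP2010} as stated only yields \eqref{eq:dobrushin-conclusion-1}. The refined estimate \eqref{eq:dobrushin-conclusion-2} --- which controls the deviation of $\Theta^\mc{P}_\vortex$ from its leading term $z^\beta(\vortex)$ --- requires combining \eqref{eq:dobrushin-conclusion-1} with the explicit cluster-expansion identities, specifically equations~(2.9) and~(2.13) of \cite{BFP2010}. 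So your plan should be amended to say: invoke Theorem~2.1 of \cite{BFP2010} for \eqref{eq:dobrushin-conclusion-1}, and then combine it with equations~(2.9) and~(2.13) of \cite{BFP2010} to extract \eqref{eq:dobrushin-conclusion-2}. This is not a serious gap, but it is a point where a referee might ask for clarification.
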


\begin{remark}
Technically, Theorem 2.1 of \cite{BFP2010} only gives \eqref{eq:dobrushin-conclusion-1}. To obtain \eqref{eq:dobrushin-conclusion-2}, combine \eqref{eq:dobrushin-conclusion-1} with equations (2.9) and (2.13) of \cite{BFP2010}.
\end{remark}


Theorem \ref{thm:dobrushins-cluster-expansion} allows us to obtain the following bounds on $\Theta^{\mc{P}}_\vortex$ at large $\beta$.

\begin{prop}\label{prop:cluster-expansion-convergence}
For $b > 0$, and $\beta$ large such that
\[ \betaexprone{b} < 1,\]
\[ \betaexprtwo{b} \leq b, \]
we have that for any $\vortex \in \mc{P} \sse \mc{P}_\lbox$,
\[  |\Theta^{\mc{P}}_\vortex(z^\beta)| \leq \log (1 + e^{b \abs{\vortex}} z^\beta(\vortex)) , \]
and
\[ |\Theta^{\mc{P}}_\vortex(z^\beta) - z^\beta(\vortex)| \leq \log (1 + e^{b \abs{\vortex}} z^\beta(\vortex)) - z^\beta(\vortex) \leq (e^{b \abs{\vortex}} - 1) z^\beta(\vortex).\]
\end{prop}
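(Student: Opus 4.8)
The plan is to apply Theorem~\ref{thm:dobrushins-cluster-expansion} with a carefully chosen nonnegative function $\nu$, namely $\nu(\vortex) := e^{b|\vortex|} z^\beta(\vortex)$ for the given $b > 0$. Once this choice is verified to satisfy the hypothesis of Theorem~\ref{thm:dobrushins-cluster-expansion}, both conclusions of the proposition follow immediately from \eqref{eq:dobrushin-conclusion-1} and \eqref{eq:dobrushin-conclusion-2}, except for the final inequality $\log(1+x) - x \leq$ (something), which I address below. So the heart of the argument is to check that
\[ z^\beta(\vortex) \prod_{\vortex' \in N_\lbox(\vortex)} \bigl(1 + e^{b|\vortex'|} z^\beta(\vortex')\bigr) \leq e^{b|\vortex|} z^\beta(\vortex) \]
for every $\vortex \in \mc{P}_\lbox$; dividing through by $z^\beta(\vortex)$ (harmless since if $z^\beta(\vortex) = 0$ the inequality is trivial), it suffices to show
\[ \prod_{\vortex' \in N_\lbox(\vortex)} \bigl(1 + e^{b|\vortex'|} z^\beta(\vortex')\bigr) \leq e^{b|\vortex|}. \]

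First I would take logarithms and use $\log(1+x) \leq x$, reducing the task to
\[ \sum_{\vortex' \in N_\lbox(\vortex)} e^{b|\vortex'|} z^\beta(\vortex') \leq b|\vortex|. \]
Next I would estimate the left-hand side by summing over the size $m = |\vortex'|$ of incompatible vortices. By Lemma~\ref{lemma:neighbor-vortex-combinatorial-bound}, the number of vortices $\vortex'$ of size $m$ with $\vortex' \nsim \vortex$ is at most $21|\vortex|(\vbdconst)^m$, and by Lemma~\ref{lemma:activity-bound} each contributes $z^\beta(\vortex') \leq \alpha_\beta^m$. Hence
\[ \sum_{\vortex' \in N_\lbox(\vortex)} e^{b|\vortex'|} z^\beta(\vortex') \leq 21|\vortex| \sum_{m \geq 1} (\vbdconst)^m e^{bm} \alpha_\beta^m = 21|\vortex| \sum_{m \geq 1} \bigl(20e^{b+1}\alpha_\beta\bigr)^m. \]
Under the hypothesis $\betaexprone{b} = 20e^{b+1}\alpha_\beta < 1$, the geometric series converges and equals $\betaexprone{b}/(1 - \betaexprone{b})$, so the whole expression is at most $21|\vortex| \cdot \betaexprone{b}/(1-\betaexprone{b}) = |\vortex| \cdot \betaexprtwo{b}$ (checking the constant: $21 \cdot 20 = 420$). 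The second hypothesis $\betaexprtwo{b} \leq b$ then gives exactly $\sum_{\vortex' \in N_\lbox(\vortex)} e^{b|\vortex'|} z^\beta(\vortex') \leq b|\vortex|$, as required. Finally, for the last inequality in the statement, I would use the elementary bound $\log(1+x) - x \leq 0 \leq$ --- wait, that has the wrong sign; instead note that $\Theta^{\mc{P}}_\vortex(z^\beta) - z^\beta(\vortex)$ need not be controlled that way, so I would instead observe that from \eqref{eq:dobrushin-conclusion-2} we get $|\Theta^{\mc{P}}_\vortex(z^\beta) - z^\beta(\vortex)| \leq \log(1+\nu(\vortex)) - z^\beta(\vortex)$ and then bound $\log(1+\nu(\vortex)) = \log(1 + e^{b|\vortex|}z^\beta(\vortex)) \leq e^{b|\vortex|}z^\beta(\vortex)$ using $\log(1+x)\leq x$, so that $\log(1+\nu(\vortex)) - z^\beta(\vortex) \leq (e^{b|\vortex|}-1)z^\beta(\vortex)$.

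I do not expect any serious obstacle here; the argument is a routine verification of a Kotecký--Preiss / Dobrushin-type convergence criterion. The one point requiring a little care is bookkeeping the constants so that the combinatorial factor $21 \cdot \vbdconst = 420$ and the exponential weight $e^{b}$ combine to give exactly the quantities $\betaexprone{b}$ and $\betaexprtwo{b}$ appearing in the hypotheses; getting this matching right is what dictates the specific form of those two threshold expressions. A secondary point is making sure the $z^\beta(\vortex) = 0$ case is handled (trivially) so that the division step is legitimate.
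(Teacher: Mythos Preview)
Your proposal is correct and follows essentially the same route as the paper: define $\nu(\vortex) = e^{b|\vortex|} z^\beta(\vortex)$, reduce the Dobrushin condition via $\log(1+x)\le x$ to the sum $\sum_{\vortex'\in N_\lbox(\vortex)} e^{b|\vortex'|} z^\beta(\vortex')\le b|\vortex|$, and bound this using Lemmas~\ref{lemma:neighbor-vortex-combinatorial-bound} and~\ref{lemma:activity-bound} together with the two hypotheses on $\beta$. Your handling of the final elementary inequality $\log(1+\nu(\vortex))-z^\beta(\vortex)\le (e^{b|\vortex|}-1)z^\beta(\vortex)$ is also correct (the paper leaves this step implicit).
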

\begin{proof}
To apply Theorem \ref{thm:dobrushins-cluster-expansion}, define $\nu^\beta(\vortex) := z^\beta(\vortex) e^{b \abs{\vortex}}$. To verify the condition of the theorem, it suffices to show that for all $\vortex \in \mc{P}_\lbox$, we have
\[ z^\beta(\vortex) \exp(\sum_{\vortex' \in N_\lbox(\vortex)} z^\beta(\vortex') e^{b \abs{\vortex'}}) \leq z^\beta(\vortex) e^{b \abs{\vortex}},  \]
or recalling Lemma \ref{lemma:activity-bound},
\[ \sum_{\vortex' \in N_\lbox(\vortex)} (e^b \alpha_\beta)^{\abs{\vortex'}} \leq b \abs{\vortex}.\]
Applying Lemma \ref{lemma:neighbor-vortex-combinatorial-bound}, and using the assumption $\betaexprone{b} < 1$ to sum the resulting geometric series, we obtain
\begin{align*}
\sum_{\vortex' \in N_\lbox(\vortex)} (e^b \alpha_\beta)^{\abs{\vortex'}} &\leq \sum_{m=1}^\infty \sum_{\substack{\vortex' \in N_\lbox(\vortex) \\ \abs{\vortex'} = m}}  (e^b \alpha_\beta)^m \\
&\leq \abs{\vortex} \betaexprtwo{b}\\
&\leq b \abs{\vortex}. \qedhere
\end{align*}
\end{proof}

\begin{lemma}\label{lemma:partition-function-ratio-minus-neighborhood-minimal-vortex}
For $0 < \varep \leq 1$, take $b_\varep := \frac{1}{7} \log(1 + \varep)$, so that
$e^{7b_\varep} - 1 = \varep$.
Suppose $\beta$ is large enough so that
\[ \betaexprone{b_\varep} \leq \frac{1}{2},\]
\[ \betaexprtwo{b_\varep} \leq b_\varep, \]
\[ (2 \cdot 126 \cdot (\vbdconst)^7 \cdot 2) \alpha_\beta^7 \leq \varep r_\beta. \]
Let $\mc{P} \sse \mc{P}_\lbox$, and let $\vortex$ be a minimal vortex. Suppose $\vortex$ is far enough away from the boundary of $\lbox$ such that any cube of side length 25 (say) which contains a plaquette of $\vortex$ is completely contained in $\lbox$. Then for any $\Gamma \sse N_\lbox(\vortex) \cap \mc{P}$, we have
\[ |\rho_{\mc{P}}(z^\beta, \Gamma) -1| \leq 290 r_\beta .\]
Moreover, let $n_0$ be the number of minimal vortices in $\Gamma$. Then
\[ |\log \rho_{\mc{P}}(z^\beta, \Gamma) + n_0 r_\beta| \leq (n_0 + 2) \varep r_\beta. \]
\end{lemma}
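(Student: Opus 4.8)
The plan is to write $\rho_{\mc{P}}(z^\beta,\Gamma)$ as a telescoping product and estimate each factor with Proposition \ref{prop:cluster-expansion-convergence}. Enumerate $\Gamma=\{\vortex_1',\ldots,\vortex_m'\}$ in an arbitrary order and set $\mc{P}_j:=\mc{P}\backslash\{\vortex_1',\ldots,\vortex_{j-1}'\}$ for $1\le j\le m$. Since $z^\beta\ge 0$ forces $\Xi_{\mc{P}_j}(z^\beta)\ge 1>0$, we have $\rho_{\mc{P}}(z^\beta,\Gamma)=\prod_{j=1}^m\big(\Xi_{\mc{P}_j\backslash\{\vortex_j'\}}(z^\beta)/\Xi_{\mc{P}_j}(z^\beta)\big)=\prod_{j=1}^m\rho_{\mc{P}_j}(z^\beta,\{\vortex_j'\})=\prod_{j=1}^m e^{-\Theta^{\mc{P}_j}_{\vortex_j'}(z^\beta)}$, so $-\log\rho_{\mc{P}}(z^\beta,\Gamma)=\sum_{j=1}^m\Theta^{\mc{P}_j}_{\vortex_j'}(z^\beta)$. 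Moreover, removing $\vortex_j'$ from the index set of $\Xi$ deletes only nonnegative terms, so $\rho_{\mc{P}_j}(z^\beta,\{\vortex_j'\})\in(0,1]$ and hence $\Theta^{\mc{P}_j}_{\vortex_j'}(z^\beta)\ge 0$ for every $j$. The three hypotheses on $\beta$ are exactly what is needed to apply Proposition \ref{prop:cluster-expansion-convergence} with $b=b_\varep$ (note $\betaexprone{b_\varep}\le 1/2<1$).

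Next I would split the index set into $M:=\{j:\vortex_j'\text{ is a minimal vortex}\}$, so $|M|=n_0$, and its complement. Every $\vortex_j'$ is incompatible with $\vortex$, hence contains a plaquette sharing a $3$-cell with a plaquette of $\vortex$; as the plaquettes of the minimal vortex $\vortex$ all contain a common edge, that plaquette lies within bounded $\ell^\infty$-distance of that edge, and if $|\vortex_j'|\le 6$ then, by Lemma \ref{lemma:vortex-contained-in-cube}, so does all of $\vortex_j'$. Thus, under the side-length-$25$ hypothesis, every $3$-cell meeting such a $\vortex_j'$ is contained in $\lbox$. Consequently: for $j\in M$, Remark \ref{remark:minimal-vortex-constant-activity} gives $z^\beta(\vortex_j')=\Phi(\vortex_j')=r_\beta$, and since $|\vortex_j'|=6$ and $e^{6b_\varep}-1\le e^{7b_\varep}-1=\varep$, Proposition \ref{prop:cluster-expansion-convergence} yields $|\Theta^{\mc{P}_j}_{\vortex_j'}(z^\beta)-r_\beta|\le\varep r_\beta$; while for $j\notin M$ with $|\vortex_j'|\le 6$, Lemma \ref{lemma:smaller-than-minimal-vortices-prob-0} forces $\Phi(\vortex_j')=0$, hence $z^\beta(\vortex_j')=0$ and $\Theta^{\mc{P}_j}_{\vortex_j'}(z^\beta)=0$.

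For the remaining indices ($j\notin M$, $|\vortex_j'|\ge 7$) I would bound crudely: Proposition \ref{prop:cluster-expansion-convergence} together with Lemma \ref{lemma:activity-bound} gives $\Theta^{\mc{P}_j}_{\vortex_j'}(z^\beta)\le e^{b_\varep|\vortex_j'|}z^\beta(\vortex_j')\le(e^{b_\varep}\alpha_\beta)^{|\vortex_j'|}$. Since $\Gamma\sse N_\lbox(\vortex)$ and $\vortex$ is minimal (so $|\vortex|=6$), Lemma \ref{lemma:neighbor-vortex-combinatorial-bound} bounds the number of size-$m$ vortices in $\Gamma$ by $126(\vbdconst)^m$; using $\vbdconst e^{b_\varep}\alpha_\beta=\betaexprone{b_\varep}\le 1/2$ to sum the geometric series and $e^{7b_\varep}=1+\varep\le 2$,
\[ \sum_{j\notin M}\Theta^{\mc{P}_j}_{\vortex_j'}(z^\beta)\le 126\sum_{m\ge 7}(\betaexprone{b_\varep})^m\le 252\,(\betaexprone{b_\varep})^7=252\,(\vbdconst)^7 e^{7b_\varep}\alpha_\beta^7\le 504\,(\vbdconst)^7\alpha_\beta^7\le\varep r_\beta, \]
the last inequality being the third hypothesis. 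Combining with the previous paragraph, $\big|{-\log\rho_{\mc{P}}(z^\beta,\Gamma)}-n_0 r_\beta\big|\le\sum_{j\in M}|\Theta^{\mc{P}_j}_{\vortex_j'}(z^\beta)-r_\beta|+\sum_{j\notin M}\Theta^{\mc{P}_j}_{\vortex_j'}(z^\beta)\le n_0\varep r_\beta+\varep r_\beta\le(n_0+2)\varep r_\beta$, which is the second assertion. For the first, Lemma \ref{lemma:minimal-vortex-incompatible-bound} gives $n_0\le 144$, so $0\le-\log\rho_{\mc{P}}(z^\beta,\Gamma)=\sum_j\Theta^{\mc{P}_j}_{\vortex_j'}(z^\beta)\le(1+\varep)n_0 r_\beta+\varep r_\beta\le 289 r_\beta$, whence $|\rho_{\mc{P}}(z^\beta,\Gamma)-1|=1-e^{-\sum_j\Theta^{\mc{P}_j}_{\vortex_j'}(z^\beta)}\le\sum_j\Theta^{\mc{P}_j}_{\vortex_j'}(z^\beta)\le 290 r_\beta$.

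The step I expect to be the main obstacle is the geometric bookkeeping in the second paragraph: making the ``bounded $\ell^\infty$-distance'' precise enough to conclude that every minimal or small non-minimal vortex incompatible with $\vortex$ indeed satisfies the hypotheses of Remark \ref{remark:minimal-vortex-constant-activity} and Lemma \ref{lemma:smaller-than-minimal-vortices-prob-0} once $\vortex$ is far from $\partial\lbox$ (this is where the constant $25$ gets consumed), and checking that the explicit constants in the geometric-series bound line up with the third hypothesis on $\beta$. Everything else is a routine consequence of Proposition \ref{prop:cluster-expansion-convergence} and the combinatorial lemmas.
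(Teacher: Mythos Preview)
Your proposal is correct and follows essentially the same approach as the paper: telescope $\rho_{\mc{P}}(z^\beta,\Gamma)$ into a product of single-vortex factors, estimate each $\Theta^{\mc{P}_j}_{\vortex_j'}$ via Proposition~\ref{prop:cluster-expansion-convergence}, use the boundary assumption together with Lemma~\ref{lemma:smaller-than-minimal-vortices-prob-0} to discard small non-minimal vortices, sum the contributions of size $\ge 7$ via Lemma~\ref{lemma:neighbor-vortex-combinatorial-bound} and the geometric series, and finish with $n_0\le 144$ and $1-e^{-x}\le x$. Your constants are actually slightly tighter than the paper's (you get the tail $\le \varep r_\beta$ where the paper records $\le(1+\varep)\varep r_\beta$), but both feed into the same $(n_0+2)\varep r_\beta$ bound.
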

\begin{proof}
The conditions on $\beta$ imply that we may apply Proposition \ref{prop:cluster-expansion-convergence} with $b = b_\varep$.
Write $\Gamma = \{\vortex_1, \ldots, \vortex_n\}$. For $1 \leq k \leq n$, let $\Gamma_k := \{\vortex_1, \ldots, \vortex_k\}$, and let $\Gamma_0 := \varnothing$. We may express
\[\rho_{\mc{P}}(z^\beta, \Gamma) = \exp(-\sum_{k=1}^n \Theta^{\Gamma_{k-1}}_{\vortex_k}(z^\beta)). \]
By the assumption that $\vortex$ is far away from the boundary of $\lbox$, for any $1 \leq k \leq n$ such that $|\vortex_k| \leq 6$, we have that any 3-cell $c$ in $\Z^4$ which contains a plaquette of $\vortex_i$ is contained in $\lbox$. Thus the condition of Lemma \ref{lemma:smaller-than-minimal-vortices-prob-0} is satisfied, and thus if $|\vortex_k| \leq 6$, then $z^\beta(\vortex_k) = \Phi(\vortex_k) \neq 0$ if and only if $\vortex_k$ is a minimal vortex. By Proposition \ref{prop:cluster-expansion-convergence}, we have that if $z^\beta(\vortex_k) = 0$, then $\Theta^{\Gamma_{k-1}}_{\vortex_k}(z^\beta) = 0$. Therefore we may as well assume that $|\vortex_k| \geq 6$ for all $1 \leq k \leq n$, and further if $|\vortex_k| = 6$, then $\vortex_k$ is a minimal vortex.

Now if $\vortex_k$ is a minimal vortex, by Proposition \ref{prop:cluster-expansion-convergence} and Remark \ref{remark:minimal-vortex-constant-activity}, we have
\[ |\Theta^{\Gamma_{k-1}}_{\vortex_k}(z^\beta) - r_\beta| \leq (e^{6b_\varep} -1) z^\beta(\vortex_k) = (e^{6b_\varep} - 1) r_\beta. \]
If $\vortex_k$ is not a minimal vortex, then by Proposition \ref{prop:cluster-expansion-convergence}, we have
\[ |\Theta^{\Gamma_{k-1}}_{\vortex_k}(z^\beta)| \leq \log(1 + e^{b_\varep |\vortex_k|} z^\beta(\vortex_k)) \leq e^{b_\varep |\vortex_k|} z^\beta(\vortex_k). \]
We then obtain
\[ \bigg|\sum_{k=1}^n \Theta^{\Gamma_{k-1}}_{\vortex_k}(z^\beta) - n_0 r_\beta \bigg| \leq n_0(e^{6b_\varep} - 1) r_\beta + R, \]
where
\[ R := \sum_{m=7}^\infty \sum_{\substack{\vortex' \in N_\lbox(\vortex) \\ |\vortex'| = m}} e^{b_\varep m} z^\beta(\vortex'). \]
Applying Lemmas \ref{lemma:vortex-combinatorial-bound} and \ref{lemma:activity-bound}, we obtain
\begin{align*}
R &\leq 21 \abs{\vortex} \sum_{m=7}^\infty (\vbdconst e^{b_\varep} \alpha_\beta)^m \\
&= 126 \frac{(\vbdconst)^7 e^{7b_\varep} \alpha_\beta^7}{1 - \betaexprone{b_\varep}}.
\end{align*}
By the assumptions, we have $e^{6b_\varep} - 1 \leq e^{7b_\varep} - 1 = \varep$, and $R \leq (1+\varep)\varep r_\beta$. Using $\varep \leq 1$ gives the second assertion. The first assertion follows from the second assertion by using the bound $n_0 \leq 144$ from Lemma \ref{lemma:minimal-vortex-incompatible-bound}, and by observing
\[\frac{\Xi_{\mc{P} \backslash \Gamma}(z^\beta)}{\Xi_{\mc{P}}(z^\beta)} \leq 1,  \] 
so that
\[ 0 \leq 1 - \exp(\log \rho_{\mc{P}}(z^\beta, \Gamma)) \leq - \log \rho_{\mc{P}}(z^\beta, \Gamma). \qedhere\]
\end{proof}

In what follows, we will abuse notation and think of $\wloop$ as a set of (unoriented) edges. For each edge $e \in \wloop$, define the set of edges
\[ B_e := \{e' \in \wloop : P(e) \nsim P(e')\}. \]
By Lemma \ref{lemma:minimal-vortex-incompatible-bound}, and the assumption that $\wloop$ is self avoiding, we have $\abs{B_e} \leq 144$. Recall that $F_\vortex$ is the event that $\vortex$ is a vortex of $\plaqset(\Sigma)$, and observe
\[ N_\wloop = \sum_{e \in \wloop} \ind_{F_{P(e)}}. \]
We quote the following Poisson approximation theorem, adapted to our situation.

\begin{theorem}[Theorem 4.1 of \cite{CR2013}]\label{thm:chen-rollin-poisson-approximation}
Let
\[ b_1 := \sum_{e \in \wloop} \sum_{e' \in B_e} \p(F_{P(e)}) \p(F_{P(e')}), \]
\[ b_2 := \sum_{e \in \wloop} \sum_{e' \in B_e \backslash \{e\}} \E [\ind_{F_{P(e)}} \ind_{F_{P(e')}}], \]
\[ b_3 := \sum_{e \in \wloop} \E \bigg[\big|\E[ \ind_{F_{P(e)}} ~|~ \ind_{F_{P(e')}}, e' \notin B_e] - \p(F_{P(e)})\big|\bigg]. \]
Let $\ms{L}(N_\wloop)$ denote the law of $N_\wloop$, and let $\lambda := \E N_\wloop$. Then
\[ d_{TV}(\ms{L}(N_\wloop), \mathrm{Poisson}(\lambda)) \leq \min (1, \lambda^{-1}) (b_1 + b_2) + \min(1, 1.4 \lambda^{-1/2})b_3.\]
\end{theorem}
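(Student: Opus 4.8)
Since this is quoted (up to adaptation to the present notation) from \cite{CR2013}, the paper will simply cite it; what I would sketch here is how a bound of this form is proved by Stein's method, which is also the template the rest of the paper follows. Writing $N_\wloop = \sum_{e \in \wloop} \ind_{F_{P(e)}}$ and $\pi_e := \p(F_{P(e)})$, so that $\lambda = \sum_{e \in \wloop} \pi_e$, I would fix a set $A \subseteq \{0, 1, 2, \dots\}$ and take $g = g_A$ to be the solution of the Poisson$(\lambda)$ Stein equation
\[ \lambda\, g(k+1) - k\, g(k) = \ind_{\{k \in A\}} - \mathrm{Poisson}(\lambda)(A), \qquad g(0) := 0, \]
for which the classical Stein-factor estimates give $\sup_{k \ge 0} |g(k)| \le \min(1, 1.4\, \lambda^{-1/2})$ and $\sup_{k \ge 0} |g(k+1) - g(k)| \le \min(1, \lambda^{-1})$. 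Since $d_{TV}(\ms{L}(N_\wloop), \mathrm{Poisson}(\lambda)) = \sup_A |\E[\lambda\, g_A(N_\wloop + 1) - N_\wloop\, g_A(N_\wloop)]|$, it then suffices to bound that expectation for each $A$.

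The core step is a local-dependence decomposition. For each $e \in \wloop$, set $Z_e := \sum_{e' \in B_e \setminus \{e\}} \ind_{F_{P(e')}}$ (the edges whose minimal vortex is incompatible with $P(e)$, hence ``locally dependent'' on $F_{P(e)}$) and $W_e := \sum_{e' \in \wloop,\, e' \notin B_e} \ind_{F_{P(e')}}$ (the rest), so that $N_\wloop = \ind_{F_{P(e)}} + Z_e + W_e$. I would write $\E[\lambda\, g(N_\wloop + 1)] = \sum_e \pi_e\, \E[g(N_\wloop + 1)]$ and $\E[N_\wloop\, g(N_\wloop)] = \sum_e \E[\ind_{F_{P(e)}}\, g(N_\wloop)]$, and in both sums replace $N_\wloop$ by $W_e$ plus a constant inside $g$, using $|g(j) - g(j')| \le \|\Delta g\|_\infty |j - j'|$: this costs $\|\Delta g\|_\infty \sum_e \pi_e\, \E[\ind_{F_{P(e)}} + Z_e] = \|\Delta g\|_\infty\, b_1$ in the first sum and $\|\Delta g\|_\infty \sum_e \E[\ind_{F_{P(e)}} Z_e] = \|\Delta g\|_\infty\, b_2$ in the second. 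What remains is $\sum_e \bigl(\pi_e\, \E[g(W_e + 1)] - \E[\ind_{F_{P(e)}}\, g(W_e + 1)]\bigr)$, and since $W_e$ is measurable with respect to $(\ind_{F_{P(e')}} : e' \notin B_e)$, conditioning on that $\sigma$-algebra bounds the $e$-th term by $\|g\|_\infty\, \E\bigl|\E[\ind_{F_{P(e)}} \mid \ind_{F_{P(e')}},\, e' \notin B_e] - \pi_e\bigr|$, i.e.\ by $\|g\|_\infty$ times the $e$-th summand of $b_3$. Collecting these three errors and inserting the Stein-factor bounds yields the stated inequality.

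The part I expect to be the main obstacle is keeping the bookkeeping honest: making sure the conditioning $\sigma$-algebra in the $b_3$ term is exactly ``the far-away indicators $\ind_{F_{P(e')}},\, e' \notin B_e$'' — fine enough that $W_e$ is measurable with respect to it, yet coarse enough that $\ind_{F_{P(e)}}$ genuinely contributes only its conditional mean — and arranging the two first-order ($\|\Delta g\|_\infty$) comparisons so that precisely $b_1 + b_2$ appears with prefactor $\min(1, \lambda^{-1})$ and precisely $b_3$ with prefactor $\min(1, 1.4\, \lambda^{-1/2})$, with no residual cross terms. In the application that immediately follows (the proof of Proposition \ref{prop:poisson-approximation-abelian}), $b_1, b_2, b_3$ are then estimated using $|B_e| \le 144$ from Lemma \ref{lemma:minimal-vortex-incompatible-bound}, the a priori bound $\p(F_{P(e)}) \le z^\beta(P(e)) = r_\beta$ from Corollary \ref{cor:vortex-probability-bound} and Remark \ref{remark:minimal-vortex-constant-activity}, and, for the delicate $b_3$ term, the cluster-expansion near-independence estimates of Lemmas \ref{lemma:vortex-appear-not-appear-probability} and \ref{lemma:partition-function-ratio-minus-neighborhood-minimal-vortex}.
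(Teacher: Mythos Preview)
The paper does not prove this theorem at all; it is quoted verbatim (adapted to the present notation) from \cite{CR2013} and used as a black box, exactly as you anticipated in your first sentence. Your sketch of the underlying Stein's-method argument is correct and is indeed the standard local-dependence (Arratia--Goldstein--Gordon / Chen--R\"ollin) proof: the Stein equation, the two Stein-factor bounds, the decomposition $N_\wloop = \ind_{F_{P(e)}} + Z_e + W_e$, and the three error terms leading to $b_1, b_2, b_3$ are all handled correctly. Your remarks on how the paper subsequently estimates $b_1, b_2, b_3$ in the proof of Lemma \ref{lemma:chen-rollin-appplication} are also accurate.
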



\begin{lemma}\label{lemma:chen-rollin-appplication}
Let $\lambda := \E N_\wloop$. In the setting of Proposition \ref{prop:poisson-approximation-abelian}, we have
\[ d_{TV}(\ms{L}(N_\wloop), \mathrm{Poisson}(\lambda)) \leq 144 r_\beta + 1000 \min(\lambda, 1.4 \sqrt{\lambda}) r_\beta. \]
\end{lemma}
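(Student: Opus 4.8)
The strategy is to apply Theorem~\ref{thm:chen-rollin-poisson-approximation} (the Chen--R\"ollin Poisson approximation theorem) with the dependency neighborhoods $B_e$, and to bound each of the three quantities $b_1, b_2, b_3$ using the probabilistic estimates already developed, in particular Corollary~\ref{cor:vortex-probability-bound}, Lemma~\ref{lemma:activity-bound}, Lemma~\ref{lemma:minimal-vortex-incompatible-bound}, and Lemma~\ref{lemma:partition-function-ratio-minus-neighborhood-minimal-vortex}. Throughout, recall $\p(F_{P(e)}) \leq z^\beta(P(e)) = r_\beta$ (Remark~\ref{remark:minimal-vortex-constant-activity} and Corollary~\ref{cor:vortex-probability-bound}), so each single-vortex probability is controlled by $r_\beta$, and $|B_e| \leq 144$ for every $e \in \wloop$.

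\textbf{Bounding $b_1$ and $b_2$.} For $b_1$, since $\p(F_{P(e)}) \leq r_\beta$ for all $e$, the double sum is at most $\sum_{e \in \wloop} |B_e| r_\beta^2 \leq 144 \ell r_\beta^2$; using $\lambda \geq \tfrac12 \ell r_\beta$ (from Proposition~\ref{prop:left-tail-bound-abelian}, or more simply $\lambda = \E N_\wloop$ and a lower bound on $\p(F_{P(e)})$ via Lemma~\ref{lemma:prob-vortices-appear}), the factor $\min(1,\lambda^{-1}) b_1$ is $O(r_\beta)$. Actually it is cleaner to note $\min(1,\lambda^{-1}) b_1 \leq \lambda^{-1} \cdot 144 \ell r_\beta^2$; since $\lambda$ and $\ell r_\beta$ are comparable this is $\lesssim r_\beta$, but one must be slightly careful when $\lambda$ is small — in that case use $\min(1,\lambda^{-1}) \leq 1$ and $b_1 \leq \lambda \cdot 144 r_\beta$ by pairing terms. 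For $b_2$, the terms are $\E[\ind_{F_{P(e)}}\ind_{F_{P(e')}}]$ with $e' \in B_e \setminus\{e\}$, i.e. $e' \neq e$; if $P(e), P(e')$ happen to be compatible this is $\leq r_\beta^2$ by Corollary~\ref{cor:vortex-probability-bound}, and if incompatible then $P(e) \cup P(e')$ cannot be the disjoint union structure, so one argues that $\{F_{P(e)} \cap F_{P(e')}\}$ forces a large vortex (a vortex containing a plaquette of $P(e)$ and a plaquette of $P(e')$ that share a $3$-cell, hence of size $\geq 7$) whose probability is $\leq \alpha_\beta^7$ by Corollary~\ref{cor:vortex-probability-bound} and Lemma~\ref{lemma:activity-bound}; since $\alpha_\beta^7 \lesssim r_\beta \cdot (\text{small})$ under the $\beta$-threshold, $b_2$ contributes $O(\ell r_\beta \cdot r_\beta)$, and $\min(1,\lambda^{-1})b_2 \lesssim r_\beta$. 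Together these give the $144 r_\beta$-type piece.

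\textbf{Bounding $b_3$ — the main obstacle.} This is the term that genuinely uses the cluster expansion. We must bound, for each $e \in \wloop$,
\[
\E\Big[\big| \E[\ind_{F_{P(e)}} \mid \ind_{F_{P(e')}},\, e' \notin B_e] - \p(F_{P(e)}) \big|\Big].
\]
The plan is to condition on the full configuration of which minimal vortices $P(e')$, $e' \notin B_e$, appear, and more: one wants to express the conditional probability via Lemma~\ref{lemma:vortex-appear-not-appear-probability}, which gives $\p(F_{P(e)} \cap \bigcap F_{\vortex_j'}^c \cap \bigcap F_{\vortex_i'}) $ in terms of $z^\beta(P(e)) = r_\beta$ times a ratio $\rho_{\mc P_\lbox}(z^\beta, \Gamma)$ where $\Gamma$ includes $N_\lbox(P(e))$. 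Since all the conditioning vortices $P(e')$ with $e' \notin B_e$ are compatible with $P(e)$ (that is the definition of $B_e$!), they lie outside $N_\lbox(P(e))$ up to the relevant structure, and Lemma~\ref{lemma:partition-function-ratio-minus-neighborhood-minimal-vortex} shows $\rho_{\mc P_\lbox}(z^\beta, \Gamma)$ is within $290 r_\beta$ of $1$ regardless of the rest of $\Gamma$ — this is precisely the "lot of independence" statement. Hence both $\E[\ind_{F_{P(e)}} \mid \cdots]$ and $\p(F_{P(e)})$ equal $r_\beta(1 + O(r_\beta))$ uniformly, so their difference is $O(r_\beta^2)$ — wait, that would be too strong; more carefully, the conditional probability is $r_\beta \cdot \rho_{\mc P_\lbox}(z^\beta, \Gamma_{\text{cond}})$ and the unconditional is $r_\beta \cdot \rho_{\mc P_\lbox}(z^\beta, N_\lbox(P(e)))$, and each $\rho$ is in $[1 - 290 r_\beta, 1]$, so the difference is at most $290 r_\beta^2$ per edge; summing over $\ell$ edges and multiplying by $\min(1, 1.4\lambda^{-1/2})$ gives the $1000\min(\lambda, 1.4\sqrt\lambda) r_\beta$-type term (using $\ell r_\beta \lesssim \lambda$ to rewrite $\ell r_\beta^2 \lesssim \lambda r_\beta$ and the $\lambda^{-1/2}$ factor to get $\sqrt\lambda r_\beta$). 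The delicate points to get right are: (i) verifying the $\beta$-threshold in Proposition~\ref{prop:poisson-approximation-abelian} implies the three conditions on $\beta$ needed in Lemma~\ref{lemma:partition-function-ratio-minus-neighborhood-minimal-vortex} (with a suitable choice of $\varep$, e.g. $\varep = 1$ or $\varep$ absolute); (ii) handling the boundary caveat — edges $e$ whose minimal vortex $P(e)$ is too close to $\partial\lbox$ for Lemma~\ref{lemma:partition-function-ratio-minus-neighborhood-minimal-vortex} to apply must be treated separately, but in the setting of Proposition~\ref{prop:poisson-approximation-abelian} the loop $\wloop$ sits inside $B_\wloop$ with $L \geq 50$, so every $P(e)$ for $e \in \wloop$ is far from $\partial\lbox$ and the lemma applies; (iii) carefully turning the conditional expectation over the random set $\{\ind_{F_{P(e')}} : e' \notin B_e\}$ into the ratio-of-partition-functions form, which requires summing over all possible appearance/non-appearance patterns and invoking Lemma~\ref{lemma:vortex-appear-not-appear-probability} for each. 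I expect step (iii), the rigorous conversion of $b_3$ into cluster-expansion ratios and the bookkeeping of which vortices land in $\Gamma$, to be the main obstacle; the arithmetic consolidation of constants into $144 r_\beta + 1000\min(\lambda,1.4\sqrt\lambda)r_\beta$ is then routine.
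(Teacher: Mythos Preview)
Your overall strategy is right, but there are two genuine gaps.

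\textbf{The $b_2$ term.} You overcomplicate this and in fact misanalyze it. By definition, $B_e = \{e' \in \wloop : P(e) \nsim P(e')\}$, so for every $e' \in B_e \setminus \{e\}$ the minimal vortices $P(e)$ and $P(e')$ are \emph{incompatible}. But $F_{P(e)}$ is the event that $P(e)$ appears as a vortex in the vortex decomposition of $\plaqset(\Sigma)$, and the vortices in that decomposition are by construction pairwise compatible. Hence $F_{P(e)} \cap F_{P(e')} = \varnothing$ whenever $P(e) \nsim P(e')$, and so $b_2 = 0$ exactly. Your attempt to bound the joint event by ``forcing a large vortex of size $\geq 7$'' is based on a misreading of what $F_{P(e)}$ means.

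\textbf{The $b_3$ term.} Your claim that ``Lemma~\ref{lemma:partition-function-ratio-minus-neighborhood-minimal-vortex} shows $\rho_{\mc P_\lbox}(z^\beta, \Gamma)$ is within $290 r_\beta$ of $1$ regardless of the rest of $\Gamma$'' is not what the lemma says: it requires $\Gamma \sse N_\lbox(\vortex) \cap \mc P$ for a \emph{single} minimal vortex $\vortex$. The conditioning set $\Gamma$ arising from $F_{e_0,\wloop'}$ is huge (it involves all of $\wloop \setminus B_{e_0}$), so you cannot apply the lemma to $\rho_{\mc P_\lbox}(z^\beta, \Gamma)$ directly. The paper's key algebraic step, which you are missing, is to rewrite
\[
\E[\ind_{F_{P(e_0)}} \mid F_{e_0,\wloop'}] \;=\; r_\beta \cdot \frac{\Xi_{(\mc P_\lbox \backslash \Gamma) \backslash N_\lbox(P(e_0))}}{\Xi_{\mc P_\lbox \backslash \Gamma}} \;=\; r_\beta \cdot \rho_{\mc P_\lbox \backslash \Gamma}\big(z^\beta,\, N_\lbox(P(e_0))\big),
\]
so that the removed set is contained in $N_\lbox(P(e_0))$ (with base set $\mc P_\lbox \backslash \Gamma$), and \emph{then} Lemma~\ref{lemma:partition-function-ratio-minus-neighborhood-minimal-vortex} applies to give $|R_1 - 1| \leq 290 r_\beta$. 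The unconditional probability $\p(F_{P(e_0)}) = r_\beta \cdot \rho_{\mc P_\lbox}(z^\beta, N_\lbox(P(e_0))) = r_\beta R_2$ is handled the same way. One then gets $|\E[\ind_{F_{P(e_0)}} \mid F_{e_0,\wloop'}] - \p(F_{P(e_0)})| = \p(F_{P(e_0)}) |R_1/R_2 - 1| \leq \p(F_{P(e_0)}) \cdot 1000 r_\beta$, and summing over $e_0$ yields $b_3 \leq 1000 \lambda r_\beta$ directly, without needing to pass through $\ell r_\beta^2$ and compare $\lambda$ to $\ell r_\beta$.

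Minor point on $b_1$: the clean bound is simply $b_1 = \sum_e \p(F_{P(e)}) \sum_{e' \in B_e} \p(F_{P(e')}) \leq \sum_e \p(F_{P(e)}) \cdot 144 r_\beta = 144 \lambda r_\beta$, which combined with $\min(1,\lambda^{-1})$ gives $144 r_\beta$ immediately; there is no need to invoke comparability of $\lambda$ and $\ell r_\beta$.
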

\begin{proof}
We will use Theorem \ref{thm:chen-rollin-poisson-approximation}. Let $b_1, b_2, b_3$ be as in the theorem. By Corollary \ref{cor:vortex-probability-bound} and Remark \ref{remark:minimal-vortex-constant-activity}, we have $\p(F_\vortex) \leq r_\beta$ for any minimal vortex $\vortex$. Combining this with the fact $|B_e| \leq 144$, we have
\[ b_1 \leq 144 \lambda r_\beta. \] 
We have that $b_2 = 0$, since by definition, $\plaqset(\Sigma)$ is decomposed into compatible vortices, so that no two incompatible vortices can both appear. We will now show
\[ b_3 \leq 1000 \lambda r_\beta, \]
which by Theorem \ref{thm:chen-rollin-poisson-approximation} will then give desired result.

Towards this end, first observe that the assumption on $\beta$ from Proposition \ref{prop:poisson-approximation-abelian} implies that the conditions on $\beta$ in Lemma \ref{lemma:partition-function-ratio-minus-neighborhood-minimal-vortex} are satisfied with $\varep = 1$. Also, the assumption that $L \geq 50$ implies that for any $e \in \wloop$, we have $P(e) \sse \plaquettes$, and moreover, we may apply Lemma \ref{lemma:partition-function-ratio-minus-neighborhood-minimal-vortex} with $\vortex = P(e)$. 

Now observe that $b_3$ is a sum of expectations. Our strategy will be to bound each expectation by an $L^\infty$ bound. I.e., fix any $e_0 \in \wloop$, and any $\wloop' \sse \wloop \backslash B_{e_0}$. Define $F_{e_0, \wloop'}$ to be the event that for every edge $e \in \wloop'$, the associated minimal vortex $P(e)$ is a vortex of $\plaqset(\Sigma)$, and for every edge $e' \in \wloop \backslash B_{e_0}$, $e' \notin \wloop'$, the associated minimal vortex $P(e')$ is not a vortex of $\plaqset(\Sigma)$. Note we may express $F_{e_0, \wloop'}$ as
\beq\label{eq:f-e0-wloop} F_{e_0, \wloop'} = \bigcap_{e \in \wloop'} F_{P(e)} \cap \bigcap_{\substack{e' \in \wloop \backslash B_{e_0} \\ e' \notin \wloop'}} F_{P(e')}^c. \eeq
It suffices to show that if $\p(F_{e_0, \wloop'}) > 0$, then
\[ |\E [\ind_{F_{P(e_0)}} ~|~ F_{e_0,\wloop'}] - \p(F_{P(e_0)})| \leq  \p(F_{P(e_0)}) 1000 r_\beta.\]
Note we may assume that the collection of vortices $P(e), e \in \wloop'$ is compatible, otherwise $\p(F_{e_0, \wloop'}) = 0$. Now define the collection of vortices
\[ \Gamma := N_\lbox(P(e), e \in \wloop') \cup \{P(e') : e' \in \wloop \backslash B_{e_0}, e' \notin \wloop' \}.\]
Applying Lemma \ref{lemma:vortex-appear-not-appear-probability}, we obtain
\[ \p(F_{P(e_0)}) = z^\beta(P(e_0)) \rho_{\mc{P}_\lbox}(z^\beta, N_\lbox(P(e_0)), \]
\[ \p(F_{e_0, \wloop'}) = \bigg(\prod_{e \in \wloop'} z^\beta(P(e))\bigg) \rho_{\mc{P}_\lbox}(z^\beta, \Gamma) , \]
\[ \p(F_{P(e_0)} \cap F_{e_0, \wloop'}) = z^\beta(P(e_0)) \bigg(\prod_{e \in \wloop'} z^\beta(P(e))\bigg) \rho_{\mc{P}_\lbox}(z^\beta, \Gamma \cup N_\lbox(P(e_0))). \]
Here we have used the definition of $B_{e_0}$, to ensure that the collection of vortices $P(e_0), P(e), e \in \wloop'$ is compatible.
We thus have
\[
\begin{split}
\E [\ind_{F_{P(e_0)}} ~|~ F_{e_0,\wloop'}] &- \p(F_{P(e_0)}) = \\
& \p(F_{P(e_0)})\bigg(\frac{\rho_{\mc{P}_\lbox \backslash \Gamma}(z^\beta, N_\lbox(P(e_0)))}{\rho_{\mc{P}_\lbox}(z^\beta, N_\lbox(P(e_0)))} - 1\bigg) .
\end{split}\]
For ease of notation, write the above as
\[ \p(F_{P(e_0)}) (R_1 / R_2 - 1). \]
As previously observed, we may apply Lemma \ref{lemma:partition-function-ratio-minus-neighborhood-minimal-vortex} to obtain
\[\abs{R_1 - 1} \leq 290 r_\beta,  \]
\[ \abs{R_2 - 1} \leq 290 r_\beta.\]
Now the assumption on $\beta$ in Proposition \ref{prop:poisson-approximation-abelian} implies that $\beta$ is large enough so that
\[\abs{R_1 / R_2 - 1} \leq  \frac{580 r_\beta}{1 - 290 r_\beta} \leq 1000 r_\beta, \]
and the claim now follows.
\end{proof}

\begin{lemma}\label{lemma:lambda-ell-beta-bound}
In the setting of Proposition \ref{prop:poisson-approximation-abelian}, we have
\[ \abs{\lambda - \ell r_\beta} \leq 290 (\ell r_\beta) r_\beta. \]
\end{lemma}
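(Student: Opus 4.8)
Since $N_\wloop = \sum_{e \in \wloop} \ind_{F_{P(e)}}$, linearity of expectation gives $\lambda = \sum_{e \in \wloop} \p(F_{P(e)})$, a sum of exactly $\ell$ terms, one for each edge of the self avoiding loop $\wloop$. The strategy is to show that each summand $\p(F_{P(e)})$ is within a multiplicative factor $(1 \pm 290 r_\beta)$ of $r_\beta$, and then sum.

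\textbf{Key steps.} First, for a fixed $e \in \wloop$, the set $P(e)$ is a minimal vortex, so applying Lemma \ref{lemma:prob-vortices-appear} with $n_0 = 1$ and $\vortex_1 = P(e)$ (a single vortex is trivially compatible with itself), I get
\[ \p(F_{P(e)}) = z^\beta(P(e)) \, \rho_{\mc{P}_\lbox}(z^\beta, N_\lbox(P(e))). \]
By Remark \ref{remark:minimal-vortex-constant-activity}, $z^\beta(P(e)) = \Phi(P(e)) = r_\beta$, so $\p(F_{P(e)}) = r_\beta \, \rho_{\mc{P}_\lbox}(z^\beta, N_\lbox(P(e)))$. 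Second, I verify that Lemma \ref{lemma:partition-function-ratio-minus-neighborhood-minimal-vortex} applies with $\varep = 1$, $\vortex = P(e)$, $\mc{P} = \mc{P}_\lbox$, and $\Gamma = N_\lbox(P(e)) \sse N_\lbox(\vortex) \cap \mc{P}_\lbox$: the $\beta$-conditions there follow from the hypothesis of Proposition \ref{prop:poisson-approximation-abelian} (exactly as already observed in the proof of Lemma \ref{lemma:chen-rollin-appplication}), and the requirement that $P(e)$ be far enough from $\partial \lbox$ that every cube of side length $25$ meeting $P(e)$ lies in $\lbox$ follows from the standing assumption $L \geq 50$. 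Hence $|\rho_{\mc{P}_\lbox}(z^\beta, N_\lbox(P(e))) - 1| \leq 290 r_\beta$, i.e. $|\p(F_{P(e)}) - r_\beta| \leq 290 r_\beta^2$.

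\textbf{Conclusion.} Summing over the $\ell$ edges of $\wloop$ and applying the triangle inequality,
\[ |\lambda - \ell r_\beta| = \Big| \sum_{e \in \wloop} \big( \p(F_{P(e)}) - r_\beta \big) \Big| \leq \sum_{e \in \wloop} |\p(F_{P(e)}) - r_\beta| \leq \ell \cdot 290 r_\beta^2 = 290 (\ell r_\beta) r_\beta. \]

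\textbf{Main obstacle.} There is essentially no deep difficulty here — the lemma is a one-line consequence of Lemma \ref{lemma:prob-vortices-appear}, Remark \ref{remark:minimal-vortex-constant-activity}, and Lemma \ref{lemma:partition-function-ratio-minus-neighborhood-minimal-vortex}. The only point requiring a little care is the bookkeeping that confirms the hypotheses of Lemma \ref{lemma:partition-function-ratio-minus-neighborhood-minimal-vortex} are met for every edge $e \in \wloop$ under the $\beta$-threshold of Proposition \ref{prop:poisson-approximation-abelian} and the assumption $L \geq 50$; this has already been carried out in the proof of Lemma \ref{lemma:chen-rollin-appplication}, so it can simply be cited.
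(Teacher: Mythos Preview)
Your proof is correct and follows essentially the same approach as the paper's own proof: express $\p(F_{P(e)}) = r_\beta\,\rho_{\mc{P}_\lbox}(z^\beta, N_\lbox(P(e)))$ via Lemma~\ref{lemma:prob-vortices-appear} and Remark~\ref{remark:minimal-vortex-constant-activity}, invoke Lemma~\ref{lemma:partition-function-ratio-minus-neighborhood-minimal-vortex} with $\varep=1$ (citing the verification already done in the proof of Lemma~\ref{lemma:chen-rollin-appplication}), and sum over the $\ell$ edges.
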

\begin{proof}
For each $e \in \wloop$, we have by Lemma \ref{lemma:prob-vortices-appear}
\[ \p(F_{P(e)}) = r_\beta \rho_{\mc{P}_\lbox}(z^\beta, N_\lbox(P(e))). \]
Recalling that $\lambda = \E N_\wloop = \sum_{e \in \wloop} \p(F_{P(e)})$, we obtain
\[\abs{\lambda - \ell r_\beta} \leq \sum_{e \in \wloop} |\p(F_{P(e)}) - r_\beta| = r_\beta \sum_{e \in \wloop}|\rho_{\mc{P}_\lbox}(z^\beta, N_\lbox(P(e))) - 1|.\]
As noted in the proof of Lemma \ref{lemma:chen-rollin-appplication}, 
we may apply Lemma \ref{lemma:partition-function-ratio-minus-neighborhood-minimal-vortex} with $\varep = 1$. Doing so, we may further upper bound the right hand side above by $290 (\ell r_\beta) r_\beta$, as desired.
\end{proof}

\begin{proof}[Proof of Proposition \ref{prop:poisson-approximation-abelian}]
By Lemma \ref{lemma:chen-rollin-appplication}, we have
\[ d_{TV}(\ms{L}(N_\wloop), \mathrm{Poisson}(\lambda)) \leq 144 r_\beta + 1000 \min(\lambda, 1.4 \sqrt{\lambda}) r_\beta. \]
By Corollary 3.1 of \cite{AL2005}, and Lemma \ref{lemma:lambda-ell-beta-bound}, we have
\[ d_{TV}(\mathrm{Poisson}(\lambda), \mathrm{Poisson}(\ell r_\beta)) \leq \abs{\lambda - \ell r_\beta} \leq 290 (\ell r_\beta) r_\beta. \]
We thus have
\[ d_{TV}(\ms{L}(N_\wloop), \mathrm{Poisson}(\ell r_\beta)) \leq 144 r_\beta + 1000 \min (\lambda, 1.4 \sqrt{\lambda}) r_\beta + 290 (\ell r_\beta) r_\beta. \]
Now observe
\[ 144 r_\beta + 290 (\ell r_\beta) r_\beta \leq 290 e^{\ell r_\beta} r_\beta, \]
\[ \min(\lambda, 1.4 \sqrt{\lambda}) \leq e^\lambda. \]
The assumption on $\beta$ from Proposition \ref{prop:poisson-approximation-abelian} implies $290 r_\beta \leq 1/2$, so that applying Lemma \ref{lemma:lambda-ell-beta-bound} once more, we obtain 
$e^\lambda \leq e^{1.5 \ell r_\beta}$.
Now finish by combining the bounds.
\end{proof}

\begin{proof}[Proof of Proposition \ref{prop:left-tail-bound-abelian}]
Throughout this proof, $\varep = 0.001$. The assumptions on $\beta$ and $L$ in the proposition allow us to apply Lemma \ref{lemma:partition-function-ratio-minus-neighborhood-minimal-vortex} with $\vortex = P(e)$, for any $e \in \wloop$, with this particular value of $\varep$. Let $X_e$ be the indicator of the event $F_{P(e)}$. Observe for $0 \leq k \leq \ell$, we have
\[ \p(N = k) = \sum_{\substack{\wloop' \sse \wloop \\ |\wloop'| = k}} \p(X_e = 1, e \in \wloop', X_{e'} = 0, e' \in \wloop \backslash \wloop').\]
Fix $k$, and $\wloop' \sse \wloop$, $|\wloop'| = k$. We may assume that the vortices $P(e), e \in \wloop'$ are compatible, otherwise the associated probability is zero. Let
\[  \Gamma := N_\lbox(P(e), e \in \wloop') \cup \{P(e') : e' \in \wloop \backslash \wloop' \}.\]
Now by Lemma \ref{lemma:vortex-appear-not-appear-probability} and Remark \ref{remark:minimal-vortex-constant-activity}, we have
\[\p(X_e = 1, e \in \wloop', X_{e'} = 0, e' \in \wloop \backslash \wloop') = r_\beta^k \rho_{\mc{P}_\lbox}(z^\beta, \Gamma). \]
We want to apply Lemma \ref{lemma:partition-function-ratio-minus-neighborhood-minimal-vortex}, so we write the right hand side above as a telescoping product. Let the edges of $\wloop$ be numbered $e_1, \ldots, e_\ell$, such that $\wloop' = \{e_1, \ldots, e_k\}$. For $1 \leq j \leq k$, let $\Gamma_j := N_\lbox(P(e_i), 1 \leq i \leq j)$. For $k+1 \leq j \leq \ell$, let $\Gamma_j := \Gamma_k \cup \{P(e_i), k+1 \leq j \leq \ell\}$. Let $\Gamma_0 := \varnothing$. We have
\[\rho_{\mc{P}_\lbox}(z^\beta, \Gamma) = \prod_{j=1}^\ell \rho_{\mc{P} \backslash \Gamma_{j-1}}(z^\beta, \Gamma_j) . \]
Observe for $1 \leq j \leq \ell$, we have
\[ \mc{P}_\lbox \backslash \Gamma_j = (\mc{P}_\lbox \backslash \Gamma_{j-1}) \backslash (\Gamma_j \backslash \Gamma_{j-1}),\]
and moreover $\Gamma_j \backslash \Gamma_{j-1} \sse N_\lbox(P(e_j))$. Let $n_0(j)$ be the number of minimal vortices in 
\[ (\Gamma_j \backslash \Gamma_{j-1}) \cap (\mc{P}_\lbox \backslash \Gamma_{j-1}). \]
We may apply Lemma \ref{lemma:partition-function-ratio-minus-neighborhood-minimal-vortex} to obtain
\[|\log \rho_{\mc{P}\backslash \Gamma_{j-1}}(z^\beta, \Gamma_j) + n_0(j) r_\beta| \leq (n_0(j) + 2) \varep r_\beta.  \]
Now let $n_0 := \sum_{j=1}^\ell n_0(j)$, the number of minimal vortices in $\Gamma$. We obtain 
\[ |\log \rho_{\mc{P}_\lbox}(z^\beta, \Gamma)  + n_0 r_\beta| \leq (n_0 + 2\ell) \varep r_\beta, \]
which gives
\begin{align*}
\log \rho_{\mc{P}_\lbox}(z^\beta, \Gamma) &\leq - n_0 r_\beta\bigg(1 - \frac{n_0 + 2\ell}{n_0}\varep \bigg). 
\intertext{Now $n_0 \geq \ell$ (here we've used the assumption that $\wloop$ is self avoiding), which gives (note $\varep \leq 1/3$)} 
&\leq - \ell r_\beta (1 - 3 \varep).
\end{align*}
We thus have
\[ \p(X_e = 1, e \in \wloop', X_{e'} = 0, e' \in \wloop \backslash \wloop') \leq r_\beta^k \exp(- \ell r_\beta (1 - 3 \varep)), \]
and thus
\[ \p(N_\wloop = k) \leq \binom{\ell}{k} r_\beta^k \exp(- \ell r_\beta (1 - 3 \varep)).\]
Let $m := \floor{(1/2) \ell r_\beta}$. We have
\begin{align*}
\p(N_\wloop \leq m) &\leq \bigg(\sum_{k=0}^m \binom{\ell}{k} r_\beta^k \bigg)\exp(- \ell r_\beta(1 - 3 \varep)) \\
&\leq \bigg(\sum_{k=0}^m \frac{(\ell r_\beta)^k}{k!} \bigg) \exp(- \ell r_\beta(1 - 3 \varep)). 
\end{align*}
Let $u := \ell r_\beta$. We now use the formula (see \cite{WOLFRAM})
\[ \sum_{k=0}^m \frac{u^k}{k!} = e^u \frac{\Gamma(m+1, u)}{\Gamma(m+1)}, \]
where 
\[ \Gamma(m+1, u) = \int_u^\infty t^m e^{-t} dt, \]
and $\Gamma(m+1) = \Gamma(m+1, 0)$ is the standard Gamma function. To bound the ratio of $\Gamma$'s, let $Y \sim \mathrm{Gamma}(m+1, 1)$. Observe
\[ \frac{\Gamma(m+1, u)}{\Gamma(m+1)} = \p(Y \geq u). \]
Note $\E Y = m+1$. So assuming for the moment $m+1 \leq u$ (i.e. $\floor{(1/2) u} + 1 \leq u$), by a standard Chernoff bound we obtain
\[ \p(Y \geq u) \leq e^{-(u - (m+1))} \bigg(\frac{u}{m+1}\bigg)^{m+1}.\]
Thus
\[ \sum_{k=0}^m \frac{u^k}{k!} \leq \bigg(\frac{eu}{m+1}\bigg)^{m+1},\] 
and thus
\begin{align*} 
\p(N_\wloop \leq m) &\leq e^{-u(1- 3 \varep)} \bigg(\frac{eu}{m+1}\bigg)^{m+1}.
\intertext{Now observe $u / (m+1) \leq 2$ and $m \leq u /2$, and so}
&\leq 2e \exp(- u \bigg( \frac{1}{2} \log \frac{e}{2} - 3 \varep \bigg)).
\end{align*}
Now $\frac{1}{2} \log \frac{e}{2} > 0.153$, and since $\varep = 0.001$, we obtain
\[ \p(N_\wloop \leq (1/2) \ell r_\beta) \leq 2e \exp(- 0.15 \ell r_\beta). \]
Recall we assumed $m+1 \leq u$, i.e. $\floor{(1/2)u} + 1 \leq u$. If this is not the case, then $u < 2$. This implies
\[ \p(N_\wloop \leq (1/2) \ell r_\beta) = \p(N_\wloop = 0) \leq \exp(- \ell r_\beta (1 - 3 \varep)). \]
As $\varep = 0.001$, the above is bounded by $2e \exp(-0.15 \ell r_\beta)$.
\end{proof}

\section{The non-Abelian case}\label{section:non-abelian-case}

To extend the argument to the non-Abelian case, the approach will be to show that the key intermediate results of Section \ref{section:wilson-loop-exp-abelian} (i.e. Lemma \ref{lemma:only-minimal-vortices-contribute-probability-bound} to Lemma \ref{lemma:large-loop-abelian}) extend to the case of general gauge groups. Our main result (Theorem \ref{thm:main-result}) will then follow fairly easily -- see the end of Section \ref{section:wilson-loop-exp-non-abelian}. The proof of the intermediate results in the Abelian setting relied primarily on cluster expansion, and the key lemma that allowed us to use cluster expansion was Lemma \ref{lemma:activity_decomp}. The issue now is that the direct analogue of Lemma \ref{lemma:activity_decomp} does not hold if the gauge group is non-Abelian; this was pointed out by Szlach\`{a}nyi and Vecserny\`{e}s \cite{SV1989} and is due to non-trivial topological considerations. The paper \cite{SV1989} does give a cluster expansion for lattice gauge theories with finite non-Abelian gauge groups; however because the specific results we will need are slightly different, we will start from scratch, and moreover full proofs will be given. I should stress however that the main insight, which is to use algebraic topology to handle the non-Abelian case, is already present in \cite{SV1989}.

Throughout this section, $\lbox$ is some fixed cube in $\Z^4$, and $\wloop$ is a self avoiding loop in $\lbox$ of length $\ell$ . We will later make some assumptions on $\lbox$, but for now, it can be any cube which contains $\wloop$. As before, $\edges$ is the set of positively oriented edges of $\lbox$, and $\plaquettes$ is the set of positively oriented plaquettes of $\lbox$. Also, let $\vertices$ be the the set of unoriented vertices of $\lbox$. We will again think of edge configurations on $\lbox$ as elements of $G^\edges$, with the understanding that if $e$ is a negatively oriented edge, and $\sigma \in G^\edges$, then $\sigma_{-e} = \sigma_e^{-1}$. This convention allows expressions such as (recall equation \eqref{eq:ordered-product})
\[ \prod_{e \in \wloop} \sigma_e, \]
where the orientation of the edges $e$ in the loop $\wloop$ is taken into account, to be well defined.

\subsection{Topological preliminaries}\label{section:topological-preliminaries}

Recall that a cell complex is a certain type of topological space obtained by assembling cells of varying dimension; see e.g. Section 0.2.4 of \cite{STILL1993}. In our case, the cells will be unit squares of dimension at most two, i.e. vertices, edges, and plaquettes. So for us, a one dimensional cell complex, or 1-complex, is a space consisting of vertices and edges, and thus it is a graph. A two dimensional cell complex, or 2-complex, is a space consisting of vertices, edges, and plaquettes. Note also that although the names are similar, this is different from the lattice cell complex which was introduced in Section \ref{section:lattice-cell-complex}; the former is a topological space, while the latter is a formal collection of oriented cells.

In what follows, if we define a 1-complex by specifying a collection of edges, then that 1-complex is understood to also include the vertices of the edges in the collection. Similarly, if we define a 2-complex by specifying a collection of plaquettes, then that 2-complex is understood to also include the vertices and edges of the plaquettes in the collection.

Let $\oneskel$\label{notation:oneskel} denote the 1-skeleton of $\lbox$, i.e. the 1-complex obtained from the edges of $\lbox$. Let $\twoskel$ denote the 2-skeleton of $\lbox$\label{notation:twoskel}, i.e. the 2-complex obtained from the plaquettes of $\lbox$. Fix a vertex $x_0 \in \vertices$\label{notation:x-0}, and a spanning tree $T$\label{notation:T} of $\oneskel$. The fundamental group $\pi_1(\oneskel, x_0)$ may be described as follows. For any vertex $x \in \vertices$, let $\upath_x$ denote the unique path in $T$ from $x_0$ to $x$. For any edge $e = (x, y) \in \oneskel$, let $a_e$ be the closed loop obtained by starting at $x_0$, following $\upath_x$ to $x$, then traversing $e = (x, y)$, then following the path $\upath_y$ in reverse, from $y$ to $x_0$. Symbolically, we write
\[\label{notation:a-e} a_e = \upath_x e \upath_y^{-1}. \]
(Note if $e$ is in the spanning tree $T$, and $x$ is closer than $y$ to $x_0$ (in the distance induced by $T$), then $a_e$ is the path which starts at $x_0$, follows the path $\upath_y$ to $y$, and then retraces its steps, following the path $\upath_y$ in reverse, from $y$ to $x_0$. Thus in this case $a_e$ is equivalent to the trivial path.)
We then have that $\pi_1(\oneskel, x_0)$ can be presented as the free group with generators $a_e, e \in \oneskel \backslash T$. I.e.,
\[ \pi_1(\oneskel, x_0) = \langle a_e, e \in \oneskel \backslash T \rangle. \]
For notational convenience, we will sometimes omit $x_0$ and write $\pi_1(\oneskel)$ instead of $\pi_1(\oneskel, x_0)$.

Now observe that edge configurations $\sigma \in G^\edges$ naturally induce a homomorphism from $\pi_1(\oneskel, x_0)$ to $G$. Let $\Hom(\pi_1(\oneskel, x_0), G)$ be the set of homomorphisms from $\pi_1(\oneskel, x_0)$ to $G$, and let the homomorphism $\homsym_T^{x_0}(\sigma) \in \Hom(\pi_1(\oneskel, x_0), G)$\label{notation:homsym} be defined as follows. We specify the value of $\homsym_T^{x_0}(\sigma)$ on each of the generators $a_e, e \in \oneskel \backslash T$. For such an $e$, let the edges traversed by $a_e$ be, in order, $e_1, \ldots, e_n$. Define
\[ \homsym_T^{x_0}(\sigma)(a_e) := \sigma_{e_1} \cdots \sigma_{e_n}. \]
The next three lemmas show that homomorphisms from $\pi_1(\oneskel, x_0)$ to $G$ can be thought of as equivalence classes of edge configurations. This is analogous in the Abelian case to thinking of 2-forms $q$ such that $dq = 0$ as equivalence classes of edge configurations.

\begin{lemma}\label{lemma:induced-homomorphism-onto}
Let $x_0 \in \vertices$, and let $T$ be a spanning tree of $\oneskel$. For any homomorphism $\homsym \in \Hom(\pi_1(\oneskel, x_0), G)$, there exists an edge configuration $\sigma \in G^\edges$ such that
\[\homsym_T^{x_0}(\sigma) = \homsym .\]
\end{lemma}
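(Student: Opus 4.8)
The plan is to write down an explicit edge configuration, and the key observation is that one should choose $\sigma$ to be trivial along the spanning tree $T$. Concretely, for each positively oriented edge $e \in \edges$, I would set $\sigma_e := \groupid$ if $e \in T$, and $\sigma_e := \homsym(a_e)$ if $e \notin T$. For $e \notin T$ this makes sense because $a_e$ is one of the free generators of $\pi_1(\oneskel, x_0)$; choosing the opposite orientation of $e$ merely replaces the generator $a_e$ by $a_e^{-1}$ and correspondingly $\sigma_e$ by $\sigma_e^{-1}$, so the definition is consistent with the constraint $\sigma_{-e} = \sigma_e^{-1}$.

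Next I would verify that $\homsym_T^{x_0}(\sigma) = \homsym$. Since $\pi_1(\oneskel, x_0)$ is the free group on $\{a_e : e \in \oneskel \backslash T\}$, and both $\homsym_T^{x_0}(\sigma)$ and $\homsym$ are homomorphisms out of it, it is enough to check they agree on each generator $a_e$ with $e \notin T$. Recall that $a_e = \upath_x\, e\, \upath_y^{-1}$ where $e = (x,y)$ and $\upath_x, \upath_y$ are paths lying entirely inside $T$. Hence the ordered list of oriented edges traversed by $a_e$ consists of edges of $T$ (coming from $\upath_x$), then the edge $e$ itself, then reversed edges of $T$ (coming from $\upath_y^{-1}$). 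By construction $\sigma$ equals $\groupid$ on every edge of $T$ in either orientation, so in the defining product $\homsym_T^{x_0}(\sigma)(a_e) = \sigma_{e_1}\cdots\sigma_{e_n}$ every factor arising from $\upath_x$ or $\upath_y^{-1}$ is $\groupid$, leaving exactly the single factor $\sigma_e = \homsym(a_e)$. Thus $\homsym_T^{x_0}(\sigma)(a_e) = \homsym(a_e)$ on every generator, so the two homomorphisms coincide.

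I do not expect any substantial obstacle in this lemma: the entire content is the choice of $\sigma$ trivial on $T$, which collapses each loop $a_e$ to its unique non-tree edge, after which the conclusion is just the universal property of a free group. The only point requiring a careful sentence is well-definedness of $\sigma_e := \homsym(a_e)$ under the choice of orientation of the non-tree edges, which is handled as indicated above; everything else is a direct unwinding of the definition of $\homsym_T^{x_0}$.
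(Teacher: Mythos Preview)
Your proposal is correct and follows exactly the paper's approach: define $\sigma$ to be $\groupid$ on $T$ and $\homsym(a_e)$ off $T$, then check agreement on generators. You give more detail than the paper (spelling out why the tree edges contribute only identities and addressing orientation consistency), but the argument is the same.
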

\begin{proof}
For $e \in T$, let $\sigma_e := \groupid$. For $e \in \oneskel \backslash T$, let $\sigma_e := \homsym(a_e)$. Then for all $e \in \oneskel \backslash T$, we have $\homsym_T^{x_0}(\sigma)(a_e) = \sigma_e = \homsym(a_e)$, and the desired result follows.
\end{proof}

\begin{lemma}\label{lemma:induced-homomorphism-equal-implies-gauge-equiv}
Let $x_0 \in \vertices$, and let $T$ be a spanning tree of $\oneskel$. Consider edge configurations $\sigma, \tau \in G^\edges$. Then $\homsym_T^{x_0}(\sigma) = \homsym_T^{x_0}(\tau)$ if and only if there exists a function $h \in G^{\vertices}$, with $h_{x_0} = \groupid$, such that for all edges $e = (x, y) \in \oneskel$, we have 
\[ \sigma_e = h_x \tau_e h_y^{-1}. \]
\end{lemma}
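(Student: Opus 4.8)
\emph{Approach.} I will regard edge configurations as giving ``parallel transport'': for an edge path $p = f_1 \cdots f_m$ in $\oneskel$ write $\sigma_p := \sigma_{f_1}\cdots\sigma_{f_m}$, so that reversing $p$ inverts $\sigma_p$, a backtracking pair $f\bar f$ contributes $\groupid$, concatenation of paths multiplies the associated elements, and $\homsym_T^{x_0}(\sigma)(a_e) = \sigma_{a_e}$ for $e \in \oneskel\bs T$, where $a_e = \upath_x e \upath_y^{-1}$. For $h \in G^{\vertices}$ define the gauge transform $\tau^h \in G^\edges$ by $(\tau^h)_e := h_x\tau_e h_y^{-1}$ for $e = (x,y)$. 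The one structural fact needed is the cocycle identity $(\tau^h)_p = h_a\,\tau_p\,h_b^{-1}$ for every path $p$ from $a$ to $b$, which is immediate by telescoping over the edges of $p$. With this language the lemma asserts exactly that $\homsym_T^{x_0}(\sigma) = \homsym_T^{x_0}(\tau)$ if and only if $\sigma = \tau^h$ for some $h$ with $h_{x_0} = \groupid$.

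\emph{The ``if'' direction.} Suppose such an $h$ exists. For each $e \in \oneskel \bs T$, applying the cocycle identity to the loop $a_e$ based at $x_0$ gives $\sigma_{a_e} = (\tau^h)_{a_e} = h_{x_0}\tau_{a_e}h_{x_0}^{-1} = \tau_{a_e}$. Since $\pi_1(\oneskel, x_0)$ is free on the generators $a_e$, $e \in \oneskel\bs T$, and $\homsym_T^{x_0}(\sigma)$ and $\homsym_T^{x_0}(\tau)$ agree on each such generator, they coincide.

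\emph{The ``only if'' direction.} Assume $\homsym_T^{x_0}(\sigma) = \homsym_T^{x_0}(\tau)$ and set $h_x := \sigma_{\upath_x}^{-1}\tau_{\upath_x}$; since $\upath_{x_0}$ is the empty path, $h_{x_0} = \groupid$. I claim $\tau^h = \sigma$, which is the assertion. First, for each vertex $x$ the cocycle identity gives $(\tau^h)_{\upath_x} = h_{x_0}\tau_{\upath_x}h_x^{-1} = \tau_{\upath_x}h_x^{-1} = \sigma_{\upath_x}$. Now take $e = (x,y) \in \oneskel$. If $e \in T$, then $\upath_y$ is $\upath_x$ followed by $e$, or $\upath_x$ is $\upath_y$ followed by $\bar e$; in either case comparing $(\tau^h)_{\upath_x}, (\tau^h)_{\upath_y}$ with $\sigma_{\upath_x}, \sigma_{\upath_y}$ via path composition forces $(\tau^h)_e = \sigma_e$. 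If $e \in \oneskel\bs T$, then on one hand $(\tau^h)_{a_e} = h_{x_0}\tau_{a_e}h_{x_0}^{-1} = \tau_{a_e} = \homsym_T^{x_0}(\tau)(a_e) = \homsym_T^{x_0}(\sigma)(a_e) = \sigma_{a_e}$, while on the other hand $(\tau^h)_{a_e} = (\tau^h)_{\upath_x}(\tau^h)_e(\tau^h)_{\upath_y}^{-1} = \sigma_{\upath_x}(\tau^h)_e\sigma_{\upath_y}^{-1}$ and likewise $\sigma_{a_e} = \sigma_{\upath_x}\sigma_e\sigma_{\upath_y}^{-1}$; cancelling the tree-path factors yields $(\tau^h)_e = \sigma_e$. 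Hence $\sigma_e = (\tau^h)_e = h_x\tau_e h_y^{-1}$ for every edge $e = (x,y) \in \oneskel$.

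\emph{Main obstacle.} There is no genuine difficulty here; the only points requiring care are the orientation bookkeeping in the path products and the observation that for $e \in T$ the loop $a_e$ is a literally backtracking path, so that $\sigma_{a_e} = \tau_{a_e} = \groupid$ on the nose (not merely null-homotopic in $\pi_1$). Once the cocycle identity and the convention for $\sigma_p$ are recorded, everything else is a formal computation in the free group $\pi_1(\oneskel, x_0) = \langle a_e : e \in \oneskel\bs T\rangle$ together with the definition of $\homsym_T^{x_0}$.
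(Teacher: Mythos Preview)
Your proof is correct and follows essentially the same approach as the paper: you define $h_x := \sigma_{\upath_x}^{-1}\tau_{\upath_x}$ exactly as the paper does, and your case split between $e \in T$ and $e \notin T$ mirrors the paper's argument. Your packaging via the path-product notation and the cocycle identity is a bit cleaner, and you also write out the ``if'' direction explicitly (the paper simply declares one direction nontrivial and proves only that one), but the substance is the same.
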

\begin{proof}
We prove the nontrivial direction. Suppose $\sigma, \tau$ induce the same homomorphism. We define the desired $h$ as follows. First, as required, $h_{x_0} := \groupid$. Now for any edge $e = (x_0, x) \in T$, define $h_x$ so that
\[ \sigma_e = h_{x_0} \tau_e h_x^{-1},  \]
i.e.
\[ h_x := \sigma_e^{-1} \tau_e. \]
More generally, for any $x \in \vertices$, suppose $\upath_x = e_1 \cdots e_n$. Define
\[ h_x := (\sigma_{e_1} \cdots \sigma_{e_n})^{-1} \tau_{e_1} \cdots \tau_{e_n}.\]
We now show that $h$ is as required. Fix an edge $e = (x, y)$. Suppose first that $e \in T$. Without loss of generality, suppose that $y$ is further from the root $x_0$ of $T$ than $x$, so that $\upath_y = \upath_x e$. If $\upath_x = e_1 \cdots e_n$, then
\[ h_x \tau_e h_y^{-1} = (\sigma_{e_1} \cdots \sigma_{e_n})^{-1} \tau_{e_1} \cdots \tau_{e_n} \tau_e (\tau_{e_1} \cdots \tau_{e_n} \tau_e)^{-1} \sigma_{e_1} \cdots \sigma_{e_n} \sigma_e = \sigma_e, \]
as desired. Now suppose $e \in \oneskel \backslash T$. Let $\upath_x = e_1, \ldots, e_n$, $\upath_y = f_1, \ldots, f_m$. Then
\begin{align*} 
h_x \tau_e h_y^{-1} &= (\sigma_{e_1} \cdots \sigma_{e_n})^{-1} \tau_{e_1} \cdots \tau_{e_n} \tau_e (\tau_{f_1} \cdots \tau_{f_m})^{-1} \sigma_{f_1} \cdots \sigma_{f_m} \\
&= (\sigma_{e_1} \cdots \sigma_{e_n})^{-1} \homsym_T^{x_0}(\tau)(a_e) \sigma_{f_1} \cdots \sigma_{f_m}.
\end{align*}
To finish, we want to show
\[(\sigma_{e_1} \cdots \sigma_{e_n})^{-1} \homsym_T^{x_0}(\tau)(a_e) \sigma_{f_1} \cdots \sigma_{f_m} = \sigma_e. \]
If we move all the $\sigma$'s to the right hand side, we see that we need to show
\[ \homsym_T^{x_0}(\tau)(a_e) = \homsym_T^{x_0}(\sigma)(a_e), \]
which is true by assumption.
\end{proof}

\begin{lemma}\label{lemma:different-gauge-transform-implies-different-edge-configuration}
Let $x_0 \in \vertices$. Let $\sigma \in G^\edges$, and $h \in G^{\vertices}$ with $h_{x_0} = \groupid$. Let $\tau \in G^\edges$ be the edge configuration given by $\tau_e := h_x \sigma_e h_y^{-1}$ for each $e = (x, y) \in \edges$. If there exists $x \in \vertices$ such that $h_x \neq \groupid$, then $\sigma \neq \tau$.
\end{lemma}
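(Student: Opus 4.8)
The plan is to argue by contradiction: assume $\sigma = \tau$ and deduce that $h$ is identically the identity, contradicting the hypothesis.

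First I would unwind the definition of $\tau$. If $\sigma = \tau$, then for every edge $e = (x,y) \in \edges$ we have $\sigma_e = h_x \sigma_e h_y^{-1}$, which rearranges to $h_x = \sigma_e h_y \sigma_e^{-1}$. Since conjugation by a group element sends the identity to the identity (and is a bijection), this shows that for any edge $e = (x,y)$ we have $h_x = \groupid$ if and only if $h_y = \groupid$. In other words, the vertex set $\{x \in \vertices : h_x = \groupid\}$ is a union of connected components of the 1-skeleton $\oneskel$ (viewed as an ordinary graph, ignoring orientation, since the adjacency relation between $x$ and $y$ does not depend on the chosen orientation of the connecting edge).

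Next I would invoke connectivity of $\oneskel$: the lattice $\lbox$ is a cube, so its 1-skeleton is a connected graph. Since $h_{x_0} = \groupid$ by hypothesis, the component containing $x_0$ is all of $\oneskel$, hence $h_x = \groupid$ for every $x \in \vertices$. Concretely this is an induction on the graph distance from $x_0$: for any $x$, take a path $x_0 = v_0, v_1, \ldots, v_k = x$ in $\oneskel$; by the edge relation just established, $h_{v_j} = \groupid$ for all $j$, so $h_x = \groupid$. This contradicts the existence of $x \in \vertices$ with $h_x \neq \groupid$, so we must have $\sigma \neq \tau$.

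I do not expect any real obstacle here; the only point requiring (minor) care is the observation that the conjugation relation $h_x = \sigma_e h_y \sigma_e^{-1}$ propagates the property "$h = \groupid$" along edges in both directions, after which the argument is just connectedness of the cube's 1-skeleton together with the normalization $h_{x_0} = \groupid$. This is the non-Abelian analogue of the corresponding uniqueness statement in the Abelian setting.
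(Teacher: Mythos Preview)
Your proposal is correct and takes essentially the same approach as the paper: both exploit the edge relation $\tau_e = h_x \sigma_e h_y^{-1}$ together with connectivity of $\oneskel$ to propagate the condition $h = \groupid$ outward from $x_0$. The paper phrases this directly---choosing the first vertex along a spanning-tree path from $x_0$ where $h \neq \groupid$ and exhibiting the corresponding edge on which $\sigma$ and $\tau$ differ---whereas you phrase it as the contrapositive, but the content is identical.
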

\begin{proof}
Fix a spanning tree $T$ of $\oneskel$.
Take a vertex $x \in \vertices$ such that $h_x \neq \groupid$, and also such that if $x_0, x_1, \ldots, x_n = x$ is the sequence of vertices in the unique path from $x_0$ to $x$ in $T$, then $h_{x_0} = \cdots = h_{x_{n-1}} = \groupid$. Then 
\[ \tau_{(x_{n-1}, x)} = \sigma_{(x_{n-1}, x)} h_x^{-1} \neq \sigma_{(x_{n-1}, x)}. \qedhere\]
\end{proof}

We have the following consequence of Lemmas \ref{lemma:induced-homomorphism-onto}, \ref{lemma:induced-homomorphism-equal-implies-gauge-equiv}, and \ref{lemma:different-gauge-transform-implies-different-edge-configuration}, which will be used later.

\begin{cor}\label{cor:number-of-edge-configurations-map-to-hom}
Let $x_0 \in \vertices$, and let $T$ be a spanning tree of $\oneskel$. For any homomorphism $\homsym \in \Hom(\pi_1(\oneskel, x_0), G)$, the number of edge configurations $\sigma \in G^\edges$ such that $\homsym_T^{x_0}(\sigma) = \homsym$ is $|G|^{|\vertices| - 1}$. 
\end{cor}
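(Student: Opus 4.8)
The plan is to fix the homomorphism $\homsym \in \Hom(\pi_1(\oneskel, x_0), G)$ and count the fiber $\homsym_T^{x_0-1}(\homsym) \sse G^\edges$ by exhibiting it as an orbit of a free group action. First, by Lemma \ref{lemma:induced-homomorphism-onto}, this fiber is nonempty, so pick a particular $\sigma_0 \in G^\edges$ with $\homsym_T^{x_0}(\sigma_0) = \homsym$. Next, let $H := \{h \in G^{\vertices} : h_{x_0} = \groupid\}$, a group under pointwise multiplication of size $|G|^{|\vertices|-1}$, and let $H$ act on $G^\edges$ by the gauge action $(h \cdot \sigma)_e := h_x \sigma_e h_y^{-1}$ for $e = (x,y) \in \edges$ (one checks this respects the convention $\sigma_{-e} = \sigma_e^{-1}$, so it is well-defined on $G^\edges$). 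The claim is that the fiber over $\homsym$ is exactly the orbit $H \cdot \sigma_0$, and that the action of $H$ on this orbit is free.

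The key steps, in order: (1) Lemma \ref{lemma:induced-homomorphism-equal-implies-gauge-equiv} says precisely that $\homsym_T^{x_0}(\sigma) = \homsym_T^{x_0}(\tau)$ if and only if $\sigma$ and $\tau$ lie in the same $H$-orbit; applied with $\tau = \sigma_0$ this identifies the fiber over $\homsym$ with $H \cdot \sigma_0$. (2) Lemma \ref{lemma:different-gauge-transform-implies-different-edge-configuration} says that if $h \in H$ is not the identity element of $H$ (i.e. $h_x \neq \groupid$ for some $x$), then $h \cdot \sigma_0 \neq \sigma_0$; combined with the group structure this shows the stabilizer of $\sigma_0$ in $H$ is trivial, i.e. the action is free, so $h \mapsto h \cdot \sigma_0$ is a bijection from $H$ onto the orbit $H \cdot \sigma_0$. (3) Conclude $|\homsym_T^{x_0-1}(\homsym)| = |H \cdot \sigma_0| = |H| = |G|^{|\vertices|-1}$, which is independent of $\homsym$, as claimed.

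There is essentially no obstacle here: all the content is in the three cited lemmas, and the corollary is just the bookkeeping that packages ``the fiber is a single free orbit of the gauge group fixing $x_0$.'' The only point requiring a line of care is verifying that the gauge action is genuinely an action (associativity and that $h \cdot \sigma \in G^\edges$ respecting the orientation-reversal convention), and that freeness of the action on a single orbit follows from the nonfixing statement in Lemma \ref{lemma:different-gauge-transform-implies-different-edge-configuration} via the identity $h \cdot \sigma_0 = k \cdot \sigma_0 \iff (k^{-1}h)\cdot \sigma_0 = \sigma_0$. If one prefers to avoid the language of group actions entirely, the same argument can be phrased directly: Lemma \ref{lemma:induced-homomorphism-equal-implies-gauge-equiv} gives a surjection $H \to \homsym_T^{x_0-1}(\homsym)$ via $h \mapsto (e = (x,y) \mapsto h_x (\sigma_0)_e h_y^{-1})$, and Lemma \ref{lemma:different-gauge-transform-implies-different-edge-configuration} gives its injectivity.
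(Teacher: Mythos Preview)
Your proposal is correct and is exactly the argument the paper intends: the paper states the corollary as an immediate consequence of Lemmas \ref{lemma:induced-homomorphism-onto}, \ref{lemma:induced-homomorphism-equal-implies-gauge-equiv}, and \ref{lemma:different-gauge-transform-implies-different-edge-configuration} without writing out any further details, and your write-up simply spells out how those three lemmas combine (nonempty fiber, fiber equals $H$-orbit, action is free). Your framing in terms of a free action of the pointed gauge group $H$ is the natural way to package this, and the only minor remark is that the injectivity step can be read off from Lemma \ref{lemma:different-gauge-transform-implies-different-edge-configuration} applied to $k^{-1}h$, exactly as you note.
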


For each plaquette $p \in \plaquettes$, let $e_1^p, e_2^p, e_3^p, e_4^p$ be the edges on the boundary of $p$, traversed in a positive orientation. Let
\[\label{notation:C-p} C_p := a_{e_1^p} a_{e_2^p} a_{e_3^p} a_{e_4^p} \in \pi_1(\oneskel, x_0). \]
In what follows, we will often suppress the dependence on $p$, and write $e_1, \ldots, e_4$ instead of $e_1^p, \ldots, e_4^p$, for notational convenience.
Observe that $C_p$ depends on $(x_0, T)$, but we will hide this dependence. Moreover, $C_p$ is not uniquely determined, since there are four possible starting points, corresponding to the four vertices of the plaquette $p$, but this will not matter. Note that due to the backtracking of edges, we may also express
\beq\label{eq:cp-equiv-to-lasso} C_p = w_{x_1} e_1 e_2 e_3 e_4 w_{x_1}^{-1},\eeq
where $x_1$ is chosen starting vertex. This shows that for any edge configuration $\sigma \in G^\edges$, and for any plaquette $p \in \plaquettes$, there is some $g_p \in G$ such that (recall equation \eqref{eq:sigma-p-def} for the definition of $\sigma_p$)
\beq\label{eq:choice-of-cp-conjugate} \homsym_{T}^{x_0}(\sigma)(C_p) = g_p \sigma_p g_p^{-1}.\eeq
As $\varphi_\beta$ (see equation \eqref{eq:phi-beta-def} for the definition) is conjugate invariant (which follows since the trace is conjugate invariant), this implies
\[ \prod_{p \in \plaquettes} \varphi_\beta(\psi_T^{x_0}(\sigma)(C_p)) = \prod_{p \in \plaquettes} \varphi_\beta(\sigma_p).\]
Now for homomorphisms $\homsym \in \Hom(\pi_1(\oneskel, x_0), G)$, define 
\[ \supp(\homsym) := \{p \in \plaquettes: \homsym(C_p) \neq \groupid\} .\] 
For plaquette sets $\plaqset \sse \plaquettes$, define
\[ \Phi(\plaqset) \label{notation:Phi-nonabelian} := \sum_{\substack{\homsym \in \Hom(\pi_1(\oneskel, x_0), G) \\ \supp(\homsym) = \plaqset}} \prod_{p \in \plaqset} \varphi_\beta(\homsym(C_p)).\]
By \eqref{eq:choice-of-cp-conjugate}, the definitions of $\Phi$ and $\supp(\psi)$ do not depend on the choices of starting points for the $C_p$. Also, in principle, $\Phi$ depends on $(x_0, T)$, but we now show that it is in fact independent of these choices.

\begin{lemma}\label{lemma:choice-of-tree-invariant}
Let $x_0, x_0' \in \vertices$, and let $T, T'$ be spanning trees of $\oneskel$. Suppose $\Phi$ is defined using $(x_0, T)$, and $\Phi'$ is defined using $(x_0', T')$. Then $\Phi = \Phi'$.
\end{lemma}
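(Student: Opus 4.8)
The plan is to rewrite $\Phi(\plaqset)$ as a normalized sum over edge configurations $\sigma \in G^\edges$ rather than over homomorphisms; once this is done, the independence of $\Phi$ from $(x_0, T)$ is manifest. So first I would recall that, by Lemma \ref{lemma:induced-homomorphism-onto} together with Corollary \ref{cor:number-of-edge-configurations-map-to-hom}, the map $\sigma \mapsto \homsym_T^{x_0}(\sigma)$ is a surjection from $G^\edges$ onto $\Hom(\pi_1(\oneskel, x_0), G)$, all of whose fibers have the same cardinality $|G|^{|\vertices|-1}$. Consequently, for any plaquette set $\plaqset \sse \plaquettes$,
\[ \Phi(\plaqset) = \frac{1}{|G|^{|\vertices|-1}} \sum_{\substack{\sigma \in G^\edges \\ \supp(\homsym_T^{x_0}(\sigma)) = \plaqset}} \prod_{p \in \plaqset} \varphi_\beta\big(\homsym_T^{x_0}(\sigma)(C_p)\big). \]

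Next I would invoke \eqref{eq:choice-of-cp-conjugate}: for each $p \in \plaquettes$ there is some $g_p \in G$ (depending on $\sigma$, $x_0$, $T$) with $\homsym_T^{x_0}(\sigma)(C_p) = g_p \sigma_p g_p^{-1}$. Since $\varphi_\beta$ is conjugation invariant, $\varphi_\beta(\homsym_T^{x_0}(\sigma)(C_p)) = \varphi_\beta(\sigma_p)$; and since conjugation fixes $\groupid$, we have $\homsym_T^{x_0}(\sigma)(C_p) = \groupid$ if and only if $\sigma_p = \groupid$, so $\supp(\homsym_T^{x_0}(\sigma)) = \{ p \in \plaquettes : \sigma_p \neq \groupid \}$. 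Substituting these two observations gives
\[ \Phi(\plaqset) = \frac{1}{|G|^{|\vertices|-1}} \sum_{\substack{\sigma \in G^\edges \\ \{\,p \,:\, \sigma_p \neq \groupid\,\} = \plaqset}} \prod_{p \in \plaqset} \varphi_\beta(\sigma_p), \]
and the right-hand side involves neither $x_0$ nor $T$. Running the same computation with $(x_0', T')$ in place of $(x_0, T)$ yields the same expression for $\Phi'(\plaqset)$, whence $\Phi = \Phi'$.

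I do not expect a serious obstacle; the only points needing care are (i) that the fibers of $\sigma \mapsto \homsym_T^{x_0}(\sigma)$ all have the same size — precisely Corollary \ref{cor:number-of-edge-configurations-map-to-hom} — which is what licenses passing from the sum over homomorphisms to the uniformly weighted sum over edge configurations, and (ii) unwinding \eqref{eq:choice-of-cp-conjugate} carefully so that both the summand and the support constraint descend to quantities expressed purely through the plaquette variables $\sigma_p$. (One could instead argue topologically, via a basepoint-changing isomorphism $\pi_1(\oneskel, x_0) \to \pi_1(\oneskel, x_0')$ under which each $C_p$ maps to a conjugate of $C_p'$, both being freely homotopic to the positively oriented boundary loop of $p$, and then use conjugation invariance of $\varphi_\beta$ in $G$; but the edge-configuration computation above is shorter and sidesteps tracking how the chosen generators transform.)
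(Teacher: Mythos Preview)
Your proof is correct, and it takes a genuinely different route from the paper's. The paper argues entirely at the level of fundamental groups: it exhibits the basepoint-change isomorphism $\xi:\pi_1(\oneskel,x_0)\to\pi_1(\oneskel,x_0')$ given by conjugation by a path from $x_0'$ to $x_0$, checks directly that $\xi(C_p)=b_p C_p' b_p^{-1}$ for some $b_p$, and then uses that $\xi$ induces a bijection $\Hom(\pi_1(\oneskel,x_0'),G)\to\Hom(\pi_1(\oneskel,x_0),G)$ which preserves both support and each factor $\varphi_\beta(\cdot(C_p))$ by conjugation invariance. This is precisely the ``topological'' alternative you sketch in your parenthetical remark. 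Your approach instead pulls everything back to $G^\edges$ via the equal-fiber surjection of Corollary~\ref{cor:number-of-edge-configurations-map-to-hom}, arriving at an expression involving only the plaquette variables $\sigma_p$, which is manifestly independent of $(x_0,T)$. Your route is shorter and in fact anticipates Lemma~\ref{lemma:phi-sum-gauge-fixed}, which the paper proves just afterwards by the closely related device of restricting to the gauge-fixed slice $GF(T)$ rather than averaging over all of $G^\edges$; the paper's route, on the other hand, stays within the homomorphism picture and does not need the fiber count.
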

\begin{proof}
Fix $\plaqset \sse \plaquettes$. We want to show $\Phi(\plaqset) = \Phi'(\plaqset)$. For $x \in \vertices$, $p \in \plaquettes$, let $w_x, C_p$ be defined in terms of $(x_0, T)$, and $w_x', C_p'$ be defined in terms of $(x_0', T')$. First, suppose we have an isomorphism $\xi : \pi_1(\oneskel, x_0) \ra \pi_1(\oneskel, x_0')$, such that for any $p \in \plaquettes$, we have some $b_p \in \pi_1(\oneskel, x_0')$ such that
\beq\label{eq:group-iso-conjugate-cp} 
\xi(C_p) = b_p C_p' b_p^{-1}. 
\eeq
Observe that $\xi$ induces a bijection between the sets $\Hom(\pi_1(\oneskel, x_0), G)$ and $\Hom(\pi_1(\oneskel, x_0'), G)$, for example by mapping the homomorphism $\homsym' \in \Hom(\pi_1(\oneskel, x_0'), G)$ to $\homsym' \circ \xi$. Moreover, by \eqref{eq:group-iso-conjugate-cp}, we have $\homsym'(C_p') = \groupid$ if and only if $\homsym' \circ \xi(C_p) =\groupid$, which gives $\supp(\homsym') = \supp(\homsym' \circ \xi)$. We then have
\begin{align*}
\sum_{\substack{\homsym \in \Hom(\pi_1(\oneskel, x_0), G) \\ \supp(\homsym) = \plaqset}} \prod_{p \in \plaqset} \varphi_\beta(\homsym(C_p)) &= \sum_{\substack{\homsym' \in \Hom(\pi_1(\oneskel, x_0'), G) \\ \supp(\homsym') = \plaqset}} \prod_{p \in \plaqset} \varphi_\beta(\homsym' \circ \xi(C_p)).
\intertext{Again by \eqref{eq:group-iso-conjugate-cp}, and the fact that $\varphi_\beta$ is conjugate invariant, we obtain}
&= \sum_{\substack{\homsym' \in \Hom(\pi_1(\oneskel, x_0'), G) \\ \supp(\homsym') = \plaqset}} \prod_{p \in \plaqset} \varphi_\beta(\homsym'(C_p')),
\end{align*}
as desired.

It remains to exhibit the isomorphism $\xi$. Let $\upath$ be the unique path in $T'$ from $x_0'$ to $x_0$. For an element $a \in \pi_1(\oneskel, x_0)$, define
\[ \xi(a) := \upath a \upath^{-1}.\]
Then $\xi$ is an isomorphism, and it remains to verify \eqref{eq:group-iso-conjugate-cp}. Recalling \eqref{eq:cp-equiv-to-lasso}, we have that $C_p$ may be written as a loop of the form $\upath_x e_1 e_2 e_3 e_4 \upath_x^{-1}$, for some vertex $x$ of $p$. Then $\xi(C_p) = \upath \upath_x e_1 e_2 e_3 e_4 \upath_x^{-1} \upath^{-1}$.
As the starting point of $C_p'$ does not affect $\Phi'$, we may assume that $C_p'$ has the same starting point as $C_p$. Thus we have $C_p' = \upath_{x}' e_1 e_2 e_3 e_4 (\upath_{x}')^{-1}$. Setting $b_p := w w_x (w_x')^{-1}$, we see that \eqref{eq:group-iso-conjugate-cp} holds, as desired.
\end{proof}

We have translated edge configurations into homomorphisms, but it will sometimes be convenient to work directly with the edge configurations. The next two lemmas allow us to go from homomorphisms back to edge configurations. 

\begin{lemma}\label{lemma:uniqueness-of-gauge-fix}
Let $x_0 \in \vertices$, and let $T$ be a spanning tree of $\oneskel$. For every homomorphism $\homsym \in \Hom(\pi_1(\oneskel, x_0), G)$, there is a unique edge configuration $\sigma \in G^\edges$ such that $\sigma = \groupid$ on $T$, and $\homsym_T^{x_0}(\sigma) = \homsym$.
\end{lemma}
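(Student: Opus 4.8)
The plan is to treat existence and uniqueness separately, leaning on the three preceding lemmas. For existence, I would simply reuse the construction from the proof of Lemma \ref{lemma:induced-homomorphism-onto}: set $\sigma_e := \groupid$ for every $e \in T$, and $\sigma_e := \homsym(a_e)$ for every $e \in \oneskel \backslash T$. Then $\sigma = \groupid$ on $T$ by construction, and $\homsym_T^{x_0}(\sigma)(a_e) = \sigma_e = \homsym(a_e)$ for all $e \in \oneskel \backslash T$; since the elements $a_e$, $e \in \oneskel \backslash T$, generate $\pi_1(\oneskel, x_0)$, this forces $\homsym_T^{x_0}(\sigma) = \homsym$.

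For uniqueness, suppose $\sigma, \tau \in G^\edges$ both satisfy $\sigma = \tau = \groupid$ on $T$ and $\homsym_T^{x_0}(\sigma) = \homsym_T^{x_0}(\tau) = \homsym$. By Lemma \ref{lemma:induced-homomorphism-equal-implies-gauge-equiv}, there is a function $h \in G^{\vertices}$ with $h_{x_0} = \groupid$ such that $\sigma_e = h_x \tau_e h_y^{-1}$ for every edge $e = (x, y) \in \oneskel$. The key point is that $h$ is constant along every edge of the spanning tree: if $e = (x, y) \in T$, then $\groupid = \sigma_e = h_x \tau_e h_y^{-1} = h_x \groupid h_y^{-1} = h_x h_y^{-1}$, so $h_x = h_y$. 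Since $T$ is a spanning tree, every vertex $x \in \vertices$ is joined to $x_0$ by a path in $T$, and propagating this equality along that path yields $h_x = h_{x_0} = \groupid$ for all $x \in \vertices$. Hence $\sigma_e = h_x \tau_e h_y^{-1} = \tau_e$ for every edge, i.e. $\sigma = \tau$.

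This argument is essentially routine once the earlier lemmas are in hand; the only substantive step is the propagation of the triviality of $h$ from $x_0$ outward along $T$, and I do not anticipate any real obstacle there. (One could alternatively invoke Lemma \ref{lemma:different-gauge-transform-implies-different-edge-configuration} for the uniqueness half, but the direct propagation along $T$ seems cleanest and most self-contained.)
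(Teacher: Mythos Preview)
Your proof is correct. The existence half matches the paper exactly. For uniqueness, the paper takes a shorter route: it simply reuses the observation you already made in your existence argument, namely that if $\sigma = \groupid$ on $T$ then $\homsym_T^{x_0}(\sigma)(a_e) = \sigma_e$ for every $e \in \oneskel \backslash T$, so $\sigma_e$ is forced to equal $\homsym(a_e)$ and the configuration is determined. Your detour through Lemma~\ref{lemma:induced-homomorphism-equal-implies-gauge-equiv} and the propagation of $h$ along $T$ is perfectly valid, just slightly less direct; it has the minor advantage of showing explicitly how the gauge-fixing condition on $T$ kills the gauge freedom, whereas the paper's argument gets uniqueness in one line by reading $\sigma$ off from $\homsym$.
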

\begin{proof}
The existence of such $\sigma$ follows from the proof of Lemma \ref{lemma:induced-homomorphism-onto}. For uniqueness, observe for $e \in \oneskel \backslash T$, we must have
\[ \sigma_e = \homsym_T^{x_0}(\sigma)(a_e) = \homsym(a_e). \qedhere\]
\end{proof}

For an edge configuration $\sigma \in G^\edges$, define the support of $\sigma$ by $\supp(\sigma) := \{p \in \plaquettes : \sigma_p \neq \groupid\}$\label{notation:supp-sigma}. For a spanning tree $T$ of $\oneskel$, let $GF(T) := \{\sigma \in G^\edges : \sigma = \groupid \text{ on T}\}$\label{notation:GF-T} (``GF" stands for gauge-fixed).

\begin{lemma}\label{lemma:phi-sum-gauge-fixed}
For a plaquette set $\plaqset \sse \plaquettes$, and any spanning tree $T$ of $\oneskel$, we have
\[ \Phi(\plaqset) = \sum_{\substack{\sigma \in GF(T) \\ \supp(\sigma) = \plaqset}} \prod_{p \in \plaqset} \varphi_\beta(\sigma_p). \]
\end{lemma}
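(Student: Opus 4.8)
The statement asserts an identity between a sum over homomorphisms $\homsym \in \Hom(\pi_1(\oneskel, x_0), G)$ (the definition of $\Phi(\plaqset)$) and a sum over gauge-fixed edge configurations $\sigma \in GF(T)$. The natural strategy is to exhibit a bijection between the index sets of the two sums under which both the support constraint and the summand are preserved. First I would address the apparent mismatch in quantifiers: $\Phi$ is defined using some fixed pair $(x_0, T)$, whereas the lemma asserts the formula for \emph{any} spanning tree $T$. This is handled by Lemma \ref{lemma:choice-of-tree-invariant}: given the spanning tree $T$ in the statement, choose any basepoint $x_0 \in \vertices$ and compute $\Phi$ using $(x_0, T)$; by Lemma \ref{lemma:choice-of-tree-invariant} this agrees with the $\Phi$ appearing in the statement. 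So it suffices to prove the identity when $\Phi$ is computed with this same $T$.

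\textbf{The bijection.} By Lemma \ref{lemma:uniqueness-of-gauge-fix}, for every $\homsym \in \Hom(\pi_1(\oneskel, x_0), G)$ there is a unique $\sigma \in GF(T)$ with $\homsym_T^{x_0}(\sigma) = \homsym$; conversely every $\sigma \in GF(T)$ induces the homomorphism $\homsym_T^{x_0}(\sigma)$, so $\homsym \mapsto \sigma$ is a bijection $\Hom(\pi_1(\oneskel, x_0), G) \to GF(T)$. Now fix such a matched pair, writing $\homsym = \homsym_T^{x_0}(\sigma)$. By \eqref{eq:choice-of-cp-conjugate}, for each plaquette $p \in \plaquettes$ there is $g_p \in G$ with $\homsym(C_p) = g_p \sigma_p g_p^{-1}$. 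Hence $\homsym(C_p) = \groupid$ if and only if $\sigma_p = \groupid$, so $\supp(\homsym) = \supp(\sigma)$; in particular $\supp(\homsym) = \plaqset$ iff $\supp(\sigma) = \plaqset$. Moreover $\varphi_\beta$ is conjugate invariant (as noted in the paragraph preceding the definition of $\supp(\homsym)$, since $\varphi_\beta$ is built from the character $\chi$), so $\varphi_\beta(\homsym(C_p)) = \varphi_\beta(g_p \sigma_p g_p^{-1}) = \varphi_\beta(\sigma_p)$ for every $p$, and in particular $\prod_{p \in \plaqset} \varphi_\beta(\homsym(C_p)) = \prod_{p \in \plaqset} \varphi_\beta(\sigma_p)$.

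\textbf{Conclusion.} Substituting the bijection into the defining sum for $\Phi(\plaqset)$ and using the two preservation facts just established, every term indexed by $\homsym$ with $\supp(\homsym) = \plaqset$ corresponds to exactly one term indexed by $\sigma \in GF(T)$ with $\supp(\sigma) = \plaqset$, with equal summand, giving
\[ \Phi(\plaqset) = \sum_{\substack{\homsym \in \Hom(\pi_1(\oneskel, x_0), G) \\ \supp(\homsym) = \plaqset}} \prod_{p \in \plaqset} \varphi_\beta(\homsym(C_p)) = \sum_{\substack{\sigma \in GF(T) \\ \supp(\sigma) = \plaqset}} \prod_{p \in \plaqset} \varphi_\beta(\sigma_p), \]
as desired. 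There is no serious obstacle here; the only points requiring care are (i) invoking Lemma \ref{lemma:choice-of-tree-invariant} so that the fixed tree used to define $\Phi$ may be taken equal to the given $T$, and (ii) using \eqref{eq:choice-of-cp-conjugate} rather than a naive equality $\homsym(C_p) = \sigma_p$, since $C_p$ carries the backtracking-to-basepoint conjugation and only the conjugacy class of $\sigma_p$ is recovered — which is exactly why conjugate invariance of $\varphi_\beta$ is essential.
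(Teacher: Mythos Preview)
Your proof is correct and follows essentially the same approach as the paper: both use Lemma \ref{lemma:uniqueness-of-gauge-fix} to set up the bijection between $\Hom(\pi_1(\oneskel,x_0),G)$ and $GF(T)$, and then invoke the conjugacy relation \eqref{eq:choice-of-cp-conjugate} together with conjugate invariance of $\varphi_\beta$ to match supports and summands. Your version is in fact slightly more careful, since you explicitly invoke Lemma \ref{lemma:choice-of-tree-invariant} to justify computing $\Phi$ with the given tree $T$, whereas the paper leaves this implicit.
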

\begin{proof}
Fix a vertex $x_0 \in \vertices$. By Lemma \ref{lemma:uniqueness-of-gauge-fix}, for any homomorphism $\homsym$, there is a unique edge configuration $\sigma \in GF(T)$ such that $\homsym = \homsym_T^{x_0}(\sigma)$. Moreover, by \eqref{eq:group-iso-conjugate-cp}, we have that $\supp(\homsym_T^{x_0}(\sigma)) = \supp(\sigma)$, and $\varphi_\beta(\homsym_T^{x_0}(\sigma)(C_p)) = \varphi_\beta(\sigma_p)$ for all $p \in \plaquettes$.
\end{proof}

We now begin to explore the factorization properties of $\Phi$. In particular, we want to show partial analogues of Lemma \ref{lemma:activity_decomp}, which was the key result that allowed us to use cluster expansion to show that the random collection of surfaces we were considering had a lot of independence. The analogues will be given by Lemmas \ref{lemma:minimal-vortex-activity-factor-nonabelian} and \ref{lemma:well-separated-vortices-activity-decomp}, but first we will need some preliminary results. Recall that a topological space is said to be simply connected if it is path connected and has trivial fundamental group. We say that $T$ is a spanning tree of a 2-complex $S$, if $T$ is a spanning tree of the 1-skeleton of $S$.

\begin{lemma}\label{lemma:identity-on-spanning-tree-implies-identity-on-everything}
Let $S$ be a simply connected 2-complex, and a subcomplex of $\twoskel$. Let $T_0$ be a spanning tree of $S$. Suppose $\sigma \in G^\edges$ is such that $\sigma_e = 1$ for all edges $e \in T_0$, and $\sigma_p = \groupid$ for all plaquettes $p \in S$. Then $\sigma_e = 1$ for all edges $e \in S$.
\end{lemma}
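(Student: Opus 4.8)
The plan is to re-run, inside the $2$-complex $S$ itself, the dictionary between edge configurations and homomorphisms of the fundamental group set up in Section~\ref{section:topological-preliminaries}, and then use simple connectivity of $S$ to force the associated homomorphism to be trivial.

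First I would fix a vertex $x_0$ of $S$ and let $S^{(1)}$ denote the $1$-skeleton of $S$ (a connected graph, since $S$ is path connected). Because $T_0$ is a spanning tree of $S^{(1)}$, the group $\pi_1(S^{(1)}, x_0)$ is free on the generators $a_e$, $e \in S^{(1)} \backslash T_0$, formed with respect to $T_0$ exactly as in Section~\ref{section:topological-preliminaries}. Since a homomorphism out of a free group is determined by arbitrary choices on the generators, setting $\homsym(a_e) := \sigma_e$ for $e \in S^{(1)} \backslash T_0$ defines a homomorphism $\homsym : \pi_1(S^{(1)}, x_0) \ra G$; this is just the restriction to $S$ of $\homsym_{T_0}^{x_0}(\sigma)$, and it is consistent with the hypothesis $\sigma \equiv \groupid$ on $T_0$, since $a_e$ is trivial for $e \in T_0$.

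Next I would observe that $\homsym(C_p) = \groupid$ for every plaquette $p \in S$. This is immediate from \eqref{eq:choice-of-cp-conjugate} (whose derivation only used the lasso form \eqref{eq:cp-equiv-to-lasso} and so applies verbatim with $T_0$ in place of $T$): it gives $\homsym(C_p) = g_p \sigma_p g_p^{-1}$ for some $g_p \in G$, which equals $\groupid$ because $\sigma_p = \groupid$ by hypothesis. Now comes the one step that needs care, the topological input. The $2$-complex $S$ is obtained from $S^{(1)}$ by attaching one $2$-cell along each plaquette $p \in S$, and, after conjugating the boundary square of $p$ by a path in $T_0$ to the basepoint $x_0$, the corresponding attaching loop is precisely $C_p$. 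Hence by the standard presentation of the fundamental group of a CW-complex (van Kampen's theorem; see e.g.\ \cite{STILL1993}),
\[ \pi_1(S, x_0) \;\cong\; \pi_1(S^{(1)}, x_0)\big/\langle\langle\, C_p : p \in S \,\rangle\rangle, \]
where $\langle\langle\,\cdot\,\rangle\rangle$ denotes normal closure. Since $S$ is simply connected, the left-hand side is trivial, so $\langle\langle\, C_p : p \in S \,\rangle\rangle = \pi_1(S^{(1)}, x_0)$.

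Finally, $\ker\homsym$ is a normal subgroup of $\pi_1(S^{(1)}, x_0)$ containing every $C_p$, hence it contains their normal closure, i.e.\ all of $\pi_1(S^{(1)}, x_0)$; thus $\homsym$ is the trivial homomorphism. Consequently $\sigma_e = \homsym(a_e) = \groupid$ for all $e \in S^{(1)} \backslash T_0$, and combined with the hypothesis $\sigma \equiv \groupid$ on $T_0$ this yields $\sigma_e = \groupid$ for every edge $e$ of $S$, as desired. I expect the only genuine obstacle to be justifying the displayed presentation — that attaching the plaquette $2$-cells to the graph $S^{(1)}$ imposes exactly the relators $C_p$ — which is precisely van Kampen's theorem for CW-complexes and should be cited carefully; the rest is bookkeeping with the free-group structure already in place.
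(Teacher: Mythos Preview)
Your proof is correct and essentially identical to the paper's own argument: both construct the homomorphism $\homsym$ on $\pi_1(S^{(1)},x_0)$ from $\sigma$, observe that the plaquette relators $C_p$ lie in $\ker\homsym$, invoke the CW presentation $\pi_1(S,x_0)\cong \pi_1(S^{(1)},x_0)/\langle\langle C_p\rangle\rangle$, and use simple connectivity to conclude $\homsym$ is trivial. The only cosmetic difference is that the paper phrases the last step via the fundamental theorem of homomorphisms (factoring $\homsym$ through the trivial group $\pi_1(S,x_0)$), whereas you argue directly that the normal closure equals the whole group.
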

\begin{proof}
Let $S_1$ be the 1-skeleton of $S$. Fix a vertex $x_0$ in $S_1$. We may obtain a presentation of $\pi_1(S_1, x_0)$ just as we did for $\pi_1(\oneskel, x_0)$. I.e., for a vertex $x$ of $S_1$, let $\upath_x^0$ be the unique path from $x_0$ to $x$ in $T_0$. For $e = (x, y) \in S_1 \backslash T_0$, let $a_e^0 := \upath_x^0 e (\upath_y^0)^{-1}$. Then $\pi_1(S_1, x_0) = \langle a_e^0, e \in S_1 \backslash T_0\rangle$.

Just as for $\oneskel$, observe that $\sigma$ induces a homormophism $\pi_1(S_1, x_0) \ra G$. Call this homomorphism $\homsym$. As $\sigma = \groupid$ on $T_0$, we additionally have that $\homsym(a_e^0) = \sigma_e$ for all $e \in S_1 \backslash T_0$. Now for plaquettes $p \in S$, let $C^0_p := a_{e_1}^0 a_{e_2}^0 a_{e_3}^0 a_{e_4}^0$, where $e_1, e_2, e_3, e_4$ are the edges of $p$, traversed in a positive orientation. For all plaquettes $p \in S$, we have that $\psi(C^0_p)$ and $\sigma_p$ are in the same conjugacy class of $G$. Thus by the assumption that $\sigma_p = 1$ for all plaquettes $p \in S$, we also have $\homsym(C^0_p) = \groupid$ for all plaquettes $p \in S$. Next, observe that (see e.g. Section 4.1.3 of \cite{STILL1993})
\[ \pi_1(S, x_0) = \langle a_e^0, e \in S_1 \backslash T_0 ~|~ C_p^0, p \in S \rangle =  \pi_1(S_1, x_0) / N, \]
where $N$ is the normal subgroup of $\pi_1(S_1, x_0)$ generated by $C^0_p, p \in S$. Note that $N \sse \ker \homsym$, and thus by the fundamental theorem of homomorphisms, there is a homormophism $\twoskelhom : \pi_1(S, x_0) \ra G$ such that $\homsym = \twoskelhom \circ \Pi$, where $\Pi : \pi_1(S_1, x_0) \ra \pi_1(S, x_0)$ is the natural projection map. But by assumption, we have that $\pi_1(S, x_0) = \{\groupid\}$. Thus for any $e \in S_1 \backslash T_0$, we have that
\[ \sigma_e =  \homsym(a_e^0) = \zeta \circ \Pi (a_e^0) = \zeta(\groupid) = \groupid. \]
As $\sigma = \groupid$ on $T_0$ by assumption, we conclude that $\sigma_e = \groupid$ for all $e \in S$, as desired.
\end{proof}

The following lemma can be thought of as an analogue of Lemma \ref{lemma:gauge_eq_decomp}.

\begin{lemma}\label{lemma:general-edge-configuration-decomposition}
Let $S_1, S_2$ be 2-complexes, and subcomplexes of $\twoskel$. Suppose $S_1 \cup S_2 = \twoskel$, and additionally, suppose $S_1, S_2$, and $S_1 \cap S_2$ are simply connected. Let $T$ be a spanning tree of $\oneskel$, which contains spanning trees of $S_1, S_2$, and $S_1 \cap S_2$. Suppose $\plaqset_1, \plaqset_2 \sse \plaquettes$, such that $\plaqset_1 \sse S_1$, $\plaqset_2 \sse S_2$, and no plaquette of $\plaqset_1$ or $\plaqset_2$ is in $S_1 \cap S_2$. Then there is a bijection between edge configurations $\sigma \in GF(T)$ such that $\supp(\sigma) = \plaqset_1 \cup \plaqset_2$, and tuples of edge configurations $(\sigma^1, \sigma^2)$ such that $\sigma^i \in GF(T)$, $\supp(\sigma^i) = \plaqset_i$, $i = 1, 2$. Moreover, if $\sigma$ is mapped to $(\sigma^1, \sigma^2)$, then $\sigma = \sigma^1 \sigma^2$, $\sigma^1 = \groupid$ on $S_2$, and $\sigma^2 = \groupid$ on $S_1$. Consequently, for $i = 1, 2$, $p \in S_i$, we have $\sigma_p = \sigma^i_p$.
\end{lemma}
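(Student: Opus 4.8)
The plan is to work with the partition of the edge set $\oneskel$ into three classes: edges lying in $S_1$ but not $S_2$ (call this $E_1$), edges lying in $S_2$ but not $S_1$ (call this $E_2$), and edges lying in $S_1 \cap S_2$ (call this $E_\cap$); since $S_1 \cup S_2 = \twoskel$ has $\oneskel$ as its $1$-skeleton, these three classes really do partition $\oneskel$. Given $\sigma \in GF(T)$ with $\supp(\sigma) = \plaqset_1 \cup \plaqset_2$, I would define $\sigma^1$ by $\sigma^1_e := \sigma_e$ for $e \in E_1$ and $\sigma^1_e := \groupid$ for $e \in E_2 \cup E_\cap$, and $\sigma^2$ symmetrically; the claim is that $\sigma \mapsto (\sigma^1, \sigma^2)$ is the desired bijection.

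The single structural input is this: \emph{if $\sigma \in GF(T)$ and no plaquette of $\supp(\sigma)$ lies in $S_1 \cap S_2$, then $\sigma_e = \groupid$ for every edge $e$ of $S_1 \cap S_2$.} This is exactly Lemma \ref{lemma:identity-on-spanning-tree-implies-identity-on-everything} applied to the simply connected subcomplex $S := S_1 \cap S_2$, with $T_0$ the spanning tree of $S_1 \cap S_2$ contained in $T$: the configuration $\sigma$ vanishes on $T_0 \sse T$, and $\sigma_p = \groupid$ for all plaquettes $p$ of $S_1 \cap S_2$ by hypothesis, so $\sigma$ vanishes on all of $E_\cap$. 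Since $\plaqset_1, \plaqset_2$ avoid $S_1 \cap S_2$, this applies to our $\sigma$, so in particular $\sigma = \groupid$ on $E_\cap$.

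With this in hand, checking that $\sigma \mapsto (\sigma^1,\sigma^2)$ is well-defined is bookkeeping over the three edge classes, using that every edge of a plaquette $p \in S_i$ lies in $S_i$, hence in $E_i \cup E_\cap$. Concretely: $\sigma^i \in GF(T)$ since $\sigma^i$ vanishes off $E_i$ and equals $\sigma$ (hence $\groupid$) on $T \cap E_i$; for $p \in S_1$ every edge of $p$ lies in $E_1 \cup E_\cap$ and $\sigma^1 = \sigma$ on both (on $E_\cap$ both equal $\groupid$ by the structural fact), so $\sigma^1_p = \sigma_p$, while for $p \notin S_1$ (so $p \in S_2$) every edge of $p$ lies in $E_2 \cup E_\cap$, whence $\sigma^1_p = \groupid$; combining these with $\supp(\sigma) = \plaqset_1 \cup \plaqset_2$, $\plaqset_1 \sse S_1$, $\plaqset_2 \sse S_2 \bs S_1$ gives $\supp(\sigma^1) = \plaqset_1$. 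Finally $\sigma = \sigma^1 \sigma^2$ pointwise (on $E_1$ the product is $\sigma_e \cdot \groupid$, on $E_2$ it is $\groupid \cdot \sigma_e$, and on $E_\cap$ it is $\groupid = \sigma_e$), and $\sigma^1 = \groupid$ on $S_2$, $\sigma^2 = \groupid$ on $S_1$ by construction; the last sentence of the lemma ($\sigma_p = \sigma^i_p$ for $p \in S_i$) is precisely the computation just made. Injectivity of the map is then immediate since $\sigma = \sigma^1\sigma^2$ recovers $\sigma$.

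For surjectivity, start from any tuple $(\sigma^1, \sigma^2)$ in the target set; applying Lemma \ref{lemma:identity-on-spanning-tree-implies-identity-on-everything} now to $S_2$ (using that $\sigma^1 \in GF(T)$, that $\supp(\sigma^1) = \plaqset_1$ contains no plaquette of $S_2$, and that $T$ contains a spanning tree of $S_2$) shows $\sigma^1 = \groupid$ on every edge of $S_2$, i.e.\ on $E_2 \cup E_\cap$; symmetrically $\sigma^2 = \groupid$ on $E_1 \cup E_\cap$. Then $\sigma := \sigma^1 \sigma^2$ lies in $GF(T)$, has $\supp(\sigma) = \plaqset_1 \cup \plaqset_2$, and is sent back to $(\sigma^1, \sigma^2)$ by the same edge-class bookkeeping, so the map is onto. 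I expect the only real subtlety — the main obstacle — to be recognizing that Lemma \ref{lemma:identity-on-spanning-tree-implies-identity-on-everything} is precisely the lever that forces the overlap contributions to vanish: it is invoked once to kill $\sigma$ on $S_1 \cap S_2$ and twice more (one symmetric application apiece) to kill each $\sigma^i$ on the opposite complex $S_{3-i}$. That is where all three simple-connectivity hypotheses and the nesting of the spanning trees get consumed; everything else is elementary tracking of which edges and plaquettes live in which of $S_1$, $S_2$, $S_1 \cap S_2$.
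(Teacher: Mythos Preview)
Your proposal is correct and follows essentially the same argument as the paper's proof: both define $\sigma^i$ by restricting $\sigma$ to the edges of $S_i$ (your explicit partition $E_1, E_2, E_\cap$ just names the pieces), both invoke Lemma~\ref{lemma:identity-on-spanning-tree-implies-identity-on-everything} once on $S_1 \cap S_2$ in the forward direction and once on each $S_i$ in the backward direction, and the remaining verifications are identical bookkeeping. The only cosmetic difference is that the paper sets $\sigma^i_e := \sigma_e$ for all $e \in S_i$ (including $E_\cap$) whereas you set $\sigma^1_e := \groupid$ on $E_\cap$; since both arguments first establish $\sigma = \groupid$ on $E_\cap$, the resulting configurations coincide.
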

\begin{proof}
Suppose we have $\sigma$ such that $\sigma = 1$ on $T$, and $\supp(\sigma) = \plaqset_1 \cup \plaqset_2$. Since $T$ contains a spanning tree of $S_1 \cap S_2$, we have that $\sigma = 1$ on a spanning tree of $S_1 \cap S_2$. Moreover, we have that $\sigma_p = 1$ on all $p \in S_1 \cap S_2$, since no plaquette of $\plaqset_1$ or $\plaqset_2$ is in $S_1 \cap S_2$. Thus by Lemma \ref{lemma:identity-on-spanning-tree-implies-identity-on-everything}, we have that $\sigma = 1$ on $S_1 \cap S_2$. For $i = 1, 2,$ define
\[ \sigma^i_e := \begin{cases} \sigma_e & e \in S_i \\ 1 & e \notin S_i\end{cases}. \]
Clearly $\sigma^i = 1$ on $T$. 

We now look at $\supp(\sigma^i)$. First, as $\sigma = 1$ on $S_1 \cap S_2$ and $S_1 \cup S_2 = \twoskel$, we have that $\sigma = \sigma^1 \sigma^2$, and $\sigma^1 = 1$ on $S_2$, $\sigma^2 = 1$ on $S_1$. For $p \in S_1$, all edges of $p$ are in $S_1$. Thus $\sigma_p =\sigma^1_p$, and $\sigma^2_p = 1$. Likewise, for $p \in S_2$, we obtain $\sigma_p = \sigma^2_p$, and $\sigma^1_p = 1$. This shows $\supp(\sigma^i) = \plaqset_i$, $i = 1, 2$.

Conversely, suppose we start with $(\sigma^1, \sigma^2)$ such that $\sigma^i \in GF(T)$, and $\supp(\sigma^i) = \plaqset_i$, $i = 1, 2$. Then $\sigma^1 = 1$ on $T$, and $\sigma^1_p = 1$ for all $p \in S_2$. As $T$ contains a spanning tree of $S_2$, by Lemma \ref{lemma:identity-on-spanning-tree-implies-identity-on-everything}, we have that $\sigma^1 = 1$ on $S_2$. Similarly, we obtain $\sigma^2 = 1$ on $S_1$. Thus if we define $\sigma = \sigma^1 \sigma^2$, then $\sigma =1$ on $T$, and for all $p \in S_i$, we have $\sigma_p = \sigma^i_p$. This shows $\supp(\sigma) = \plaqset_1 \cup \plaqset_2$.
\end{proof}

What Lemma \ref{lemma:general-edge-configuration-decomposition} says is that while the analogue of Lemma \ref{lemma:activity_decomp} may not be true for general compatible plaquette sets $\plaqset_1, \plaqset_2 \sse \plaquettes$, it will be true for those plaquette sets for which we can find a decomposition $\twoskel = S_1 \cup S_2$ which satisfies the conditions of Lemma \ref{lemma:general-edge-configuration-decomposition}. It turns out we can always do so if one of the plaquette sets is a minimal vortex. We begin to show this next.

Let $e \in \edges$ be such that $P(e) \sse \plaquettes$. Let $S_e$\label{notation:S-e} be the cell complex obtained by including any plaquette which is in a 3-cell that contains the edge $e$ (note if $P(e) \sse \plaquettes$, then any 3-cell that contains $e$ is in $\lbox$). Let $\partial S_e$ be the cell complex obtained by deleting $e$, and all plaquettes in $P(e)$, from $S_e$. Let $S_e^c$ be the cell complex obtained by deleting $e$, and all plaquettes in $P(e)$, from $\twoskel$. Observe $\partial S_e = S_e \cap S_e^c$.

To help visualize, suppose first that we are in three dimensions, and $e$ is the edge between the vertices $(0, 0, 0)$ and $(1, 0, 0)$. Then $\partial S_e$ is the boundary of the rectangular prism $[0, 1] \times [-1, 1]^2$. When we go to four dimensions, with $e$ now the edge between $(0, 0, 0, 0)$ and $(1, 0, 0, 0)$, $\partial S_e$ is the union of the boundaries of the three rectangular prisms $[0, 1] \times [-1, 1]^2 \times \{0\}$, $[0, 1] \times [-1, 1] \times \{0\} \times [-1, 1]$, and $[0, 1] \times \{0\} \times [-1, 1]^2$. The boundaries of the first two prisms intersect at the closed curve which is the boundary of the rectangle $[0, 1] \times [-1, 1] \times \{0\}^2$, and similarly the boundaries of the first and third prisms intersect at the boundary of $[0, 1] \times \{0\} \times [-1, 1] \times \{0\}$, and the boundaries of the second and third prisms intersect at the boundary of $[0, 1] \times \{0\}^2 \times [-1, 1]$. 

We will need the following topological statement, whose proof is left to the appendix.

\begin{lemma}\label{lemma:minimal-vortex-cell-complexes-simply-connected}
Suppose $e \in \edges$ is such that $P(e) \sse \plaquettes$. Then $S_e, S_e^c$, and  $\partial S_e$ are all simply connected.
\end{lemma}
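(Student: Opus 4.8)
The plan is to establish $\pi_1(\twoskel)=\pi_1(S_e)=\pi_1(\partial S_e)=1$ fairly directly, and then obtain $\pi_1(S_e^c)=1$ as a formal consequence. Throughout I use two standard facts: (i) for a CW complex $X$, the inclusion of the $2$-skeleton of $X$ into $X$ induces an isomorphism on $\pi_1$; and (ii) the van Kampen theorem, in the form: if a CW complex is the union of two subcomplexes whose intersection is path connected, then its $\pi_1$ is the corresponding amalgamated free product. I will also repeatedly use the hypothesis $P(e)\sse\plaquettes$, which guarantees that the combinatorial neighborhood of $e$ in $\lbox$ is the full one and is not truncated by $\partial\lbox$.

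First, $\twoskel$ is the $2$-skeleton of $\lbox$, and $\lbox\subset\R^4$ is convex, hence contractible, so $\pi_1(\twoskel)=1$ by (i). Next, for $S_e$: after a translation and a relabeling of coordinates, assume $e$ runs from the origin to $(1,0,0,0)$, and let $U\subset\R^4$ be the union of the closed $3$-cells of $\lbox$ containing $e$, so that $S_e$ is the $2$-skeleton of $U$. For each $m\in\{2,3,4\}$, the four such $3$-cells whose directions are $\{1\}\cup(\{2,3,4\}\setminus\{m\})$ glue to a box $\Pi_m$ (in which $x_m\equiv0$, $x_1\in[0,1]$, and the remaining two coordinates range over $[-1,1]$), with $e\subset\Pi_m$; and $U=\Pi_2\cup\Pi_3\cup\Pi_4$. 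The straight-line homotopy $H_t(x)=(x_1,(1-t)x_2,(1-t)x_3,(1-t)x_4)$ maps each $\Pi_m$ into itself (as $\Pi_m$ is convex and $H_t$ preserves $x_m=0$), hence maps $U$ into $U$, and is a deformation retraction of $U$ onto $e$; thus $U$ is contractible and $\pi_1(S_e)=1$ by (i).

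The heart of the argument is $\partial S_e$. The first, bookkeeping-heavy step is to identify $\partial S_e=\partial\Pi_2\cup\partial\Pi_3\cup\partial\Pi_4$, where $\partial\Pi_m$ denotes the boundary $2$-sphere of the box $\Pi_m$ with its unit-square cell structure. This follows by checking that the plaquettes of $S_e$ are exactly those lying in some $\Pi_m$, that the plaquettes of $\Pi_m$ not on $\partial\Pi_m$ are precisely the plaquettes of $P(e)$ contained in $\Pi_m$, and that every edge of a plaquette of $P(e)$ other than $e$ itself already lies on some $\partial\Pi_m$; hence deleting the open cells of $P(e)$ and of $e$ from $S_e$ leaves exactly $\bigcup_m\partial\Pi_m$. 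Now $\partial\Pi_m\cong S^2$ is simply connected; for $m\neq m'$, $\partial\Pi_m\cap\partial\Pi_{m'}$ is the boundary of the rectangle $\Pi_m\cap\Pi_{m'}$, i.e. a circle (in particular path connected); and $\partial\Pi_2\cap\partial\Pi_3\cap\partial\Pi_4$ is the pair of endpoints of $e$. Attaching the spheres one at a time, (ii) gives that $\partial\Pi_2\cup\partial\Pi_3$ is simply connected (two simply connected complexes glued along a connected set), and then a second application of (ii) gives that $\partial S_e=(\partial\Pi_2\cup\partial\Pi_3)\cup\partial\Pi_4$ is simply connected, using that $(\partial\Pi_2\cup\partial\Pi_3)\cap\partial\Pi_4=(\partial\Pi_2\cap\partial\Pi_4)\cup(\partial\Pi_3\cap\partial\Pi_4)$ is path connected (the two circles share the endpoints of $e$) and that both pieces are simply connected, so the amalgamated product is trivial regardless of the $\pi_1$ of the intersection.

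Finally, $S_e^c$ comes for free: the text records $S_e\cap S_e^c=\partial S_e$, and $\twoskel=S_e\cup S_e^c$ is immediate ($S_e$ supplies the cells $e$ and $P(e)$ that $S_e^c$ omits); all three complexes are path connected, so (ii) yields $\pi_1(\twoskel)\cong\pi_1(S_e)\ast_{\pi_1(\partial S_e)}\pi_1(S_e^c)$, and since $\pi_1(S_e)=\pi_1(\partial S_e)=1$ this pushout is just $\pi_1(S_e^c)$, whence $\pi_1(S_e^c)=\pi_1(\twoskel)=1$. I expect the only real work to be the explicit combinatorial description of $\partial S_e$ as the union of the three boundary spheres together with the structure of their intersections — purely a matter of careful coordinate bookkeeping, relying on $P(e)\sse\plaquettes$ to know the local picture is complete; once that is in hand, the topology is entirely routine, consisting of two invocations of van Kampen, each of which collapses because the pieces involved are simply connected.
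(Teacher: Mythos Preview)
Your proof is correct and follows essentially the same route as the paper's: decompose into the three rectangular prisms $\Pi_2,\Pi_3,\Pi_4$, handle $\partial S_e$ as the union of their boundary $2$-spheres via two applications of van Kampen, and deduce $\pi_1(S_e^c)=1$ from $\twoskel=S_e\cup S_e^c$ with $S_e\cap S_e^c=\partial S_e$. The one pleasant difference is your treatment of $S_e$: rather than applying van Kampen to the three prisms (as the paper does), you observe that the linear homotopy $H_t(x)=(x_1,(1-t)x_2,(1-t)x_3,(1-t)x_4)$ preserves each $\Pi_m$ and hence retracts $U=\Pi_2\cup\Pi_3\cup\Pi_4$ onto $e$, which is a slightly cleaner way to see contractibility.
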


\begin{lemma}\label{lemma:minimal-vortex-edge-config-decomp}
Suppose $\plaqset_1, \plaqset_2 \sse \plaquettes$ are compatible, and $\plaqset_1$ is a minimal vortex, i.e. $\plaqset_1 = P(e_0)$ for some edge $e_0$. Let $T$ be a spanning tree of $\oneskel$ which contains a spanning tree of $\partial S_{e_0}$. Then there is a bijection between edge configurations $\sigma \in GF(T)$ such that $\supp(\sigma) = \plaqset_1 \cup \plaqset_2$, and tuples of edge configurations $(\sigma^1, \sigma^2)$, such that $\sigma^i \in GF(T)$, $\supp(\sigma^i) = \plaqset_i$, $i = 1, 2$. Moreover, if $\sigma$ is mapped to $(\sigma^1, \sigma^2)$, then $\sigma = \sigma^1 \sigma^2$, and $\sigma^1 = \groupid$ outside $e_0$, and $\sigma^2 = \groupid$ on $S_{e_0}$.
\end{lemma}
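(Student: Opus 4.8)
The plan is to obtain this as a special case of Lemma~\ref{lemma:general-edge-configuration-decomposition}, applied with $S_1 := S_{e_0}$, $S_2 := S_{e_0}^c$, and the plaquette sets $\plaqset_1 = P(e_0)$ and $\plaqset_2$; everything then reduces to verifying the hypotheses of that lemma and translating its conclusion. The easy structural facts are already available: $S_{e_0} \cup S_{e_0}^c = \twoskel$ and $S_{e_0} \cap S_{e_0}^c = \partial S_{e_0}$ (noted in the text), and $S_{e_0}$, $S_{e_0}^c$, $\partial S_{e_0}$ are all simply connected by Lemma~\ref{lemma:minimal-vortex-cell-complexes-simply-connected}, which applies since $P(e_0) = \plaqset_1 \subseteq \plaquettes$. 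Also $\plaqset_1 = P(e_0) \subseteq S_{e_0}$ by definition. Moreover $\plaqset_1 \cap \plaqset_2 = \varnothing$: if $p \in P(e_0) \cap \plaqset_2$, then since $G(P(e_0))$ is connected with more than one vertex, $p$ has a neighbour $q \in P(e_0) = \plaqset_1$, so $\{p,q\}$ is an edge of $G(\plaqset_1 \cup \plaqset_2)$ joining $\plaqset_1$ to $\plaqset_2$, contradicting $\plaqset_1 \sim \plaqset_2$; consequently $\plaqset_2 \subseteq S_{e_0}^c$.

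It remains to check the two hypotheses of Lemma~\ref{lemma:general-edge-configuration-decomposition} that need an argument. First, no plaquette of $\plaqset_1$ or $\plaqset_2$ lies in $S_{e_0} \cap S_{e_0}^c = \partial S_{e_0}$: the plaquettes of $\plaqset_1 = P(e_0)$ are deleted from $\partial S_{e_0}$ by construction, and if some $p \in \plaqset_2$ lay in $\partial S_{e_0}$ then $p$ would sit in a $3$-cell $c$ containing $e_0$, which also contains a plaquette $q \in P(e_0) = \plaqset_1$ (and $q \neq p$ by disjointness), so $\{p,q\}$ would be an edge of $G(\plaqset_1 \cup \plaqset_2)$ between $G(\plaqset_1)$ and $G(\plaqset_2)$, contradicting compatibility. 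Second, $T$ must contain spanning trees of $S_{e_0}$, of $S_{e_0}^c$, and of $\partial S_{e_0}$: by hypothesis $T$ contains a spanning tree $T_0$ of $\partial S_{e_0}$; since $\partial S_{e_0}$ has the same vertex set as $S_{e_0}$ and its $1$-skeleton lies inside that of $S_{e_0}$, $T_0$ is automatically a spanning tree of $S_{e_0}$ as well; and since $T_0 \subseteq T$ is connected and contains both endpoints of $e_0$ but not $e_0$ itself, the tree $T$ cannot contain $e_0$, so $T$ uses only edges of $\oneskel \backslash \{e_0\}$, which is exactly the edge set of $S_{e_0}^c$, making $T$ a spanning tree of $S_{e_0}^c$.

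With all hypotheses in place, Lemma~\ref{lemma:general-edge-configuration-decomposition} yields the desired bijection $\sigma \leftrightarrow (\sigma^1,\sigma^2)$ between edge configurations $\sigma \in GF(T)$ with $\supp(\sigma) = \plaqset_1 \cup \plaqset_2$ and pairs with $\sigma^i \in GF(T)$, $\supp(\sigma^i) = \plaqset_i$, under which $\sigma = \sigma^1\sigma^2$, $\sigma^1 = \groupid$ on $S_2 = S_{e_0}^c$, and $\sigma^2 = \groupid$ on $S_1 = S_{e_0}$; and since the edge set of $S_{e_0}^c$ is precisely $\oneskel \backslash \{e_0\}$, the statement ``$\sigma^1 = \groupid$ on $S_{e_0}^c$'' is the same as ``$\sigma^1 = \groupid$ outside $e_0$'', which is the claim. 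I expect the one genuinely delicate point to be the second hypothesis check: that the single assumption ``$T$ contains a spanning tree of $\partial S_{e_0}$'' already forces $T$ to contain spanning trees of both $S_{e_0}$ and $S_{e_0}^c$. This rests on the two observations that $\partial S_{e_0}$ and $S_{e_0}$ have the same vertices, and that $T$ cannot use $e_0$ because $T_0$ already connects its endpoints; the remaining checks are either quoted from earlier lemmas or one-line facts about $3$-cells of $\Z^4$.
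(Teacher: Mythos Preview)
Your proof is correct and follows essentially the same approach as the paper: both reduce to Lemma~\ref{lemma:general-edge-configuration-decomposition} with $S_1 = S_{e_0}$, $S_2 = S_{e_0}^c$, invoking Lemma~\ref{lemma:minimal-vortex-cell-complexes-simply-connected} for simple connectedness and compatibility to place $\plaqset_2$ away from $S_{e_0}$. Your version is in fact more careful than the paper's on the spanning-tree hypothesis, explicitly arguing that $T$ cannot contain $e_0$ because the assumed spanning tree of $\partial S_{e_0}$ already connects its endpoints; the paper asserts this step without justification.
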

\begin{proof}
We apply Lemma \ref{lemma:general-edge-configuration-decomposition}. To verify the conditions, observe $S_{e_0} \cap S_{e_0}^c = \partial S_{e_0}$, and by Lemma \ref{lemma:minimal-vortex-cell-complexes-simply-connected}, $S_{e_0}, S_{e_0}^c, \partial S_{e_0}$ are all simply connected. Observe also that a spanning tree of $\partial S_{e_0}$ is also a spanning tree of $S_{e_0}$, and a spanning tree of $\oneskel$ which does not use $e_0$ is also a spanning tree of $S_{e_0}^c$. Finally, no plaquette of $P(e_0)$ is in $S_{e_0}^c$, and by definition of compatibility, we have that no plaquette of $\plaqset_2$ can be in $S_{e_0}$.
\end{proof}

\begin{lemma}\label{lemma:minimal-vortex-activity-factor-nonabelian}
Suppose $\plaqset = \plaqset_1 \cup \plaqset_2 \sse \plaquettes$, where $\plaqset_1, \plaqset_2$ are compatible, and $\plaqset_1$ is a minimal vortex. Then $\Phi(\plaqset) = \Phi(\plaqset_1) \Phi(\plaqset_2)$.
\end{lemma}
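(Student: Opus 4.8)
The plan is to pass to the gauge-fixed description of $\Phi$ given by Lemma \ref{lemma:phi-sum-gauge-fixed} and then feed in the bijection of Lemma \ref{lemma:minimal-vortex-edge-config-decomp}. Write $\plaqset_1 = P(e_0)$ for some edge $e_0$; since $\plaqset_1 \sse \plaquettes$ we have $P(e_0) \sse \plaquettes$, so $S_{e_0}$ is defined and Lemma \ref{lemma:minimal-vortex-cell-complexes-simply-connected} applies. Choose a spanning tree $T$ of $\oneskel$ that contains a spanning tree of $\partial S_{e_0}$ (any spanning tree of a subgraph extends to one of the ambient graph). By Lemma \ref{lemma:choice-of-tree-invariant} the quantity $\Phi$ does not depend on the chosen tree, so Lemma \ref{lemma:phi-sum-gauge-fixed} lets us write each of $\Phi(\plaqset)$, $\Phi(\plaqset_1)$, $\Phi(\plaqset_2)$ as a sum over $\sigma \in GF(T)$ with the prescribed support.

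Next I would apply Lemma \ref{lemma:minimal-vortex-edge-config-decomp}, which (with this $T$) gives a bijection $\sigma \leftrightarrow (\sigma^1, \sigma^2)$ between $\{\sigma \in GF(T) : \supp(\sigma) = \plaqset\}$ and $\{(\sigma^1,\sigma^2) : \sigma^i \in GF(T),\ \supp(\sigma^i) = \plaqset_i,\ i=1,2\}$, satisfying $\sigma = \sigma^1 \sigma^2$, with $\sigma^1 = \groupid$ off the edge $e_0$ and $\sigma^2 = \groupid$ on $S_{e_0}$. The point is that this bijection is weight-preserving plaquette by plaquette. Indeed, every plaquette $p \in \plaqset_1 = P(e_0)$ lies in $S_{e_0}$, so all four of its boundary edges lie in $S_{e_0}$, hence $\sigma^2$ is trivial on them and $\sigma_p = \sigma^1_p$ (while $\sigma^2_p = \groupid$). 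Conversely, for $p \in \plaqset_2$, compatibility of $\plaqset_1$ and $\plaqset_2$ forces $p \notin S_{e_0}$ (a $3$-cell meeting both $P(e_0)$ and $p$ would produce an edge of $G(\plaqset_1 \cup \plaqset_2)$ between the two subgraphs); in particular $p$ does not contain $e_0$, so $\sigma^1$ is trivial on all edges of $p$ and $\sigma_p = \sigma^2_p$. (Both facts are also immediate from the ``consequently'' clause in Lemma \ref{lemma:general-edge-configuration-decomposition}, applied with $S_1 = S_{e_0}$, $S_2 = S_{e_0}^c$.) Therefore $\prod_{p \in \plaqset} \varphi_\beta(\sigma_p) = \prod_{p \in \plaqset_1} \varphi_\beta(\sigma^1_p) \prod_{p \in \plaqset_2} \varphi_\beta(\sigma^2_p)$.

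Summing this identity over the bijection, the sum over $\sigma$ factors as the product of the sum over $\sigma^1 \in GF(T)$ with $\supp(\sigma^1) = \plaqset_1$ and the sum over $\sigma^2 \in GF(T)$ with $\supp(\sigma^2) = \plaqset_2$, and by Lemma \ref{lemma:phi-sum-gauge-fixed} these two sums are exactly $\Phi(\plaqset_1)$ and $\Phi(\plaqset_2)$. This yields $\Phi(\plaqset) = \Phi(\plaqset_1)\Phi(\plaqset_2)$. The only substantive ingredient is the decomposition Lemma \ref{lemma:minimal-vortex-edge-config-decomp} (which itself rests on the topological input Lemma \ref{lemma:minimal-vortex-cell-complexes-simply-connected} via Lemma \ref{lemma:general-edge-configuration-decomposition}); once that is in hand the remaining argument is bookkeeping, and the main thing to be careful about is verifying that $\sigma^2$ is trivial on \emph{every} edge of \emph{every} plaquette of $\plaqset_1$ and $\sigma^1$ trivial on every edge of every plaquette of $\plaqset_2$, which is precisely what the structure of $S_{e_0}$ and compatibility deliver.
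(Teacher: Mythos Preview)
Your proof is correct and follows essentially the same route as the paper: choose a spanning tree $T$ containing a spanning tree of $\partial S_{e_0}$, express $\Phi$ via Lemma~\ref{lemma:phi-sum-gauge-fixed}, and then use the bijection of Lemma~\ref{lemma:minimal-vortex-edge-config-decomp} to factor the sum. You supply more detail than the paper on why the weight $\prod_p \varphi_\beta(\sigma_p)$ factors along the bijection, but the argument is the same.
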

\begin{proof}
Let $e$ be such that $\plaqset_1 = P(e)$. Let $T$ be a spanning tree of $\oneskel$ which contains a spanning tree of $\partial S_e$. Applying Lemmas \ref{lemma:phi-sum-gauge-fixed} and \ref{lemma:minimal-vortex-edge-config-decomp}, we have
\begin{align*}
\Phi(\plaqset) &= \sum_{\substack{\sigma \in GF(T) \\ \supp(\sigma) = \plaqset}} \prod_{p \in \plaqset} \varphi_\beta(\sigma_p) \\
&=  \sum_{\substack{\sigma^1 \in GF(T) \\ \supp(\sigma^1) = \plaqset_1}} \sum_{\substack{\sigma^2 \in GF(T) \\ \supp(\sigma^2) = \plaqset_2}} \prod_{p \in \plaqset_1} \varphi_\beta(\sigma^1_p) \prod_{p \in \plaqset_2} \varphi_\beta(\sigma^2_p) \\
&= \Phi(\plaqset_1) \Phi(\plaqset_2). \qedhere
\end{align*}
\end{proof}

By repeatedly applying Lemma \ref{lemma:minimal-vortex-activity-factor-nonabelian}, we obtain the following corollary.

\begin{cor}\label{cor:minimal-vortex-many-factor-non-abelian}
Let $\plaqset \sse \plaquettes$. Suppose we have a decomposition $\plaqset = \plaqset_1 \cup \cdots \cup \plaqset_n$, such that $\plaqset_1, \ldots, \plaqset_n$ are compatible, and such that for some $0 \leq k \leq n$, $\plaqset_1, \ldots, \plaqset_k$ are minimal vortices. Then
\[ \Phi(\plaqset) = \bigg(\prod_{i=1}^k \Phi(\plaqset_i)\bigg) \Phi(\plaqset_{k+1} \cup \cdots \cup \plaqset_n).  \] 
\end{cor}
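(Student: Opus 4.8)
The plan is a straightforward induction on $k$, peeling off one minimal vortex at a time using Lemma~\ref{lemma:minimal-vortex-activity-factor-nonabelian}. The base case $k=0$ is vacuous: the empty product equals $1$ and $\plaqset=\plaqset_1\cup\cdots\cup\plaqset_n$, so there is nothing to prove. For the inductive step with $k\geq 1$, I would write $\plaqset=\plaqset_1\cup\plaqset'$ where $\plaqset':=\plaqset_2\cup\cdots\cup\plaqset_n$, apply Lemma~\ref{lemma:minimal-vortex-activity-factor-nonabelian} to the pair $(\plaqset_1,\plaqset')$ to get $\Phi(\plaqset)=\Phi(\plaqset_1)\Phi(\plaqset')$, and then invoke the induction hypothesis on $\plaqset'$ with parameter $k-1$: the collection $\plaqset_2,\ldots,\plaqset_n$ is still pairwise compatible and $\plaqset_2,\ldots,\plaqset_k$ are still minimal vortices, so $\Phi(\plaqset')=\big(\prod_{i=2}^k\Phi(\plaqset_i)\big)\Phi(\plaqset_{k+1}\cup\cdots\cup\plaqset_n)$. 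Multiplying the two displays gives the claim.

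The one point that needs care — the only real content beyond bookkeeping — is verifying that $\plaqset_1$ is compatible with the \emph{union} $\plaqset'=\plaqset_2\cup\cdots\cup\plaqset_n$, which is what licenses the application of Lemma~\ref{lemma:minimal-vortex-activity-factor-nonabelian}; a priori the hypothesis only gives pairwise compatibility $\plaqset_1\sim\plaqset_j$ for each $j$. I would argue directly from the definition: an edge of $G(\plaqset)=G(\plaqset_1\cup\plaqset')$ joining a vertex of $G(\plaqset_1)$ to a vertex of $G(\plaqset')$ would join some plaquette of $\plaqset_1$ to some plaquette of $\plaqset_j$ ($j\geq 2$) lying in a common $3$-cell, i.e.\ it is an edge of $G(\plaqset_1\cup\plaqset_j)$ between $G(\plaqset_1)$ and $G(\plaqset_j)$, contradicting $\plaqset_1\sim\plaqset_j$. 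Hence no such edge exists and $\plaqset_1\sim\plaqset'$. The same observation — that a sub-collection of a pairwise-compatible collection is pairwise compatible — also guarantees the induction hypothesis applies to $\plaqset_2,\ldots,\plaqset_n$.

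Since Lemma~\ref{lemma:minimal-vortex-activity-factor-nonabelian} is already established, no further topological input (e.g.\ the simple-connectedness statements of Lemma~\ref{lemma:minimal-vortex-cell-complexes-simply-connected}) is needed here; the corollary is a purely combinatorial consequence, and the compatibility bookkeeping above is the only, minor, obstacle. For the edge case $k=n$ one should note that $\plaqset_{k+1}\cup\cdots\cup\plaqset_n=\varnothing$ and $\Phi(\varnothing)=1$ — the only homomorphism on $\pi_1(\oneskel)$ with empty support is trivial, since $\twoskel$ is simply connected — so the empty tail is harmless.
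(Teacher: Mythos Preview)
Your proof is correct and takes essentially the same approach as the paper, which simply states that the corollary follows ``by repeatedly applying Lemma~\ref{lemma:minimal-vortex-activity-factor-nonabelian}.'' Your added care in verifying that pairwise compatibility of $\plaqset_1$ with each $\plaqset_j$ yields $\plaqset_1\sim\plaqset_2\cup\cdots\cup\plaqset_n$, and in handling the $k=n$ edge case via $\Phi(\varnothing)=1$, fills in details the paper leaves implicit.
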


We also have the following consequences of Lemmas \ref{lemma:identity-on-spanning-tree-implies-identity-on-everything} and \ref{lemma:minimal-vortex-edge-config-decomp}.

\begin{cor}\label{cor:homomorphism-number-factors-minimal-vortex}
Suppose $\plaqset_1, \plaqset_2 \sse \plaquettes$ are compatible, and $\plaqset_1$ is a minimal vortex. Then the number of $\psi \in \Hom(\pi_1(\oneskel), G)$ such that $\supp(\psi) = \plaqset_1 \cup \plaqset_2$ is equal to $(\abs{G} - 1)$ times the number of $\psi \in \Hom(\pi_1(\oneskel), G)$ such that $\supp(\psi) = \plaqset_2$.
\end{cor}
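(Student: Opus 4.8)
The plan is to reduce the counting of homomorphisms to the counting of gauge-fixed edge configurations, then exploit the minimal-vortex decomposition of Lemma \ref{lemma:minimal-vortex-edge-config-decomp}, and finally count edge configurations supported on a single edge using Lemma \ref{lemma:identity-on-spanning-tree-implies-identity-on-everything}.

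First I would write $\plaqset_1 = P(e_0)$, fix a base vertex $x_0 \in \vertices$, and fix a spanning tree $T$ of $\oneskel$ which contains a spanning tree of $\partial S_{e_0}$; note that such a $T$ necessarily avoids $e_0$, since $\partial S_{e_0}$ already joins the two endpoints of $e_0$ by a path not using $e_0$. By Lemma \ref{lemma:uniqueness-of-gauge-fix}, together with the identity $\supp(\homsym_T^{x_0}(\sigma)) = \supp(\sigma)$ that follows from \eqref{eq:choice-of-cp-conjugate}, for every plaquette set $\plaqset \sse \plaquettes$ the number of $\psi \in \Hom(\pi_1(\oneskel), G)$ with $\supp(\psi) = \plaqset$ equals the number of $\sigma \in GF(T)$ with $\supp(\sigma) = \plaqset$. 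This turns the corollary into a statement purely about $GF(T)$.

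Next I would apply Lemma \ref{lemma:minimal-vortex-edge-config-decomp}: the map $(\sigma^1, \sigma^2) \mapsto \sigma^1 \sigma^2$ is a bijection from the set of pairs with $\sigma^i \in GF(T)$ and $\supp(\sigma^i) = \plaqset_i$ onto the set of $\sigma \in GF(T)$ with $\supp(\sigma) = \plaqset_1 \cup \plaqset_2$. Since the domain is a product set, passing to cardinalities factors the count as $\#\{\sigma^1 \in GF(T) : \supp(\sigma^1) = P(e_0)\}$ times $\#\{\sigma^2 \in GF(T) : \supp(\sigma^2) = \plaqset_2\}$, and by the previous paragraph the second factor is exactly the quantity appearing on the right-hand side of the corollary. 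So everything reduces to showing the first factor equals $\abs{G} - 1$.

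For this last step I would argue that any $\sigma^1 \in GF(T)$ with $\supp(\sigma^1) = P(e_0)$ must equal $\groupid$ on every edge other than $e_0$: since $e_0 \notin T$ and every other edge of $\oneskel$ lies in $S_{e_0}^c$, the tree $T$ is a spanning tree of the simply connected complex $S_{e_0}^c$ (Lemma \ref{lemma:minimal-vortex-cell-complexes-simply-connected}), on which $\sigma^1$ is trivial on $T$ and has every plaquette holonomy equal to $\groupid$ (as those plaquettes are precisely the ones not in $P(e_0)$); Lemma \ref{lemma:identity-on-spanning-tree-implies-identity-on-everything} then forces $\sigma^1 = \groupid$ off $e_0$. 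Hence $\sigma^1$ is determined by $g := \sigma^1_{e_0} \in G$, and conversely any such single-edge configuration lies in $GF(T)$ because $e_0 \notin T$. For such a configuration every plaquette outside $P(e_0)$ has trivial holonomy, while each plaquette of $P(e_0)$ has exactly one boundary edge equal to $\pm e_0$ and hence holonomy $g^{\pm 1}$, so $\supp(\sigma^1) = P(e_0)$ holds if and only if $g \neq \groupid$ — giving exactly $\abs{G} - 1$ configurations. I do not expect a genuine obstacle here; the only points needing care are the observations that the chosen $T$ avoids $e_0$ and that $S_{e_0}^c$ is a legitimate simply connected subcomplex to which Lemma \ref{lemma:identity-on-spanning-tree-implies-identity-on-everything} applies, after which the count is immediate and the proof concludes by combining the displayed factorization with the reduction to edge configurations.
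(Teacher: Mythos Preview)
Your proposal is correct and follows essentially the same route as the paper: reduce to gauge-fixed configurations via Lemma~\ref{lemma:uniqueness-of-gauge-fix}, factor via Lemma~\ref{lemma:minimal-vortex-edge-config-decomp}, and count the $\sigma^1$ factor as $|G|-1$. The only cosmetic difference is that the paper reads off $\sigma^1 = \groupid$ outside $e_0$ directly from the ``Moreover'' clause of Lemma~\ref{lemma:minimal-vortex-edge-config-decomp}, whereas you re-derive it by applying Lemma~\ref{lemma:identity-on-spanning-tree-implies-identity-on-everything} to $S_{e_0}^c$; this is exactly the argument inside the proof of Lemma~\ref{lemma:general-edge-configuration-decomposition}, so you are simply unpacking one layer rather than taking a different path.
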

\begin{proof}
Let $e_0$ be such that $\plaqset_1 = P(e_0)$. Fix a spanning tree $T$ of $\oneskel$ which contains a spanning tree of $\partial S_{e_0}$. By Lemma \ref{lemma:uniqueness-of-gauge-fix}, we have that the number of $\psi \in \Hom(\pi_1(\oneskel), G)$ such that $\supp(\psi) = \plaqset_1 \cup \plaqset_2$ is equal to the number of $\sigma \in GF(T)$ such that $\supp(\sigma) = \plaqset_1 \cup \plaqset_2$. By Lemma \ref{lemma:minimal-vortex-edge-config-decomp}, this number is equal to the number of $(\sigma^1, \sigma^2)$ such that $\sigma^i \in GF(T)$, and $\supp(\sigma^i) = \plaqset_i$, $i = 1, 2$. Moreover, we have that $\sigma^1 = \groupid$ outside of $e_0$. Thus there are $\abs{G} - 1$ choices of $\sigma^1$, corresponding to the assignment of $\sigma^1_{e_0}$ to a non-identity element of $G$. To finish, observe again by Lemma \ref{lemma:uniqueness-of-gauge-fix} that the number of $\sigma^2 \in GF(T)$ such that $\supp(\sigma^2) = \plaqset_2$ is equal to the number of $\homsym \in \Hom(\pi_1(\oneskel), G)$ such that $\supp(\homsym) = \plaqset_2$.
\end{proof}

\begin{cor}\label{cor:homomorphism-number-single-minimal-vortex}
Suppose $e_0 \in \edges$ is such that $P(e_0) \sse \plaquettes$.
The number of homomorphisms $\psi \in \Hom(\pi_1(\oneskel), G)$ such that $\supp(\psi) = P(e_0)$ is $\abs{G} - 1$. Also, $\Phi(P(e_0)) = r_\beta$ (recall the definition of $r_\beta$ \eqref{eq:r-beta-def}).
\end{cor}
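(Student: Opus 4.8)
The plan is to reduce the count to gauge-fixed edge configurations via the machinery of Section~\ref{section:topological-preliminaries}, and then to mimic the elementary computation already carried out in the Abelian case (Lemma~\ref{lemma:minimal-vortices-contribution}). Fix a vertex $x_0 \in \vertices$, set $\plaqset_1 := P(e_0)$ and $\plaqset_2 := \varnothing$ (which is trivially compatible with $\plaqset_1$), and choose a spanning tree $T$ of $\oneskel$ that contains a spanning tree of $\partial S_{e_0}$. Note that such a $T$ cannot contain the edge $e_0$: by Lemma~\ref{lemma:minimal-vortex-cell-complexes-simply-connected} the complex $\partial S_{e_0}$ is connected, so a spanning tree of $\partial S_{e_0}$ already joins the two endpoints of $e_0$ by a path avoiding $e_0$, and adding $e_0$ to $T$ would create a cycle. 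By Lemma~\ref{lemma:uniqueness-of-gauge-fix}, the number of $\psi \in \Hom(\pi_1(\oneskel, x_0), G)$ with $\supp(\psi) = P(e_0)$ equals the number of $\sigma \in GF(T)$ with $\supp(\sigma) = P(e_0)$.

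Next I would identify this set of edge configurations explicitly as $\{\sigma^{g,e_0} : g \in G \setminus \{\groupid\}\}$, where $\sigma^{g,e_0}$ is equal to $g$ on $e_0$ and to $\groupid$ on every other edge. For the forward inclusion: since $e_0 \notin T$ we have $\sigma^{g,e_0} \in GF(T)$, and since a plaquette $p$ satisfies $\sigma^{g,e_0}_p \in \{g, g^{-1}\}$ when $p$ contains $e_0$ and $\sigma^{g,e_0}_p = \groupid$ otherwise, we get $\supp(\sigma^{g,e_0}) = P(e_0)$ exactly when $g \neq \groupid$. For the converse, given any $\sigma \in GF(T)$ with $\supp(\sigma) = P(e_0)$, apply Lemma~\ref{lemma:minimal-vortex-edge-config-decomp} with $\plaqset_2 = \varnothing$: this writes $\sigma = \sigma^1 \sigma^2$ with $\sigma^1 = \groupid$ off $e_0$ and $\sigma^2 \in GF(T)$, $\supp(\sigma^2) = \varnothing$. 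Since $\twoskel$ is simply connected (it is the $2$-skeleton of a cube, so $\pi_1(\twoskel) = \pi_1(\lbox) = \{\groupid\}$), Lemma~\ref{lemma:identity-on-spanning-tree-implies-identity-on-everything} forces $\sigma^2 = \groupid$ on all of $\twoskel$, hence $\sigma = \sigma^1 = \sigma^{g,e_0}$ with $g = \sigma_{e_0} \neq \groupid$. Thus there are exactly $\abs{G} - 1$ such configurations, which proves the first assertion. (Alternatively, Corollary~\ref{cor:homomorphism-number-factors-minimal-vortex} applied with $\plaqset_2 = \varnothing$, combined with the fact that the trivial homomorphism is the only one with empty support, gives the same count.)

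Finally, for $\Phi(P(e_0)) = r_\beta$, use Lemma~\ref{lemma:phi-sum-gauge-fixed} with the same tree $T$ to get $\Phi(P(e_0)) = \sum_{g \neq \groupid} \prod_{p \in P(e_0)} \varphi_\beta(\sigma^{g,e_0}_p)$. Each of the six plaquettes in $P(e_0)$ satisfies $\sigma^{g,e_0}_p \in \{g, g^{-1}\}$, and $\varphi_\beta$ is invariant under $g \mapsto g^{-1}$ since $\chi$ is the character of a unitary representation (so $\Re\chi(g^{-1}) = \overline{\Re\chi(g)} = \Re\chi(g)$); hence $\varphi_\beta(\sigma^{g,e_0}_p) = \varphi_\beta(g)$ for each such $p$, the product over the six plaquettes equals $\varphi_\beta(g)^6$, and summing over $g \neq \groupid$ gives $\Phi(P(e_0)) = \sum_{g \neq \groupid} \varphi_\beta(g)^6 = r_\beta$ by \eqref{eq:r-beta-def}. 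The only mildly delicate point in all of this is the converse inclusion in the second paragraph, i.e. ruling out nontrivial gauge-fixed configurations of empty support; this is exactly where simple-connectedness of $\twoskel$ enters, and beyond that the argument is pure bookkeeping, so I do not expect a genuine obstacle.
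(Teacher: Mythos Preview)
Your argument is correct and reaches the same conclusion by essentially the same mechanism: reduce via Lemma~\ref{lemma:uniqueness-of-gauge-fix} to counting gauge-fixed configurations, then show that any $\sigma \in GF(T)$ with $\supp(\sigma)=P(e_0)$ must be identity off $e_0$. The difference is only in how that last step is executed. The paper chooses $T$ to avoid $e_0$, so that $T$ is automatically a spanning tree of the simply connected complex $S_{e_0}^c$, and applies Lemma~\ref{lemma:identity-on-spanning-tree-implies-identity-on-everything} directly to $S_{e_0}^c$ to force $\sigma=1$ on every edge other than $e_0$. You instead choose $T$ to contain a spanning tree of $\partial S_{e_0}$ and route through Lemma~\ref{lemma:minimal-vortex-edge-config-decomp} with $\plaqset_2=\varnothing$, reducing to the statement that a gauge-fixed configuration with empty support is trivial, which you then get from Lemma~\ref{lemma:identity-on-spanning-tree-implies-identity-on-everything} applied to all of $\twoskel$. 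Your parenthetical alternative via Corollary~\ref{cor:homomorphism-number-factors-minimal-vortex} is even cleaner and is arguably the most natural route given the ordering of results. The paper's version is slightly more self-contained (one lemma invocation instead of two), while yours makes explicit the role of the decomposition machinery and is more careful about the orientation issue in the $\Phi(P(e_0))=r_\beta$ computation, which the paper leaves implicit.
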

\begin{proof}
Take a spanning tree $T$ of $\oneskel$ which does not use $e_0$, so that it is also a spanning tree of $S_{e_0}^c$. The number of such $\psi$ is equal to the number of $\sigma \in GF(T)$ such that $\supp(\sigma) = P(e_0)$. For any such $\sigma$, note that as $T$ is also a spanning tree of $S_{e_0}^c$, we have $\sigma = \groupid$ on a spanning tree of $S_{e_0}^c$. Also, for all $p \in S_{e_0}^c$, we have $\sigma_p  =1$. Thus by Lemma \ref{lemma:identity-on-spanning-tree-implies-identity-on-everything}, we have that $\sigma_e = 1$ for all $e \in S_{e_0}^c$, that is, for all $e \neq e_0$. As $\supp(\sigma) = P(e_0)$, we must have $\sigma_{e_0} \neq \groupid$. Thus there are $\abs{G} - 1$ possible choices for $\sigma$, corresponding to the assignment of $\sigma_{e_0}$ to a non-identity element of $G$. Moreover, this implies $\Phi(P(e_0)) = r_\beta$, as desired.
\end{proof}

By repeatedly applying Corollary \ref{cor:homomorphism-number-factors-minimal-vortex}, and applying Corollary \ref{cor:homomorphism-number-single-minimal-vortex} once, we arrive at the following.

\begin{cor}\label{cor:only-minimal-vortices-edge-config}
Let $\vortex_i = P(e_i) \sse \plaquettes$, $1 \leq i \leq k$, be compatible minimal vortices. Fix $x_0 \in \vertices$. Then the number of $\psi \in \Hom(\pi_1(\oneskel, x_0), G)$ such that $\supp(\psi) = \vortex_1 \cup \cdots \cup \vortex_k$ is equal to $(\abs{G} - 1)^k$. Consequently, for a spanning tree $T$ of $\oneskel$ which does not contain $e_1, \ldots, e_k$, any such $\psi$ is of the form $\psi = \psi_T^{x_0}(\sigma)$, where $\sigma$ is such that $\sigma_e \neq 1$ if and only if $e \in \{e_1, \ldots, e_k\}$.
\end{cor}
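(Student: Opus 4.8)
The plan is to establish the cardinality statement by induction on $k$ using Corollaries \ref{cor:homomorphism-number-single-minimal-vortex} and \ref{cor:homomorphism-number-factors-minimal-vortex}, and then to read off the structural description via an explicit injection between two sets of equal, already-computed size.

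For the count: when $k=1$, Corollary \ref{cor:homomorphism-number-single-minimal-vortex} directly gives that the number of $\psi \in \Hom(\pi_1(\oneskel, x_0), G)$ with $\supp(\psi) = P(e_1)$ is $\abs{G}-1$. For the inductive step, let $\vortex_1, \ldots, \vortex_k$ be compatible minimal vortices and put $\plaqset_1 := \vortex_1$, $\plaqset_2 := \vortex_2 \cup \cdots \cup \vortex_k$. First I would verify that $\plaqset_1, \plaqset_2$ are compatible: an edge of $G(\plaqset_1 \cup \plaqset_2)$ joining $G(\plaqset_1)$ to $G(\plaqset_2)$ would join a plaquette of $\vortex_1$ to a plaquette of some $\vortex_j$ with $j \ge 2$ (through a shared $3$-cell), hence would already be an edge of $G(\vortex_1 \cup \vortex_j)$ between $G(\vortex_1)$ and $G(\vortex_j)$, contradicting $\vortex_1 \sim \vortex_j$. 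Then Corollary \ref{cor:homomorphism-number-factors-minimal-vortex} says the number of $\psi$ with $\supp(\psi) = \vortex_1 \cup \cdots \cup \vortex_k$ equals $(\abs{G}-1)$ times the number with $\supp(\psi) = \vortex_2 \cup \cdots \cup \vortex_k$, and the latter is $(\abs{G}-1)^{k-1}$ by the inductive hypothesis applied to the $k-1$ compatible minimal vortices $\vortex_2, \ldots, \vortex_k$; multiplying gives $(\abs{G}-1)^k$.

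For the structural claim, fix the spanning tree $T$ of the statement (not containing any $e_i$) and let $X := \{\sigma \in G^\edges : \sigma_e \ne \groupid \text{ iff } e \in \{e_1, \ldots, e_k\}\}$, so $\abs{X} = (\abs{G}-1)^k$ and $X \subseteq GF(T)$. The key geometric input, which is exactly the argument of Lemma \ref{lemma:minimal-vortex-incompatible-bound}, is that compatibility of $P(e_i) = \vortex_i$ and $P(e_j) = \vortex_j$ forces no $3$-cell to contain both $e_i$ and $e_j$; since every plaquette lies in a $3$-cell, no plaquette contains two distinct $e_i, e_j$. Hence for $\sigma \in X$ and a plaquette $p$, the ordered product $\sigma_p$ equals $\groupid$ unless $p$ contains exactly one $e_i$, in which case $\sigma_p = \sigma_{e_i}^{\pm 1} \ne \groupid$; therefore $\supp(\sigma) = \bigcup_i P(e_i) = \vortex_1 \cup \cdots \cup \vortex_k$. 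By Lemma \ref{lemma:uniqueness-of-gauge-fix}, $\sigma \mapsto \homsym_T^{x_0}(\sigma)$ is injective on $GF(T)$, and by \eqref{eq:choice-of-cp-conjugate} it preserves support, so it maps $X$ injectively into $\{\psi : \supp(\psi) = \vortex_1 \cup \cdots \cup \vortex_k\}$. Since both sets have cardinality $(\abs{G}-1)^k$, the map is a bijection, which is precisely the asserted description of the $\psi$'s.

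The argument is mostly bookkeeping resting on the earlier corollaries; the only steps needing care are the stability of compatibility under forming unions of vortices (so that Corollary \ref{cor:homomorphism-number-factors-minimal-vortex} can be iterated) and the elementary observation that compatible minimal vortices are "spread out" enough that the obvious edge configuration has exactly their union as support. I do not expect a genuine obstacle here.
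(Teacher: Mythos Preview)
Your proposal is correct and follows essentially the same approach as the paper: the counting statement is obtained by iterating Corollary \ref{cor:homomorphism-number-factors-minimal-vortex} down to the base case given by Corollary \ref{cor:homomorphism-number-single-minimal-vortex}, and the structural description then follows by matching cardinalities. Your write-up simply makes explicit the details the paper leaves to the reader, including the ``Consequently'' part, which you handle cleanly via the injection $\sigma \mapsto \homsym_T^{x_0}(\sigma)$ on $GF(T)$ combined with the support computation for $\sigma \in X$.
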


\begin{remark}
Note if $e_1, \ldots, e_k$ are such that $P(e_1), \ldots, P(e_k) \sse \plaquettes$, and moreover the vortices are compatible, then the graph obtained by removing $e_1, \ldots, e_k$ from $\oneskel$ is connected. Thus there always exists a spanning tree $T$ of $\oneskel$ which does not contain $e_1, \ldots, e_k$.
\end{remark}

By repeatedly applying Lemma \ref{lemma:minimal-vortex-edge-config-decomp}, we arrive at the following.

\begin{cor}\label{cor:edge-config-bijection-tuple}
Let $\vortex_1, \ldots, \vortex_k, \plaqset' \sse \plaquettes$ be compatible, with $\vortex_i = P(e_i)$, $1 \leq i \leq k$. Let $\plaqset := \vortex_1 \cup \cdots \cup \vortex_k \cup \plaqset'$. Suppose $T$ is a spanning tree of $\oneskel$ which contains spanning trees of $\partial S_{e_i}$, $1 \leq i \leq k$. Then there is a bijection between $\sigma \in GF(T)$ such that $\supp(\sigma) = \plaqset$, and tuples $(\tilde{\sigma}, \sigma')$, such that $\tilde{\sigma}, \sigma' \in GF(T)$, $\sigma = \tilde{\sigma} \sigma'$, $\tilde{\sigma}_e \neq \groupid$ if and only if $e \in\{e_1, \ldots, e_k\}$, and $\supp(\sigma') = \plaqset'$.
\end{cor}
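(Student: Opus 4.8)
The plan is to prove the corollary by induction on $k$, peeling off one minimal vortex at a time via Lemma \ref{lemma:minimal-vortex-edge-config-decomp}. The base case $k = 0$ is immediate: the stated requirement that $\tilde{\sigma}_e \neq \groupid$ precisely for $e \in \varnothing$ forces $\tilde{\sigma} = \groupid$, so the asserted bijection is simply $\sigma \mapsto (\groupid, \sigma)$, and the support condition $\supp(\sigma) = \plaqset = \plaqset'$ matches on the nose.

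For the inductive step, assume $k \geq 1$ and write $\plaqset = \vortex_1 \cup \plaqset_2$ with $\plaqset_2 := \vortex_2 \cup \cdots \cup \vortex_k \cup \plaqset'$. First I would observe that $\vortex_1$ and $\plaqset_2$ are compatible: an edge of $G(\vortex_1 \cup \plaqset_2)$ joining a plaquette of $\vortex_1$ to a plaquette of $\plaqset_2$ would witness a 3-cell shared by a plaquette of $\vortex_1$ and a plaquette of some $\vortex_j$ (with $j \geq 2$) or of $\plaqset'$, contradicting pairwise compatibility. Since $\vortex_1 = P(e_1)$ is a minimal vortex and $T$ contains a spanning tree of $\partial S_{e_1}$, Lemma \ref{lemma:minimal-vortex-edge-config-decomp} supplies a bijection between $\sigma \in GF(T)$ with $\supp(\sigma) = \plaqset$ and pairs $(\sigma^1, \sigma^2)$ with $\sigma^i \in GF(T)$, $\supp(\sigma^1) = \vortex_1$, $\supp(\sigma^2) = \plaqset_2$, $\sigma = \sigma^1 \sigma^2$, $\sigma^1 = \groupid$ outside $e_1$, and $\sigma^2 = \groupid$ on $S_{e_1}$. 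Since $\supp(\sigma^1) = P(e_1) \neq \varnothing$, the condition $\sigma^1 = \groupid$ outside $e_1$ forces $\sigma^1_{e_1} \neq \groupid$; thus $\sigma^1_e \neq \groupid$ exactly when $e = e_1$.

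Next I would apply the induction hypothesis to $\sigma^2$: the family $\vortex_2, \ldots, \vortex_k, \plaqset'$ is compatible (a subfamily of a compatible family), $T$ contains spanning trees of $\partial S_{e_i}$ for $2 \leq i \leq k$, and $\supp(\sigma^2) = \vortex_2 \cup \cdots \cup \vortex_k \cup \plaqset'$, so there is a bijection between such $\sigma^2$ and tuples $(\hat{\sigma}, \sigma')$ with $\hat{\sigma}, \sigma' \in GF(T)$, $\sigma^2 = \hat{\sigma}\sigma'$, $\hat{\sigma}_e \neq \groupid$ iff $e \in \{e_2, \ldots, e_k\}$, and $\supp(\sigma') = \plaqset'$. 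I would then set $\tilde{\sigma} := \sigma^1 \hat{\sigma}$ and verify, using that $\{e_1\}$ and $\{e_2, \ldots, e_k\}$ are disjoint (the $e_i$ are distinct, since otherwise two of the $\vortex_i$ would coincide and fail to be compatible), hence $\sigma^1$ and $\hat{\sigma}$ have disjoint edge-supports: $\tilde{\sigma} \in GF(T)$ (both factors equal $\groupid$ on $T$); $\tilde{\sigma}_e \neq \groupid$ iff $e \in \{e_1, \ldots, e_k\}$ (no cancellation, since the supports do not overlap); and $\sigma = \sigma^1 \sigma^2 = \sigma^1 \hat{\sigma} \sigma' = \tilde{\sigma}\sigma'$. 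Composing the two bijections, together with the elementary bijection $(\sigma^1, \hat{\sigma}) \leftrightarrow \sigma^1 \hat{\sigma}$ between pairs of edge configurations with disjoint edge-supports and their product (inverted by restricting to the relevant edges), yields the desired correspondence $\sigma \leftrightarrow (\tilde{\sigma}, \sigma')$.

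The only real work is bookkeeping: propagating the spanning-tree hypothesis (trivial here, since $T$ is fixed and simultaneously contains spanning trees of every $\partial S_{e_i}$) and checking that the edge-supports of the successively peeled-off factors remain disjoint, so that the defining conditions on $\tilde{\sigma}$ compose without interference. I do not anticipate any genuine obstacle; the statement is a routine iteration of Lemma \ref{lemma:minimal-vortex-edge-config-decomp}.
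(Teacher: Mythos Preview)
Your proposal is correct and is exactly the approach the paper takes: the paper's proof is the single sentence ``By repeatedly applying Lemma~\ref{lemma:minimal-vortex-edge-config-decomp}, we arrive at the following,'' and your induction is precisely that repeated application made explicit, with the bookkeeping (distinctness of the $e_i$, disjointness of edge-supports, associativity of the edgewise product) carefully verified.
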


\begin{remark}
It is not clear whether there always exists a spanning tree $T$ of $\oneskel$ which contains spanning trees of $\partial S_{e_i}$, $1 \leq i \leq k$. Thus we will need to ensure that this is the case when we apply this corollary.
\end{remark}

We now seek to prove the analogue of Lemma \ref{lemma:activity_decomp} for more general sets of plaquettes. Given a rectangle $B$ contained in $\lbox$, let $S_2(B)$\label{notation:S-2-B} denote the 2-complex obtained by including all plaquettes of $B$. Let $\partial S_2(B)$\label{notation:partial-S-2-B} be the 2-complex obtained by including all plaquettes which are on the boundary of $B$, but not on the boundary of $\lbox$. Note if $B$ is contained in the interior of $\lbox$, then $\partial S_2(B)$ is simply the 2-complex made of all boundary plaquettes of $B$. Let $S_2^c(B)$\label{notation:S-2-c-B} be the 2-complex obtained by including all plaquettes of $\lbox$ that are not in $B$, as well as all plaquettes in $\partial S_2(B)$.


Given plaquette sets $\plaqset_1, \plaqset_2 \sse \plaquettes$, we say that $\plaqset_1, \plaqset_2$ are well separated, or $\plaqset_1$ is well separated from $\plaqset_2$, if there exists a rectangle $B$ in $\lbox$ such that $\plaqset_1 \sse S_2(B)$, $\plaqset_2 \sse S_2^c(B)$, and no plaquettes of $\plaqset_1$ or $\plaqset_2$ are contained in $\partial S_2(B)$. For such a $B$, we say that $\plaqset_1, \plaqset_2$ are well separated by $B$, or that $B$ well separates $\plaqset_1, \plaqset_2$. Note this definition is not symmetric in $\plaqset_1, \plaqset_2$.


The proof of the following topological fact is left to the appendix.

\begin{lemma}\label{lemma:rectangle-cell-complexes-simply-connected}
For a rectangle $B$ in $\lbox$, we have that $S_2(B)$ is simply connected. If in addition all side lengths of $B$ are strictly less than the side length of $\lbox$, then $\partial S_2(B), S_2^c(B)$ are simply connected.
\end{lemma}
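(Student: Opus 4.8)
The plan is to reduce all three claims to a computation of fundamental groups of geometric realizations. The basic tool is the standard fact that for a CW (here: cube) complex $X$, attaching cells of dimension $\geq 3$ does not change $\pi_1$, so the inclusion of the $2$-skeleton induces an isomorphism $\pi_1(X^{(2)}) \cong \pi_1(|X|)$; moreover a complex is connected iff its $1$-skeleton (equivalently its realization) is. Each of $S_2(B)$, $\partial S_2(B)$, $S_2^c(B)$ is the $2$-skeleton of a genuine subcomplex of $\twoskel$ (since $B$ has integer corners): for $S_2(B)$ this is the cube complex $B$; for $\partial S_2(B)$ it is the subcomplex of $\partial B$ consisting of all cells lying in some plaquette of $\partial B$ that is not contained in $\partial\lbox$; and for $S_2^c(B)$ it is the subcomplex of $\lbox$ whose realization is $\overline{\lbox \setminus B}$. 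Verifying these identifications requires care because of the convention that a $2$-complex specified by its plaquettes automatically includes their edges and vertices — in particular one must check that the "seam" plaquettes between a facet of $B$ on $\partial\lbox$ and one not on $\partial\lbox$ are correctly accounted for — but modulo that it suffices to determine the homotopy types of $|B|$, $|\partial S_2(B)|$, and $\overline{\lbox\setminus B}$.

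For $S_2(B)$ the side-length hypothesis plays no role: $|B|$ is a closed box, hence contractible, and its $1$-skeleton is a connected grid, so $S_2(B)$ is connected and simply connected.

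The strict side-length hypothesis is used only to control how $B$ meets $\partial\lbox$: in each coordinate direction $B$ cannot share both of its extreme faces in that direction with $\lbox$ (its side length there is strictly smaller), so at most one of the two extreme facets of $B$ in each direction lies on $\partial\lbox$. Hence the union $F$ of the facets of $B$ contained in $\partial\lbox$ is a union of facets of the box $B$ all sharing a common face — a closed (PL) ball — or is empty (exactly when $B$ lies in the interior of $\lbox$). Since $\partial B \cong S^3$ and $|\partial S_2(B)|$ is $\partial B$ with the cells lying in $F$ (and no others) deleted, $|\partial S_2(B)|$ is either $S^3$ (when $F = \varnothing$) or the closure of the complement of a subball in $S^3$, which is again a closed ball by PL Schoenflies; either way it is connected and simply connected. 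Similarly $\overline{\lbox\setminus B}$: when $B$ lies in the interior of $\lbox$ it is a closed ball minus an open interior ball, homeomorphic to $S^3\times[0,1]$, which deformation retracts onto $\partial\lbox \cong S^3$; when $B$ meets $\partial\lbox$, then $B\cap\partial\lbox$ is (again by the "at most one extreme facet per direction" observation) a closed ball in $\partial\lbox\cong S^3$, so $\overline{\lbox\setminus B}$ is a closed ball (a box with a boundary "bite" removed) and is contractible. In all cases $\pi_1 = 1$, so $\partial S_2(B)$ and $S_2^c(B)$ are simply connected.

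I expect the real work to be bookkeeping rather than topology. The topological inputs (attaching high-dimensional cells preserves $\pi_1$; the complement of a subball of $S^n$ is a ball; a ball with an interior ball removed is $S^{n-1}\times I$; a ball with a boundary bite removed is a ball) are all standard. The delicate points are (i) pinning down exactly which cells belong to $\partial S_2(B)$ and $S_2^c(B)$ under the stated convention, so that they really are the $2$-skeleta of the subcomplexes described above, and (ii) making the case analysis over how $B$ touches $\partial\lbox$ (in $0,1,2,3$, or $4$ directions) uniform — which is precisely where the strict side-length hypothesis is indispensable, since it rules out the one configuration (a pair of opposite facets of $B$ lying on $\partial\lbox$) for which $F$, and hence $B\cap\partial\lbox$, would fail to be a ball and the conclusion could break.
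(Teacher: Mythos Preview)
Your proposal is correct and, for $S_2(B)$ and $\partial S_2(B)$, proceeds essentially as the paper does: fill in the higher-dimensional cells to recover the solid box (contractible) or the part of $\partial B$ not on $\partial\lbox$ (a sphere $S^3$ or a ball, depending on whether $B$ touches $\partial\lbox$), and conclude. Your case analysis for $\partial S_2(B)$ is in fact more explicit than the paper's, which simply asserts that in the boundary-touching case one obtains a space homeomorphic to a three-dimensional ball.

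The only genuine difference is your treatment of $S_2^c(B)$. You analyze the realization $\overline{\lbox\setminus B}$ directly (annulus $S^3\times[0,1]$ in the interior case, ball with a boundary bite removed otherwise). The paper instead invokes Seifert--Van~Kampen: since $S_2(B)\cup S_2^c(B)=\twoskel$ and $S_2(B)\cap S_2^c(B)=\partial S_2(B)$ are both already known to be simply connected, and $\pi_1(\twoskel)=\{1\}$, the free-product relation $\{1\}=\pi_1(S_2(B))*\pi_1(S_2^c(B))$ forces $\pi_1(S_2^c(B))=\{1\}$. This is slicker --- it reuses the two earlier computations and sidesteps any further geometric case analysis --- whereas your route is more self-contained and makes the homotopy type of $S_2^c(B)$ explicit. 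Both arguments are valid; yours would require the bookkeeping you flagged (matching the plaquette convention to the cells of $\overline{\lbox\setminus B}$), which the paper's Van~Kampen shortcut avoids entirely.
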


We now show the analogue of Lemma \ref{lemma:gauge_eq_decomp} when $\plaqset_1, \plaqset_2$ are well separated.

\begin{lemma}\label{lemma:vortices-well-separated-gauge-fix-decomp}
Let $\plaqset_1, \plaqset_2 \sse \plaquettes$. Suppose $\plaqset_1, \plaqset_2$ are well separated by a rectangle $B$ in $\lbox$, such that all side lengths of $B$ are strictly less than the side length of $\lbox$. Let $T$ be a spanning tree of $\oneskel$ which contains spanning trees of $S_2(B), S_2^c(B)$, and $\partial S_2(B)$. There is a bijection between the set of $\sigma \in GF(T)$ such that $\supp(\sigma) = \plaqset_1 \cup \plaqset_2$, and the set of tuples $(\sigma^1, \sigma^2)$ such that $\sigma^i \in GF(T)$, $\supp(\sigma^i) = \plaqset_i$, $i = 1, 2$. Moreover, if $\sigma$ is mapped to $(\sigma^1, \sigma^2)$, then $\sigma = \sigma^1 \sigma^2$, $\sigma^1 = \groupid$ on $S_2^c(B)$, $\sigma^2 = \groupid$ on $S_2(B)$. Consequently, for all $p \in S_2(B)$, $\sigma^1_p = \sigma_p$, and for all $p \in S_2^c(B)$, $\sigma^2_p = \sigma_p$.
\end{lemma}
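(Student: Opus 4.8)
The plan is to deduce this directly from Lemma \ref{lemma:general-edge-configuration-decomposition}, applied with $S_1 := S_2(B)$ and $S_2 := S_2^c(B)$. First I would verify the hypothesis package of that lemma. Every plaquette of $\lbox$ either lies in $B$ or is outside $B$, so $S_2(B) \cup S_2^c(B) = \twoskel$; and directly from the definitions $S_2(B) \cap S_2^c(B) = \partial S_2(B)$, since both sides consist precisely of the plaquettes on the boundary of $B$ that are not on the boundary of $\lbox$, together with their edges and vertices. Because all side lengths of $B$ are strictly less than the side length of $\lbox$, Lemma \ref{lemma:rectangle-cell-complexes-simply-connected} gives that $S_2(B)$, $S_2^c(B)$, and $\partial S_2(B)$ are all simply connected. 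The spanning tree $T$ is assumed to contain spanning trees of these three complexes, and the well-separated hypothesis gives exactly $\plaqset_1 \sse S_2(B)$, $\plaqset_2 \sse S_2^c(B)$, with no plaquette of $\plaqset_1$ or $\plaqset_2$ in $\partial S_2(B) = S_1 \cap S_2$. These are precisely the conditions of Lemma \ref{lemma:general-edge-configuration-decomposition}.

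Invoking that lemma then yields the desired bijection between $\sigma \in GF(T)$ with $\supp(\sigma) = \plaqset_1 \cup \plaqset_2$ and tuples $(\sigma^1, \sigma^2)$ with $\sigma^i \in GF(T)$, $\supp(\sigma^i) = \plaqset_i$, together with $\sigma = \sigma^1 \sigma^2$, $\sigma^1 = \groupid$ on $S_2 = S_2^c(B)$, and $\sigma^2 = \groupid$ on $S_1 = S_2(B)$. The final ``consequently'' follows as in the proof of Lemma \ref{lemma:general-edge-configuration-decomposition}: if $p \in S_2(B)$ then all four boundary edges of $p$ lie in $S_2(B)$, where $\sigma^2 = \groupid$, so $\sigma_e = \sigma^1_e$ on these edges and hence $\sigma_p = \sigma^1_p$; symmetrically, $\sigma_p = \sigma^2_p$ for $p \in S_2^c(B)$.

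So within this section the proof is essentially bookkeeping: matching up notation with Lemma \ref{lemma:general-edge-configuration-decomposition} and confirming the set-theoretic identities $S_2(B) \cup S_2^c(B) = \twoskel$ and $S_2(B) \cap S_2^c(B) = \partial S_2(B)$. The only genuinely substantive input is the simple connectivity assertion of Lemma \ref{lemma:rectangle-cell-complexes-simply-connected}, whose proof is deferred to the appendix; that is where the real obstacle lies. If one wanted to make the hypotheses fully self-contained one could also note that a spanning tree $T$ of $\oneskel$ with the stated containment property exists---first take a spanning tree of $\partial S_2(B)$, extend it within $S_2(B)$ and within $S_2^c(B)$, then extend to all of $\oneskel$---but since the lemma is stated for a given such $T$, this is not needed here.
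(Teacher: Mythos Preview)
Your proposal is correct and follows exactly the same approach as the paper: verify the hypotheses of Lemma~\ref{lemma:general-edge-configuration-decomposition} with $S_1=S_2(B)$, $S_2=S_2^c(B)$, invoking Lemma~\ref{lemma:rectangle-cell-complexes-simply-connected} for simple connectivity and observing $S_2(B)\cap S_2^c(B)=\partial S_2(B)$. The paper's proof is just a terser version of what you wrote, and your remark on constructing $T$ actually anticipates the argument the paper gives in the subsequent Lemma~\ref{lemma:well-separated-vortices-activity-decomp}.
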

\begin{proof}
By Lemma \ref{lemma:rectangle-cell-complexes-simply-connected}, we have that $S_2(B), S_2^c(B)$ and $\partial S_2(B)$ are simply connected. Moreover, observe $\partial S_2(B) = S_2(B) \cap S_2^c(B)$. Now apply Lemma \ref{lemma:general-edge-configuration-decomposition}.
\end{proof}

Lemma \ref{lemma:vortices-well-separated-gauge-fix-decomp} now implies the following analogue of Lemma \ref{lemma:activity_decomp} for well separated plaquette sets $\plaqset_1, \plaqset_2$.

\begin{lemma}\label{lemma:well-separated-vortices-activity-decomp}
Let $\plaqset_1, \plaqset_2 \sse \plaquettes$ be well separated by a rectangle $B$ contained in $\lbox$, such that all side lengths of $B$ are strictly less than the side length of $\lbox$. Then 
\[\Phi(\plaqset_1 \cup \plaqset_2) = \Phi(\plaqset_1) \Phi(\plaqset_2). \]
\end{lemma}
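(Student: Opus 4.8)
The plan is to follow the template of the proof of Lemma~\ref{lemma:minimal-vortex-activity-factor-nonabelian}, with Lemma~\ref{lemma:vortices-well-separated-gauge-fix-decomp} playing the role that Lemma~\ref{lemma:minimal-vortex-edge-config-decomp} played there. First I would produce a spanning tree $T$ of $\oneskel$ that simultaneously contains spanning trees of $S_2(B)$, $S_2^c(B)$, and $\partial S_2(B)$. Such a $T$ exists by a routine construction: the one-skeleton of $\partial S_2(B)$ is connected (being simply connected by Lemma~\ref{lemma:rectangle-cell-complexes-simply-connected}), so pick a spanning tree $T_0$ of it, then extend $T_0$ to a spanning tree $T_1$ of $S_2(B)$ using only edges of $S_2(B)$, and extend $T_0$ to reach the remaining vertices of $S_2^c(B)$ using only edges of $S_2^c(B)$; since $S_2(B) \cup S_2^c(B) = \twoskel$ and these two subcomplexes meet precisely along $\partial S_2(B) = S_2(B)\cap S_2^c(B)$, the union of these edge sets is a spanning tree of $\oneskel$ of the required kind.

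With such $T$ fixed, Lemma~\ref{lemma:phi-sum-gauge-fixed} rewrites all three quantities $\Phi(\plaqset_1 \cup \plaqset_2)$, $\Phi(\plaqset_1)$, $\Phi(\plaqset_2)$ as sums over gauge-fixed edge configurations with the prescribed support. The hypotheses of Lemma~\ref{lemma:vortices-well-separated-gauge-fix-decomp} are met verbatim here: well separation of $\plaqset_1,\plaqset_2$ by $B$ is exactly the statement that $\plaqset_1 \sse S_2(B)$, $\plaqset_2 \sse S_2^c(B)$, with no plaquette of either contained in $\partial S_2(B)$, and by assumption all side lengths of $B$ are strictly less than that of $\lbox$. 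Hence I obtain a bijection $\sigma \leftrightarrow (\sigma^1, \sigma^2)$ between $\{\sigma \in GF(T) : \supp(\sigma) = \plaqset_1 \cup \plaqset_2\}$ and $\{(\sigma^1, \sigma^2) : \sigma^i \in GF(T),\ \supp(\sigma^i) = \plaqset_i,\ i=1,2\}$, under which $\sigma_p = \sigma^1_p$ for all $p \in S_2(B)$ and $\sigma_p = \sigma^2_p$ for all $p \in S_2^c(B)$. Since $\plaqset_1 \sse S_2(B)$ and $\plaqset_2 \sse S_2^c(B)$, this lets me split $\prod_{p \in \plaqset_1 \cup \plaqset_2} \varphi_\beta(\sigma_p) = \big(\prod_{p \in \plaqset_1} \varphi_\beta(\sigma^1_p)\big)\big(\prod_{p \in \plaqset_2} \varphi_\beta(\sigma^2_p)\big)$.

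Finally, substituting this product decomposition into the sum defining $\Phi(\plaqset_1 \cup \plaqset_2)$ and re-indexing via the bijection turns the single sum over $\sigma$ into a double sum over $(\sigma^1,\sigma^2)$, which factors as $\Phi(\plaqset_1)\Phi(\plaqset_2)$, as desired. The only step requiring genuine thought is the construction of $T$; the rest is bookkeeping already encapsulated in the cited lemmas. I expect no serious obstacle, precisely because the hard topological input — simple connectivity of $S_2(B)$, $S_2^c(B)$, and $\partial S_2(B)$ — has been isolated into Lemma~\ref{lemma:rectangle-cell-complexes-simply-connected} and channelled through Lemma~\ref{lemma:general-edge-configuration-decomposition}.
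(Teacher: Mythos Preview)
Your proposal is correct and follows essentially the same approach as the paper: construct the required spanning tree by starting with a spanning tree of $\partial S_2(B)$ and extending it separately within $S_2(B)$ and $S_2^c(B)$, then invoke Lemma~\ref{lemma:vortices-well-separated-gauge-fix-decomp} together with Lemma~\ref{lemma:phi-sum-gauge-fixed} to factor the sum, exactly as in the proof of Lemma~\ref{lemma:minimal-vortex-activity-factor-nonabelian}. Your write-up is in fact slightly more detailed than the paper's on the spanning-tree construction and on how the plaquette-wise identities $\sigma_p = \sigma^i_p$ yield the product splitting.
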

\begin{proof}
Take a spanning tree $T$ as in the statement of Lemma \ref{lemma:vortices-well-separated-gauge-fix-decomp}. To see why such a spanning tree exists, first take a spanning tree $\tilde{T}$ of $\partial S_2(B)$
. Extend $\tilde{T}$ to spanning trees $T_1, T_2$ of $S_2(B), S_2^c(B)$, respectively. Then let $T$ be the union of $T_1, T_2$. Now apply Lemma \ref{lemma:vortices-well-separated-gauge-fix-decomp}, and proceed as in the proof of Lemma \ref{lemma:minimal-vortex-activity-factor-nonabelian}.
\end{proof}

Now given a plaquette set $\plaqset \sse \plaquettes$, we want to partition $\plaqset$ such that $\Phi(\plaqset)$ factors into a product. Moreover, we want this partition to be as fine as possible. Towards this end, first decompose $\plaqset = \vortex_1 \cup \cdots \cup \vortex_n$ into compatible vortices. We may assume that for some $0 \leq k \leq n$, $\vortex_1, \ldots, \vortex_k$ are minimal vortices, while $\vortex_{k+1}, \ldots, \vortex_n$ are not minimal vortices. Include in the partition $\vortex_1, \ldots, \vortex_k$. It then remains to partition $\plaqset' := \vortex_{k+1} \cup \cdots \cup \vortex_n$. Observe that given a partition of the index set $\{k+1, \ldots, n\} = I_1 \cup \cdots \cup I_m$, for $1 \leq j \leq m$ we may define
\[ K_j := \bigcup_{i \in I_j} \vortex_i. \]
Then $K_1, \ldots, K_m$ is a partition of $\plaqset'$. We now impose the following condition on $K_1, \ldots, K_m$. For each $1 \leq j \leq m-1$, $K_j$ is well separated from $K_{j+1} \cup \cdots \cup K_m$ by a cube in $\lbox$. Such a partition always exists; e.g. vacuously take $m=1$, $I_1 = \{k+1, \ldots, n\}$. Now take a ``maximal" partition $\plaqset' = K_1 \cup \cdots \cup K_m$, in the sense that for every $1 \leq j \leq m$, there does not exist a further partition $K_j = K_j^1 \cup K_j^2$ into nonempty components, such that $K_j^1, K_j^2$ are well separated by a cube in $\lbox$.

Note such a maximal partition $\plaqset' = K_1 \cup \cdots \cup K_m$ may or may not be unique; we will not assume uniqueness. For each $\plaqset'$ which does not have any minimal vortices in its vortex decomposition, fix a maximal partition $\plaqset' = K_1 \cup \cdots \cup K_m$. Note that for all $1 \leq j \leq m-1$, the cube which well separates $K_j$ from $K_{j+1} \cup \cdots \cup K_m$ must have side length strictly less than the side length of $\lbox$. Thus by applying Corollary \ref{cor:minimal-vortex-many-factor-non-abelian} once and repeatedly applying Lemma \ref{lemma:well-separated-vortices-activity-decomp}, we arrive at the following result.

\begin{lemma}
Suppose $\plaqset \sse \plaquettes$, and $\plaqset = \vortex_1 \cup \cdots \cup \vortex_k \cup K_1 \cup \cdots \cup K_m$ is partitioned as just described. Then
\[ \Phi(\plaqset) = \prod_{i=1}^k \Phi(\vortex_i) \prod_{j=1}^m \Phi(K_j).\]
\end{lemma}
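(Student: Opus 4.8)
The plan is to peel off the minimal vortices first using Corollary \ref{cor:minimal-vortex-many-factor-non-abelian}, and then factor the remaining non-minimal part one well-separated piece at a time using Lemma \ref{lemma:well-separated-vortices-activity-decomp}. All the real content has already been done; this is essentially bookkeeping on top of those two results.

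First, recall that by construction $\vortex_1, \ldots, \vortex_k$ are precisely the minimal vortices occurring in the (compatible) vortex decomposition $\plaqset = \vortex_1 \cup \cdots \cup \vortex_n$, and that $\plaqset' := \vortex_{k+1} \cup \cdots \cup \vortex_n = K_1 \cup \cdots \cup K_m$. Since $\vortex_1, \ldots, \vortex_n$ are compatible and $\vortex_1, \ldots, \vortex_k$ are minimal vortices, Corollary \ref{cor:minimal-vortex-many-factor-non-abelian}, applied to this decomposition, gives
\[ \Phi(\plaqset) = \Bigg(\prod_{i=1}^k \Phi(\vortex_i)\Bigg)\, \Phi(\plaqset'). \]
So it remains to show $\Phi(\plaqset') = \prod_{j=1}^m \Phi(K_j)$.

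I would prove this by induction on $m$, the case $m=1$ being trivial. For $m \geq 2$, the defining property of the partition $\plaqset' = K_1 \cup \cdots \cup K_m$ says that $K_1$ is well separated from $K_2 \cup \cdots \cup K_m$ by a cube $B$ in $\lbox$. Because $K_2 \cup \cdots \cup K_m$ is nonempty, this cube $B$ must have side length strictly less than the side length of $\lbox$: if $B$ were equal to $\lbox$ then $S_2^c(B)$ would contain no plaquettes, contradicting $K_2 \cup \cdots \cup K_m \subseteq S_2^c(B)$, and a proper cube contained in $\lbox$ with side length equal to that of $\lbox$ cannot exist. (This is exactly the remark made just before the lemma statement.) Hence Lemma \ref{lemma:well-separated-vortices-activity-decomp} applies to the well-separated pair $(K_1, K_2 \cup \cdots \cup K_m)$ and yields
\[ \Phi(\plaqset') = \Phi(K_1)\, \Phi(K_2 \cup \cdots \cup K_m). \]
Now $K_2 \cup \cdots \cup K_m$, together with the partition $K_2, \ldots, K_m$, has the same form as the original: for each $2 \leq j \leq m-1$, $K_j$ is well separated from $K_{j+1} \cup \cdots \cup K_m$ by a cube in $\lbox$. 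So the inductive hypothesis gives $\Phi(K_2 \cup \cdots \cup K_m) = \prod_{j=2}^m \Phi(K_j)$, which completes the induction and the proof.

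The only point needing a moment of care is the verification that each separating cube has side length strictly less than that of $\lbox$, which is what Lemma \ref{lemma:well-separated-vortices-activity-decomp} requires and which is forced by the nonemptiness of the tail $K_{j+1} \cup \cdots \cup K_m$; everything else is an immediate iteration of previously established factorization identities.
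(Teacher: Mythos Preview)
Your proof is correct and follows exactly the approach the paper takes: apply Corollary \ref{cor:minimal-vortex-many-factor-non-abelian} once to split off the minimal vortices, then iterate Lemma \ref{lemma:well-separated-vortices-activity-decomp} to factor $\Phi(\plaqset')$ into $\prod_j \Phi(K_j)$. Your check that the separating cubes have side length strictly less than that of $\lbox$ matches the paper's own remark preceding the lemma.
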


Hereafter, the partition $\plaqset = \vortex_1 \cup \cdots \cup \vortex_k \cup K_1 \cdots \cup K_m$ which was just described will be referred to as the ``knot decomposition" of $\plaqset$. Let $\mc{K}$\label{notation:script-K} be the collection of all $K \sse \plaquettes$\label{notation:K} which appear in the knot decomposition of some $\plaqset \sse \plaquettes$. In particular, $\mc{K}$ contains all minimal vortices. Following \cite{SV1989}, the elements of $\mc{K}$ will be called knots.

\subsection{Wilson loop expectation in terms of random collections of knots}\label{section:wilson-loop-exp-non-abelian}

Recall $\rho$ is a unitary representation of $G$, $d$ is the dimension of $\rho$, and $\chi$ is the character of $\rho$. For a given vertex $x_0 \in \vertices$, and a spanning tree $T$ of $\oneskel$, recall the presentation $\pi_1(\oneskel, x_0) = \langle a_e, e \in \oneskel \backslash T \rangle$, where the $a_e$ are defined in terms of $(x_0, T)$. For the self avoiding loop $\wloop = e_1 \cdots e_\ell$, let $C_\wloop := a_{e_1} \cdots a_{e_\ell}$ (note if $e_i$ is such that $e_i \in T$, then define $a_{e_i} := \groupid$). For $\plaqset \sse \plaquettes$, define
\[ \Phi_\wloop(\plaqset) := \sum_{\substack{\homsym \in \Hom(\pi_1(\oneskel, x_0), G) \\ \supp(\homsym) = \plaqset}} \chi(\homsym(C_\wloop)) \prod_{p \in \plaqset} \varphi_\beta(\homsym(C_p)).\]

By the same proof as in Lemma \ref{lemma:choice-of-tree-invariant}, we have that $\Phi_\wloop$ does not depend on the choice of $(x_0, T)$. We also have the following analogue of Lemma \ref{lemma:phi-sum-gauge-fixed}, whose proof we omit.

\begin{lemma}\label{lemma:phi-wloop-gauge-fix}
For any spanning tree $T$ of $\oneskel$, and for any plaquette set $P \sse \plaquettes$, we have
\[ \Phi_\wloop(\plaqset) = \sum_{\substack{\sigma \in GF(T) \\ \supp(\sigma) = \plaqset}} \chi\bigg(\prod_{e \in \wloop} \sigma_e \bigg) \prod_{p \in \plaqset} \varphi_\beta(\sigma_p). \]
\end{lemma}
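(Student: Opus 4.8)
The plan is to run exactly the argument used for Lemma~\ref{lemma:phi-sum-gauge-fixed}, the only new ingredient being the treatment of the extra factor $\chi(\homsym(C_\wloop))$.

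First I would fix a base vertex $x_0 \in \vertices$ and recall the presentation $\pi_1(\oneskel, x_0) = \langle a_e : e \in \oneskel \backslash T\rangle$ built from $(x_0,T)$. By Lemma~\ref{lemma:uniqueness-of-gauge-fix} (combined with Lemma~\ref{lemma:induced-homomorphism-onto}), the map $\sigma \mapsto \homsym_T^{x_0}(\sigma)$ restricts to a bijection from $GF(T)$ onto $\Hom(\pi_1(\oneskel, x_0), G)$: it is onto by Lemma~\ref{lemma:induced-homomorphism-onto}, and it is injective on $GF(T)$ because such a $\sigma$ is recovered from $\homsym = \homsym_T^{x_0}(\sigma)$ via $\sigma_e = \homsym(a_e)$ for $e \in \oneskel \backslash T$ and $\sigma_e = \groupid$ for $e \in T$. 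Moreover, exactly as in the proof of Lemma~\ref{lemma:phi-sum-gauge-fixed}, equation~\eqref{eq:choice-of-cp-conjugate} shows that for every plaquette $p$ the elements $\homsym_T^{x_0}(\sigma)(C_p)$ and $\sigma_p$ are conjugate, so $\supp(\homsym_T^{x_0}(\sigma)) = \supp(\sigma)$ and, by conjugate invariance of $\varphi_\beta$, $\varphi_\beta(\homsym_T^{x_0}(\sigma)(C_p)) = \varphi_\beta(\sigma_p)$ for all $p \in \plaquettes$.

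The new point is to check that $\chi(\homsym_T^{x_0}(\sigma)(C_\wloop)) = \chi\big(\prod_{e \in \wloop} \sigma_e\big)$. I would write $\wloop = e_1 \cdots e_\ell$ with vertices $v_0, v_1, \ldots, v_\ell = v_0$, so that $e_i = (v_{i-1}, v_i)$ and $a_{e_i} = \upath_{v_{i-1}}\, e_i\, \upath_{v_i}^{-1}$; the backtracking cancellation already used to obtain~\eqref{eq:cp-equiv-to-lasso} then gives $C_\wloop = a_{e_1}\cdots a_{e_\ell} = \upath_{v_0}\,(e_1 \cdots e_\ell)\,\upath_{v_0}^{-1}$ in $\pi_1(\oneskel, x_0)$. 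Just as in the derivation of~\eqref{eq:choice-of-cp-conjugate}, this yields $\homsym_T^{x_0}(\sigma)(C_\wloop) = g_\wloop \big(\prod_{e \in \wloop}\sigma_e\big) g_\wloop^{-1}$ for some $g_\wloop \in G$ (indeed $g_\wloop = \groupid$ since $\sigma = \groupid$ on the tree path $\upath_{v_0}$, but this is not needed), and conjugate invariance of $\chi$ finishes the identity. Substituting the three identities into the definition of $\Phi_\wloop(\plaqset)$ and reindexing the sum over $\homsym$ with $\supp(\homsym) = \plaqset$ as a sum over $\sigma \in GF(T)$ with $\supp(\sigma) = \plaqset$ via the bijection above gives the claimed formula; independence of the left-hand side from the auxiliary choice $(x_0,T)$ is the analogue of Lemma~\ref{lemma:choice-of-tree-invariant} already noted before the statement.

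I do not anticipate a genuine obstacle: the only step requiring a line of care is the telescoping identity $C_\wloop = \upath_{v_0}\,(e_1\cdots e_\ell)\,\upath_{v_0}^{-1}$ and the resulting conjugacy of $\homsym_T^{x_0}(\sigma)(C_\wloop)$ to the loop product $\prod_{e\in\wloop}\sigma_e$; everything else is a verbatim repetition of the proof of Lemma~\ref{lemma:phi-sum-gauge-fixed}.
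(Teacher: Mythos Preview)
Your proposal is correct and is exactly the argument the paper has in mind: the paper omits the proof, stating only that it is the analogue of Lemma~\ref{lemma:phi-sum-gauge-fixed}, and you have faithfully reproduced that argument together with the one additional observation needed here, namely that $C_\wloop = \upath_{v_0}(e_1\cdots e_\ell)\upath_{v_0}^{-1}$ so that $\homsym_T^{x_0}(\sigma)(C_\wloop)$ is conjugate to $\prod_{e\in\wloop}\sigma_e$ and hence has the same character.
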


Recall $\mc{P}_\lbox$ is the set of nonempty vortices $\vortex \sse \plaquettes$. For $\Gamma \sse \mc{P}_\lbox$\label{notation:Gamma}, define 
\[\label{notation:P-Gamma} P(\Gamma) := \bigcup_{\vortex \in \Gamma} \vortex, \]
\[\label{notation:I-Gamma} I(\Gamma) := \ind(\text{elements of $\Gamma$ are compatible}). \]
In particular, we have $P(\varnothing) := \varnothing \sse \plaquettes$, and $I(\varnothing) := 1$. The following lemma expresses Wilson loop expectations in terms of a random subset of $\mc{P}_\lbox$, which can be thought of as a random collection of surfaces. First, let $N_1 := |G|^{|\vertices| - 1}$.

\begin{lemma}\label{lemma:wilson-loop-exp-in-terms-of-vortices}
We have
\[ Z_{\lbox, \beta} = N_1 \sum_{\Gamma \sse \mc{P}_\lbox} \Phi(P(\Gamma)) I(\Gamma), \]
and
\[ \langle W_\wloop \rangle_{\lbox, \beta} = \frac{\sum_{\Gamma \sse \plaquettes} \Phi_\wloop(P(\Gamma)) I(\Gamma)}{\sum_{\Gamma \sse \plaquettes} \Phi(P(\Gamma)) I(\Gamma)}. \]
\end{lemma}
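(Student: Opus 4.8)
The plan is to mirror the reasoning used in the Abelian case (Lemma \ref{lemma:abelian-cluster-expansion-setup} together with Lemmas \ref{lemma:v-sigma-law} and \ref{lemma:wilson-loop-cond-exp}), but routed through homomorphisms of the fundamental group rather than closed $2$-forms. First I would fix a vertex $x_0 \in \vertices$ and a spanning tree $T$ of $\oneskel$, and use the gauge-fixing bijection: by Corollary \ref{cor:number-of-edge-configurations-map-to-hom}, the map $\sigma \mapsto \homsym_T^{x_0}(\sigma)$ sends exactly $N_1 = |G|^{|\vertices|-1}$ edge configurations onto each homomorphism, and by \eqref{eq:choice-of-cp-conjugate} the weight $\prod_{p} \varphi_\beta(\sigma_p)$ depends only on $\homsym_T^{x_0}(\sigma)$ (since $\varphi_\beta$ is conjugate invariant). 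Hence
\[
Z_{\lbox, \beta} = \sum_{\sigma \in G^\edges} \prod_{p \in \plaquettes} \varphi_\beta(\sigma_p) = N_1 \sum_{\homsym \in \Hom(\pi_1(\oneskel, x_0), G)} \prod_{p \in \plaquettes} \varphi_\beta(\homsym(C_p)).
\]
For the Wilson loop, I would use equation \eqref{eq:cp-equiv-to-lasso}-style reasoning together with $W_\wloop(\sigma) = \chi(\prod_{e \in \wloop} \sigma_e)$: since $\wloop$ is a closed loop, $\homsym_T^{x_0}(\sigma)(C_\wloop)$ is a conjugate of $\prod_{e \in \wloop} \sigma_e$, and $\chi$ is conjugate invariant, so $\chi(\prod_{e \in \wloop}\sigma_e) = \chi(\homsym_T^{x_0}(\sigma)(C_\wloop))$ depends only on the induced homomorphism. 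This gives
\[
Z_{\lbox, \beta} \langle W_\wloop\rangle_{\lbox,\beta} = \sum_{\sigma \in G^\edges} \chi\Big(\prod_{e \in \wloop}\sigma_e\Big)\prod_{p}\varphi_\beta(\sigma_p) = N_1 \sum_{\homsym} \chi(\homsym(C_\wloop))\prod_{p}\varphi_\beta(\homsym(C_p)).
\]

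Next I would organize the sum over homomorphisms by their support. Every $\homsym$ has a support $\supp(\homsym) = \plaqset$ for a unique plaquette set $\plaqset \sse \plaquettes$, so
\[
\sum_{\homsym} \prod_{p}\varphi_\beta(\homsym(C_p)) = \sum_{\plaqset \sse \plaquettes} \sum_{\substack{\homsym \,:\, \supp(\homsym) = \plaqset}} \prod_{p \in \plaqset}\varphi_\beta(\homsym(C_p)) = \sum_{\plaqset \sse \plaquettes}\Phi(\plaqset),
\]
using $\varphi_\beta(\groupid) = 1$ to drop the factors at plaquettes outside $\plaqset$; similarly the numerator becomes $\sum_{\plaqset}\Phi_\wloop(\plaqset)$. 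Finally I would convert the sum over plaquette sets into a sum over $\Gamma \sse \mc{P}_\lbox$: each nonempty $\plaqset$ decomposes uniquely into compatible vortices $\vortex_1, \dots, \vortex_n$, so the assignment $\Gamma = \{\vortex_1,\dots,\vortex_n\} \mapsto \plaqset = P(\Gamma)$ is a bijection between subsets $\Gamma \sse \mc{P}_\lbox$ with $I(\Gamma) = 1$ and plaquette sets $\plaqset$ (with $\Gamma = \varnothing$ corresponding to $\plaqset = \varnothing$, where $\Phi(\varnothing) = \Phi_\wloop(\varnothing) = 1$ by convention, matching the empty homomorphism). Inserting the indicator $I(\Gamma)$ to kill incompatible $\Gamma$ then yields $\sum_{\plaqset}\Phi(\plaqset) = \sum_{\Gamma \sse \mc{P}_\lbox}\Phi(P(\Gamma))I(\Gamma)$ and likewise for $\Phi_\wloop$; dividing gives the claimed formula for $\langle W_\wloop\rangle_{\lbox,\beta}$, and $Z_{\lbox,\beta} = N_1 \sum_\Gamma \Phi(P(\Gamma))I(\Gamma)$ follows at once.

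The only subtle point — and the part I would write most carefully — is the identification $\chi(\prod_{e\in\wloop}\sigma_e) = \chi(\homsym_T^{x_0}(\sigma)(C_\wloop))$. Here one must check that $C_\wloop = a_{e_1}\cdots a_{e_\ell}$ really is (up to conjugacy, hence harmlessly under $\chi$) the element $\sigma_{e_1}\cdots\sigma_{e_\ell}$: the backtracking $\upath$-segments in consecutive $a_{e_i}$'s telescope because $\wloop$ is a closed path returning to its start, leaving $\upath_{x_1} e_1 \cdots e_\ell \upath_{x_1}^{-1}$ where $x_1$ is the basepoint of $\wloop$, exactly as in \eqref{eq:cp-equiv-to-lasso}. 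I do not anticipate genuine difficulty here, just bookkeeping; everything else is a routine re-run of the Abelian argument with Corollary \ref{cor:number-of-edge-configurations-map-to-hom} playing the role of the Poincaré-lemma count $N_1$.
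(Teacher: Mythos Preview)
Your proposal is correct and follows essentially the same route as the paper's proof: fix $(x_0,T)$, use Corollary \ref{cor:number-of-edge-configurations-map-to-hom} to pass from edge configurations to homomorphisms (picking up the factor $N_1$), stratify by support to get $\sum_\plaqset \Phi(\plaqset)$ and $\sum_\plaqset \Phi_\wloop(\plaqset)$, and then use the unique vortex decomposition to rewrite as a sum over $\Gamma \sse \mc{P}_\lbox$ with the indicator $I(\Gamma)$. The paper omits the verification that $\chi(\prod_{e\in\wloop}\sigma_e)$ depends only on $\homsym_T^{x_0}(\sigma)$ (it just says ``the same manipulations''), so your explicit telescoping argument showing $C_\wloop$ is conjugate to the path $e_1\cdots e_\ell$ is a welcome addition rather than a departure.
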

\begin{proof}
Fix some vertex $x_0 \in \vertices$, and a spanning tree $T$ of $\oneskel$. Recalling Corollary \ref{cor:number-of-edge-configurations-map-to-hom}, we have
\begin{align*}
Z_{\lbox, \beta} &= \sum_{\sigma \in G^\edges} \prod_{p \in \plaquettes} \varphi_\beta(\sigma_p) \\
&= N_1 \sum_{\homsym \in \Hom(\pi_1(\oneskel, x_0), G)}  \prod_{p \in \plaquettes} \varphi_\beta(\homsym(C_p)) \\
&= N_1 \sum_{\plaqset \sse \plaquettes} \Phi(\plaqset) \\
&= N_1 \sum_{\Gamma \sse \mc{P}_\lbox} \Phi(P(\Gamma)) I(\Gamma).
\end{align*}
Note the last equality follows because any plaquette set $\plaqset \sse \plaquettes$ has a unique decomposition into compatible vortices. For the second assertion, start with
\[ \langle W_\wloop \rangle_{\lbox, \beta} = Z_{\lbox, \beta}^{-1} \sum_{\sigma \in G^\edges} \chi\bigg(\prod_{e \in \wloop} \sigma_e \bigg) \prod_{p \in \plaquettes} \varphi_\beta(\sigma_p), \]
and then use the same manipulations as before to show that the numerator is equal to
\[ N_1 \sum_{\Gamma \sse \mc{P}_\lbox} \Phi_\wloop(P(\Gamma)) I(\Gamma). \qedhere\]
\end{proof}

Now let $\randomv \sse \mc{P}_\lbox$\label{notation:randomv} be a random collection vortices, with law given by
\[ \p(\randomv = \Gamma) := \frac{\Phi(P(\Gamma)) I(\Gamma)}{\sum_{\Gamma' \sse \mc{P}_\lbox} \Phi(P(\Gamma')) I(\Gamma')}, ~~~ \Gamma \sse \mc{P}_\lbox.\]
(As an aside, one way to sample $\randomv$ would be to first sample the edge configuration $\Sigma \sim \mu_{\lbox, \beta}$, and then take the vortex decomposition of $\supp(\Sigma)$.) By Lemma \ref{lemma:wilson-loop-exp-in-terms-of-vortices}, we have
\[ \E \bigg[\frac{\Phi_\wloop(P(\randomv))}{\Phi(P(\randomv))}\bigg] = \langle W_\wloop \rangle_{\lbox, \beta}.\]
It thus suffices to study the behavior of $\Phi_\wloop(P(\randomv)) / \Phi(P(\randomv))$. The general strategy will be the same as in the Abelian case. We will define an event $E$, and a random variable $N_\wloop$, such that on the event $E$, we may write $\Phi_\wloop(P(\randomv)) / \Phi(P(\randomv)) = \mathrm{Tr}(A_\beta^{N_{\wloop}})$. We will be able to approximate $N_\wloop$ by a Poisson random variable, which will enable us to do an exact calculation to obtain a formula for the first order of $\langle W_\wloop \rangle_{\lbox, \beta}$. To obtain an error bound, we will bound $\p(E^c)$, as well as the error in approximating $N_\wloop$ by a Poisson.

Towards this end, let $P_\wloop$ denote the collection of all plaquettes contained in at least one $S_e, e \in \wloop$. I.e., $p \in P_\wloop$ if there exists $e \in \wloop$, and a 3-cell $c$ which contains both $e$ and $p$. For a given realization of $\randomv$, we may take the knot decomposition $P(\randomv) = \vortex_1 \cup \cdots \cup \vortex_k \cup K_1 \cup \cdots \cup K_m$. Now let $E$\label{notation:E-nonabelian} be the event that for all $1 \leq j \leq m$, there is a cube $B_j'$ in $\lbox$ which well separates $K_j$ from $P_\wloop$. This definition of $E$ is a bit more complicated than the corresponding definition in the Abelian setting, and this is again due to the presence of additional topological considerations. Though the main point is the same - $E$ is defined so that on this event, only minimal vortices can contribute. Let $N_\wloop$\label{notation:N-wloop-nonabelian} be the number of $e \in \wloop$ such that $P(e)$ is an element of $\randomv$. 
The following lemma expresses $\Phi_\wloop(P(\randomv)) / \Phi(P(\randomv))$ in terms of $N_\wloop$, on the event $E$. 
It is the analogue of Lemma \ref{lemma:wilson-loop-on-event-E}. First, recall the definition of $A_\beta$ in equation \eqref{eq:elemmatrix-def}.

\begin{lemma}\label{lemma:formula-for-wilson-loop-on-E}
On the event $E$, we have
\[ \frac{\Phi_\wloop(P(\randomv))}{\Phi(P(\randomv))} = \mathrm{Tr}(\elemmatrix^{N_\wloop}). \]
\end{lemma}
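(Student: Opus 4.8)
The plan is to compute $\Phi_\wloop(P(\randomv))/\Phi(P(\randomv))$ by exploiting the knot decomposition $P(\randomv) = \vortex_1 \cup \cdots \cup \vortex_k \cup K_1 \cup \cdots \cup K_m$ and factoring both $\Phi$ and $\Phi_\wloop$ along it, in analogy with the Abelian argument (Lemmas \ref{lemma:cond-exp-factor}, \ref{lemma:non-contributing-vortices}, \ref{lemma:minimal-vortices-contribution}). First I would set up a suitable spanning tree $T$ of $\oneskel$: using the event $E$, for each $1 \leq j \leq m$ there is a cube $B_j'$ well separating $K_j$ from $P_\wloop$, and for each minimal vortex $\vortex_i = P(e_i)$ we want $T$ to contain a spanning tree of $\partial S_{e_i}$ while avoiding $e_i$ itself. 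One must first check that these conditions can be simultaneously met — this is where the Remark after Corollary \ref{cor:edge-config-bijection-tuple} is relevant, and the well-separating cubes plus the structure of minimal vortices should make it work (the $e_i$ lie in $\wloop$, so $P(e_i) \subseteq P_\wloop$, hence the $e_i$ are on the ``$\wloop$ side'' of each separation). With this $T$ fixed, Lemma \ref{lemma:phi-sum-gauge-fixed} and Lemma \ref{lemma:phi-wloop-gauge-fix} rewrite both $\Phi$ and $\Phi_\wloop$ as sums over $\sigma \in GF(T)$ with $\supp(\sigma) = P(\randomv)$.

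Next I would invoke the decomposition machinery: Corollary \ref{cor:edge-config-bijection-tuple} (applied iteratively to peel off the minimal vortices) together with Lemma \ref{lemma:well-separated-vortices-activity-decomp}'s underlying bijection (Lemma \ref{lemma:vortices-well-separated-gauge-fix-decomp}, applied iteratively to peel off $K_1, \ldots, K_m$) gives a bijection between $\sigma \in GF(T)$ with $\supp(\sigma) = P(\randomv)$ and tuples $(\sigma^{(e_1)}, \ldots, \sigma^{(e_k)}, \sigma^{K_1}, \ldots, \sigma^{K_m})$, where $\sigma = \prod \sigma^{(e_i)} \prod \sigma^{K_j}$, each $\sigma^{(e_i)}$ is supported on the single edge $e_i$ (nontrivial there), and each $\sigma^{K_j}$ is gauge-fixed with support $K_j$ and equal to $\groupid$ off the cube $B_j'$. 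The key point, from the ``moreover'' clauses of those lemmas, is that $\sigma^{K_j} = \groupid$ on all of $S_2^c(B_j')$, which in particular contains $P_\wloop$ — so $\sigma^{K_j}_e = \groupid$ for every edge $e \in \wloop$. Hence $\prod_{e \in \wloop} \sigma_e$ only sees the single-edge configurations $\sigma^{(e_i)}$: if $e_i \in \wloop$ it contributes the group element $g_i := \sigma^{(e_i)}_{e_i}$ (or its inverse, depending on orientation), and all other factors are $\groupid$. So $\prod_{e \in \wloop} \sigma_e$ equals an ordered product of the $g_i^{\pm 1}$ over those $e_i$ lying on $\wloop$, of which there are exactly $N_\wloop$.

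Then I would plug this into $\Phi_\wloop$. The Boltzmann weight $\prod_p \varphi_\beta(\sigma_p)$ factors over the knot decomposition (as in Lemma \ref{lemma:minimal-vortex-activity-factor-nonabelian}'s proof: $\sigma_p = \sigma^{(e_i)}_p$ for $p \in \vortex_i$ and $\sigma_p = \sigma^{K_j}_p$ for $p \in K_j$), and each minimal-vortex factor is $\varphi_\beta(g_i)^6$ by the usual six-plaquette computation. So $\Phi_\wloop(P(\randomv))$ becomes a product: the $K_j$-sums factor out and cancel against the same factors in $\Phi(P(\randomv))$, and the minimal-vortex sums over $g_i \neq \groupid$ of $\chi(\cdots)\varphi_\beta(g_i)^6$ remain. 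Writing $\chi = \mathrm{Tr}\,\rho$ and using that $\rho$ is a homomorphism, the product $\chi(\prod g_i^{\pm 1})$ over the $N_\wloop$ on-loop edges becomes $\mathrm{Tr}(\prod \rho(g_i^{\pm 1}))$; summing each $g_i$ independently against $\varphi_\beta(g_i)^6$ and dividing by $\Phi(P(e_i)) = r_\beta$ (Corollary \ref{cor:homomorphism-number-single-minimal-vortex}) turns each factor $\rho(g_i^{\pm1})$ into $r_\beta^{-1}\sum_{g\neq\groupid}\rho(g)\varphi_\beta(g)^6 = A_\beta$ — here one uses, as in Lemma \ref{lemma:minimal-vortices-contribution}, that $\varphi_\beta(g) = \varphi_\beta(g^{-1})$ and that $\rho(g^{-1}) = \rho(g)^*$ has the same sum, so orientation is immaterial. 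Collecting the $N_\wloop$ copies of $A_\beta$ inside the trace yields $\mathrm{Tr}(A_\beta^{N_\wloop})$ (cyclicity of trace handles the interleaving with off-loop edges, which contribute identity factors). The main obstacle I anticipate is the bookkeeping in the first step: verifying that a single spanning tree $T$ can simultaneously accommodate all the $\partial S_{e_i}$ and all the well-separating cubes $B_j'$, and carefully tracking which edges of $\wloop$ get set to $\groupid$ by which piece of the decomposition — the topological content is already supplied by the lemmas, so this is really a matter of assembling them in the right order without circular dependencies among the cubes.
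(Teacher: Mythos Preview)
Your overall strategy—factor along the knot decomposition and reduce to the trace computation—matches the paper's, and your endgame (the $\mathrm{Tr}(A_\beta^{N_\wloop})$ calculation) is essentially Lemma \ref{lemma:only-minimal-vortices-winding-around-loop-computation}. But your execution has a real gap precisely where you anticipated it: the existence of a single global spanning tree $T$ simultaneously containing spanning trees of all the $\partial S_{e_i}$ \emph{and} compatible with all the separating cubes $B_j'$. You wave this off as something that ``should work,'' but the paper explicitly flags it as unresolved (the Remark after Corollary \ref{cor:edge-config-bijection-tuple}), and indeed when the paper later needs such a tree in Lemma \ref{lemma:wilson-loop-bound-general-non-abelian} it first passes to a sparse subset of edges with pairwise vertex-disjoint $S_{e_i}$ to force existence. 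For arbitrary compatible minimal vortices the $\partial S_{e_i}$ can overlap in ways that obstruct a common spanning subtree, and layering on the cube conditions only makes it worse.

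The paper avoids this entirely by being modular: it proves three small lemmas and applies them one piece at a time. Lemma \ref{lemma:minimal-vortex-not-winding-around-loop-doesnt-contribute} peels off a single off-loop minimal vortex (using a tree adapted to that one $\partial S_e$); Lemma \ref{lemma:knot-well-separated-from-loop-doesnt-contribute} peels off a single knot $K_j$ (using a tree adapted to that one separating rectangle, and here the event $E$ furnishes the cube $B_j'$ which is intersected with the knot-decomposition cube $B_j$); finally Lemma \ref{lemma:only-minimal-vortices-winding-around-loop-computation} handles the remaining on-loop minimal vortices using only a tree that avoids the edges $e_1,\ldots,e_{k'}$, which is easy to produce. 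Each step uses its own spanning tree, which is legitimate because $\Phi$ and $\Phi_\wloop$ are tree-independent (Lemma \ref{lemma:choice-of-tree-invariant} and the remark before Lemma \ref{lemma:phi-wloop-gauge-fix}). So the fix is not to repair your global tree but to abandon it: iterate the peeling lemmas, changing trees as you go.
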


Before proving this lemma, we need some preliminary results.

\begin{lemma}\label{lemma:minimal-vortex-not-winding-around-loop-doesnt-contribute}
Suppose $\plaqset \sse \plaquettes$, such that for some edge $e$ not in the loop $\wloop$, $P(e)$ is a vortex of $\plaqset$. If $\Phi(\plaqset) > 0$, then
\[ \frac{\Phi_\wloop(\plaqset)}{\Phi(\plaqset)} = \frac{\Phi_\wloop(\plaqset \backslash P(e))}{\Phi(\plaqset \backslash P(e))}.\]
\end{lemma}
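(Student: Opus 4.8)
The plan is to reduce everything to the minimal-vortex factorization results already proved. Since $P(e)$ is a vortex of $\plaqset$, it is one of the connected components in the vortex decomposition of $\plaqset$; in particular $P(e)$ and $\plaqset' := \plaqset \backslash P(e)$ are compatible, and $P(e)$ is a minimal vortex. This is exactly the hypothesis of Lemma \ref{lemma:minimal-vortex-activity-factor-nonabelian}, which gives $\Phi(\plaqset) = \Phi(P(e))\,\Phi(\plaqset')$. By Corollary \ref{cor:homomorphism-number-single-minimal-vortex} we have $\Phi(P(e)) = r_\beta > 0$ (note $P(e) \sse \plaquettes$ since $P(e)$ is a vortex of $\plaqset \sse \plaquettes$), so the assumption $\Phi(\plaqset) > 0$ forces $\Phi(\plaqset') > 0$. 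Hence it suffices to establish the companion factorization $\Phi_\wloop(\plaqset) = \Phi(P(e))\,\Phi_\wloop(\plaqset')$; dividing the two factorizations and cancelling $\Phi(P(e))$ then yields the claim.

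To prove the $\Phi_\wloop$ factorization, I would fix a spanning tree $T$ of $\oneskel$ containing a spanning tree of $\partial S_e$ (such a $T$ exists by extending a spanning tree of $\partial S_e$ to all of $\oneskel$), and express $\Phi(\plaqset)$ and $\Phi_\wloop(\plaqset)$ via the gauge-fixed sums of Lemma \ref{lemma:phi-sum-gauge-fixed} and Lemma \ref{lemma:phi-wloop-gauge-fix}, which are valid for any choice of $T$. By Lemma \ref{lemma:minimal-vortex-edge-config-decomp}, the edge configurations $\sigma \in GF(T)$ with $\supp(\sigma) = \plaqset$ are in bijection with pairs $(\sigma^1,\sigma^2)$, $\sigma^i \in GF(T)$, $\supp(\sigma^1) = P(e)$, $\supp(\sigma^2) = \plaqset'$, under which $\sigma = \sigma^1\sigma^2$, $\sigma^1 = \groupid$ off the edge $e$, and (exactly as in the proof of Lemma \ref{lemma:minimal-vortex-activity-factor-nonabelian}) $\prod_{p\in\plaqset}\varphi_\beta(\sigma_p) = \prod_{p\in P(e)}\varphi_\beta(\sigma^1_p)\prod_{p\in\plaqset'}\varphi_\beta(\sigma^2_p)$.

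The only genuinely new ingredient is the Wilson-loop factor $\chi\big(\prod_{e'\in\wloop}\sigma_{e'}\big)$, and this is where the hypothesis $e \notin \wloop$ enters: since $\sigma^1 = \groupid$ on every edge other than $e$ (hence also on the reverse of any such edge), and no oriented edge occurring in $\wloop$ equals $e$ or its reverse, we get $\sigma^1_{e'} = \groupid$ for every edge $e'$ appearing in $\wloop$, so $\sigma_{e'} = \sigma^1_{e'}\sigma^2_{e'} = \sigma^2_{e'}$ along $\wloop$ and therefore $\chi\big(\prod_{e'\in\wloop}\sigma_{e'}\big) = \chi\big(\prod_{e'\in\wloop}\sigma^2_{e'}\big)$. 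Substituting this together with the $\varphi_\beta$-factorization into the gauge-fixed expression for $\Phi_\wloop(\plaqset)$, the double sum over $(\sigma^1,\sigma^2)$ splits as $\big(\sum_{\sigma^1}\prod_{p\in P(e)}\varphi_\beta(\sigma^1_p)\big)\big(\sum_{\sigma^2}\chi(\prod_{e'\in\wloop}\sigma^2_{e'})\prod_{p\in\plaqset'}\varphi_\beta(\sigma^2_p)\big) = \Phi(P(e))\,\Phi_\wloop(\plaqset')$, completing the argument. I do not expect a serious obstacle here: the points requiring care are invoking the bijection of Lemma \ref{lemma:minimal-vortex-edge-config-decomp} with the right spanning tree and being careful about edge orientations when arguing $\sigma^1 = \groupid$ along $\wloop$; the rest is bookkeeping parallel to the proof of Lemma \ref{lemma:minimal-vortex-activity-factor-nonabelian} and to the Abelian Lemma \ref{lemma:minimal-vortices-contribution}.
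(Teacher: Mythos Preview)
Your proposal is correct and follows essentially the same argument as the paper: reduce to showing $\Phi_\wloop(\plaqset) = \Phi(P(e))\,\Phi_\wloop(\plaqset')$ via Lemma~\ref{lemma:minimal-vortex-activity-factor-nonabelian}, then use the gauge-fixed expression (Lemma~\ref{lemma:phi-wloop-gauge-fix}) together with the bijection of Lemma~\ref{lemma:minimal-vortex-edge-config-decomp}, and observe that $\sigma^1 = \groupid$ off $e$ with $e \notin \wloop$ makes the Wilson-loop factor depend only on $\sigma^2$. Your extra remarks about $\Phi(P(e)) = r_\beta > 0$ and edge orientations are accurate but not strictly needed for the paper's level of detail.
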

\begin{proof}
By Lemma \ref{lemma:minimal-vortex-activity-factor-nonabelian}, we have $\Phi(\plaqset) = \Phi(P(e)) \Phi(\plaqset \backslash P(e))$, so it suffices to show $\Phi_\wloop(\plaqset) = \Phi(P(e)) \Phi_\wloop(\plaqset \backslash P(e))$. Let $T$ be a spanning tree of $\oneskel$ which contains a spanning tree of $\partial S_e$. By Lemma \ref{lemma:phi-wloop-gauge-fix}, $\Phi_\wloop(\plaqset)$ is a sum over edge configurations $\sigma \in G^\edges$ such that $\supp(\sigma) = \plaqset$. By Lemma \ref{lemma:minimal-vortex-edge-config-decomp}, $\sigma$ corresponds uniquely to a tuple of edge configurations $(\sigma^1, \sigma^2)$, such that $\sigma = \sigma^1 \sigma^2$, $\sigma^1 = \groupid$ outside of $e$, $\supp(\sigma^1) = P(e)$, $\supp(\sigma^2) = \plaqset \backslash P(e)$. Observe as $e \notin \wloop$, we have
$\prod_{e \in \wloop} \sigma_e = \prod_{e \in \wloop} \sigma^2_e$.
The desired result now follows by factoring the sum over $\sigma$ into sums over $\sigma^1$, $\sigma^2$.
\end{proof}

\begin{lemma}\label{lemma:knot-well-separated-from-loop-doesnt-contribute}
Suppose $\plaqset = \plaqset' \cup K \sse \plaquettes$, such that $K$ is well separated from $\plaqset' \cup P_\wloop$ by a rectangle whose side lengths are all strictly less than the side length of $\lbox$. If $\Phi(\plaqset) > 0$, then
\[ \frac{\Phi_\wloop(\plaqset)}{\Phi(\plaqset)} = \frac{\Phi_\wloop(\plaqset')}{\Phi(\plaqset')}.\]
\end{lemma}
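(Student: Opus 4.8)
The plan is to mimic the proof of Lemma \ref{lemma:minimal-vortex-not-winding-around-loop-doesnt-contribute}, but using the well-separation machinery (Lemma \ref{lemma:vortices-well-separated-gauge-fix-decomp} and Lemma \ref{lemma:well-separated-vortices-activity-decomp}) in place of the minimal-vortex machinery (Lemma \ref{lemma:minimal-vortex-edge-config-decomp}). By Lemma \ref{lemma:well-separated-vortices-activity-decomp}, since $K$ is well separated from $\plaqset' \cup P_\wloop \supseteq \plaqset'$ by a rectangle $B$ with all side lengths strictly less than the side length of $\lbox$, we have $\Phi(\plaqset) = \Phi(K)\Phi(\plaqset')$. (Here I use that the relation ``well separated from'' restricts to subsets: if $B$ well separates $K$ from $\plaqset' \cup P_\wloop$, then since no plaquette of $\plaqset' \cup P_\wloop$ lies in $\partial S_2(B)$, in particular no plaquette of $\plaqset'$ does, and $\plaqset' \subseteq S_2^c(B)$.) Since $\Phi(\plaqset) > 0$, also $\Phi(K), \Phi(\plaqset') > 0$. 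So it suffices to show $\Phi_\wloop(\plaqset) = \Phi(K)\Phi_\wloop(\plaqset')$.

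First I would pick a good spanning tree. Let $B$ be the rectangle that well separates $K$ from $\plaqset' \cup P_\wloop$. As in the proof of Lemma \ref{lemma:well-separated-vortices-activity-decomp}, take a spanning tree $\tilde T$ of $\partial S_2(B)$, extend it to spanning trees of $S_2(B)$ and $S_2^c(B)$, and let $T$ be their union; this $T$ is a spanning tree of $\oneskel$ satisfying the hypotheses of Lemma \ref{lemma:vortices-well-separated-gauge-fix-decomp} (with $\plaqset_1 = K$, $\plaqset_2 = \plaqset'$). Additionally I need that every edge of $\wloop$ lies in $S_2^c(B)$ and not in $S_2(B) \setminus S_2^c(B)$ — this is the point where well-separation from $P_\wloop$ (as opposed to merely from $\plaqset'$) is used: every plaquette of $P_\wloop$ lies in $S_2^c(B)$ and not in $\partial S_2(B)$, hence every edge of $\wloop$ lies in $S_2^c(B)$, since each such edge is an edge of some plaquette of $P_\wloop$ (namely any of the plaquettes in $P(e) \subseteq P_\wloop$). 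Then by Lemma \ref{lemma:phi-wloop-gauge-fix}, $\Phi_\wloop(\plaqset)$ is a sum over $\sigma \in GF(T)$ with $\supp(\sigma) = \plaqset = K \cup \plaqset'$, and by Lemma \ref{lemma:vortices-well-separated-gauge-fix-decomp} each such $\sigma$ corresponds uniquely to a tuple $(\sigma^1, \sigma^2)$ with $\sigma^i \in GF(T)$, $\supp(\sigma^1) = K$, $\supp(\sigma^2) = \plaqset'$, $\sigma = \sigma^1 \sigma^2$, $\sigma^1 = \groupid$ on $S_2^c(B)$, and $\sigma^2 = \groupid$ on $S_2(B)$. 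Since every edge $e \in \wloop$ lies in $S_2^c(B)$, we have $\sigma^1_e = \groupid$, hence $\prod_{e \in \wloop}\sigma_e = \prod_{e \in \wloop}\sigma^1_e\,\sigma^2_e = \prod_{e \in \wloop}\sigma^2_e$ — wait, this last step needs care because the product is ordered and non-Abelian, so I should instead argue directly that $\sigma_e = \sigma^1_e\sigma^2_e = \groupid \cdot \sigma^2_e = \sigma^2_e$ for each individual edge $e \in \wloop$, whence the ordered products agree term by term. Also $\prod_{p \in \plaqset}\varphi_\beta(\sigma_p) = \prod_{p \in K}\varphi_\beta(\sigma^1_p)\prod_{p \in \plaqset'}\varphi_\beta(\sigma^2_p)$ by the ``consequently'' clause of Lemma \ref{lemma:vortices-well-separated-gauge-fix-decomp}. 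Factoring the sum over $\sigma$ into sums over $\sigma^1$ and $\sigma^2$ gives $\Phi_\wloop(\plaqset) = \big(\sum_{\sigma^1}\prod_{p\in K}\varphi_\beta(\sigma^1_p)\big)\big(\sum_{\sigma^2}\chi(\prod_{e\in\wloop}\sigma^2_e)\prod_{p\in\plaqset'}\varphi_\beta(\sigma^2_p)\big) = \Phi(K)\Phi_\wloop(\plaqset')$, using Lemma \ref{lemma:phi-sum-gauge-fixed} for the first factor and Lemma \ref{lemma:phi-wloop-gauge-fix} for the second. Dividing by $\Phi(\plaqset) = \Phi(K)\Phi(\plaqset')$ finishes the proof.

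The main obstacle I anticipate is bookkeeping around which complex the edges of $\wloop$ live in, i.e. verifying cleanly that well-separation of $K$ from $P_\wloop$ forces $\sigma^1$ to be trivial on all of $\wloop$ — this is really the whole content of why $P_\wloop$ (and not just $\plaqset'$) appears in the hypothesis, and it is what makes the $\chi$-factor split off entirely onto the $\plaqset'$ side. The rest is a routine adaptation of the already-established factorization arguments; in particular I should double-check the edge-case bookkeeping that $\Phi(K) > 0$ and $\Phi(\plaqset') > 0$ so the division is legitimate, which follows immediately from $\Phi(\plaqset) = \Phi(K)\Phi(\plaqset') > 0$ and nonnegativity of $\Phi$.
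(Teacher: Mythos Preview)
Your proposal is correct and follows essentially the same approach as the paper's proof, which simply says to apply Lemma \ref{lemma:well-separated-vortices-activity-decomp} for the $\Phi$ factorization and then repeat the argument of Lemma \ref{lemma:minimal-vortex-not-winding-around-loop-doesnt-contribute} with Lemma \ref{lemma:vortices-well-separated-gauge-fix-decomp} in place of Lemma \ref{lemma:minimal-vortex-edge-config-decomp}. You have correctly identified and filled in the key detail---that well-separation from $P_\wloop$ forces every edge of $\wloop$ to lie in $S_2^c(B)$, so that $\sigma^1$ is trivial along $\wloop$ and the $\chi$-factor passes entirely to the $\plaqset'$ side.
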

\begin{proof}
By Lemma \ref{lemma:well-separated-vortices-activity-decomp}, we have $\Phi(\plaqset) = \Phi(\plaqset') \Phi(K)$, so it remains to show $\Phi_\wloop(\plaqset) = \Phi_\wloop(\plaqset') \Phi(K)$. This follows by the same argument as in the proof of Lemma \ref{lemma:minimal-vortex-not-winding-around-loop-doesnt-contribute}, except we use Lemma \ref{lemma:vortices-well-separated-gauge-fix-decomp} in place of Lemma \ref{lemma:minimal-vortex-edge-config-decomp}.
\end{proof}


\begin{lemma}\label{lemma:only-minimal-vortices-winding-around-loop-computation}
Suppose we have $\plaqset = \vortex_1 \cup \cdots \cup \vortex_k \sse \plaquettes$, such that $\vortex_1, \ldots, \vortex_k$ are compatible minimal vortices, and for all $1 \leq i \leq k$, $\vortex_i = P(e_i)$, with $e_i \in \wloop$. Then
\[ \frac{\Phi_\wloop(\plaqset)}{\Phi(\plaqset)} = \mathrm{Tr}(\elemmatrix^k). \]
\end{lemma}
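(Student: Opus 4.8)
The plan is to reduce both $\Phi_\wloop(\plaqset)$ and $\Phi(\plaqset)$ to explicit sums over tuples of non-identity group elements, one element for each $e_i$, and then to recognize those sums as traces of powers of $r_\beta \elemmatrix$.

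First I would fix a spanning tree $T$ of $\oneskel$ which does not contain any of $e_1, \ldots, e_k$; such a tree exists by the remark following Corollary \ref{cor:only-minimal-vortices-edge-config}. By Lemmas \ref{lemma:phi-sum-gauge-fixed} and \ref{lemma:phi-wloop-gauge-fix}, $\Phi(\plaqset)$ and $\Phi_\wloop(\plaqset)$ are sums over $\sigma \in GF(T)$ with $\supp(\sigma) = \plaqset$, and by Corollary \ref{cor:only-minimal-vortices-edge-config} these $\sigma$ are exactly the ones with $\sigma_e \neq \groupid$ if and only if $e \in \{e_1, \ldots, e_k\}$; thus they are parametrized by tuples $(g_1, \ldots, g_k) \in (G \setminus \{\groupid\})^k$ via $\sigma_{e_i} = g_i$. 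Since the minimal vortices $P(e_1), \ldots, P(e_k)$ are compatible and distinct, they are pairwise disjoint as plaquette sets, so every plaquette $p$ of $\plaqset$ lies in exactly one $P(e_i)$ and contains exactly one edge (namely $e_i$, in some orientation) on which $\sigma$ is nontrivial; hence $\sigma_p \in \{g_i, g_i^{-1}\}$, and using $\varphi_\beta(g) = \varphi_\beta(g^{-1})$ we get $\varphi_\beta(\sigma_p) = \varphi_\beta(g_i)$. As $|P(e_i)| = 6$, this gives $\prod_{p \in \plaqset}\varphi_\beta(\sigma_p) = \prod_{i=1}^k \varphi_\beta(g_i)^6$, and therefore $\Phi(\plaqset) = \big(\sum_{g \neq \groupid}\varphi_\beta(g)^6\big)^k = r_\beta^k$ (recovering Corollary \ref{cor:homomorphism-number-single-minimal-vortex} when $k = 1$).

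For $\Phi_\wloop$ I would next analyze the ordered product $\prod_{e \in \wloop}\sigma_e$. Since $\sigma$ is supported on $e_1, \ldots, e_k$, all of which lie in $\wloop$, and $\wloop$ is self avoiding, we may relabel so that $e_1, \ldots, e_k$ is the order in which these edges are traversed by $\wloop$; then $\prod_{e \in \wloop}\sigma_e = g_1^{\epsilon_1}\cdots g_k^{\epsilon_k}$, where $\epsilon_i := \wloop_{e_i} \in \{\pm 1\}$ records the orientation. Writing $\chi = \mathrm{Tr}\,\rho$ and using that $\rho$ is a homomorphism,
\[ \Phi_\wloop(\plaqset) = \sum_{g_1, \ldots, g_k \neq \groupid} \mathrm{Tr}\big(\rho(g_1)^{\epsilon_1}\cdots \rho(g_k)^{\epsilon_k}\big) \prod_{i=1}^k \varphi_\beta(g_i)^6 = \mathrm{Tr}\Bigg(\prod_{i=1}^k \Big(\sum_{g \neq \groupid}\rho(g)^{\epsilon_i}\varphi_\beta(g)^6\Big)\Bigg). \]
Now for each $i$: if $\epsilon_i = 1$ the inner sum is $r_\beta \elemmatrix$ by definition \eqref{eq:elemmatrix-def}, and if $\epsilon_i = -1$ the substitution $g \mapsto g^{-1}$ together with $\varphi_\beta(g^{-1}) = \varphi_\beta(g)$ gives $\sum_{g \neq \groupid}\rho(g)^{-1}\varphi_\beta(g)^6 = \sum_{h \neq \groupid}\rho(h)\varphi_\beta(h)^6 = r_\beta \elemmatrix$ as well. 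Hence $\Phi_\wloop(\plaqset) = \mathrm{Tr}\big((r_\beta \elemmatrix)^k\big) = r_\beta^k\,\mathrm{Tr}(\elemmatrix^k)$, and dividing by $\Phi(\plaqset) = r_\beta^k$ yields the claim.

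The only mildly delicate points are checking that distinct compatible minimal vortices are pairwise disjoint (so each plaquette "sees" only one distinguished edge) and the orientation bookkeeping inside $\prod_{e\in\wloop}\sigma_e$; neither is a real obstacle. Everything else is a direct computation from the parametrization of Corollary \ref{cor:only-minimal-vortices-edge-config} and the identity $\varphi_\beta(g) = \varphi_\beta(g^{-1})$.
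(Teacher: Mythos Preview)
Your proof is correct and follows essentially the same approach as the paper: fix a spanning tree avoiding $e_1,\ldots,e_k$, use Corollary \ref{cor:only-minimal-vortices-edge-config} to parametrize the relevant gauge-fixed configurations by tuples $(g_1,\ldots,g_k)\in (G\setminus\{\groupid\})^k$, compute both $\Phi$ and $\Phi_\wloop$ explicitly, and handle the orientation ambiguity via $\varphi_\beta(g)=\varphi_\beta(g^{-1})$. You are slightly more explicit than the paper about why $\prod_{p\in\plaqset}\varphi_\beta(\sigma_p)=\prod_i \varphi_\beta(g_i)^6$ (via disjointness of the $P(e_i)$) and about the orientation bookkeeping, but these are just expansions of steps the paper treats more tersely.
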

\begin{proof}
Take $x_0 \in \vertices$, and a spanning tree $T$ of $\oneskel$ which does not contain $e_1, \ldots, e_k$. 
By Corollary \ref{cor:only-minimal-vortices-edge-config}, we obtain that any homomorphism $\psi \in \Hom(\pi_1(\oneskel, x_0), G)$ such that $\supp(\psi) = \plaqset$ is of the form $\psi_T^{x_0}(\sigma)$, where $\sigma$ is of the form 
\[ \sigma_e = \begin{cases} g_i \neq 1 & e = e_i, 1 \leq i \leq k \\ 1 & e \notin \{e_1 \ldots, e_k\}\end{cases}.\]
We thus have
\begin{align*}
\Phi_\wloop(\plaqset) &= \sum_{\substack{g_i \neq \groupid \\ 1 \leq i \leq k}} \chi\bigg(\prod_{i=1}^k g_i \bigg) \prod_{i=1}^k \varphi_\beta(g_i)^6 \\
&= \Tr\Bigg(\sum_{\substack{g_i \neq \groupid \\ 1 \leq i \leq k}} \prod_{i=1}^k \rho(g_i) \varphi_\beta(g_i)^6  \Bigg) \\
&= \Tr\Bigg(\prod_{i=1}^k  \bigg(\sum_{g \neq 1} \rho(g) \varphi_\beta(g)^6 \bigg)\Bigg).
\end{align*}
In the first equality, we used the assumption that $\wloop$ is self avoiding, so that each edge appears only once. Also note that technically, for the edges $e_i$ which are negatively oriented in $\wloop$, the corresponding contribution should be $\rho(g_i^{-1})$ instead of $\rho(g_i)$. However, we can ignore this, since $\rho$ is a unitary representation, so that we have $\varphi_\beta(g_i) = \varphi_\beta(g_i^{-1})$. Now finish by dividing through by $\Phi(\plaqset) = \Phi(\vortex_1) \cdots \Phi(\vortex_k) = r_\beta^k$.
\end{proof}

\begin{proof}[Proof of Lemma \ref{lemma:formula-for-wilson-loop-on-E}]
Start with the knot decomposition 
\[ P(\randomv) = \vortex_1 \cup \cdots \cup \vortex_k \cup K_1 \cup \cdots \cup K_m. \]
By relabeling if necessary, we may suppose that there is some $0 \leq k' \leq k$ such that for all $1 \leq i \leq k'$, $\vortex_i = P(e_i)$, with $e_i \in \wloop$, and for all $k'+1 \leq j \leq k$, $\vortex_j = P(e_j), e_j \notin \wloop$. Note then $N_\wloop = k'$. Let $\plaqset' := \vortex_1 \cup \cdots \cup \vortex_{k'} \cup K_1 \cup \cdots \cup K_m$. By repeatedly applying Lemma \ref{lemma:minimal-vortex-not-winding-around-loop-doesnt-contribute}, we have
\[ \frac{\Phi_\wloop(P(\randomv))}{\Phi(P(\randomv))} = \frac{\Phi_\wloop(\plaqset')}{\Phi(\plaqset')}.\]
Let $B_1$ be a cube which well separates $K_1$ from $K_2 \cup \cdots \cup K_m$. On the event $E$, there is a cube $B_1'$ which well separates $K_1$ from $P_\wloop \supseteq \vortex_1 \cup \cdots \cup \vortex_{k'}$. Thus $B_1 \cap B_1'$ is a rectangle which well separates $K_1$ from $(\plaqset' \backslash K_1) \cup P_\wloop$, and moreover as all side lengths of $B_1'$ must be strictly less than the side length of $\lbox$, the same is true for $B_1 \cap B_1'$. Thus by Lemma \ref{lemma:knot-well-separated-from-loop-doesnt-contribute}, we have
\[ \frac{\Phi_\wloop(\plaqset')}{\Phi(\plaqset')} = \frac{\Phi_\wloop(\plaqset' \backslash K_1)}{\Phi(\plaqset' \backslash K_1)}. \]
We may repeat this argument to get rid of $K_2, \ldots, K_m$, and obtain
\[ \frac{\Phi_\wloop(\plaqset')}{\Phi(\plaqset')} = \frac{\Phi_\wloop(\vortex_1 \cup \cdots \cup \vortex_{k'})}{\Phi(\vortex_1 \cup \cdots \cup \vortex_{k'})}.\]
To finish, apply Lemma \ref{lemma:only-minimal-vortices-winding-around-loop-computation}.
\end{proof}

Now recall that in the Abelian case, to handle the case of long loops, we used the bound $|\E [ W_\wloop(\Sigma) ~|~ \plaqset(\Sigma)]| \leq |\elemmatrix|^{N_\wloop}$ (see the proof of Lemma \ref{lemma:large-loop-abelian}). We now set out to prove an analogue of this: Lemma \ref{lemma:wilson-loop-bound-general-non-abelian}. To be clear, this bound will always hold, so that we don't need to work with the event $E$ here. Thus let us forget about $E$ until after Lemma \ref{lemma:wilson-loop-bound-general-non-abelian}.

\begin{lemma}\label{lemma:se-intersection-bound}
For any $e \in \edges$, the number of $e'$ such that $S_e, S_{e'}$ share a vertex is at most $2^{17} - 1$.
\end{lemma}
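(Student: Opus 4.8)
The plan is to run an elementary geometric counting argument; the stated bound $2^{17}-1$ is extremely loose, so there is a lot of room.

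First I would pin down the spatial extent of $S_e$. Up to a lattice symmetry (a translation composed with a signed permutation of the coordinate axes), we may assume $e$ is the edge with endpoints $x$ and $x + e_1$. The complex $S_e$ consists of the vertices, edges, and plaquettes lying in one of the $3$-cells that contain $e$. The key observation is that every $3$-cell is a unit cube, hence has $\ell^\infty$-diameter $1$, and each $3$-cell containing $e$ contains the vertex $x$; therefore every vertex of every such $3$-cell, and in particular every vertex of $S_e$, lies in the ball $B := \{v \in \Z^4 : \|v - x\|_\infty \le 1\}$. (One can in fact check that $S_e$ has exactly $38$ vertices, but we will not need this.) The same reasoning applies verbatim to any other edge $e'$: writing $x'$ for an endpoint of $e'$, every vertex of $S_{e'}$ lies in $\{v : \|v - x'\|_\infty \le 1\}$. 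Since $S_e$, $S_{e'}$ as defined relative to $\lbox$ are subcomplexes of their $\Z^4$ analogues, it suffices to prove the bound for the $\Z^4$ versions.

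Next I would convert "$S_e$ and $S_{e'}$ share a vertex" into a distance bound. If $v$ is a common vertex, then $\|v - x\|_\infty \le 1$ and $\|v - x'\|_\infty \le 1$ for suitable endpoints $x$ of $e$ and $x'$ of $e'$, so $\|x - x'\|_\infty \le 2$ by the triangle inequality. Hence every $e'$ counted by the lemma has an endpoint in the ball $\{y \in \Z^4 : \|y - x\|_\infty \le 2\}$, which contains $5^4 = 625$ vertices. Each vertex of $\Z^4$ is an endpoint of at most $8$ edges, so the number of such $e'$ is at most $625 \cdot 8 = 5000$, which is far below $2^{17}-1 = 131071$. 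This proves the lemma.

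The only thing to get right is the elementary geometry—namely that all $3$-cells containing a given edge are confined to an $\ell^\infty$-ball of radius one about an endpoint—so there is really no obstacle; the rest is a one-line count, and the displayed constant is far from optimal. If a sharper constant were wanted, one could instead use $|V(S_e)| = 38$ together with the bound that each fixed vertex $v$ lies in at most $32$ three-cells, each with $12$ edges, giving at most $38 \cdot 32 \cdot 12 = 14592$; but this refinement is unnecessary here.
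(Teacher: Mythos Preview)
Your proof is correct and actually yields a much sharper bound than the paper's. The paper proceeds by direct cell counting: it bounds the number of vertices of $S_e$ by $3\cdot 2\cdot 9 = 54$, then for each such vertex $v$ bounds the number of $e'$ with $v\in S_{e'}$ by counting $3$-cells through $v$ (at most $8\cdot 6\cdot 4$) times edges per $3$-cell ($12$), arriving at $54\cdot 8\cdot 6\cdot 4\cdot 12 = 124416 \le 2^{17}-1$. Your $\ell^\infty$-metric argument --- confining all vertices of $S_e$ to the unit ball about an endpoint and then applying the triangle inequality --- bypasses the intermediate cell enumeration entirely and lands at $5000$. The alternative you sketch at the end ($38$ vertices, $32$ three-cells per vertex, $12$ edges per three-cell) is precisely the paper's method with the constants tightened. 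Either way the bound is far from sharp, and the lemma only feeds a crude constant into Corollary~\ref{cor:getting-disjoint-s-e}, so nothing downstream is sensitive to which argument one chooses.
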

\begin{proof}
The number of vertices in $S_e$ is at most $3 \cdot 2 \cdot 9 = 54$. For a given vertex $x_0$, in order for $e'$ to be such that $S_{e'}$ contains $x_0$, there must exist a 3-cell $c$ which contains both $e'$ and $x_0$. The number of 3-cells which contain $x_0$ is at most $8 \cdot 6 \cdot 4$, and the number of edges in a 3-cell is at most 12. To finish, observe $54 \cdot 8 \cdot 6 \cdot 4 \cdot 12 \leq 2^{17} - 1$.
\end{proof}

\begin{cor}\label{cor:getting-disjoint-s-e}
For any collection $\vortex_i = P(e_i) \sse \plaquettes$, $1 \leq i \leq k$, there exists a subset $I \sse [k]$, of size at least $\max(\floor{ 2^{-17} k}, 1)$, such that the 2-complexes $S_{e_i}, i \in I$ have disjoint vertex sets.
\end{cor}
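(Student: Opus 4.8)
This is a greedy-selection argument, exactly the standard way to extract a large "independent set" from a bounded-degree situation. The plan is as follows.

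First I would set up the conflict graph: form a graph $H$ on vertex set $[k]$, placing an edge between $i$ and $j$ if the $2$-complexes $S_{e_i}$ and $S_{e_j}$ share a vertex. By Lemma \ref{lemma:se-intersection-bound}, for each fixed $e_i$, the number of edges $e'$ with $S_{e_i}$ and $S_{e'}$ sharing a vertex is at most $2^{17}-1$; hence each vertex of $H$ has degree at most $2^{17}-1$, i.e. $H$ has maximum degree $\Delta(H) \le 2^{17}-1$. (One must note that an independent set $I$ in $H$ is precisely a set of indices for which the complexes $S_{e_i}$, $i \in I$, have pairwise disjoint vertex sets, which is what we want.)

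Second, I would invoke the elementary graph fact that any graph on $k$ vertices with maximum degree $\le D$ has an independent set of size at least $\lceil k/(D+1) \rceil$. The quickest proof: run the greedy algorithm — repeatedly pick any remaining vertex into $I$ and delete it together with its (at most $D$) neighbors; each step removes at most $D+1$ vertices, so the process lasts at least $\lceil k/(D+1)\rceil$ steps, and the chosen vertices form an independent set. Applying this with $D = 2^{17}-1$, so $D+1 = 2^{17}$, gives an independent set of size at least $\lceil k / 2^{17} \rceil \ge \floor{2^{-17} k}$. Finally, since $k \ge 1$ the set $\{1\}$ alone gives a (trivially) valid singleton, so the size is at least $\max(\floor{2^{-17}k}, 1)$, as claimed.

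There is really no substantive obstacle here; the only thing to be careful about is bookkeeping with the constant — confirming that "share a vertex" is a symmetric relation (so $H$ is a genuine undirected graph) and that the bound from Lemma \ref{lemma:se-intersection-bound} counts conflicting edges $e'$ (including possibly $e' = e$, which only helps), so that the degree bound $2^{17}-1$ is correct and $D+1 = 2^{17}$ gives the clean denominator. If one wanted to avoid even citing the greedy lemma, one could instead order $e_1, \dots, e_k$ arbitrarily and scan through, adding $e_i$ to $I$ whenever $S_{e_i}$ is disjoint from all previously chosen complexes; each addition "blocks" at most $2^{17}-1$ later indices, giving the same count. Either phrasing is a few lines.
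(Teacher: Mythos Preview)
Your proof is correct and matches the paper's approach exactly: the paper also forms the conflict graph, bounds its maximum degree by $2^{17}-1$ via Lemma~\ref{lemma:se-intersection-bound}, and invokes the standard fact that a graph on $k$ vertices with maximum degree $d$ has an independent set of size at least $k/(d+1)$. Your write-up actually gives more detail (the greedy argument) than the paper's one-line citation of the graph-theory fact.
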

\begin{proof}
Use Lemma \ref{lemma:se-intersection-bound}, along with the following result from graph theory. For a graph on $n$ vertices with maximum degree $d$, there exists an independent set of size at least $n / (d+1)$.
\end{proof}

\begin{lemma}\label{lemma:wilson-loop-bound-general-non-abelian}
Let $\plaqset \sse \plaquettes$, and suppose we have the knot decomposition $\plaqset = \vortex_1 \cup \cdots \cup \vortex_k \cup K_1 \cup \cdots \cup K_m$. Let $\tilde{k}$ be the number of $\vortex_i$, $1 \leq i \leq k$, such that $\vortex_i = P(e_i)$, with $e_i \in \wloop$. Then
\[ \abs{\frac{\Phi_\wloop(\plaqset)}{\Phi(\plaqset)}} \leq d \norm{\elemmatrix}_{op}^{\max(\floor{2^{-17}\tilde{k}}, 1)}. \]
\end{lemma}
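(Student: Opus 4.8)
The plan is to rewrite $\Phi_\wloop(\plaqset)/\Phi(\plaqset)$ as a convex combination of traces of a fixed word in $\elemmatrix$ and unitary matrices in which $\elemmatrix$ occurs at least $\max(\floor{2^{-17}\tilde{k}},1)$ times, and then bound each such trace using $|\mathrm{Tr}(X)|\le d\norm{X}_{op}$ together with $\norm{\elemmatrix}_{op}\le 1$. We may assume $\tilde{k}\ge 1$. First I would apply Corollary \ref{cor:getting-disjoint-s-e} to the collection of the $\tilde{k}$ minimal vortices $\vortex_i=P(e_i)$ of $\plaqset$ with $e_i\in\wloop$ (each satisfies $P(e_i)\sse\plaquettes$, being a vortex of $\plaqset$), obtaining an index set $I$ with $|I|=:n\ge\max(\floor{2^{-17}\tilde{k}},1)$ such that the complexes $S_{e_i}$, $i\in I$, have pairwise disjoint vertex sets; relabel so $I=\{1,\dots,n\}$. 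Disjointness of the $S_{e_i}$ makes the $\partial S_{e_i}$ pairwise vertex-disjoint, so a union of chosen spanning trees of the $\partial S_{e_i}$ is a forest in $\oneskel$ and extends to a spanning tree $T$ of $\oneskel$ containing a spanning tree of each $\partial S_{e_i}$. (This is precisely the issue flagged in the remark following Corollary \ref{cor:edge-config-bijection-tuple}: such a $T$ need not exist in general, but passing to a vertex-disjoint sub-collection via Corollary \ref{cor:getting-disjoint-s-e} removes the obstruction.) Since neither $\Phi$ nor $\Phi_\wloop$ depends on the spanning tree (Lemma \ref{lemma:choice-of-tree-invariant} and its analogue for $\Phi_\wloop$), I would work with this $T$.

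Set $\plaqset':=\plaqset\setminus\bigcup_{i\in I}P(e_i)$, the union of the remaining vortices of $\plaqset$; these are compatible with each $P(e_i)$, so by Corollary \ref{cor:minimal-vortex-many-factor-non-abelian} and Corollary \ref{cor:homomorphism-number-single-minimal-vortex} one gets $\Phi(\plaqset)=r_\beta^n\,\Phi(\plaqset')$, and in particular $\Phi(\plaqset')>0$. Using Lemma \ref{lemma:phi-wloop-gauge-fix} to write $\Phi_\wloop(\plaqset)$ as a sum over $\sigma\in GF(T)$ with $\supp(\sigma)=\plaqset$, I would apply Corollary \ref{cor:edge-config-bijection-tuple} to decompose each such $\sigma$ uniquely as $\sigma=\tilde\sigma\sigma'$, where $\tilde\sigma\in GF(T)$ is supported on $\{e_1,\dots,e_n\}$ with $g_i:=\tilde\sigma_{e_i}\in G\setminus\{\groupid\}$, and $\sigma'\in GF(T)$ has $\supp(\sigma')=\plaqset'$ and (from the proof of that corollary, iterating Lemma \ref{lemma:minimal-vortex-edge-config-decomp}) $\sigma'=\groupid$ on each $S_{e_i}$. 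Two consequences follow from vertex-disjointness of the $S_{e_i}$: each plaquette lies in at most one $P(e_i)$, with $\sigma_p\in\{g_i,g_i^{-1}\}$ for $p\in P(e_i)$ and $\sigma_p=\sigma'_p$ for $p\in\plaqset'$, so
\[ \prod_{p\in\plaqset}\varphi_\beta(\sigma_p)=\Big(\prod_{i=1}^n\varphi_\beta(g_i)^6\Big)\prod_{p\in\plaqset'}\varphi_\beta(\sigma'_p); \]
and, since $\wloop$ is self avoiding each $e_i$ is traversed by $\wloop$ exactly once while every other edge $e$ of $\wloop$ satisfies $\sigma_e=\sigma'_e$, so, grouping the letters of the ordered product and letting $\pi$ record the order in which $e_1,\dots,e_n$ occur along $\wloop$,
\[ \rho\Big(\prod_{e\in\wloop}\sigma_e\Big)=M_0\,\rho(g_{\pi(1)}^{\epsilon_1})\,M_1\,\rho(g_{\pi(2)}^{\epsilon_2})\cdots\rho(g_{\pi(n)}^{\epsilon_n})\,M_n, \]
where each $\epsilon_j\in\{\pm1\}$ encodes an orientation and each $M_j$ is a product of matrices $\rho(\sigma'_e)$, hence unitary, depending only on $\sigma'$.

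Now summing over $g_1,\dots,g_n\in G\setminus\{\groupid\}$ independently and using $\sum_{g\ne\groupid}\rho(g)\varphi_\beta(g)^6=r_\beta\elemmatrix$ together with $\sum_{g\ne\groupid}\rho(g^{-1})\varphi_\beta(g)^6=r_\beta\elemmatrix^\ast=r_\beta\elemmatrix$ (because $\elemmatrix$ is Hermitian and $\varphi_\beta(g)=\varphi_\beta(g^{-1})$), each slot contributes a factor $r_\beta\elemmatrix$ regardless of $\epsilon_j$, giving
\[ \Phi_\wloop(\plaqset)=r_\beta^n\sum_{\substack{\sigma'\in GF(T)\\ \supp(\sigma')=\plaqset'}}\Big(\prod_{p\in\plaqset'}\varphi_\beta(\sigma'_p)\Big)\,\mathrm{Tr}\big(M_0\elemmatrix M_1\elemmatrix\cdots\elemmatrix M_n\big). \]
Dividing by $\Phi(\plaqset)=r_\beta^n\sum_{\sigma'}\prod_{p\in\plaqset'}\varphi_\beta(\sigma'_p)$ (a sum of nonnegative terms with positive total) exhibits $\Phi_\wloop(\plaqset)/\Phi(\plaqset)$ as a convex combination of the numbers $\mathrm{Tr}(M_0\elemmatrix M_1\cdots\elemmatrix M_n)$. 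Since each $M_j$ is unitary, $\norm{M_0\elemmatrix M_1\cdots\elemmatrix M_n}_{op}\le\norm{\elemmatrix}_{op}^n$, so $|\mathrm{Tr}(M_0\elemmatrix M_1\cdots\elemmatrix M_n)|\le d\,\norm{\elemmatrix}_{op}^n$; hence $|\Phi_\wloop(\plaqset)/\Phi(\plaqset)|\le d\,\norm{\elemmatrix}_{op}^n\le d\,\norm{\elemmatrix}_{op}^{\max(\floor{2^{-17}\tilde{k}},1)}$, the last step because $\norm{\elemmatrix}_{op}\le 1$ and $n\ge\max(\floor{2^{-17}\tilde{k}},1)$.

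The main obstacle is twofold. First, producing the spanning tree $T$ containing spanning trees of all the $\partial S_{e_i}$ simultaneously — impossible in general, which is exactly why one must thin the winding minimal vortices down to a vertex-disjoint sub-collection via Corollary \ref{cor:getting-disjoint-s-e}, at the cost of the factor $2^{-17}$. Second, the algebraic bookkeeping showing that after the edge-configuration decomposition the dependence of both $\prod_{e\in\wloop}\sigma_e$ and $\prod_{p\in\plaqset}\varphi_\beta(\sigma_p)$ on the $g_i$ is only through the single letters $\rho(g_{\pi(j)}^{\epsilon_j})$ and the scalar weights $\varphi_\beta(g_i)^6$, with every intervening factor unitary, so that the $g_i$-sums collapse to powers of $\elemmatrix$; the orientation signs $\epsilon_j$ cause no trouble since $\elemmatrix$ is Hermitian.
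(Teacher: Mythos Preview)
Your proposal is correct and follows essentially the same route as the paper: thin the $\tilde{k}$ winding minimal vortices to a vertex-disjoint sub-collection via Corollary~\ref{cor:getting-disjoint-s-e}, build a spanning tree $T$ containing spanning trees of the corresponding $\partial S_{e_i}$, apply Corollary~\ref{cor:edge-config-bijection-tuple} to factor each $\sigma\in GF(T)$ as $\tilde\sigma\sigma'$, sum out the $g_i$ to produce copies of $r_\beta\elemmatrix$ interleaved with unitary blocks depending only on $\sigma'$, and finally bound the trace. Your write-up is in fact slightly more explicit than the paper's in two places---you spell out why the interleaving matrices $M_j$ are unitary (as products of $\rho(\sigma'_e)$), and you note that the orientation signs $\epsilon_j$ are harmless because $\elemmatrix$ is Hermitian---but the argument is the same.
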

\begin{proof}
We may assume that $\vortex_1, \ldots, \vortex_{\tilde{k}}$ are such that $\vortex_i = P(e_i)$, with $e_i \in \wloop$. By Corollary \ref{cor:getting-disjoint-s-e}, there exists a subset $I \sse [\tilde{k}]$ of size at least $\max(\floor{2^{-17}\tilde{k}}, 1)$, such that the 2-complexes $S_{e_i}, i \in I$ have disjoint vertex sets. Let $k' := |I|$, and by relabeling, assume $I = [k']$. Let 
\[ \plaqset' := \vortex_{k'+1} \cup \cdots \cup \vortex_k \cup K_1 \cup \cdots \cup K_m. \]
Now because the $S_{e_i}, 1 \leq i \leq k'$ have disjoint vertex sets, we may take a spanning tree $T$ of $\oneskel$ which contains spanning trees of $\partial S_{e_i}$, $1 \leq i \leq k'$. Let $EC$ be the set of $\sigma \in GF(T)$ such that $\sigma_e \neq \groupid$ if and only if $e \in \{e_1, \ldots, e_{k'}\}$. 
Applying Corollary \ref{cor:edge-config-bijection-tuple}, we obtain
\[ \Phi_\wloop(\plaqset) = \sum_{\substack{\sigma' \in GF(T) \\ \supp(\sigma') = \plaqset'}} \sum_{\tilde{\sigma} \in EC} \Tr(\prod_{e \in \wloop} \rho(\tilde{\sigma}_e \sigma'_e)) \prod_{p \in \plaqset'} \varphi_\beta(\sigma'_p)\prod_{\substack{p \in \vortex_i \\ 1 \leq i \leq k'}} \varphi_\beta(\tilde{\sigma}_p). \]
Let us now fix $\sigma'$. We proceed to ``average out" the sum over $\tilde{\sigma}$. Assume $e_1, \ldots, e_{k'}$ are ordered by appearance in $\wloop$. Observe then that there are matrices $A_1, \ldots, A_{k'+1}$ (depending on $\sigma'$ but not $\tilde{\sigma}$) such that for any $\tilde{\sigma} \in EC$, we have 
\[\prod_{e \in \wloop} \rho(\tilde{\sigma}_e \sigma'_e) = A_1 \rho(\tilde{\sigma}_{e_1})A_2 \rho(\tilde{\sigma}_{e_2}) \cdots A_{k'} \rho(\tilde{\sigma}_{e_{k'}}) A_{k'+1}. \] 
For instance, if $e_1$ is the $j$th edge of $\wloop$, then $A_1$ will be $\rho$ applied to the (ordered) product of $\sigma'_e$, as $e$ ranges over the first $j-1$ edges of $\wloop$. Now to sum in $\tilde{\sigma}$, it suffices to sum over the non-identity values of $\tilde{\sigma}$, i.e. $g_i \in G$, $g_i \neq \groupid$, $1 \leq i \leq k'$. We have
\[
\begin{split}
\sum_{i=1}^{k'} \sum_{g_i \neq \groupid} &A_1 \rho(g_1) \cdots A_{k'} \rho(g_{k'}) A_{k'+1} \prod_{i=1}^{k'} \varphi_\beta(g_i)^6 = \\ &A_1 \bigg(\sum_{g_1 \neq 1} \rho(g_1) \varphi_\beta(g_1)^6\bigg) \cdots A_{k'} \bigg(\sum_{g_{k'} \neq 1} \rho(g_{k'}) \varphi_\beta(g_{k'})^6\bigg) A_{k'+1}.
\end{split}
\]
As observed in the proof of Lemma \ref{lemma:only-minimal-vortices-winding-around-loop-computation}, if the edge $e_i$ appears in $\wloop$ in a negative orientation, then the contribution should be $\rho(g_i^{-1})$, not $\rho(g_i)$. However, once we sum over $g_i$, this difference gets washed out, since $\varphi_\beta(g_i) = \varphi_\beta(g_i^{-1})$. Now observe that dividing through by $\Phi(\plaqset) =  \Phi(\plaqset') \prod_{i=1}^{k'} \Phi(\vortex_i) = \Phi(\plaqset') r_\beta^{k'}$, we obtain
\[\Phi(\plaqset')^{-1} A_1 A_\beta \cdots A_{k'} A_\beta A_{k'+1}. \]
We may bound
\[ \Tr(A_1 A_\beta \cdots A_{k'} A_\beta A_{k'+1}) \leq d \norm{A_1 A_\beta \cdots A_{k'} A_\beta A_{k'+1}}_{op} \leq d \norm{A_\beta}_{op}^{k'}. \]
As this inequality holds for any fixed $\sigma'$, the same holds once we sum over $\sigma'$, and this yields the desired inequality.
\end{proof}

In the remainder of the subsection, suppose $\wloop$ is far enough away from the boundary of $\lbox$, such that any cube of side length 50 (say) which contains a vertex of $\wloop$ is completely contained in $\lbox$. Recall also the event $E$ and the random variable $N_\wloop$, which were defined just after the proof of Lemma \ref{lemma:wilson-loop-exp-in-terms-of-vortices}. We now state the key intermediate results about $E$ and $N_\wloop$, and then use these results to give a proof of Theorem \ref{thm:main-result}. These intermediate results are entirely analogous to those of Section \ref{section:wilson-loop-exp-abelian} (i.e. Lemma \ref{lemma:only-minimal-vortices-contribute-probability-bound} to Lemma \ref{lemma:large-loop-abelian}). Most proofs will be deferred to Sections \ref{section:proofs-non-abelian} and \ref{section:knot-upper-bound-non-abelian}. 

\begin{lemma}\label{lemma:only-minimal-vortices-contribute-probability-bound-non-abelian}
Suppose 
\beq\label{eq:beta-cond-only-minimal-vortices-contribute-probability-bound-non-abelian} \beta \geq \frac{1}{\Delta_G} (1000 +  14\log \abs{G}). \eeq
Then
\[ \p(E^c) \leq (\ell r_\beta) e^{-\beta \Delta_G / 2}.\]
\end{lemma}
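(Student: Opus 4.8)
The plan is to run a Peierls-type argument, closely mirroring the proof of Lemma~\ref{lemma:only-minimal-vortices-contribute-probability-bound} from the Abelian case, but phrased in terms of knots instead of vortices. Recall that on $E^c$ the knot decomposition $P(\randomv) = \vortex_1 \cup \cdots \cup \vortex_k \cup K_1 \cup \cdots \cup K_m$ contains at least one non-minimal knot $K := K_j$ that cannot be well separated from $P_\wloop$ by any cube in $\lbox$. I would bound $\p(E^c)$ by a union bound over all such candidate knots $K$, controlling three quantities: the geometric location of $K$ relative to $\wloop$, the number of knots of a given size, and the probability that a prescribed knot appears in $P(\randomv)$.

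For the probability: the event that a non-minimal knot $K$ occurs in the knot decomposition of $P(\randomv)$ forces all the (compatible, non-minimal) vortices constituting $K$ to occur in $\randomv$, so by a computation analogous to Lemma~\ref{lemma:prob-vortices-appear} (equivalently, the non-Abelian analogue of Corollary~\ref{cor:vortex-probability-bound}) together with the activity bound $\Phi(\vortex) \leq \alpha_\beta^{\abs{\vortex}}$ (the non-Abelian analogue of Lemma~\ref{lemma:activity-bound}, to be established in Section~\ref{section:knot-upper-bound-non-abelian}, with $\alpha_\beta = (\abs{G}-1)e^{-\beta \Delta_G}$) we get $\p(K \text{ occurs as a knot of } P(\randomv)) \leq \alpha_\beta^{\abs{K}}$. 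For the geometric step, I would argue that on $E^c$ the offending knot $K$, of size $t := \abs{K}$, must contain a plaquette within $\ell^\infty$-distance $O(t)$ of $\wloop$. Indeed, a knot of size $t$ is contained in a cube of side length $O(t)$ (the knot analogue of Lemma~\ref{lemma:vortex-contained-in-cube}, also deferred to Section~\ref{section:knot-upper-bound-non-abelian}); enlarging this cube slightly and making it flush with $\partial\lbox$ in any directions where $K$ meets the boundary gives a cube $B$ in $\lbox$ with $K \sse S_2(B)$ and no plaquette of $K$ in $\partial S_2(B)$. If $B$ were disjoint from $P_\wloop$ we would have $P_\wloop \sse S_2^c(B)$ with no plaquette of $P_\wloop$ in $\partial S_2(B)$, so $B$ would well separate $K$ from $P_\wloop$, contradicting $E^c$; hence $B$, and therefore $K$, lies within distance $O(t)$ of $P_\wloop$, and since every plaquette of $P_\wloop$ shares a $3$-cell with an edge of $\wloop$, $K$ contains a plaquette of $\plaquettes \bs S_\wloop^{ct}$ for a suitable constant $c$. (Using the convention that $\partial S_2(B)$ omits plaquettes on $\partial\lbox$ is exactly what lets knots near the boundary of $\lbox$ be handled with no extra $N^4 e^{-\beta L \Delta_G/2}$ term; the finitely many knots too large for a cube of side $O(t)$ to fit in $\lbox$ are disposed of by a crude count and contribute negligibly.) Finally, since $\wloop$ is far from $\partial\lbox$, any such $K$ is far from $\partial\lbox$, so by Lemma~\ref{lemma:smaller-than-minimal-vortices-prob-0} the condition $\Phi(K) > 0$ forces $t \geq 7$.

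The union bound is then routine. By Lemma~\ref{lemma:contributing-plaquette-bound}, $\abs{\plaquettes \bs S_\wloop^{ct}} = O(\ell t^4)$, and by the knot-counting bound of Section~\ref{section:knot-upper-bound-non-abelian} the number of knots of size $t$ containing a fixed plaquette is at most $C^t$ for an absolute constant $C$. Combining with the probability bound,
\[ \p(E^c) \leq \sum_{t \geq 7} O(\ell t^4)\, C^t \alpha_\beta^t = O(\ell)\sum_{t \geq 7} t^4 (C\alpha_\beta)^t. \]
Hypothesis~\eqref{eq:beta-cond-only-minimal-vortices-contribute-probability-bound-non-abelian} forces $C\alpha_\beta \leq 1/2$ (say), so the series converges and is dominated by $O((C\alpha_\beta)^7) = O(\alpha_\beta^7)$. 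Since $r_\beta \geq e^{-6\beta\Delta_G}$ and $\alpha_\beta = (\abs{G}-1)e^{-\beta\Delta_G}$, we have $\alpha_\beta^7 \leq (\abs{G}-1)^7 e^{-\beta\Delta_G} r_\beta$, and the threshold~\eqref{eq:beta-cond-only-minimal-vortices-contribute-probability-bound-non-abelian} is chosen large enough that the residual absolute and $\abs{G}$-dependent constants are swallowed by $e^{-\beta\Delta_G/2}$, giving $\p(E^c) \leq (\ell r_\beta) e^{-\beta\Delta_G/2}$.

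I expect the main obstacle to be the two inputs outsourced to Section~\ref{section:knot-upper-bound-non-abelian}: a linear-in-size diameter bound and an exponential-in-size counting bound for knots. These are genuinely harder than their vortex counterparts (Lemmas~\ref{lemma:vortex-contained-in-cube} and~\ref{lemma:vortex-combinatorial-bound}), because a knot is a union of \emph{compatible} — hence $G(\cdot)$-disconnected — non-minimal vortices that nonetheless cannot be pried apart by a well-separating cube, and one must show that this topological entanglement forces the constituent vortices to be geometrically localized before one can enumerate the resulting configurations. Controlling $\Phi(K)$ by $\alpha_\beta^{\abs{K}}$ also needs a little more care non-Abelianly, since the number of homomorphisms with prescribed support is not obviously $\leq (\abs{G}-1)^{\abs{K}}$; absorbing a possible extra $\abs{G}^{O(\abs{K})}$ factor here is what accounts for the larger constant in the $\beta$-threshold compared with the Abelian case.
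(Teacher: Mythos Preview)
Your approach is essentially the paper's: a Peierls argument, union bound over non-minimal knots that cannot be well separated from $P_\wloop$, using the cube bound (Lemma~\ref{lemma:knot-contained-in-cube}), the knot count (Lemma~\ref{lemma:knot-combinatorial-bound}), the plaquette count near $\wloop$ (Lemma~\ref{lemma:contributing-plaquette-bound-non-abelian}), and the size cutoff $|K|\ge 7$ (Lemma~\ref{lemma:knot-minimal-size}). The boundary handling you describe is exactly the reason the paper can drop the $N^4 e^{-\beta L\Delta_G/2}$ term.

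One point to correct in your probability step. You argue that $F_K$ forces the constituent vortices of $K$ into $\randomv$, and then invoke a ``non-Abelian analogue of Corollary~\ref{cor:vortex-probability-bound}'' to get $\p(F_K)\le \prod_i \Phi(\vortex_i)\le \alpha_\beta^{|K|}$. But Corollary~\ref{cor:vortex-probability-bound} rested on Lemma~\ref{lemma:activity_decomp}, the factorization of $\Phi$ over compatible vortices, and this is precisely what fails non-Abelianly; that failure is the reason knots were introduced in the first place. So there is no clean bound of the form $\p(\Gamma_0\subseteq\randomv)\le\prod_{\vortex\in\Gamma_0}\Phi(\vortex)$ for general compatible $\Gamma_0$. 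The paper instead proves directly $\p(F_K)\le\Phi(K)$ (Lemma~\ref{lemma:knot-prob-bound}), using only the factorization $\Phi(P(\Gamma))=\Phi(K)\,\Phi(P(\Gamma'))$ that holds by definition whenever $K$ sits in the knot decomposition of $P(\Gamma)$, and then bounds $\Phi(K)\le\alpha_\beta^{|K|}$ as a single piece (Lemma~\ref{lemma:activity-bound-non-abelian}, with $\alpha_\beta=|G|\,e^{-\beta\Delta_G}$ via the rank bound Lemma~\ref{lemma:rank-bound}). This is a small local fix and the rest of your argument goes through unchanged.
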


\begin{cor}\label{cor:only-minimal-vortices-approximation-non-abelian}
Suppose 
\[ \beta \geq \frac{1}{\Delta_G} (1000 +  14\log \abs{G}). \]
Then
\[ |\langle W_\wloop \rangle_{\lbox, \beta} - \E \mathrm{Tr}(\elemmatrix^{N_\wloop})| \leq 2d(\ell r_\beta) e^{-\beta \Delta_G / 2}. \]
\end{cor}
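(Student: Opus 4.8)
The plan is to follow exactly the derivation of the Abelian Corollary \ref{cor:only-minimal-vortices-approximation}, with Lemma \ref{lemma:formula-for-wilson-loop-on-E} playing the role of Lemma \ref{lemma:wilson-loop-on-event-E} and Lemma \ref{lemma:only-minimal-vortices-contribute-probability-bound-non-abelian} playing the role of Lemma \ref{lemma:only-minimal-vortices-contribute-probability-bound}. First I would recall from Lemma \ref{lemma:wilson-loop-exp-in-terms-of-vortices} and the definition of the random collection of vortices $\randomv$ that
\[ \langle W_\wloop \rangle_{\lbox, \beta} = \E\left[\frac{\Phi_\wloop(P(\randomv))}{\Phi(P(\randomv))}\right], \]
where the ratio is almost surely well defined, since $\p(\randomv = \Gamma) > 0$ forces $\Phi(P(\Gamma)) > 0$. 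Subtracting $\E \mathrm{Tr}(\elemmatrix^{N_\wloop})$ and using the triangle inequality gives
\[ \left|\langle W_\wloop \rangle_{\lbox, \beta} - \E \mathrm{Tr}(\elemmatrix^{N_\wloop})\right| \leq \E\left[\left|\frac{\Phi_\wloop(P(\randomv))}{\Phi(P(\randomv))} - \mathrm{Tr}(\elemmatrix^{N_\wloop})\right|\right]. \]

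The key point is that the integrand vanishes on the event $E$, which is precisely the content of Lemma \ref{lemma:formula-for-wilson-loop-on-E}. Hence the expectation on the right is supported on $E^c$, and it suffices to bound the integrand uniformly there. For the first term I would invoke Lemma \ref{lemma:wilson-loop-bound-general-non-abelian}, which gives $|\Phi_\wloop(P(\randomv))/\Phi(P(\randomv))| \leq d$ because $\norm{\elemmatrix}_{op} \leq 1$; for the second, since $\elemmatrix$ is Hermitian with operator norm at most $1$ its eigenvalues lie in $[-1,1]$, so $|\mathrm{Tr}(\elemmatrix^{N_\wloop})| \leq d$. Therefore the integrand is at most $2d$ pointwise, and
\[ \left|\langle W_\wloop \rangle_{\lbox, \beta} - \E \mathrm{Tr}(\elemmatrix^{N_\wloop})\right| \leq 2d\, \p(E^c). \]
Finally, the hypothesis $\beta \geq \frac{1}{\Delta_G}(1000 + 14\log\abs{G})$ is exactly condition \eqref{eq:beta-cond-only-minimal-vortices-contribute-probability-bound-non-abelian}, so Lemma \ref{lemma:only-minimal-vortices-contribute-probability-bound-non-abelian} yields $\p(E^c) \leq (\ell r_\beta) e^{-\beta \Delta_G/2}$, and the claim follows.

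I do not expect a real obstacle in this corollary itself; it is a short consequence once Lemmas \ref{lemma:formula-for-wilson-loop-on-E}, \ref{lemma:wilson-loop-bound-general-non-abelian}, and \ref{lemma:only-minimal-vortices-contribute-probability-bound-non-abelian} are in hand. The only subtlety worth flagging is that the uniform bound on $|\Phi_\wloop/\Phi|$ must come from Lemma \ref{lemma:wilson-loop-bound-general-non-abelian} rather than a naive estimate: $\Phi_\wloop$ carries the character factor $\chi$, which can be of size $d$, and controlling it by $\Phi$ requires the topological edge-configuration decomposition behind that lemma. The genuine difficulty is displaced into Lemma \ref{lemma:only-minimal-vortices-contribute-probability-bound-non-abelian} (a Peierls/cluster-expansion estimate) and into Lemma \ref{lemma:formula-for-wilson-loop-on-E}, whose proofs are carried out separately.
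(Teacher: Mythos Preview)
Your proof is correct and follows essentially the same approach as the paper. One small remark: the uniform bound $|\Phi_\wloop(P(\randomv))/\Phi(P(\randomv))| \leq d$ is actually more elementary than you suggest in your closing comment---it follows directly from $|\chi(g)| \leq d$ (since $\rho$ is a $d$-dimensional unitary representation) together with the nonnegativity of each term $\varphi_\beta(\psi(C_p))$ in the sums defining $\Phi$ and $\Phi_\wloop$, so invoking Lemma \ref{lemma:wilson-loop-bound-general-non-abelian} is correct but unnecessary.
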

\begin{proof}
Recall that $\langle W_\wloop \rangle_{\lbox, \beta} = \E [\Phi_\wloop (P(\randomv)) / \Phi(P(\randomv))]$. By Lemma \ref{lemma:formula-for-wilson-loop-on-E}, we have
\[ \E [\Phi_\wloop(\randomv) / \Phi(\randomv)] - \E \mathrm{Tr}(\elemmatrix^{N_\wloop}) = \E \bigg[\bigg(\frac{\Phi_\wloop(P(\randomv))}{\Phi(P(\randomv))} - \mathrm{Tr}(\elemmatrix^{N_\wloop})\bigg) \ind_{E^c}\bigg]. \]
Note $|\Phi_\wloop(P(\randomv)) / \Phi(P(\randomv))| \leq d$, and also $\norm{\elemmatrix}_{op} \leq 1$, so that for any integer $k \geq 0$, we have $|\mathrm{Tr}(\elemmatrix^k)| \leq d \|\elemmatrix^k\|_{op} \leq d$. Now finish by applying Lemma \ref{lemma:only-minimal-vortices-contribute-probability-bound-non-abelian}.
\end{proof}

\begin{prop}\label{prop:poisson-approximation-non-abelian}
Suppose
\beq\label{eq:beta-cond-poisson-approximation-non-abelian} \beta \geq \frac{1}{\Delta_G} (500 + 7 \log\abs{G}). \eeq
Let $\ms{L}(N_\wloop)$ be the law of $N_\wloop$. Then
\[ d_{TV}(\ms{L}(N_{\wloop}), \mathrm{Poisson}(\ell r_\beta)) \leq 300 (e^{1.5 \ell r_\beta}) r_\beta. \]
\end{prop}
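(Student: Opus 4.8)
The plan is to transcribe the Abelian proof of Proposition~\ref{prop:poisson-approximation-abelian}, replacing each ingredient by its non-Abelian counterpart. Write $N_\wloop = \sum_{e \in \wloop}\ind_{F_{P(e)}}$, where $F_{P(e)}$ is the event that the minimal vortex $P(e)$ is one of the compatible vortices appearing in the decomposition of $P(\randomv)$, and set $B_e := \{e' \in \wloop : P(e) \nsim P(e')\}$, so that $|B_e| \le 144$ by Lemma~\ref{lemma:minimal-vortex-incompatible-bound} and the self-avoidingness of $\wloop$. I would then invoke the Chen--Röllin bound (Theorem~\ref{thm:chen-rollin-poisson-approximation}) with these neighborhoods and $\lambda := \E N_\wloop$. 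The term $b_2$ vanishes exactly as before, since $\randomv$ is by construction a compatible family so two incompatible minimal vortices cannot both occur. For $b_1$ one uses $\p(F_{P(e)}) \le r_\beta$, which needs only the non-Abelian analogue of Corollary~\ref{cor:vortex-probability-bound}, resting in turn on the clean factorization of minimal vortices (Lemma~\ref{lemma:minimal-vortex-activity-factor-nonabelian}) and $\Phi(P(e)) = r_\beta$ (Corollary~\ref{cor:homomorphism-number-single-minimal-vortex}); this gives $b_1 \le 144\,\lambda\,r_\beta$. The real content is the term $b_3$, and separately the estimate $|\lambda - \ell r_\beta| = O(\ell r_\beta^2)$; both require a decay-of-correlations estimate coming from a cluster expansion.

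The cluster expansion I would set up has \emph{knots} as polymers rather than vortices, since the non-Abelian $\Phi$ fails to factor over vortices but, by the knot decomposition, does factor as $\Phi(\plaqset) = \prod_i \Phi(\vortex_i)\prod_j \Phi(K_j)$; this exhibits $Z_{\lbox,\beta}/N_1$ as the partition function of a gas of knots with nonnegative activities $z^\beta(K) := \Phi(K)$ and a topologically defined compatibility relation for which every plaquette set corresponds to a compatible family of knots. One then runs the Dobrushin-type criterion of Theorem~\ref{thm:dobrushins-cluster-expansion} exactly as in Proposition~\ref{prop:cluster-expansion-convergence}, obtaining $|\Theta^{\mc P}_K(z^\beta)| \le \log(1 + e^{b|K|} z^\beta(K))$ and $|\Theta^{\mc P}_K(z^\beta) - z^\beta(K)| \le (e^{b|K|}-1)z^\beta(K)$; verifying its hypothesis uses the non-Abelian analogue of the activity bound $\Phi(\plaqset) \le \alpha_\beta^{|\plaqset|}$ (Lemma~\ref{lemma:activity-bound}) together with a combinatorial bound of the form $C^m$ on the number of knots of size $m$ meeting a fixed plaquette (analogues of Lemmas~\ref{lemma:vortex-combinatorial-bound} and~\ref{lemma:neighbor-vortex-combinatorial-bound}, using that a knot of size $m$ lies in a cube of side $O(m)$). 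From these $\Theta$-bounds I would deduce the non-Abelian analogue of Lemma~\ref{lemma:partition-function-ratio-minus-neighborhood-minimal-vortex}: for a minimal vortex $P(e)$ with $e$ far enough from $\partial\lbox$ (exactly the standing assumption on $\wloop$), and any $\Gamma$ inside the incompatibility neighborhood of $P(e)$, $|\rho_{\mc P}(z^\beta, \Gamma) - 1| \le 290\, r_\beta$ and $|\log \rho_{\mc P}(z^\beta,\Gamma) + n_0 r_\beta| \le (n_0+2)\varep r_\beta$ with $n_0$ the number of minimal vortices in $\Gamma$. The point of working with minimal vortices is precisely that Lemma~\ref{lemma:minimal-vortex-activity-factor-nonabelian} lets them factor out regardless of non-Abelianness, so the whole Abelian bookkeeping survives verbatim for $N_\wloop$.

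With this machinery the proof closes as in Section~\ref{section:abelian-proofs}. The non-Abelian analogue of Lemma~\ref{lemma:vortex-appear-not-appear-probability} writes $\p\big(\bigcap_i F_{P(e_i)} \cap \bigcap_j F^c_{P(e_j')}\big)$ as a product of factors $r_\beta$ times a reduced correlation; hence, fixing $e_0 \in \wloop$ and $\wloop' \sse \wloop \backslash B_{e_0}$, the conditional probability inside $b_3$ equals $\p(F_{P(e_0)})$ times a ratio of two reduced correlations each within $290\,r_\beta$ of $1$, so $|\E[\ind_{F_{P(e_0)}}\mid F_{e_0,\wloop'}] - \p(F_{P(e_0)})| \le \p(F_{P(e_0)})\cdot 1000\, r_\beta$, giving $b_3 \le 1000\,\lambda\, r_\beta$ (the analogue of Lemma~\ref{lemma:chen-rollin-appplication}). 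Summing the reduced-correlation estimate over $e \in \wloop$ gives $|\lambda - \ell r_\beta| \le 290\,(\ell r_\beta)\, r_\beta$ (analogue of Lemma~\ref{lemma:lambda-ell-beta-bound}), and Corollary~3.1 of \cite{AL2005} then bounds $d_{TV}(\mathrm{Poisson}(\lambda), \mathrm{Poisson}(\ell r_\beta)) \le |\lambda - \ell r_\beta|$. Assembling everything through Theorem~\ref{thm:chen-rollin-poisson-approximation} and the triangle inequality, using $\lambda \le 1.5\,\ell r_\beta$, $\min(\lambda,1.4\sqrt\lambda) \le e^\lambda \le e^{1.5\ell r_\beta}$, and the fact that the threshold \eqref{eq:beta-cond-poisson-approximation-non-abelian} makes all auxiliary small-$r_\beta$ inequalities (such as $290 r_\beta \le 1/2$ and the convergence conditions on $\beta$ in the cluster-expansion lemmas) hold, yields $d_{TV}(\ms L(N_\wloop),\mathrm{Poisson}(\ell r_\beta)) \le 300\, e^{1.5\ell r_\beta}\, r_\beta$.

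The main obstacle is the cluster-expansion setup itself: one must check carefully that the knot decomposition genuinely presents $Z_{\lbox,\beta}/N_1$ as a polymer partition function --- i.e. exhibit a symmetric compatibility relation on $\mc K$ so that compatible families of knots are in bijection with plaquette sets, despite the non-uniqueness of the maximal partition chosen in the knot decomposition --- and one must prove the combinatorial bound on the number of knots of a given size, since knots, unlike minimal vortices, can be large and the spanning-tree counting argument of Lemma~\ref{lemma:vortex-combinatorial-bound} must be adapted to them (together with showing a knot of size $m$ is confined to a cube of side $O(m)$). One also needs the non-Abelian activity bound, which replaces the one-line argument of Lemma~\ref{lemma:activity-bound} by a bound on the number of gauge-fixed edge configurations with prescribed support. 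Everything downstream of a working cluster expansion is a routine, if lengthy, transcription of the Abelian arguments, with the topology entering only through the factorization Lemmas~\ref{lemma:minimal-vortex-activity-factor-nonabelian} and~\ref{lemma:well-separated-vortices-activity-decomp} and the requirement that $\wloop$ lie far from $\partial\lbox$.
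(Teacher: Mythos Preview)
Your overall architecture is right, and the parts of your argument that only use the factorization of \emph{minimal} vortices (the bounds on $b_1$, $b_2$, and the formula from Lemma~\ref{lemma:prob-as-correlation-function} writing the relevant probabilities as $r_\beta^k\,\rho_{\mc P_\lbox}(\Gamma)$) go through exactly as you describe; these match the paper's proof closely, and in fact the paper obtains slightly sharper constants ($b_3 \le 150\,\lambda r_\beta$ and $|\lambda - \ell r_\beta| \le 145\,(\ell r_\beta) r_\beta$, leading to the $300$).

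The genuine gap is your proposed cluster expansion with knots as polymers. You correctly flag as the ``main obstacle'' that one must exhibit a \emph{symmetric} compatibility relation on $\mc K$ under which pairwise-compatible families biject with plaquette sets; but this is not a technicality to be checked---it is exactly where the approach breaks. The paper points out (see the paragraph preceding Lemma~\ref{lemma:knot-contain-single-plaquette-sum-bound}) the Borromean-rings phenomenon: there exist vortices $\vortex_1,\vortex_2,\vortex_3$ that are pairwise well separated yet whose union admits no nontrivial well-separated bipartition. Hence ``pairwise well separated'' cannot serve as the polymer compatibility, and the sequential, asymmetric, non-unique knot decomposition does not fit the hypotheses of Theorem~\ref{thm:dobrushins-cluster-expansion}. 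This is precisely why the paper abandons cluster expansion in the non-Abelian case.

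What the paper does instead is a bare-hands estimate on the reduced correlations $\rho_{\mc P}(\Gamma)$. The key inputs are: (i) Lemma~\ref{lemma:vortex-removal-identity}, an exact identity that peels off one minimal vortex from $\Gamma$ at the cost of an $r_\beta$ correction, using only the minimal-vortex factorization of Lemma~\ref{lemma:minimal-vortex-activity-factor-nonabelian}; and (ii) Lemma~\ref{lemma:no-minimal-vortex-correlation-function-lower-bd}, which bounds $1 - \rho_{\mc P}(\Gamma)$ when $\Gamma$ contains no minimal vortex by summing $\Phi(K)$ over knots $K$ incompatible with a fixed minimal vortex, controlled via Lemmas~\ref{lemma:activity-bound-non-abelian} and~\ref{lemma:knot-combinatorial-bound}. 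Iterating (i) down to the situation of (ii) yields Lemma~\ref{lemma:minimal-vortex-correlation-function-lower-bound} (and its companion Lemma~\ref{lemma:correlation-function-upper-bound}), which give $|\rho_{\mc P}(\Gamma) - 1| \le 145\, r_\beta$ for $\Gamma \sse N_\lbox(P(e))$. This replaces your Lemma~\ref{lemma:partition-function-ratio-minus-neighborhood-minimal-vortex} analogue without ever writing $\Xi_{\mc P}$ as a polymer partition function. From there your third paragraph is essentially correct and matches the paper. So: keep your downstream bookkeeping, but swap the cluster-expansion engine for the direct recursion of Lemmas~\ref{lemma:vortex-removal-identity} and~\ref{lemma:no-minimal-vortex-correlation-function-lower-bd}.
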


\begin{prop}\label{prop:left-tail-bound-non-abelian}
Suppose 
\beq\label{eq:beta-cond-left-tail-bound-non-abelian} \beta \geq \frac{1}{\Delta_G} (500 + 7 \log \abs{G}).\eeq
Then we have the left tail bound
\[ \p\bigg(N_\wloop \leq \frac{1}{2} \ell r_\beta\bigg) \leq 2e \exp(- 0.15 \ell r_\beta). \]
\end{prop}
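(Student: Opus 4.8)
The plan is to mimic closely the proof of Proposition \ref{prop:left-tail-bound-abelian}, replacing the Abelian computations of conditional probabilities via the partition-function ratio $\rho_{\mc{P}}(z^\beta, \Gamma)$ with their non-Abelian analogues. The key structural ingredients are already in place: Corollary \ref{cor:only-minimal-vortices-edge-config} identifies homomorphisms supported on a compatible collection of minimal vortices $P(e_i)$ with the simple edge configurations that are non-identity exactly on $\{e_1,\dots,e_k\}$, and hence for such a collection $\Phi(P(e_1)\cup\cdots\cup P(e_k)) = r_\beta^k$ (combining Corollary \ref{cor:homomorphism-number-single-minimal-vortex} and Lemma \ref{lemma:minimal-vortex-activity-factor-nonabelian}). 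So the first step is to set up, for a fixed $\wloop' \sse \wloop$ with $|\wloop'|=k$, the event that $P(e)$ is a vortex of $\randomv$ exactly for $e \in \wloop'$, and to express its probability as $r_\beta^k$ times a ratio of partition functions $\Xi_{\mc{P}_\lbox \setminus \Gamma}/\Xi_{\mc{P}_\lbox}$, where $\Gamma$ is built from $N_\lbox(P(e), e\in\wloop')$ together with the minimal vortices $\{P(e') : e'\in\wloop\setminus\wloop'\}$. This requires a non-Abelian version of Lemma \ref{lemma:vortex-appear-not-appear-probability}; I expect it follows from Lemma \ref{lemma:wilson-loop-exp-in-terms-of-vortices} plus the factorization Corollary \ref{cor:minimal-vortex-many-factor-non-abelian}, exactly as in the Abelian case, and I would either prove it in Section \ref{section:proofs-non-abelian} or inline it here.

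The second step is the telescoping-product estimate. Write $\rho_{\mc{P}_\lbox}(z^\beta,\Gamma)$ as a product over the edges of $\wloop$ of factors $\rho_{\mc{P}\setminus\Gamma_{j-1}}(z^\beta,\Gamma_j)$, where at each stage $\Gamma_j\setminus\Gamma_{j-1} \sse N_\lbox(P(e_j))$. To control each factor I need the non-Abelian analogue of Lemma \ref{lemma:partition-function-ratio-minus-neighborhood-minimal-vortex}, giving $|\log\rho_{\mc{P}\setminus\Gamma_{j-1}}(z^\beta,\Gamma_j) + n_0(j) r_\beta| \leq (n_0(j)+2)\varep r_\beta$, where $n_0(j)$ counts the minimal vortices in $(\Gamma_j\setminus\Gamma_{j-1})\cap(\mc{P}_\lbox\setminus\Gamma_{j-1})$. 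The cluster expansion machinery (Theorem \ref{thm:dobrushins-cluster-expansion}, Proposition \ref{prop:cluster-expansion-convergence}) is set up purely in terms of $z^\beta$, $\alpha_\beta$, and the combinatorial Lemmas \ref{lemma:vortex-combinatorial-bound}, \ref{lemma:neighbor-vortex-combinatorial-bound}, which carry over verbatim, and Lemma \ref{lemma:minimal-vortices-contribution}'s role (``small non-minimal vortices have zero activity'') is played by Lemma \ref{lemma:smaller-than-minimal-vortices-prob-0} — so the one genuinely new point is verifying that $z^\beta(\vortex)=r_\beta$ for minimal vortices $\vortex$ far from the boundary, which is Corollary \ref{cor:homomorphism-number-single-minimal-vortex}, and that the relevant spanning trees needed to factor $\Phi$ exist; here the boundary-distance hypothesis (every cube of side length $50$ meeting $\wloop$ lies in $\lbox$) plus Lemma \ref{lemma:se-intersection-bound} ensure the $\partial S_e$ for $e\in\wloop$ do not interfere.

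The third step is identical to the Abelian case: summing over $j$, with $\varep=0.001$, using $n_0 := \sum_j n_0(j) \geq \ell$ (self-avoidingness of $\wloop$), gives $\log\rho_{\mc{P}_\lbox}(z^\beta,\Gamma) \leq -\ell r_\beta(1-3\varep)$, hence $\p(N_\wloop=k) \leq \binom{\ell}{k} r_\beta^k e^{-\ell r_\beta(1-3\varep)}$. Summing $k$ from $0$ to $m := \floor{(1/2)\ell r_\beta}$, bounding $\binom{\ell}{k} \leq (\ell r_\beta)^k/k!$, and invoking the incomplete-Gamma / Chernoff estimate for a $\mathrm{Gamma}(m+1,1)$ variable yields $\p(N_\wloop \leq (1/2)\ell r_\beta) \leq 2e\exp(-(\frac12\log\frac{e}{2} - 3\varep)\ell r_\beta) \leq 2e\exp(-0.15\ell r_\beta)$, with the edge case $\ell r_\beta < 2$ handled separately as before. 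The $\beta$-threshold \eqref{eq:beta-cond-left-tail-bound-non-abelian} is chosen so that the conditions of the non-Abelian analogue of Lemma \ref{lemma:partition-function-ratio-minus-neighborhood-minimal-vortex} hold with $\varep=0.001$; I would just record the needed inequalities ($\betaexprone{b_\varep}\leq 1/2$, etc.) and note they follow from \eqref{eq:beta-cond-left-tail-bound-non-abelian}. The main obstacle I anticipate is not the probabilistic argument — which is a line-by-line transcription — but bookkeeping the topological side conditions: ensuring at each telescoping stage that the $\Phi$-factorizations (Corollary \ref{cor:minimal-vortex-many-factor-non-abelian}, Lemma \ref{lemma:well-separated-vortices-activity-decomp}) apply, which is where the non-Abelian proof genuinely differs from the Abelian one and where the knot decomposition and the boundary-distance hypothesis must be used carefully.
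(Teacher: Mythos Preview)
Your plan has a genuine gap at the second step. You write that ``the cluster expansion machinery (Theorem \ref{thm:dobrushins-cluster-expansion}, Proposition \ref{prop:cluster-expansion-convergence}) is set up purely in terms of $z^\beta$, $\alpha_\beta$, and the combinatorial Lemmas \ref{lemma:vortex-combinatorial-bound}, \ref{lemma:neighbor-vortex-combinatorial-bound}, which carry over verbatim.'' This is not correct. The cluster expansion Theorem \ref{thm:dobrushins-cluster-expansion} applies to partition functions of the form \eqref{eq:partition-function-gas-of-particles-abelian}, which in turn requires the factorization $\Phi(\vortex_1 \cup \cdots \cup \vortex_n) = \prod_i z^\beta(\vortex_i)$ for arbitrary collections of compatible vortices. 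In the Abelian case this is exactly Lemma \ref{lemma:activity_decomp}. In the non-Abelian case no such factorization holds: $\Phi$ factors over minimal vortices (Lemma \ref{lemma:minimal-vortex-activity-factor-nonabelian}) and over well-separated pieces (Lemma \ref{lemma:well-separated-vortices-activity-decomp}), but not over general compatible vortices, because compatibility does not imply well-separation. The paper discusses this explicitly just before Lemma \ref{lemma:no-minimal-vortex-correlation-function-lower-bd}: one can have three vortices that are pairwise well separated yet whose union admits no two-part well-separated decomposition (the Borromean-ring phenomenon), so $\Xi_{\mc{P}}$ cannot be put in the form \eqref{eq:partition-function-gas-of-particles-abelian} and Theorem \ref{thm:dobrushins-cluster-expansion} is not available. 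Consequently there is no non-Abelian analogue of Lemma \ref{lemma:partition-function-ratio-minus-neighborhood-minimal-vortex} built on cluster expansion, and your telescoping estimate cannot proceed as stated.

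The paper replaces cluster expansion by a direct bare-hands argument: Lemma \ref{lemma:vortex-removal-identity} gives an exact recursion peeling off one minimal vortex at a time, Lemma \ref{lemma:no-minimal-vortex-correlation-function-lower-bd} bounds $1-\rho_{\mc{P}}(\Gamma)$ when $\Gamma$ contains no minimal vortices by summing knot activities directly, and these combine into the one-sided bound of Lemma \ref{lemma:correlation-function-upper-bound}, namely $\rho_{\mc{P}}(\Gamma) \le 1 - k r_\beta(1-290 r_\beta)$ for $\Gamma \sse N_\lbox(\vortex)$ containing $k$ minimal vortices. That upper bound is what feeds the telescoping product in the actual proof, giving $\rho_{\mc{P}_\lbox}(\Gamma) \le \exp(-0.997 \ell r_\beta)$; from there the incomplete-Gamma/Chernoff tail argument is indeed a transcription of the Abelian case. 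So your third step is fine, and your first step is essentially Lemma \ref{lemma:prob-as-correlation-function}, but the bridge between them must go through Lemmas \ref{lemma:no-minimal-vortex-correlation-function-lower-bd}--\ref{lemma:correlation-function-upper-bound} rather than through cluster expansion.
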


\begin{cor}\label{cor:wilson-loop-bound-long-loop}
Suppose 
\[ \beta \geq \frac{1}{\Delta_G} (500 + 7 \log \abs{G}).\]
Then
\[ |\langle W_\wloop \rangle_{\lbox, \beta}| \leq 2ed \exp(-0.15 \ell r_\beta) + d\exp(-(2^{-19} \log \norm{A_\beta}_{op}^{-1}) \ell r_\beta).\]
\end{cor}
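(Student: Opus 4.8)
The plan is to start from the identity $\langle W_\wloop \rangle_{\lbox, \beta} = \E[\Phi_\wloop(P(\randomv)) / \Phi(P(\randomv))]$ recorded just after the definition of $\randomv$, and to bound the integrand pointwise using Lemma \ref{lemma:wilson-loop-bound-general-non-abelian}. For a realization of $\randomv$, take the knot decomposition of $P(\randomv)$. Since $\p(\randomv = \Gamma) > 0$ forces $I(\Gamma) = 1$, the collection $\randomv$ is almost surely a pairwise compatible family of vortices, so by uniqueness of the vortex decomposition the vortices of $P(\randomv)$ are exactly the elements of $\randomv$. Hence the quantity $\tilde{k}$ appearing in Lemma \ref{lemma:wilson-loop-bound-general-non-abelian} applied to $\plaqset = P(\randomv)$ is precisely the number of $e \in \wloop$ with $P(e) \in \randomv$, i.e. $\tilde{k} = N_\wloop$. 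Lemma \ref{lemma:wilson-loop-bound-general-non-abelian} then gives, pointwise,
\[ \bigg|\frac{\Phi_\wloop(P(\randomv))}{\Phi(P(\randomv))}\bigg| \leq d \, \norm{\elemmatrix}_{op}^{\max(\floor{2^{-17} N_\wloop},\, 1)}, \]
so that $|\langle W_\wloop \rangle_{\lbox, \beta}| \leq d \, \E\big[\norm{\elemmatrix}_{op}^{\max(\floor{2^{-17} N_\wloop},\, 1)}\big]$.

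Next I would split this expectation over the event $\{N_\wloop \leq \tfrac12 \ell r_\beta\}$ and its complement. On the first event, using $\norm{\elemmatrix}_{op} \leq 1$ we bound the quantity by $1$, so its contribution is at most $d \, \p(N_\wloop \leq \tfrac12 \ell r_\beta) \leq 2 e d \exp(-0.15\, \ell r_\beta)$ by Proposition \ref{prop:left-tail-bound-non-abelian}, whose hypothesis on $\beta$ is exactly the one assumed in the corollary. On the complement $\{N_\wloop > \tfrac12 \ell r_\beta\}$ I would show the pointwise inequality $\max(\floor{2^{-17} N_\wloop},\, 1) \geq 2^{-19} \ell r_\beta$: if $2^{-17} N_\wloop \geq 1$ then (since $\floor{x} \geq x/2$ for $x \geq 1$) one has $\floor{2^{-17} N_\wloop} \geq 2^{-18} N_\wloop > 2^{-19} \ell r_\beta$; and if $2^{-17} N_\wloop < 1$ then the max equals $1$, while $N_\wloop > \tfrac12 \ell r_\beta$ together with $N_\wloop < 2^{17}$ forces $\ell r_\beta < 2^{18} \leq 2^{19}$, so again $1 \geq 2^{-19} \ell r_\beta$. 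Because $\norm{\elemmatrix}_{op} \leq 1$, i.e. $\log \norm{\elemmatrix}_{op}^{-1} \geq 0$, enlarging the exponent only decreases the quantity, so on this event
\[ \norm{\elemmatrix}_{op}^{\max(\floor{2^{-17} N_\wloop},\, 1)} \leq \exp\!\big(-(2^{-19}\log \norm{\elemmatrix}_{op}^{-1})\, \ell r_\beta\big), \]
and bounding the indicator of the event by $1$ gives a contribution of at most $d \exp(-(2^{-19}\log \norm{\elemmatrix}_{op}^{-1})\, \ell r_\beta)$. Adding the two pieces yields the stated bound.

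I do not expect a serious obstacle here: the corollary is essentially a repackaging of Lemma \ref{lemma:wilson-loop-bound-general-non-abelian} and Proposition \ref{prop:left-tail-bound-non-abelian}. The only delicate point is the elementary floor-function bookkeeping on the event $\{N_\wloop > \tfrac12 \ell r_\beta\}$, in particular the degenerate regime in which $\ell r_\beta$ is small enough that $\floor{2^{-17} N_\wloop} = 0$ and one must fall back on the $\max(\cdot, 1) = 1$ branch, where checking $2^{-19}\ell r_\beta \leq 1$ is what keeps the bound honest. A secondary point to state carefully is the identification $\tilde{k} = N_\wloop$, which relies on $\randomv$ being almost surely a compatible family so that its vortex decomposition coincides with $\randomv$ itself.
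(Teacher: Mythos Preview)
Your proposal is correct and follows essentially the same approach as the paper: split on $\{N_\wloop \le \tfrac12 \ell r_\beta\}$, apply Proposition~\ref{prop:left-tail-bound-non-abelian} on that event, and on the complement use Lemma~\ref{lemma:wilson-loop-bound-general-non-abelian} together with the elementary inequality $\max(\lfloor 2^{-18}\ell r_\beta\rfloor,1)\ge 2^{-19}\ell r_\beta$. Your write-up is slightly more detailed than the paper's (you spell out the identification $\tilde k = N_\wloop$ and the floor-function case analysis), but the argument is the same.
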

\begin{proof}
Recall that $\langle W_\wloop \rangle_{\lbox, \beta} = \E [\Phi_\wloop (P(\randomv)) / \Phi(P(\randomv))]$.
By Lemma \ref{lemma:wilson-loop-bound-general-non-abelian} and Proposition \ref{prop:left-tail-bound-non-abelian}, we have
\begin{align*}
\bigg|\E \bigg[\frac{\Phi_\wloop(P(\randomv))}{\Phi(P(\randomv))}\bigg]\bigg| &\leq d \p\bigg(N_\wloop \leq \frac{1}{2} \ell r_\beta \bigg) + d \norm{A_\beta}^{\max(\floor{2^{-18} \ell r_\beta}, 1)}  \\
&\leq 2ed \exp(-0.15 \ell r_\beta) + d \norm{A_\beta}^{\max(\floor{2^{-18} \ell r_\beta}, 1)}.
\end{align*}
To finish, observe $\max(\floor{2^{-18} \ell r_\beta}, 1) \geq 2^{-19} \ell r_\beta$.
\end{proof}

\begin{proof}[Proof of Theorem \ref{thm:main-result}]
It suffices to show the bound \eqref{eq:main-result-wilson-loop-bound} with $\langle W_\wloop \rangle_\beta$ replaced by $\langle W_\wloop \rangle_{\lbox, \beta}$, with all $\lbox$ sufficiently large. In particular, we may assume that $\lbox$ is such that any cube of side length 50 which contains a vertex of $\wloop$ is completely contained in $\lbox$, so that we may apply the preceding results of this section.

First, by combining Corollary \ref{cor:only-minimal-vortices-approximation-non-abelian} and Proposition \ref{prop:poisson-approximation-non-abelian}, and using the fact $\norm{A_\beta}_{op} \leq 1$, we have
\[ |\langle W_\wloop\rangle_{\lbox, \beta} - e^{-\ell r_\beta} \mathrm{Tr}(e^{\ell r_\beta A_\beta})| \leq 2d (\ell r_\beta) e^{-\beta \Delta_G / 2} + 2d \cdot 300 e^{1.5 \ell r_\beta} r_\beta. \]
As $r_\beta \leq (|G| - 1) e^{-6 \beta \Delta_G}$, and $\beta$ is large enough so that $600 (|G| - 1) \leq e^{\beta \Delta_G}$, we further obtain
\beq\label{eq:short-loop} |\langle W_\wloop\rangle_{\lbox, \beta} - e^{-\ell r_\beta} \mathrm{Tr}(e^{\ell r_\beta A_\beta})| \leq 3 d e^{1.5 \ell r_\beta} e^{-\beta \Delta_G / 2}. \eeq
Letting $\lambda_1(\beta), \ldots,  \lambda_d(\beta)$ be the eigenvalues of $A_\beta$, observe
\[ e^{-\ell r_\beta} \mathrm{Tr}(e^{\ell r_\beta A_\beta}) = \sum_{i=1}^d e^{-\ell r_\beta (1 - \lambda_i(\beta))}. \]
For notational convenience, denote this quantity by $B_\beta$. Now by Corollary \ref{cor:wilson-loop-bound-long-loop} and the definition of $c_\beta$ (see equation \eqref{eq:main-result-c-beta-def}), we have
\beq\label{eq:long-loop} |\langle W_\wloop \rangle_{\lbox, \beta} - B_\beta| \leq |\langle W_\wloop \rangle_{\lbox, \beta}| + B_\beta \leq (2e + 2) d e^{-\ell r_\beta c_\beta}. \eeq
Upon combining equations \eqref{eq:short-loop} and \eqref{eq:long-loop}, we obtain
\[ |\langle W_\wloop \rangle_{\lbox, \beta} - B_\beta|^{1 + 1.5 / c_\beta} \leq 3d e^{1.5 \ell r_\beta} e^{-\beta \Delta_G / 2} ((2e + 2) d e^{-\ell r_\beta c_\beta})^{1.5 / c_\beta} .\]
Now finish by taking both sides to the power $c_\beta / (c_\beta + 1.5)$.
\end{proof}

\subsection{Proofs}\label{section:proofs-non-abelian}

For $\plaqset \sse \plaquettes$, let $\twoskelminus{\plaqset}$\label{notation:twoskelminus} denote the 2-complex obtained by including all edges and vertices of $\lbox$, and all plaquettes in $\plaquettes \backslash \plaqset$. For $x_0 \in \vertices$, $T$ a spanning tree of $\oneskel$, we have the presentation (see e.g. Section 4.1.3 of \cite{STILL1993})
\[ \pi_1(\twoskelminus{\plaqset}, x_0) = \langle a_e, e \in \oneskel \backslash T ~|~ C_p, p \in \plaquettes \backslash \plaqset\rangle  = \pi_1(\oneskel, x_0) / N_\plaqset, \]
where $N_\plaqset$ is the normal subgroup of $\pi_1(\oneskel, x_0)$ generated by $C_p, p \in \plaquettes \backslash \plaqset$. Let 
\[ \Pi_\plaqset : \pi_1(\oneskel, x_0) \ra \pi_1(\twoskelminus{\plaqset}, x_0) \]
be the natural projection map induced by $N_\plaqset$. 
For a homomorphism $\twoskelhom \in \Hom(\pi_1(\twoskelminus{\plaqset}, x_0), G)$, define $\supp(\twoskelhom) := \{p : \zeta(\Pi_\plaqset(C_p)) \neq \groupid\}$.

\begin{lemma}\label{lemma:homomorphisms-on-2-complex}
Fix $\plaqset \sse \plaquettes$. The quotient map $\Pi_\plaqset$ induces a bijection between the set of $\homsym \in \Hom(\pi_1(\oneskel, x_0), G)$ such that $\supp(\homsym) = \plaqset$, and the set of $\twoskelhom \in \Hom(\pi_1(\twoskelminus{\plaqset}, x_0), G)$ such that $\supp(\twoskelhom) = \plaqset$.
\end{lemma}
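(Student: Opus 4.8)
The plan is to deduce this directly from the universal property of the quotient group (the fundamental theorem of homomorphisms), followed by a short bookkeeping check that the two notions of support line up.

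First I would record that $\Pi_\plaqset : \pi_1(\oneskel, x_0) \ra \pi_1(\twoskelminus{\plaqset}, x_0)$ is a surjection with kernel $N_\plaqset$, the normal subgroup generated by $\{C_p : p \in \plaquettes \bs \plaqset\}$. Precomposition with $\Pi_\plaqset$ then gives a map $\twoskelhom \mapsto \twoskelhom \circ \Pi_\plaqset$ from $\Hom(\pi_1(\twoskelminus{\plaqset}, x_0), G)$ into $\Hom(\pi_1(\oneskel, x_0), G)$, and this map is injective because $\Pi_\plaqset$ is onto. By the fundamental theorem of homomorphisms, a homomorphism $\homsym \in \Hom(\pi_1(\oneskel, x_0), G)$ lies in the image of this map — i.e.\ factors as $\homsym = \twoskelhom \circ \Pi_\plaqset$ for a (necessarily unique, by surjectivity of $\Pi_\plaqset$) homomorphism $\twoskelhom$ — if and only if $N_\plaqset \sse \ker \homsym$. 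Since $N_\plaqset$ is generated by the $C_p$ with $p \in \plaquettes \bs \plaqset$ and $\ker \homsym$ is normal, this last condition is equivalent to $\homsym(C_p) = \groupid$ for all $p \in \plaquettes \bs \plaqset$, i.e.\ to $\supp(\homsym) \sse \plaqset$. Hence $\twoskelhom \mapsto \twoskelhom \circ \Pi_\plaqset$ is a bijection between $\Hom(\pi_1(\twoskelminus{\plaqset}, x_0), G)$ and $\{\homsym \in \Hom(\pi_1(\oneskel, x_0), G) : \supp(\homsym) \sse \plaqset\}$.

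It then remains to check that this bijection preserves supports exactly. If $\homsym = \twoskelhom \circ \Pi_\plaqset$, then $\homsym(C_p) = \twoskelhom(\Pi_\plaqset(C_p))$ for every plaquette $p$, so $\supp(\homsym) = \{p : \twoskelhom(\Pi_\plaqset(C_p)) \neq \groupid\}$, which is precisely $\supp(\twoskelhom)$ in the sense defined just before the lemma statement. Therefore $\supp(\homsym) = \plaqset$ if and only if $\supp(\twoskelhom) = \plaqset$, and restricting the bijection of the previous paragraph to the subsets consisting of maps with support exactly $\plaqset$ gives the claim.

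I do not expect any serious obstacle here: the entire content is the universal property of quotient groups together with the elementary observation that $N_\plaqset \sse \ker \homsym$ is exactly the statement $\supp(\homsym) \sse \plaqset$. The only point requiring genuine care is keeping the two $\supp$ conventions (one on $\pi_1(\oneskel, x_0)$, one on $\pi_1(\twoskelminus{\plaqset}, x_0)$) straight, which is handled in the last paragraph above.
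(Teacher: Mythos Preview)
Your proof is correct and follows essentially the same approach as the paper: both use the fundamental theorem of homomorphisms to factor $\homsym$ through the quotient $\Pi_\plaqset$ exactly when $N_\plaqset \sse \ker\homsym$, and then check that supports are preserved. Your version is slightly more explicit in first establishing the bijection for $\supp(\homsym) \sse \plaqset$ and then restricting, but the content is the same.
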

\begin{proof}
Observe that if $\homsym \in \Hom(\pi_1(\oneskel, x_0), G)$, $\supp(\homsym) = \plaqset$, then $N_\plaqset \sse \ker(\homsym)$. Then by the fundamental theorem on homomorphisms, there exists a unique $\twoskelhom$ such that $\homsym = \twoskelhom \circ \Pi_\plaqset$. Note this implies $\supp(\twoskelhom) = \supp(\homsym) = \plaqset$.

Conversely, suppose $\twoskelhom \in \Hom(\pi_1(\twoskelminus{\plaqset}, x_0), G)$, $\supp(\twoskelhom) = \plaqset$. Then $\twoskelhom \circ \Pi_\plaqset \in \Hom(\pi_1(\oneskel, x_0), G)$, and $\supp(\twoskelhom \circ \Pi_\plaqset) = \plaqset$.
\end{proof}

\begin{lemma}\label{lemma:knot-either-minimal-or-no-minimal-components}
Let $K \in \mc{K}$ be a knot. Consider the vortex decomposition $K = \vortex_1 \cup \cdots \cup \vortex_n$. Then either $n = 1$ and $\vortex_1$ is a minimal vortex, or for all $1 \leq i \leq n$, $\vortex_i$ is not a minimal vortex.
\end{lemma}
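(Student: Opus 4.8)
The plan is simply to unwind the definition of the knot decomposition, using the uniqueness of the decomposition of a plaquette set into compatible vortices. Recall that for a plaquette set $\plaqset \sse \plaquettes$ one first forms its vortex decomposition $\plaqset = \vortex_1 \cup \cdots \cup \vortex_t$, arranged so that $\vortex_1, \ldots, \vortex_k$ are the minimal vortices among the $\vortex_i$ while $\vortex_{k+1}, \ldots, \vortex_t$ are not minimal; one keeps $\vortex_1, \ldots, \vortex_k$ and replaces $\plaqset' := \vortex_{k+1} \cup \cdots \cup \vortex_t$ by a maximal well-separated partition $\plaqset' = K_1 \cup \cdots \cup K_m$, where each $K_j$ has the form $\bigcup_{i \in I_j} \vortex_i$ for a block $I_j$ of a partition of $\{k+1, \ldots, t\}$. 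Thus, given a knot $K \in \mc{K}$, I would fix a plaquette set $\plaqset$ whose knot decomposition contains $K$; then $K$ is either one of the minimal vortices $\vortex_1, \ldots, \vortex_k$, or one of the sets $K_j$.

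If $K$ is one of $\vortex_1, \ldots, \vortex_k$, then $K$ is a minimal vortex; in particular $G(K)$ is connected, so the vortex decomposition of $K$ has a single component, namely $K$ itself, which is minimal. This is the first alternative of the lemma, with $n = 1$.

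If instead $K = K_j = \bigcup_{i \in I_j} \vortex_i$, I would argue that the vortex decomposition of $K_j$ is precisely $\{\vortex_i : i \in I_j\}$. Since $\vortex_1, \ldots, \vortex_t$ are pairwise compatible (being a vortex decomposition), there are no edges of $G(K_j)$ joining distinct subgraphs $G(\vortex_i)$, $i \in I_j$; as each $G(\vortex_i)$ is connected, these subgraphs are exactly the connected components of $G(K_j)$, and by uniqueness of the vortex decomposition the vortices of $K_j$ are the $\vortex_i$, $i \in I_j$. Each such $\vortex_i$ lies among $\vortex_{k+1}, \ldots, \vortex_t$, hence is not a minimal vortex, which is the second alternative.

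This argument is essentially bookkeeping; the only step needing a little care is the identification of the vortex decomposition of $K_j$ with $\{\vortex_i : i \in I_j\}$, which rests on the pairwise compatibility of the vortices in the vortex decomposition of $\plaqset$ together with the uniqueness of a graph's decomposition into connected components. I do not anticipate any real obstacle.
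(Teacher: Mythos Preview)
Your proof is correct and is precisely the unwinding of definitions that the paper's one-line proof (``This follows by the definition of the knot decomposition, and the definition of a vortex'') leaves implicit. The only substantive step---that the vortex decomposition of $K_j$ coincides with $\{\vortex_i : i \in I_j\}$---you have justified correctly via pairwise compatibility and uniqueness of connected components.
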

\begin{proof}
This follows by the definition of the knot decomposition, and the definition of a vortex.
\end{proof}

The following lemma is an analogue of Lemma \ref{lemma:smaller-than-minimal-vortices-prob-0}.

\begin{lemma}\label{lemma:knot-minimal-size}
Let $\plaqset \sse \plaquettes$. Suppose for all plaquettes $p \in \plaqset$, all 3-cells which contain $p$ are completely contained in $\lbox$. If $|\plaqset| \leq 5$, then $\Phi(\plaqset) = 0$. If $|\plaqset| = 6$, then $\Phi(\plaqset) > 0$ if and only if $\plaqset = P(e) \sse \plaquettes$.
\end{lemma}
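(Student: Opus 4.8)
The plan is to mimic closely the proof of the Abelian analogue, Lemma \ref{lemma:smaller-than-minimal-vortices-prob-0}, replacing the statement ``there exists a $2$-form $q$ with $dq=0$ supported exactly on $\plaqset$'' by the statement ``there exists a homomorphism $\homsym \in \Hom(\pi_1(\oneskel,x_0),G)$ with $\supp(\homsym)=\plaqset$'', and exploiting the purely combinatorial structure of $\Z^4$. Fix a vertex $x_0$ and a spanning tree $T$ of $\oneskel$. First I would reduce to edge configurations: by Lemma \ref{lemma:phi-sum-gauge-fixed}, $\Phi(\plaqset)$ is the sum over $\sigma \in GF(T)$ with $\supp(\sigma)=\plaqset$ of $\prod_{p\in\plaqset}\varphi_\beta(\sigma_p)$, so $\Phi(\plaqset)>0$ iff there exists such a $\sigma$ (all summands are strictly positive since $\varphi_\beta>0$). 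The key combinatorial observation — exactly as in Lemma \ref{lemma:smaller-than-minimal-vortices-prob-0} — is that if $|\plaqset|\le 5$, or if $|\plaqset|=6$ but $\plaqset$ is not of the form $P(e)$, then there is a $3$-cell $c$ of $\Z^4$ which contains exactly one plaquette $p$ of $\plaqset$; by the hypothesis on $\plaqset$, this $c$ lies in $\lbox$. This is the content needed to rule out the existence of a valid $\sigma$.

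For the case $|\plaqset|\le 5$ (and the non-minimal case with $|\plaqset|=6$): suppose for contradiction $\sigma\in GF(T)$ has $\supp(\sigma)=\plaqset$. Let $c$ be a $3$-cell of $\lbox$ containing exactly one plaquette $p\in\plaqset$; write the six boundary plaquettes of $c$ as $p=p_1,p_2,\dots,p_6$. Since $\sigma_p\in\Hom$-image relations are only conjugacy-level statements, I need the honest relation among the $\sigma_{p_i}$ coming from the $2$-complex: the loop $C_c := C_{p_1}C_{p_2}\cdots C_{p_6}$ (with appropriate starting-point conjugations, traversing the boundary of the $3$-cell $c$) is trivial in $\pi_1(\oneskel,x_0)$, because it bounds — concretely, the six boundary plaquettes of $c$ glue to a $2$-sphere, so the product of the $C_{p_i}$'s, each conjugated into a common basepoint, equals the identity in the free group $\pi_1(\oneskel,x_0)$. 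Applying the homomorphism $\homsym := \homsym_T^{x_0}(\sigma)$ gives $\prod_{i=1}^{6} g_{p_i}\homsym(C_{p_i})g_{p_i}^{-1} = \groupid$ for suitable $g_{p_i}\in G$ (using \eqref{eq:choice-of-cp-conjugate}). But $\homsym(C_{p_i}) = \groupid$ for $i\ge 2$ since $p_i\notin\plaqset$, and $\homsym(C_{p_1})$ is conjugate to $\sigma_{p_1}\ne\groupid$; so the product equals a conjugate of $\sigma_{p_1}$, which is not $\groupid$ — a contradiction. Hence no such $\sigma$ exists, so $\Phi(\plaqset)=0$.

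For the case $|\plaqset|=6$ with $\plaqset=P(e)$: the ``if'' direction is Corollary \ref{cor:homomorphism-number-single-minimal-vortex}, which gives $\Phi(P(e))=r_\beta>0$ (here one uses that $P(e)\sse\plaquettes$, which holds under the hypothesis on $\plaqset$). The ``only if'' direction is exactly the non-minimal $|\plaqset|=6$ case handled above. The main obstacle I anticipate is making the ``boundary of a $3$-cell gives a relation'' step fully rigorous in the formal framework of the paper: I need to check carefully that the product of the six face-loops $C_{p_i}$ of a $3$-cell, once all conjugated to the basepoint $x_0$ along tree paths, is the identity of the free group $\pi_1(\oneskel,x_0)$ — equivalently, that the $2$-sphere formed by the boundary of a $3$-cell is simply connected and that the standard presentation-complex relations apply. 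This is the analogue in the non-Abelian language of the identity $ddf=0$ (Lemma \ref{lemma:exact-implies-closed}) used implicitly in the Abelian proof, and it can be verified either directly by tracking the backtracking cancellations among the twelve edges of $c$, or by invoking the fact (cf.\ Lemma \ref{lemma:minimal-vortex-cell-complexes-simply-connected} and Lemma \ref{lemma:rectangle-cell-complexes-simply-connected}, or a direct argument) that the $2$-complex consisting of the six faces of a single $3$-cell is simply connected. Everything else is a routine transcription of the Abelian argument.
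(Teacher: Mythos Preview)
Your proposal is correct and takes essentially the same approach as the paper: the paper routes the argument through Lemma \ref{lemma:homomorphisms-on-2-complex} and then shows (in the appendix, Lemma \ref{lemma:plaquette-sets-too-small-means-simply-connected}) that $\pi_1(\twoskelminus{\plaqset})$ is trivial, but the underlying step is exactly the one you isolate --- that if a $3$-cell $c$ in $\lbox$ has all but one face outside $\plaqset$, then the remaining face-loop $C_p$ is forced to be trivial, because the six face-loops of $c$ satisfy a relation coming from the simple connectivity of $\partial c \cong S^2$. Your direct homomorphism argument and the paper's quotient-group phrasing are two ways of saying the same thing.
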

\begin{proof}
We have that $\Phi(\plaqset)$ is a sum over homomorphisms $\homsym : \pi_1(\oneskel) \ra G$ such that $\supp(\homsym) = \plaqset$. By Lemma \ref{lemma:homomorphisms-on-2-complex}, the number of such $\homsym$ is equal to the number of homomorphisms $\twoskelhom : \pi_1(\twoskelminus{\plaqset}) \ra G$ such that $\supp(\twoskelhom) = \plaqset$. I now claim that if $|\plaqset| \leq 5$ and $\plaqset$ satisfies the stated assumptions, then $\pi_1(\twoskelminus{\plaqset}) = \{\groupid\}$, which implies that there does not exist $\twoskelhom$ such that $\supp(\twoskelhom) = \plaqset$. This would then imply that $\Phi(\plaqset) = 0$. Similarly, I claim that if $|\plaqset| = 6$, and $\plaqset$ satisfies the stated assumptions, then $\pi_1(\twoskelminus{\plaqset}) \neq \{\groupid\}$ if and only if $\plaqset = P(e) \sse \plaquettes$. As these two claims are purely topological, they are left to the appendix -- see Lemma \ref{lemma:plaquette-sets-too-small-means-simply-connected}.
\end{proof}

The following two lemmas are the analogues of Lemmas \ref{lemma:vortex-combinatorial-bound} and \ref{lemma:vortex-contained-in-cube}. The proofs are a bit more involved, so to not distract from the main thrust of the argument, they are left to Section \ref{section:knot-upper-bound-non-abelian}.

\begin{lemma}\label{lemma:knot-combinatorial-bound}
Let $p \in \plaquettes$. For any $m \geq 1$, the number of knots in $\mc{K}$ of size $m$ which contain $p$ is at most $(\knotbound)^m$.
\end{lemma}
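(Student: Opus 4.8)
The plan is to adapt the spanning-tree counting argument used for Lemma~\ref{lemma:vortex-combinatorial-bound}, but to run it at two nested scales: a spanning tree on the \emph{vortices} making up the knot, and, inside each individual vortex, the estimate of Lemma~\ref{lemma:vortex-combinatorial-bound} itself. The minimal vortices are disposed of immediately — a minimal vortex containing $p$ is $P(e)$ for one of the four edges $e$ of $p$, giving at most four of them, each of size $6$ — so by Lemma~\ref{lemma:knot-either-minimal-or-no-minimal-components} we may restrict to knots $K$ whose vortex decomposition $K=\vortex_1\cup\cdots\cup\vortex_r$ consists entirely of non-minimal vortices and which is \emph{indecomposable}: $K$ admits no splitting into two nonempty pieces well separated by a cube in $\lbox$.

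Two geometric facts about cube-separation are needed, to be established in Section~\ref{section:knot-upper-bound-non-abelian} together with the analogue of Lemma~\ref{lemma:vortex-contained-in-cube} (a knot of size $m$ lies in a cube of side $O(m)$). First (\emph{connectivity}): define a graph $H$ on $\{\vortex_1,\dots,\vortex_r\}$ joining $\vortex_i,\vortex_j$ whenever $\{\vortex_i\}$ cannot be separated from $\vortex_j$ by a cube; then indecomposability of $K$ forces $H$ to be connected — otherwise a Helly-type argument on nested cubes produces a single cube slicing one $H$-component off from the rest. Second (\emph{locality of linking}): if $\vortex_i$ and $\vortex_j$ are $H$-adjacent then $\vortex_j$ must meet the $R$-neighbourhood of $\vortex_i$ for some $R=O(\min(\mathrm{diam}\,\vortex_i,\mathrm{diam}\,\vortex_j))$; since the $\rho$-neighbourhood of a plaquette set of size $s$ contains at most $O(\rho^4 s)$ plaquettes, $\vortex_j$ is thereby anchored to a set of at most $O(\min(|\vortex_i|,|\vortex_j|)^5)$ plaquettes determined by $\vortex_i$.

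Granting these, I would count as follows. Reveal the vortices in order of decreasing size, ties broken lexicographically, and make each vortex's parent in the tree the first already-revealed vortex it is $H$-adjacent to (this canonical choice avoids summing over tree shapes, which would reintroduce an $m^m$). Since $K$ lies in a cube of side $O(m)$ around $p$, the largest vortex $\vortex_1$ contains a plaquette among $O(m^4)$ possibilities and is then fixed in $\le(\vbdconst)^{|\vortex_1|}$ ways by Lemma~\ref{lemma:vortex-combinatorial-bound}; each subsequently revealed vortex $\vortex_j$ is $H$-adjacent to a previously revealed (hence larger) vortex, so its anchor plaquette lies in a set of size $\le C|\vortex_j|^5$ and it is then fixed in $\le(\vbdconst)^{|\vortex_j|}$ further ways; and the many ``small'' vortices (say, those of size below $\log m$, of which there may be order $m$) are counted in a single batch — each lies within $O(1)$ of a larger vortex, so the union of all their plaquettes is a subset of a set of $O(m)$ plaquettes, contributing a factor at most $2^{O(m)}$. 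Multiplying everything and using $\sum_j|\vortex_j|=m$, $\sum_j\log|\vortex_j|\le m$, and that there are $O(m/\log m)$ ``big'' vortices to absorb the polynomial factors $C|\vortex_j|^5$ into $e^{O(m)}$, one arrives at a bound of the form $C_1^m$; bookkeeping the (very crude) constants — $\vbdconst=20e$, the bounded neighbourhood sizes, Cayley's formula via $r!\ge r^re^{-r}$ — yields $C_1\le\knotbound$.

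The main obstacle is exactly the pair of geometric facts above: proving connectivity of $H$, and — the real crux — arranging in the locality statement that the admissible placements of each newly revealed vortex are governed by \emph{that vortex's own size}, so that the product over vortices telescopes to $C^m$ rather than to the useless $(Cm^k)^m$ that would come from a cube-of-side-$O(m)$ bound alone. The generous value $\knotbound=10^{24}$ reflects that none of these estimates is optimized.
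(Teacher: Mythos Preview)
Your approach has a genuine gap at the connectivity claim for $H$. The paper explicitly flags the Borromean obstruction (in the discussion preceding Lemma~\ref{lemma:knot-contain-single-plaquette-sum-bound}): there exist vortices $\vortex_1, \vortex_2, \vortex_3$ which are \emph{pairwise} well separated by cubes, yet $\vortex_1 \cup \vortex_2 \cup \vortex_3$ admits no partition into two nonempty well-separated pieces. In this configuration your graph $H$ is edgeless on $\{\vortex_1, \vortex_2, \vortex_3\}$, hence disconnected, while the union is an indecomposable knot. No Helly-type argument can rescue this: pairwise separability by cubes simply does not imply that any one piece can be separated from the remainder, so indecomposability does not force $H$ to be connected. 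Your locality claim is then moot, since there need be no $H$-edges at all along which to propagate the counting.

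The paper's fix (Section~\ref{section:knot-upper-bound-non-abelian}) is a hierarchy of graphs $G^\scale(K)$. At level $0$ the vertices are the vortices of $K$, with an edge placed by the symmetric relation $J$ (one set meets the minimal enclosing cube of the other). At level $\scale+1$ the vertices are the connected components of $G^\scale$, with the same edge rule applied. The key observation is \emph{not} that $G^\scale(K)$ is connected, but only that it has no isolated vertex: an isolated vertex $\plaqset^\scale_i$ would be well separated from its complement by its own minimal cube $B(\plaqset^\scale_i)$, contradicting indecomposability. No isolated vertex forces every component to have size at least $2$, so vertex sizes at level $\scale$ are at least $2^\scale$ and the hierarchy collapses to a single vertex within $\lfloor \log_2 m \rfloor$ levels. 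The counting is then done by induction on $\scale$ (Lemmas~\ref{lemma:ell-equals-zero-bound} and~\ref{lemma:bound-on-a-m-p-inductive-step}), with a spanning-tree argument at each level much like your sketch; the multiplicative constants incurred at level $\scale$ decay geometrically in $\scale$, and their product over all $O(\log m)$ levels yields the final $(\knotbound)^m$. Your single-level scheme is essentially the base case $\scale=0$ of this construction, but the Borromean phenomenon makes the hierarchy unavoidable.
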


\begin{lemma}\label{lemma:knot-contained-in-cube}
Let $m \geq 1$. For any knot $K \in \mc{K}$ of size $m$, there exists a cube $B$ in $\lbox$ of side length at most $3m$, such that all plaquettes of $K$ are contained in $S_2(B)$, but not in $\partial S_2(B)$.
\end{lemma}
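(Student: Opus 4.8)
The plan is to treat two cases — $K$ a minimal vortex or not — and in each to bound the $\ell^\infty$-diameter of $K$ linearly in $m$, fatten slightly, and place the resulting cube inside $\lbox$, falling back on $B = \lbox$ (for which $\partial S_2(\lbox) = \varnothing$) whenever $\lbox$ is too small to contain a cube of side length $3m$. If $K = P(e)$ is a minimal vortex, then $m = 6$ and $P(e) \sse S_e$, and $S_e$ (the plaquettes lying in a $3$-cell containing $e$) is contained in a cube of side length $3$ centered on $e$; enlarging to a cube $B$ of side length at most $3m = 18$ gives $P(e) \sse S_2(B)$, and since $P(e)$ is surrounded by $\partial S_e \sse B$ one also gets $P(e) \cap \partial S_2(B) = \varnothing$. (Along any face of $B$ that would cross $\partial \lbox$, take $B$ flush with $\partial \lbox$; the boundary plaquettes of $B$ on that face then lie on $\partial \lbox$, hence are excluded from $\partial S_2(B)$. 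The same device is used throughout.)

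Now suppose $K$ is not a minimal vortex, and let $K = \vortex_1 \cup \cdots \cup \vortex_n$ be its vortex decomposition; by Lemma \ref{lemma:knot-either-minimal-or-no-minimal-components} no $\vortex_i$ is a minimal vortex, and by Lemma \ref{lemma:vortex-contained-in-cube} each $\vortex_i$ lies in a cube $C_i$ of side length $|\vortex_i|$. Form the graph $H$ on $\{1, \dots, n\}$ with $i \sim j$ iff $C_i$ and $C_j$ are within $\ell^\infty$-distance $1$. If $H$ is connected, then walking along a spanning tree of $H$ shows that $K$ has $\ell^\infty$-diameter at most $\sum_i |\vortex_i| + (n-1) \le 2m-1$ (using $n \le m$); hence $K$ lies in a cube of side length $2m-1$, and enlarging it by one unit on each side gives a cube of side length $2m+1 \le 3m$ in whose interior $K$ lies, which we place in $\lbox$ as above (or replace by $\lbox$ if $\lbox$ is too small). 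It thus remains to rule out the case that $H$ is disconnected, using the maximality of the knot decomposition $\plaqset = \vortex_1 \cup \cdots \cup \vortex_k \cup K_1 \cup \cdots \cup K_m$ in which $K$ occurs.

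So suppose $H$ is disconnected; choose a connected component $H_1$ of minimal total size and set $K^1 := \bigcup_{i \in H_1} \vortex_i$, $K^2 := K \bs K^1$. Then $K^1, K^2 \ne \varnothing$, $|K^1| \le m/2$, the $\ell^\infty$-diameter of $K^1$ is at most $2|K^1|-1 < m$, and $K^1$ is at $\ell^\infty$-distance at least $2$ from $K^2$, so there is a coordinate direction in which the coordinate ranges of $K^1$ and $K^2$ are separated by a gap of at least $2$. From such a gap one builds a cube $B \sse \lbox$ with $K^1 \sse S_2(B) \bs \partial S_2(B)$ and $K^2 \sse S_2^c(B) \bs \partial S_2(B)$, i.e. $B$ well separates $K^1$ from $K^2$; this contradicts the maximality of the knot decomposition. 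The one scenario in which this fails is when $K^1$ is thin in the separating direction and pressed against $\partial \lbox$ there while being wide in another direction — but a wide, $H_1$-connected $K^1$ satisfies $|K^1| \ge \frac{1}{2}(\text{that width})$, which forces $m \ge |K^1|$ to be at least a fixed fraction of the side length of $\lbox$, so that $B = \lbox$ already has side length at most $3m$. Thus in every case the conclusion holds.

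The main obstacle is exactly this last piece of geometric bookkeeping: converting a one-coordinate separation of two plaquette sets into an honest cube inside $\lbox$ — rather than a rectangular slab — and checking that the only obstructions are configurations in which $|K|$ is already so large that the trivial choice $B = \lbox$ meets the side-length bound. Making precise how the four coordinate extents of $K$ are controlled by $|K|$ and the maximality of the knot decomposition, and handling every way in which $K$ can abut $\partial \lbox$, is where the real work lies — which is why the lemma (like Lemma \ref{lemma:knot-combinatorial-bound}) is deferred to Section \ref{section:knot-upper-bound-non-abelian}.
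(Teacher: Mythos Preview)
The central gap is the implication ``$K^1$ is at $\ell^\infty$-distance at least $2$ from $K^2$, so there is a coordinate direction in which the coordinate ranges of $K^1$ and $K^2$ are separated by a gap of at least $2$.'' This is false: two sets at pairwise $\ell^\infty$-distance $\geq 2$ need not be separated by any coordinate hyperplane. In a two-dimensional cartoon, take $K^1$ near $(5,5)$ and $K^2$ with pieces near $(0,0)$ and $(10,10)$; the coordinate ranges overlap in both directions even though every point of $K^1$ is far from every point of $K^2$. So the construction of the well-separating cube collapses right at this step, and your final paragraph's description of what remains as ``geometric bookkeeping'' understates the problem.

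More fundamentally, a single-level graph $H$ on the vortices cannot do the job. Even with the right adjacency notion, disconnectedness of $H$ does not force $K$ to split into two pieces well separated by a \emph{cube}: this is precisely the Borromean-rings obstruction the paper flags in Section~\ref{section:proofs-non-abelian}. One can have three (or more) vortices that are pairwise non-adjacent in $H$, so $H$ is totally disconnected, yet no partition $K = K^1 \cup K^2$ admits a single cube $B$ with $K^1$ inside and $K^2$ outside. Thus ``$H$ disconnected $\Rightarrow$ contradiction to knot maximality'' fails.

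The paper's proof replaces your one-shot graph by a hierarchy $G^s(K)$: at each level one merges connected components, recomputes the minimal bounding cube $B(\cdot)$ of each merged piece, and redefines adjacency via $J(P,P')=1$ iff $P$ meets $B(P')$ or vice versa. Lemma~\ref{lemma:vertices-of-g-ell-contained-in-cube} shows inductively that every vertex of $G^s(K)$ has bounding cube of side $\leq 3$ times its size. For a knot, no $G^s(K)$ with more than one vertex can have an isolated vertex (an isolated vertex would be well separated from its complement by its own bounding cube, contradicting maximality), so every vertex of $G^s(K)$ has size $\geq 2^s$; the hierarchy therefore collapses to the single vertex $K$ by level $\lfloor \log_2 m\rfloor$, giving $B(K)$ of side $\leq 3m$. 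The iteration is exactly what absorbs the Borromean-type configurations that break your direct argument.
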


The following lemma is the analogue of Lemma \ref{lemma:contributing-plaquette-bound}. First, recall the definition of $\plaqset_\wloop$ given just before Lemma \ref{lemma:formula-for-wilson-loop-on-E}.

\begin{lemma}\label{lemma:contributing-plaquette-bound-non-abelian}
For $m \geq 1$, let $S_\wloop^m$ be the set of plaquettes $p \in \plaquettes$ such that any cube of side length $m$ containing $p$ does not contain a plaquette of $\plaqset_\wloop$. Then
\[ |\plaquettes \backslash S_\wloop^m| \leq  96 \cdot 48 \ell (2m+1)^4.\]
\end{lemma}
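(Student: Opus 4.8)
The plan is to follow the proof of Lemma \ref{lemma:contributing-plaquette-bound} almost verbatim, the only new ingredient being that $\plaqset_\wloop$ thickens the loop $\wloop$ by a single layer of $3$-cells, which only enlarges the relevant $\ell^\infty$-ball by radius $1$.

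First I would unwind the complement: if $p \in \plaquettes \backslash S_\wloop^m$, then there is a cube $B$ of side length $m$ such that $p$ is in $B$ and $B$ contains some plaquette $q \in \plaqset_\wloop$. By definition of $\plaqset_\wloop$, $q$ lies in some $3$-cell $c$ that also contains an edge $e$ of $\wloop$. Since $c$ is a unit $3$-cube, every vertex of $q$ and every vertex of $e$ are within $\ell^\infty$-distance $1$ of one another. Since $p$ and $q$ are both in the side-$m$ cube $B$, every vertex of $p$ is within $\ell^\infty$-distance $m$ of every vertex of $q$. Fixing a vertex $x$ of $e$, the triangle inequality then gives that every vertex of $p$ is within $\ell^\infty$-distance $m+1$ of $x$; equivalently, $p$ is contained in a cube of side length $2(m+1)$ (that is, $2m+3$ lattice points per coordinate) centered at $x$.

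Next I would count: exactly as in the proof of Lemma \ref{lemma:contributing-plaquette-bound}, the number of plaquettes contained in a cube with $2m+3$ lattice points per coordinate is at most $6 \cdot 8\,(2m+3)^4 = 48(2m+3)^4$, since there are $(2m+3)^4$ vertices, each incident to at most $8$ edges, each incident to at most $6$ plaquettes. As each such $p$ is attached in this way to a vertex $x$ of $\wloop$, and $\wloop$ has at most $\ell$ vertices, we obtain $|\plaquettes \backslash S_\wloop^m| \le 48\,\ell\,(2m+3)^4$. Finally, since $2m+3 \le 3(2m+1)$, we have $(2m+3)^4 \le 81(2m+1)^4 \le 96(2m+1)^4$, which yields the claimed bound $|\plaquettes \backslash S_\wloop^m| \le 96 \cdot 48\, \ell\,(2m+1)^4$.

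I do not expect a genuine obstacle here; this is a routine Peierls-type counting argument. The only points requiring care are keeping the paper's convention (side length $s$ means $s+1$ lattice points per coordinate) straight, and noting that the stated constant $96 \cdot 48$ is deliberately loose, so the extra radius coming from the $S_e$-thickening is absorbed without further effort (one could alternatively route through the bound $|\plaqset_\wloop| \le 72\ell$ directly and still land comfortably inside the stated estimate).
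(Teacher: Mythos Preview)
Your argument is correct and essentially the same approach as the paper's. The only bookkeeping difference is that the paper centers the $\ell^\infty$-ball at a vertex of a plaquette of $P_\wloop$ (bounding the number of such vertices by $12\cdot 8\,\ell=96\ell$ and using radius $m$), whereas you push one step further back to a vertex of an edge of $\wloop$ (only $\ell$ centers, but radius $m+1$) and then absorb $(2m+3)^4\le 96(2m+1)^4$ into the constant; both land on the same stated bound.
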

\begin{proof}
Essentially the same proof as for Lemma \ref{lemma:contributing-plaquette-bound}, except we bound the number of vertices in some plaquette of $\plaqset_\wloop$ by $96 \ell$, since each such vertex must be in a 3-cell which contains an edge of $\wloop$, and any given edge has at most 12 3-cells which contain it, and any given 3-cell has 8 vertices. 
\end{proof}

We now give an upper bound on $\Phi$, which will be the analogue of Lemma \ref{lemma:activity-bound}. First, define
\[ \alpha_\beta \label{notation:alpha-beta-nonabelian} := |G| e^{-\beta \Delta_G}. \]

\begin{lemma}\label{lemma:activity-bound-non-abelian}
For any $\plaqset \sse \plaquettes$, we have
\[ \Phi(\plaqset) \leq \alpha_\beta^{|\plaqset|}.\]
\end{lemma}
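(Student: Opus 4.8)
The plan is to combine the trivial pointwise bound on the weights with a counting bound on the number of homomorphisms having a prescribed support. Recall that $\Phi(\plaqset) = \sum_{\homsym \in \Hom(\pi_1(\oneskel, x_0), G),\ \supp(\homsym) = \plaqset} \prod_{p \in \plaqset} \varphi_\beta(\homsym(C_p))$. For every $\homsym$ appearing in this sum and every $p \in \plaqset$ we have $\homsym(C_p) \neq \groupid$, so by the definition of $\Delta_G$ and of $\varphi_\beta$ (equation \eqref{eq:phi-beta-def}) we get $\varphi_\beta(\homsym(C_p)) \leq e^{-\beta \Delta_G}$; hence $\prod_{p \in \plaqset}\varphi_\beta(\homsym(C_p)) \leq e^{-\beta \Delta_G |\plaqset|}$ and therefore
\[ \Phi(\plaqset) \leq e^{-\beta \Delta_G |\plaqset|} \cdot \#\{\homsym \in \Hom(\pi_1(\oneskel, x_0), G) : \supp(\homsym) = \plaqset\}. \]
Since $\alpha_\beta = |G| e^{-\beta \Delta_G}$, it suffices to show this count is at most $|G|^{|\plaqset|}$.

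For the counting bound, fix a spanning tree $T$ of $\oneskel$. By Lemma \ref{lemma:uniqueness-of-gauge-fix} the homomorphisms $\homsym$ with $\supp(\homsym) = \plaqset$ are in bijection with the edge configurations $\sigma \in GF(T)$ with $\supp(\sigma) = \plaqset$; moreover, since $\sigma$ is the identity on $T$, the holonomy of $\sigma$ along the lasso $C_p = \upath_{x_1} e_1 e_2 e_3 e_4 \upath_{x_1}^{-1}$ of equation \eqref{eq:cp-equiv-to-lasso} is exactly $\sigma_p$, so the condition $\supp(\sigma) = \plaqset$ is literally the system of equations $\sigma_p = \groupid$ for $p \notin \plaqset$ (together with $\sigma_p \neq \groupid$ for $p \in \plaqset$, which we discard for an upper bound). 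The key combinatorial input is that one can choose $T$, an enumeration $e_1, \ldots, e_r$ of the edges of $\oneskel$ not in $T$, and pairwise distinct plaquettes $q_1, \ldots, q_r \in \plaquettes$, such that for each $i$ the edge $e_i$ lies on $\partial q_i$ while every other boundary edge of $q_i$ lies in $T \cup \{e_1, \ldots, e_{i-1}\}$. This is a ``collapsing'' property of the $2$-skeleton of a box (which is simply connected by Lemma \ref{lemma:rectangle-cell-complexes-simply-connected} with $B = \lbox$), and it can be produced by an explicit lexicographic construction, exactly as for the planar grid where to a non-tree edge one assigns the unique unit square having that edge as its face in the direction of the increasing relevant coordinate, edges being ordered by that coordinate. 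Given such data, $C_{q_i}$, as a word in the $a_e$'s, involves only $a_{e_1}, \ldots, a_{e_i}$ and contains $a_{e_i}$ exactly once, so the equation $\sigma_{q_i} = \groupid$ (in force when $q_i \notin \plaqset$) determines $\sigma_{e_i}$ uniquely from $\sigma_{e_1}, \ldots, \sigma_{e_{i-1}}$.

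Running through $i = 1, \ldots, r$ in order, at step $i$ the value $\sigma_{e_i}$ is forced (at most one choice) whenever $q_i \notin \plaqset$, and unconstrained (at most $|G|$ choices) whenever $q_i \in \plaqset$. Since the $q_i$ are distinct, at most $|\plaqset|$ indices $i$ have $q_i \in \plaqset$, so the number of admissible $\sigma$ is at most $|G|^{|\plaqset|}$, which is the required counting bound; combined with the first display this proves the lemma. (Equivalently, the elimination shows $\pi_1(\twoskelminus{\plaqset}, x_0)$ is generated by at most $|\plaqset|$ elements, so $|\Hom(\pi_1(\twoskelminus{\plaqset}, x_0), G)| \leq |G|^{|\plaqset|}$, and Lemma \ref{lemma:homomorphisms-on-2-complex} then gives the same conclusion.) The main obstacle is establishing the collapsing/triangularity property of the box $2$-skeleton with enough care that the distinct-plaquettes requirement really holds — in particular checking that the lexicographic assignment $e \mapsto q_e$ is injective and respects the chosen ordering; the rest is bookkeeping.
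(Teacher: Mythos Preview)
Your reduction to the counting bound is exactly what the paper does: bound each summand by $e^{-\beta\Delta_G|\plaqset|}$ and then bound the number of homomorphisms with $\supp(\homsym)=\plaqset$ by $|G|^{|\plaqset|}$, which (as you note parenthetically) is equivalent via Lemma~\ref{lemma:homomorphisms-on-2-complex} to the rank bound $\mathrm{rk}\,\pi_1(\twoskelminus{\plaqset})\le |\plaqset|$. Where you diverge from the paper is in how that rank bound is established. The paper defers it to Lemma~\ref{lemma:rank-bound} in the appendix, whose proof (credited to Manolescu) passes to the dual lattice, realises $\twoskelminus{\plaqset}$ as the complement of a $2$-complex $M$ in a $4$-ball, and uses transversality plus Seifert--Van~Kampen to exhibit $|\plaqset|$ generators. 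Your route is purely combinatorial: a discrete-Morse--type collapsing of $\twoskel$ that pairs every non-tree edge with a distinct plaquette in a triangular fashion, so that removing at most $|\plaqset|$ relators leaves at most $|\plaqset|$ generators unresolved.

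Your approach is valid and arguably more elementary, but the phrase ``exactly as for the planar grid'' is doing too much work. In four dimensions each edge borders six plaquettes, so there is no ``unique unit square in the increasing direction'', and the $2$d picture does not transfer verbatim. A clean way to carry out your plan: take the standard lexicographic discrete Morse matching on the full box $[0,N]^4$ (every non-origin cell $\sigma$ is matched with the cell obtained by replacing its first non-$\{0\}$ factor $\{a\}$ by $[a-1,a]$, or vice versa); this has the origin as its only critical cell. Restricting to the $2$-skeleton leaves the vertex--edge and edge--plaquette pairs intact, so one obtains a spanning tree $T$ and an injective assignment $e\mapsto q_e$ from non-tree edges to plaquettes. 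One then checks that the two boundary edges of $q_e$ in the ``lower'' direction are always in $T$, while the remaining parallel edge is either in $T$ or a non-tree edge with lexicographically smaller base vertex; ordering non-tree edges lexicographically therefore gives the triangular system you want. This is the verification you flagged as ``the main obstacle'', and it does need to be written out --- the injectivity and the ordering both rely on the specific choice of matching, not just on simple connectivity of $\twoskel$ (simply connected $2$-complexes need not admit such a collapsing in general). Once that is done, your argument is complete and gives a self-contained alternative to the paper's transversality proof.
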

\begin{proof}
For any homomorphism $\homsym \in \Hom(\pi_1(\oneskel), G)$ such that $\supp(\homsym) = \plaqset$, observe
\[ \prod_{p \in \plaqset} \varphi_\beta(\homsym(C_p)) \leq e^{-\beta \Delta_G |\plaqset|}. \]
To finish, we want to bound the number of such $\psi$ by $|G|^{|\plaqset|}$. By Lemma \ref{lemma:homomorphisms-on-2-complex}, it suffices to bound the number of homomorphisms from $\pi_1(\twoskelminus{\plaqset})$ to $G$ by $|G|^{|\plaqset|}$. To do so, it suffices to show that $\mathrm{rk}(\pi_1(\twoskelminus{\plaqset})) \leq |\plaqset|$, where $\mathrm{rk}$ denotes the rank of the group, i.e. the minimum size of a generating subset. As this is a purely topological fact, it is left to the appendix -- see Lemma \ref{lemma:rank-bound}.
\end{proof}

For $\mc{P} \sse \mc{P}_\lbox$, define
\[\label{notation:Xi-P-nonabelian} \Xi_\mc{P} := \sum_{\Gamma \sse \mc{P}} \Phi(P(\Gamma)) I(\Gamma).\]
This is the analogue to the quantity defined in \eqref{eq:partition-function-gas-of-particles-abelian}, the analysis of which was a crucial step in the argument for the Abelian case. Note that $Z_{\lbox, \beta} \propto \Xi_{\mc{P}_\lbox}$ (recall Lemma \ref{lemma:wilson-loop-exp-in-terms-of-vortices}).
For $\Gamma \sse \mc{P} \sse \mc{P}_\lbox$, define
\[\label{notation:rho-P-nonabelian} \rho_\mc{P}(\Gamma) := \frac{\Xi_{\mc{P} \backslash \Gamma}}{\Xi_{\mc{P}}}. \]
Observe we always have
\beq\label{eq:correlation-function-between-0-1} 0 \leq \rho_{\mc{P}}(\Gamma) \leq 1. \eeq
The next lemma shows that the probabilities of certain events involving $P(\randomv)$ may be expressed in terms of $\rho_{\mc{P}_\lbox}$. It is an analogue of Lemma \ref{lemma:vortex-appear-not-appear-probability}. First, recall that for vortices $\vortex_1, \ldots, \vortex_k$, the set $N_\lbox(\vortex_1, \ldots, \vortex_k)$ is defined as the collection of vortices in $\mc{P}_\lbox$ which are incompatible with at least one of $\vortex_1, \ldots, \vortex_k$.

\begin{lemma}\label{lemma:prob-as-correlation-function}
Let $\vortex_1, \ldots, \vortex_k \in \mc{P}_\lbox$ be compatible minimal vortices, and let $\vortex_1', \ldots, \vortex_m' \in \mc{P}_\lbox$ be vortices, not necessarily compatible nor minimal. Let $F$ be the event that $\vortex_i \in \randomv$ for all $1 \leq i \leq k$. Let $F'$ be the event that $\vortex_j'\notin \randomv$ for all $1 \leq j \leq m$. Let $\Gamma := N_\lbox(\vortex_1, \ldots, \vortex_k) \cup \{\vortex_1', \ldots, \vortex_m'\}$. We have
\[ \p(F \cap F') =  r_\beta^k \rho_{\mc{P}_\lbox}(\Gamma). \]
\end{lemma}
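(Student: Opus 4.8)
**

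The plan is to mimic the proof of Lemma \ref{lemma:prob-vortices-appear} from the Abelian section, replacing the Abelian factorization lemma (Lemma \ref{lemma:activity_decomp}) with its non-Abelian substitutes (Corollary \ref{cor:minimal-vortex-many-factor-non-abelian} and Lemma \ref{lemma:well-separated-vortices-activity-decomp}), and using Corollary \ref{cor:homomorphism-number-single-minimal-vortex} to identify $\Phi(P(e_i)) = r_\beta$ for each minimal vortex. The event $F \cap F'$ should be written as a disjoint union over all possible configurations of $P(\randomv)$ that contain $\vortex_1, \ldots, \vortex_k$ as vortices but contain none of $\vortex_1', \ldots, \vortex_m'$. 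Concretely, $F \cap F'$ equals the disjoint union
\[ \bigcup_{\Gamma' \sse \mc{P}_\lbox \backslash \Gamma} \{\randomv = \{\vortex_1, \ldots, \vortex_k\} \cup \Gamma'\}, \]
where the inner union is really over $\Gamma'$ with $I(\{\vortex_1, \ldots, \vortex_k\} \cup \Gamma') = 1$ (if $\Gamma'$ is not compatible with the $\vortex_i$'s, the corresponding event is empty, so this restriction is automatic once we sum $\Phi(P(\cdot)) I(\cdot)$). Crucially, for $\Gamma'$ drawn from $\mc{P}_\lbox \backslash \Gamma = \mc{P}_\lbox \backslash (N_\lbox(\vortex_1, \ldots, \vortex_k) \cup \{\vortex_1', \ldots, \vortex_m'\})$, no vortex in $\Gamma'$ is incompatible with any $\vortex_i$ (since those live in $N_\lbox(\vortex_1, \ldots, \vortex_k)$), so $I(\{\vortex_1, \ldots, \vortex_k\} \cup \Gamma') = I(\Gamma')$, and no $\vortex_j'$ appears, so $F'$ is genuinely enforced.

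Next I would plug in the law of $\randomv$ and factor. Using $\p(\randomv = \Gamma) = \Phi(P(\Gamma)) I(\Gamma) / \Xi_{\mc{P}_\lbox}$, we get
\[ \p(F \cap F') = \frac{1}{\Xi_{\mc{P}_\lbox}} \sum_{\Gamma' \sse \mc{P}_\lbox \backslash \Gamma} \Phi\big(P(\{\vortex_1, \ldots, \vortex_k\} \cup \Gamma')\big) I(\Gamma'). \]
Now $P(\{\vortex_1, \ldots, \vortex_k\} \cup \Gamma') = \vortex_1 \cup \cdots \cup \vortex_k \cup P(\Gamma')$, and since the $\vortex_i$ are compatible minimal vortices that are also compatible with every vortex in $\Gamma'$, Corollary \ref{cor:minimal-vortex-many-factor-non-abelian} gives
\[ \Phi\big(\vortex_1 \cup \cdots \cup \vortex_k \cup P(\Gamma')\big) = \bigg(\prod_{i=1}^k \Phi(\vortex_i)\bigg) \Phi(P(\Gamma')) = r_\beta^k \, \Phi(P(\Gamma')), \]
where the last equality uses Corollary \ref{cor:homomorphism-number-single-minimal-vortex}. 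Substituting and recognizing the remaining sum as $\Xi_{\mc{P}_\lbox \backslash \Gamma}$ yields
\[ \p(F \cap F') = r_\beta^k \frac{\Xi_{\mc{P}_\lbox \backslash \Gamma}}{\Xi_{\mc{P}_\lbox}} = r_\beta^k \rho_{\mc{P}_\lbox}(\Gamma), \]
as desired.

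The main obstacle — really the only subtle point — is verifying the disjointness and exhaustiveness of the union, i.e. that the vortex decomposition (equivalently, the knot/connected-component structure) of $P(\randomv)$ behaves well: we must confirm that $\vortex_1, \ldots, \vortex_k$ are vortices of $P(\randomv)$ precisely when $\randomv$ decomposes as $\{\vortex_1,\dots,\vortex_k\}$ together with a compatible family avoiding $N_\lbox(\vortex_1,\ldots,\vortex_k)$, using that $\randomv$ is by construction a collection of compatible vortices and that the decomposition of a plaquette set into compatible vortices is unique. I would handle this exactly as in the proof of Lemma \ref{lemma:prob-vortices-appear}, remarking that "the proof is essentially the same" and only flagging that Corollary \ref{cor:minimal-vortex-many-factor-non-abelian} replaces Lemma \ref{lemma:activity_decomp}; the other ingredient (applicability of the factorization, which in the Abelian case needed only compatibility but here needs one of the two sets to be a minimal vortex) is automatically satisfied since all of $\vortex_1,\ldots,\vortex_k$ are minimal vortices, so Corollary \ref{cor:minimal-vortex-many-factor-non-abelian} applies directly with $k$ the number of minimal-vortex factors.
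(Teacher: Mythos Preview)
Your proposal is correct and follows essentially the same approach as the paper's proof: both express $\p(F\cap F')$ as a sum over configurations, set up the bijection between admissible $\randomv$ and subsets $\tilde\Gamma\sse\mc{P}_\lbox\backslash\Gamma$, factor $\Phi$ using Corollary~\ref{cor:minimal-vortex-many-factor-non-abelian} together with Corollary~\ref{cor:homomorphism-number-single-minimal-vortex} (so $\Phi(\vortex_i)=r_\beta$), and recognize the remaining sum as $\Xi_{\mc{P}_\lbox\backslash\Gamma}$. Your mention of Lemma~\ref{lemma:well-separated-vortices-activity-decomp} is superfluous here---only the minimal-vortex factorization is needed, as you correctly note at the end.
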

\begin{proof}
We have
\beq\label{eq:prob-as-correlation-function-intermediate-sum} \p(F \cap F') = \Xi_{\mc{P}_\lbox}^{-1} \sum_{\substack{\Gamma' \sse \mc{P}_\lbox \\ \vortex_1, \ldots, \vortex_k \in \Gamma' \\ \vortex_1', \ldots, \vortex_m' \notin \Gamma'}} \Phi(P(\Gamma')) I(\Gamma'). \eeq
Now for any $\Gamma' \sse \mc{P}_\lbox$ such that $I(\Gamma') = 1$, $\vortex_1, \ldots, \vortex_k \in \Gamma'$, and $\vortex_1', \ldots, \vortex_m' \notin \Gamma'$, we may partition $\Gamma' = \{\vortex_1, \ldots, \vortex_k\} \cup \tilde{\Gamma}$, where $I(\tilde{\Gamma}) = 1$ and $\tilde{\Gamma} \cap \Gamma = \varnothing$. Conversely, given any such $\tilde{\Gamma}$, we have that $I(\{\vortex_1, \ldots, \vortex_k\} \cup \tilde{\Gamma}) = 1$, since $\tilde{\Gamma} \cap N_\lbox(\vortex_1, \ldots, \vortex_k) = \varnothing$. Also, by Corollaries \ref{cor:minimal-vortex-many-factor-non-abelian} and \ref{cor:homomorphism-number-single-minimal-vortex}, we have
\[ \Phi(P(\Gamma')) = \bigg(\prod_{i=1}^k \Phi(\vortex_i)\bigg) \Phi(P(\tilde{\Gamma})) = r_\beta^k \Phi(P(\tilde{\Gamma})).\]
We may thus rewrite the right hand side of \eqref{eq:prob-as-correlation-function-intermediate-sum} as
\[ \Xi_{\mc{P}_\lbox}^{-1} r_\beta^k \sum_{\tilde{\Gamma} \sse \mc{P}_\lbox \backslash \Gamma} \Phi(P(\tilde{\Gamma})) I(\tilde{\Gamma}) = r_\beta^k \frac{\Xi_{\mc{P}_\lbox \backslash \Gamma}}{ \Xi_{\mc{P}_\lbox}}. \] 
The desired result now follows.
\end{proof}

Applying Lemma \ref{lemma:prob-as-correlation-function} with $k =1, m = 0$, and using \eqref{eq:correlation-function-between-0-1}, we obtain the following corollary.

\begin{cor}\label{cor:minimal-vortex-prob-bound-non-abelian}
Let $\vortex$ be a minimal vortex. Then
\[ \p(\vortex \in \randomv) \leq r_\beta. \]
\end{cor}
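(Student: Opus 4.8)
This corollary is essentially a one-line specialization of the preceding lemma, so the plan is simply to invoke Lemma \ref{lemma:prob-as-correlation-function} in the degenerate case and then discard a factor that is bounded by $1$. Concretely, I would take $k = 1$, with the single compatible minimal vortex being $\vortex$ itself, and take $m = 0$, so that there are no vortices $\vortex_1', \ldots, \vortex_m'$ to exclude. In the notation of Lemma \ref{lemma:prob-as-correlation-function}, the event $F$ becomes exactly $\{\vortex \in \randomv\}$, the event $F'$ is the whole sample space (an empty intersection), and $\Gamma = N_\lbox(\vortex)$.

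With these choices the conclusion of Lemma \ref{lemma:prob-as-correlation-function} reads
\[ \p(\vortex \in \randomv) = r_\beta \, \rho_{\mc{P}_\lbox}(N_\lbox(\vortex)). \]
Then I would invoke \eqref{eq:correlation-function-between-0-1}, which asserts $0 \leq \rho_{\mc{P}}(\Gamma) \leq 1$ for every $\Gamma \sse \mc{P} \sse \mc{P}_\lbox$, applied here with $\mc{P} = \mc{P}_\lbox$ and $\Gamma = N_\lbox(\vortex)$. Multiplying through by the nonnegative quantity $r_\beta$ gives $\p(\vortex \in \randomv) \leq r_\beta$, as claimed.

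There is no real obstacle here: the content was already absorbed into Lemma \ref{lemma:prob-as-correlation-function} (whose proof in turn used Corollaries \ref{cor:minimal-vortex-many-factor-non-abelian} and \ref{cor:homomorphism-number-single-minimal-vortex} to extract the factor $\Phi(\vortex) = r_\beta$) and into the trivial bound $\Xi_{\mc{P} \backslash \Gamma} \leq \Xi_{\mc{P}}$ recorded in \eqref{eq:correlation-function-between-0-1}, which holds because $\Phi$ and $I$ are nonnegative and $\mc{P} \backslash \Gamma \sse \mc{P}$. The only care needed is to double-check that the $m = 0$ case of Lemma \ref{lemma:prob-as-correlation-function} is admissible, i.e. that $F'$ is interpreted as the certain event and $\Gamma = N_\lbox(\vortex) \cup \varnothing = N_\lbox(\vortex)$, which is immediate from the statement.
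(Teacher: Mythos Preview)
Your proposal is correct and matches the paper's own argument essentially line for line: the paper also derives the corollary by applying Lemma~\ref{lemma:prob-as-correlation-function} with $k=1$, $m=0$ and then invoking the bound \eqref{eq:correlation-function-between-0-1}.
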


For $K \in \mc{K}$, let $F_K$ be the event that $K$ is present in the knot decomposition of $P(\randomv)$. The following lemma bounds the probability of $F_K$.

\begin{lemma}\label{lemma:knot-prob-bound}
For any $K \in \mc{K}$, we have
\[ \p(F_K) \leq \Phi(K). \]
\end{lemma}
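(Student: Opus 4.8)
The plan is to express $\p(F_K)$ as a ratio of restricted partition functions, analogously to Lemma \ref{lemma:prob-as-correlation-function}, and then use the fact that such ratios are bounded by $1$. First I would unwind the definition: $F_K$ is the event that $K$ appears as a component in the knot decomposition of $P(\randomv)$. Recall that the knot decomposition of a plaquette set $\plaqset$ first takes the vortex decomposition $\plaqset = \vortex_1 \cup \cdots \cup \vortex_n$, keeps the minimal vortices as their own knots, and then partitions the remaining (non-minimal) vortices into ``maximal'' well-separated blocks $K_1, \ldots, K_m$. By Lemma \ref{lemma:knot-either-minimal-or-no-minimal-components}, $K$ is either a single minimal vortex, or a union of non-minimal vortices. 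In the first case the result is immediate from Corollary \ref{cor:minimal-vortex-prob-bound-non-abelian} together with Corollary \ref{cor:homomorphism-number-single-minimal-vortex} (which gives $\Phi(K) = r_\beta$), so the substance is the second case.

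For the main case, I would write
\[ \p(F_K) = \Xi_{\mc{P}_\lbox}^{-1} \sum_{\substack{\Gamma \sse \mc{P}_\lbox \\ K \text{ is a knot of } P(\Gamma)}} \Phi(P(\Gamma)) I(\Gamma). \]
The key structural point is that if $K$ appears as a knot of $P(\Gamma)$, then $K$ itself decomposes into some subcollection $\Gamma_K \sse \Gamma$ of (non-minimal) vortices with $P(\Gamma_K) = K$, and moreover, by the way the maximal well-separated partition is defined, the cube realizing the separation of $K$ from the rest has side length strictly less than that of $\lbox$; hence Lemma \ref{lemma:well-separated-vortices-activity-decomp} applies and $\Phi(P(\Gamma)) = \Phi(K)\,\Phi(P(\Gamma \bs \Gamma_K))$. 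Also $I(\Gamma) = 1$ forces $I(\Gamma \bs \Gamma_K) = 1$. So, summing over the complementary collections $\Gamma' = \Gamma \bs \Gamma_K$ ranging over subsets of $\mc{P}_\lbox$ that are compatible and whose union is well-separated from $K$ (a subfamily of all $\Gamma' \sse \mc{P}_\lbox$), I get
\[ \p(F_K) \leq \Xi_{\mc{P}_\lbox}^{-1}\, \Phi(K) \sum_{\Gamma' \sse \mc{P}_\lbox} \Phi(P(\Gamma')) I(\Gamma') = \Phi(K)\, \frac{\Xi_{\mc{P}_\lbox}}{\Xi_{\mc{P}_\lbox}} = \Phi(K), \]
using $\Xi_{\mc{P}_\lbox} = \sum_{\Gamma' \sse \mc{P}_\lbox} \Phi(P(\Gamma')) I(\Gamma')$ and nonnegativity of all terms.

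The main obstacle I expect is the bookkeeping of \emph{which} $\Gamma_K$ can realize $K$ as a knot, and making sure the map $\Gamma \mapsto (\Gamma_K, \Gamma')$ is well-defined and injective so that the sum genuinely decomposes as a product; in particular one must check that $K$ being a knot of $P(\Gamma)$ pins down $\Gamma_K$ uniquely as $\{V \in \Gamma : V \sse K\}$ (since the vortex decomposition is unique and $K$ is a union of vortices), and that the separating cube has side length strictly less than $N$ — this is where one invokes that a maximal well-separated partition into $\geq 2$ blocks uses only proper sub-cubes, which is guaranteed because $P_\wloop$ and the knot are both finite objects inside $\lbox$. Once that is nailed down, the inequality falls out by the same ``restrict-and-bound-the-ratio-by-one'' device used throughout Section \ref{section:proofs-non-abelian}, and the $|K| = 6$ edge case is subsumed since a size-$6$ knot must be a minimal vortex by Lemma \ref{lemma:knot-either-minimal-or-no-minimal-components} and Lemma \ref{lemma:knot-minimal-size}.
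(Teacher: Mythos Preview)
Your proposal is correct and is essentially the paper's own argument: write $\p(F_K)$ as a sum over compatible $\Gamma$ in which $K$ appears as a knot, split $\Gamma = \Gamma_0 \cup \Gamma'$ with $\Gamma_0$ the vortices comprising $K$, use the factorization $\Phi(P(\Gamma)) = \Phi(K)\,\Phi(P(\Gamma'))$, and bound the remaining sum by $\Xi_{\mc{P}_\lbox}$.

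One small correction to your justification of the factorization step: the knot decomposition does not literally provide a single cube well-separating $K$ from \emph{all} of $P(\Gamma)\setminus K$. The separation among the non-minimal knots $K_1,\ldots,K_m$ is only sequential ($K_j$ is well separated from $K_{j+1}\cup\cdots\cup K_m$, not from $K_1\cup\cdots\cup K_{j-1}$), and the minimal vortices in $\Gamma$ are not part of that separation at all. So a direct appeal to Lemma \ref{lemma:well-separated-vortices-activity-decomp} does not quite work. The factorization you need is nonetheless exactly the content of the (unnumbered) lemma stated immediately after Lemma \ref{lemma:well-separated-vortices-activity-decomp}, which gives $\Phi(P(\Gamma)) = \prod_i \Phi(\vortex_i)\prod_j \Phi(K_j)$; applying it once to $P(\Gamma)$ and once to $P(\Gamma')$ (the same sequential well-separation structure survives after deleting one $K_j$) yields $\Phi(P(\Gamma)) = \Phi(K)\,\Phi(P(\Gamma'))$. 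With that citation fixed, your argument matches the paper's.
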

\begin{proof}
Let $\Gamma_0$ be the set of vortices appearing in the vortex decomposition of $K$. For any $\Gamma \sse \mc{P}_\lbox$ such that $I(\Gamma) = 1$ and $K$ appears in the knot decomposition of $P(\Gamma)$, we may partition $\Gamma = \Gamma_0 \cup \Gamma'$, where $\Gamma' \sse \mc{P}_\lbox \backslash \Gamma_0$, $I(\Gamma') = 1$, and $\Phi(P(\Gamma)) = \Phi(K) \Phi(P(\Gamma'))$. We may thus upper bound $\p(F_K)$ by
\[ \Xi_{\mc{P}_\lbox}^{-1} \Phi(K) \sum_{\Gamma' \sse \mc{P}_\lbox \backslash \Gamma_0} \Phi(P(\Gamma')) I(\Gamma').\]
The sum in the above display is at most $\Xi_{\mc{P}_\lbox}$, and thus the desired result now follows.
\end{proof}

\begin{proof}[Proof of Lemma \ref{lemma:only-minimal-vortices-contribute-probability-bound-non-abelian}]
By Lemma \ref{lemma:knot-contained-in-cube}, on the event $E^c$, there must be a knot $K$ of some size $m \geq 1$ in the knot decomposition of $P(\randomv)$, which is not a minimal vortex, such that there is a cube of side length $3m$ which contains both $K$ and some plaquette of $P_\wloop$. Recalling the definition of $S_\wloop^{3m}$ from Lemma \ref{lemma:contributing-plaquette-bound-non-abelian}, we then have that $K$ contains a plaquette of $\plaquettes \backslash S_\wloop^{3m}$. Now if $m \leq 6$, then by the assumption that the loop $\wloop$ is far away from the boundary of $\lbox$, we also have that $K$ is far away from the boundary of $\lbox$. In particular, $K$ satisfies the  condition of Lemma \ref{lemma:knot-minimal-size}, and thus $\Phi(K) = 0$. By Lemma \ref{lemma:knot-prob-bound}, we have $\p(F_K) \leq \Phi(K)$, and thus we may assume that $m \geq 7$.
Now by a union bound, and then applying Lemmas \ref{lemma:knot-prob-bound}, \ref{lemma:knot-combinatorial-bound}, and \ref{lemma:contributing-plaquette-bound-non-abelian}, we obtain 
\begin{align*} 
\p(E^c) &\leq \sum_{m=7}^\infty \sum_{p \in \plaquettes \backslash S_\wloop^{3m}} \sum_{\substack{K \ni p \\ |K| = m}} \p(F_K) \\
&\leq 96 \cdot 48 \cdot 7^4 \ell \frac{(e^4 \knotbound \alpha_\beta)^7}{1 - e^4 \knotbound \alpha_\beta}.
\end{align*}
Now observe $\alpha_\beta^7 = |G|^7 e^{-7\beta \Delta_G}$, $e^{-6 \beta \Delta_G} \leq r_\beta$, and the assumption on $\beta$ implies
\[ 96 \cdot 48 \cdot 7^4 \frac{(e^4 \knotbound \abs{G})^7}{1 - e^4 \knotbound \alpha_\beta} e^{-\beta \Delta_G / 2} \leq 1. \]
The desired result now follows.
\end{proof}

We now begin to build towards the proofs of Propositions \ref{prop:poisson-approximation-non-abelian} and \ref{prop:left-tail-bound-non-abelian}. Recall that the main ingredient in the proofs (Section \ref{section:abelian-proofs}) of the Abelian analogues of these propositions (Propositions \ref{prop:poisson-approximation-abelian} and \ref{prop:left-tail-bound-abelian}) was that we were able to obtain good estimates on the Abelian analogue of $\rho_\mc{P}$ by using cluster expansion (Theorem \ref{thm:dobrushins-cluster-expansion}). The trouble now is we are not able to directly use cluster expansion, because we are not actually able to express $\Xi_\mc{P}$ in the form of equation \eqref{eq:partition-function-gas-of-particles-abelian}. This is due to the fact that there can be vortices $\vortex_1, \vortex_2, \vortex_3$ which are pairwise well separated, but their union $\vortex_1 \cup \vortex_2 \cup \vortex_3$ cannot be partitioned into two nonempty well separated pieces. This is analogous to the existence of Borromean rings in knot theory; see e.g. the Wikipedia page on Borromean rings. We thus have to resort to a more bare-hands approach, which we begin next.

\begin{lemma}\label{lemma:knot-contain-single-plaquette-sum-bound}
Suppose $\knotbound \alpha_\beta < 1$. Then for any $p \in \plaquettes$, we have
\[ \sum_{\substack{K \in \mc{K} \\ K \ni p \\ |K| \geq 7}} \Phi(K) \leq \frac{(\knotbound \alpha_\beta)^7}{1 - \knotbound \alpha_\beta} .\]
\end{lemma}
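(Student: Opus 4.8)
The plan is to bound the sum by splitting over the size $m = |K| \geq 7$ and counting, for each $m$, how many knots of size $m$ contain a fixed plaquette $p$, and then bounding the activity $\Phi(K)$ of each such knot. The two ingredients are Lemma \ref{lemma:knot-combinatorial-bound}, which says the number of knots in $\mc{K}$ of size $m$ containing $p$ is at most $(\knotbound)^m$, and Lemma \ref{lemma:activity-bound-non-abelian}, which says $\Phi(K) \leq \alpha_\beta^{|K|}$. Combining these, for each $m$ the contribution of knots of size $m$ is at most $(\knotbound)^m \alpha_\beta^m = (\knotbound \alpha_\beta)^m$.

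First I would write
\[ \sum_{\substack{K \in \mc{K} \\ K \ni p \\ |K| \geq 7}} \Phi(K) = \sum_{m = 7}^\infty \sum_{\substack{K \in \mc{K} \\ K \ni p \\ |K| = m}} \Phi(K) \leq \sum_{m=7}^\infty (\knotbound)^m \alpha_\beta^m = \sum_{m=7}^\infty (\knotbound \alpha_\beta)^m. \]
Then, since we are assuming $\knotbound \alpha_\beta < 1$, the geometric series converges and sums to $\frac{(\knotbound \alpha_\beta)^7}{1 - \knotbound \alpha_\beta}$, which is exactly the claimed bound. This completes the argument.

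There is essentially no obstacle here: the lemma is a routine combination of the combinatorial counting bound (Lemma \ref{lemma:knot-combinatorial-bound}) and the activity bound (Lemma \ref{lemma:activity-bound-non-abelian}), followed by summing a geometric series. All the real work has been pushed into those two prior lemmas — in particular Lemma \ref{lemma:knot-combinatorial-bound}, whose proof is deferred to Section \ref{section:knot-upper-bound-non-abelian} and is where the large constant $\knotbound$ comes from. The only thing to be careful about is that the hypothesis $\knotbound \alpha_\beta < 1$ is precisely what is needed to guarantee convergence of the tail geometric series, so it should be invoked explicitly at the summation step.
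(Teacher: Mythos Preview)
Your proof is correct and matches the paper's approach essentially line for line: split the sum by size $m$, apply Lemma \ref{lemma:activity-bound-non-abelian} for the bound $\Phi(K) \leq \alpha_\beta^m$ and Lemma \ref{lemma:knot-combinatorial-bound} for the count $(\knotbound)^m$, then sum the resulting geometric series using the hypothesis $\knotbound \alpha_\beta < 1$.
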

\begin{proof}
By Lemmas \ref{lemma:activity-bound-non-abelian} and \ref{lemma:knot-combinatorial-bound}, we have that the sum in question may be bounded by
\[\sum_{m=7}^\infty \sum_{\substack{K \in \mc{K} \\ K \ni p \\ |K| = m}} \Phi(K) \leq \sum_{m=7}^\infty (\knotbound \alpha_\beta)^m. \]
Now use the assumption $\knotbound \alpha_\beta < 1$ to sum the geometric series.
\end{proof}

The following two lemmas are crucial in allowing us to obtain good enough bounds on $\rho_{\mc{P}}$.

\begin{lemma}\label{lemma:no-minimal-vortex-correlation-function-lower-bd}
Suppose $\beta$ is large enough so that $\knotbound \alpha_\beta < 1$.
Suppose $\vortex  \in \mc{P} \sse \mc{P}_\lbox$, $\vortex$ is a minimal vortex, and $\vortex$ is far enough away from the boundary of $\lbox$, such that any cube of side length 25 (say) which contains a plaquette of $\vortex$ is completely contained in $\lbox$. Suppose $\Gamma \sse N_\lbox(\vortex) \cap \mc{P}$ is such that $\Gamma$ does not contain any minimal vortices. Then
\[ 1 - \rho_{\mc{P}}(\Gamma) \leq \betathreshnonab. \]
\end{lemma}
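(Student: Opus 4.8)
The plan is to bound $1 - \rho_{\mc{P}}(\Gamma) = 1 - \Xi_{\mc{P} \backslash \Gamma}/\Xi_{\mc{P}}$ by exploiting the fact that $\Xi_{\mc P}$ is a sum over collections $\Gamma' \subseteq \mc P$ of compatible vortices, and that the ``missing'' terms (those $\Gamma'$ using at least one vortex of $\Gamma$) all carry a large power of $\alpha_\beta$. Write $\Xi_{\mc P} = \Xi_{\mc{P} \backslash \Gamma} + R$, where $R := \sum_{\Gamma'} \Phi(P(\Gamma')) I(\Gamma')$ with the sum over $\Gamma' \subseteq \mc P$ compatible and $\Gamma' \cap \Gamma \neq \varnothing$. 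Then
\[ 1 - \rho_{\mc P}(\Gamma) = \frac{R}{\Xi_{\mc P}} \leq \frac{R}{\Xi_{\mc{P} \backslash \Gamma}}. \]
So the task reduces to showing $R \leq \betathreshnonab \cdot \Xi_{\mc{P} \backslash \Gamma}$. The key structural input is that $\Gamma$ contains no minimal vortices and lies in $N_\lbox(\vortex)$, so every $K \in \Gamma$, once we pass to the knot decomposition, either equals a non-minimal knot (hence has $|K| \geq 7$ by Lemma \ref{lemma:knot-minimal-size}, using that $\vortex$ — and hence its incompatibility neighborhood — is far from the boundary) or is a non-minimal vortex sitting inside such a knot; in all cases it contributes a factor $\Phi(K) \leq \alpha_\beta^{|K|}$ with $|K| \geq 7$. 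Crucially each such $K$ contains a plaquette sharing a 3-cell with $\vortex$, of which there are at most $21|\vortex| = 126$ by Lemma \ref{lemma:neighbor-vortex-combinatorial-bound}'s counting (here $|\vortex| = 6$).

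The main step is to factor a generic term in $R$. Fix a compatible $\Gamma'$ with $\Gamma' \cap \Gamma \neq \varnothing$; pick (say, the first in some fixed order) a vortex $\vortex' \in \Gamma' \cap \Gamma$. Since $\vortex'$ is not a minimal vortex and is incompatible with $\vortex$ (which is far from the boundary), the knot $K'$ in the knot decomposition of $P(\Gamma')$ containing $\vortex'$ is a non-minimal knot with $|K'| \geq 7$ and contains a plaquette sharing a 3-cell with $\vortex$. One then wants to peel off $K'$: using Corollary \ref{cor:minimal-vortex-many-factor-non-abelian} and Lemma \ref{lemma:well-separated-vortices-activity-decomp} (the factorization of $\Phi$ along the knot decomposition), write $\Phi(P(\Gamma')) = \Phi(K') \Phi(P(\Gamma' \setminus (\text{vortices of } K')))$, and check that the remaining vortex collection is still compatible and lies in $\mc{P} \setminus \Gamma$... but wait — that is not automatic, since $\Gamma'$ may contain other vortices of $\Gamma$. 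To get around this, I would not insist the residual collection avoids $\Gamma$; instead I'd bound $R \leq \sum_{K' } \Phi(K') \cdot \Xi_{\mc P}$ where $K'$ ranges over knots incompatible with $\vortex$ of size $\geq 7$, giving
\[ R \leq \Xi_{\mc P} \sum_{p \,:\, p \text{ shares a 3-cell with } \vortex} \ \sum_{\substack{K \in \mc K,\ K \ni p \\ |K| \geq 7}} \Phi(K) \leq \Xi_{\mc P} \cdot 126 \cdot \frac{(\knotbound \alpha_\beta)^7}{1 - \knotbound \alpha_\beta}, \]
by Lemma \ref{lemma:knot-contain-single-plaquette-sum-bound} and the bound $21|\vortex| = 126$ on the number of plaquettes sharing a 3-cell with $\vortex$. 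But this only gives $R \leq \betathreshnonab \cdot \Xi_{\mc P}$, not relative to $\Xi_{\mc{P}\backslash\Gamma}$; since $\Xi_{\mc{P}\backslash\Gamma} \geq 1$ (the empty collection contributes $1$) and $R/\Xi_{\mc P} \leq R/\Xi_{\mc{P}\backslash\Gamma}$ is the wrong direction, I instead use $1 - \rho_{\mc P}(\Gamma) = R/\Xi_{\mc P} \leq \betathreshnonab$ directly — so the crude bound $R \leq \betathreshnonab \cdot \Xi_{\mc P}$ is exactly what's needed.

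The hard part will be the careful ``peeling'' argument that justifies $R \leq \Xi_{\mc P}\cdot(\text{small})$: specifically, showing that for each compatible $\Gamma'$ contributing to $R$ one can injectively (over-count is fine) associate it to a pair $(K', \Gamma'')$ with $K'$ a non-minimal knot incompatible with $\vortex$, $|K'| \geq 7$, $\Gamma''$ compatible, and $\Phi(P(\Gamma')) = \Phi(K')\Phi(P(\Gamma''))$ — then resumming over $\Gamma''$ freely gives a factor $\Xi_{\mc P}$ and over $K'$ gives the geometric-series bound. The subtlety is that the knot decomposition of $P(\Gamma')$ need not respect the given vortex partition $\Gamma'$, so one must argue via the knot decomposition rather than $\Gamma'$ itself, and verify that removing all vortices comprising $K'$ from $\Gamma'$ leaves a compatible collection whose union is $P(\Gamma') \setminus K'$. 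Once the factorization $\Phi(P(\Gamma')) = \Phi(K')\Phi(P(\Gamma'\setminus K'))$ is in hand (from the factorization along the knot decomposition, Corollary \ref{cor:minimal-vortex-many-factor-non-abelian} plus Lemma \ref{lemma:well-separated-vortices-activity-decomp}), the rest is the union bound over the at-most-$126$ plaquettes meeting $\vortex$ and Lemma \ref{lemma:knot-contain-single-plaquette-sum-bound}, together with the observation that $\knotbound \alpha_\beta < 1$ makes all geometric series converge and that $126 \cdot \frac{(\knotbound\alpha_\beta)^7}{1-\knotbound\alpha_\beta} = \betathreshnonab$ with $|\vortex| = 6$ absorbed into the constant $21 \cdot 6 = 126$.
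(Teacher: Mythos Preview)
Your proposal is correct and follows essentially the same route as the paper's proof: write $1-\rho_{\mc P}(\Gamma)=\Xi_{\mc P}^{-1}\sum_{\Gamma'\cap\Gamma\neq\varnothing}\Phi(P(\Gamma'))I(\Gamma')$, peel off from each contributing $\Gamma'$ the knot $K'$ in the knot decomposition of $P(\Gamma')$ that contains a vortex of $\Gamma$ (hence is non-minimal by Lemma~\ref{lemma:knot-either-minimal-or-no-minimal-components} and has $|K'|\geq 7$ by Lemma~\ref{lemma:knot-minimal-size}), resum the remainder to recover a factor $\Xi_{\mc P}$, and bound the sum over such $K'$ by $126$ times the geometric series of Lemma~\ref{lemma:knot-contain-single-plaquette-sum-bound}. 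One small clean-up: your ``subtlety'' about the knot decomposition not respecting $\Gamma'$ is not actually an issue, since for $I(\Gamma')=1$ the elements of $\Gamma'$ \emph{are} the vortex decomposition of $P(\Gamma')$, so the knot decomposition simply groups them and removing $K'$ corresponds to removing a sub-collection of $\Gamma'$; also the detour through $\Xi_{\mc P\backslash\Gamma}$ is unnecessary since $R/\Xi_{\mc P}$ is already the quantity to bound.
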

\begin{proof}
We have
\[ 
1 - \rho_{\mc{P}}(\Gamma) = \frac{1}{\Xi_{\mc{P}}} \sum_{\substack{\Gamma' \sse \mc{P} \\ \Gamma' \cap \Gamma \neq \varnothing}} \Phi(P(\Gamma')) I(\Gamma'). \]
Given $\Gamma' \sse \mc{P}$ such that $\Gamma' \cap \Gamma \neq \varnothing$, $I(\Gamma') = 1$, consider the knot decomposition of $P(\Gamma')$. Using this decomposition, we may partition $P(\Gamma')$ into two parts: a knot which is incompatible with $\vortex$, and everything else. This gives a partition $\Gamma' = \tilde{\Gamma} \cup \Gamma''$ such that $P(\tilde{\Gamma}) \in \mc{K}$, $\tilde{\Gamma} \cap \Gamma \neq \varnothing$, and $I(\tilde{\Gamma}) = I(\Gamma'') = 1$. Moreover, we have $\Phi(P(\Gamma)) = \Phi(P(\tilde{\Gamma})) \Phi(P(\Gamma''))$. Thus we may bound
\begin{align*}
\frac{1}{\Xi_{\mc{P}}}\sum_{\substack{\Gamma' \sse \mc{P} \\ \Gamma' \cap \Gamma \neq \varnothing}} \Phi(P(\Gamma')) I(\Gamma') &\leq \frac{1}{\Xi_{\mc{P}}} \sum_{\substack{\tilde{\Gamma} \sse \mc{P} \\ P(\tilde{\Gamma}) \in \mc{K} \\ \tilde{\Gamma} \cap \Gamma \neq \varnothing}} \sum_{\Gamma'' \sse \mc{P}} \Phi(P(\tilde{\Gamma}))I(\tilde{\Gamma}) \Phi(P(\Gamma''))  I(\Gamma'')  \\
&\leq  \sum_{\substack{\tilde{\Gamma} \sse \mc{P} \\ P(\tilde{\Gamma}) \in \mc{K} \\ \tilde{\Gamma} \cap \Gamma \neq \varnothing}} \Phi(P(\tilde{\Gamma})) I(\tilde{\Gamma}).
\end{align*}
Now suppose $\tilde{\Gamma}$ is such that $\tilde{\Gamma} \cap \Gamma \neq \varnothing$ and $P(\tilde{\Gamma}) \in \mc{K}$. By assumption, $\tilde{\Gamma}$ must contain a vortex which is not a minimal vortex. By Lemma \ref{lemma:knot-either-minimal-or-no-minimal-components}, this implies that $P(\tilde{\Gamma})$ cannot be a minimal vortex. Moreover, we have $P(\tilde{\Gamma}) \nsim \vortex$, and by Lemma \ref{lemma:knot-contained-in-cube}, there is a cube of side length $3 |P(\tilde{\Gamma})|$ which contains $P(\tilde{\Gamma})$. If $|P(\tilde{\Gamma})| \leq 6$, then the assumption that $\vortex$ is far away from the boundary of $\lbox$ also implies that $P(\tilde{\Gamma})$ is far enough away from the boundary of $\lbox$, so that $P(\tilde{\Gamma})$ satisfies the condition of Lemma \ref{lemma:knot-minimal-size}, and so $\Phi(P(\tilde{\Gamma})) = 0$.
Thus we may further upper bound
\[  \sum_{\substack{\tilde{\Gamma} \sse \mc{P} \\ P(\tilde{\Gamma}) \in \mc{K} \\ \tilde{\Gamma} \cap \Gamma \neq \varnothing}} \Phi(P(\tilde{\Gamma})) I(\tilde{\Gamma}) \leq \sum_{\substack{K \in \mc{K} \\ K \nsim \vortex \\ |K| \geq 7}} \Phi(K). \]
Now in order for $K$ to be incompatible with $\vortex$, we must have that $K$ contains a plaquette which is in the same 3-cell as a plaquette of $\vortex$. The number of possible plaquettes is at most $21 |\vortex|$. To finish, note by assumption $|\vortex| = 6$, and apply Lemma \ref{lemma:knot-contain-single-plaquette-sum-bound}.
\end{proof}

\begin{lemma}\label{lemma:vortex-removal-identity}
Suppose $\Gamma \sse \mc{P} \sse \mc{P}_\lbox$, and $\Gamma$ contains a minimal vortex $\vortex$. Then
\[ \rho_{\mc{P}}(\Gamma) = \rho_{\mc{P}}(\Gamma \backslash \{\vortex\}) - r_\beta \rho_{\mc{P}}(\Gamma \cup (N_\lbox(\vortex) \cap \mc{P})).\]
\end{lemma}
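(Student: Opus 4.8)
The plan is to clear denominators and reduce the claimed identity to a purely combinatorial statement about the partition functions $\Xi$. Write $\mc{Q} := \mc{P} \backslash \Gamma$, and note that since $\vortex \in \Gamma \sse \mc{P}$ we have the set identities
\[ \mc{P} \backslash (\Gamma \backslash \{\vortex\}) = \mc{Q} \cup \{\vortex\}, \qquad \mc{P} \backslash \big(\Gamma \cup (N_\lbox(\vortex) \cap \mc{P})\big) = \mc{Q} \backslash N_\lbox(\vortex), \]
the second using $\mc{Q} \sse \mc{P}$. Hence, after multiplying the desired identity through by $\Xi_{\mc{P}}$, it becomes
\[ \Xi_{\mc{Q} \cup \{\vortex\}} = \Xi_{\mc{Q}} + r_\beta \, \Xi_{\mc{Q} \backslash N_\lbox(\vortex)}. \]
Note $\vortex \notin \mc{Q}$, so the left side is the partition function over a genuine enlargement of $\mc{Q}$.

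To prove this, I would split the defining sum $\Xi_{\mc{Q}\cup\{\vortex\}} = \sum_{\Gamma' \sse \mc{Q}\cup\{\vortex\}} \Phi(P(\Gamma')) I(\Gamma')$ according to whether $\vortex \in \Gamma'$. The subsets $\Gamma'$ not containing $\vortex$ are exactly the subsets of $\mc{Q}$, and contribute $\Xi_{\mc{Q}}$. For a subset containing $\vortex$, write $\Gamma' = \{\vortex\} \cup \Gamma''$ with $\Gamma'' \sse \mc{Q}$; then $P(\Gamma') = \vortex \cup P(\Gamma'')$, and $I(\Gamma') = 1$ if and only if $I(\Gamma'') = 1$ and $\vortex$ is compatible with every element of $\Gamma''$. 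By definition of $N_\lbox(\vortex)$, the latter condition is precisely $\Gamma'' \cap N_\lbox(\vortex) = \varnothing$, i.e.\ $\Gamma'' \sse \mc{Q} \backslash N_\lbox(\vortex)$.

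Finally, when $I(\Gamma') = 1$ the collection $\{\vortex\} \cup \Gamma''$ is pairwise compatible and $\vortex$ is a minimal vortex, so Corollary \ref{cor:minimal-vortex-many-factor-non-abelian} applies and gives $\Phi(P(\Gamma')) = \Phi(\vortex \cup P(\Gamma'')) = \Phi(\vortex)\,\Phi(P(\Gamma''))$; moreover $\vortex \sse \plaquettes$ since $\vortex \in \mc{P}_\lbox$, so Corollary \ref{cor:homomorphism-number-single-minimal-vortex} gives $\Phi(\vortex) = r_\beta$. Therefore the subsets containing $\vortex$ contribute $r_\beta \sum_{\Gamma'' \sse \mc{Q}\backslash N_\lbox(\vortex)} \Phi(P(\Gamma''))\,I(\Gamma'') = r_\beta\,\Xi_{\mc{Q}\backslash N_\lbox(\vortex)}$. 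Adding the two contributions yields the displayed identity, and dividing back by $\Xi_{\mc{P}}$ (and re-substituting the set identities above) gives the lemma. There is no serious obstacle here; the only points that need care are the equivalence of ``$\vortex$ compatible with every element of $\Gamma''$'' with ``$\Gamma'' \cap N_\lbox(\vortex) = \varnothing$'', and checking that the hypotheses of Corollary \ref{cor:minimal-vortex-many-factor-non-abelian} (a compatible decomposition with $\vortex$ a minimal vortex split off) are exactly what the case $I(\Gamma')=1$ provides.
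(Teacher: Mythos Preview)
Your proposal is correct and follows essentially the same argument as the paper: both clear denominators, split the sum defining $\Xi_{(\mc{P}\backslash\Gamma)\cup\{\vortex\}}$ according to whether $\vortex\in\Gamma'$, and use the factorization $\Phi(\vortex\cup P(\Gamma''))=\Phi(\vortex)\Phi(P(\Gamma''))=r_\beta\,\Phi(P(\Gamma''))$ for the terms containing $\vortex$. The only cosmetic differences are your introduction of the shorthand $\mc{Q}=\mc{P}\backslash\Gamma$ and your citation of Corollary~\ref{cor:minimal-vortex-many-factor-non-abelian} where the paper invokes Lemma~\ref{lemma:minimal-vortex-activity-factor-nonabelian} directly (the corollary with $k=1$ is exactly that lemma).
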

\begin{proof}
Observe $\mc{P} \backslash (\Gamma \backslash \{\vortex\}) = (\mc{P} \backslash \Gamma) \cup \{\vortex\}$. Thus
\[ \Xi_{\mc{P} \backslash(\Gamma \backslash \{\vortex\})} = \sum_{\Gamma' \sse \mc{P} \backslash \Gamma} \Phi(P(\Gamma')) I(\Gamma') + \sum_{\substack{\Gamma' \sse (\mc{P} \backslash \Gamma) \cup \{\vortex\} \\ \Gamma' \ni \vortex}} \Phi(P(\Gamma')) I(\Gamma'). \]
Now by Lemma \ref{lemma:minimal-vortex-activity-factor-nonabelian}, for any $\Gamma'$ such that $I(\Gamma') = 1$, $\Gamma' \ni \vortex$, we have $\Phi(P(\Gamma')) = \Phi(\vortex) \Phi(P(\Gamma' \backslash \{\vortex\}))$. Moreover, since $I(\Gamma') = 1$, we have that every element of $\Gamma' \backslash \{\vortex\}$ must be  compatible with $\vortex$, i.e. $\Gamma' \backslash \{\vortex\} \sse \mc{P} \backslash (\Gamma \cup N_\lbox(\vortex))$. Conversely, for any $\tilde{\Gamma} \sse \mc{P} \backslash (\Gamma \cup N_\lbox(\vortex))$ such that $I(\tilde{\Gamma}) = 1$, we have $I(\tilde{\Gamma} \cup \{\vortex\}) = 1$, and by Lemma \ref{lemma:minimal-vortex-activity-factor-nonabelian}, we also have $\Phi(P(\tilde{\Gamma} \cup \{\vortex\})) = \Phi(\vortex) \Phi(P(\tilde{\Gamma}))$. Thus
\begin{align*}
\Xi_{\mc{P} \backslash(\Gamma \backslash \{\vortex\})}
&= \sum_{\Gamma' \sse \mc{P} \backslash \Gamma} \Phi(P(\Gamma')) I(\Gamma') + \Phi(\vortex) \sum_{\Gamma' \sse \mc{P} \backslash(\Gamma \cup N_\lbox(\vortex))} \Phi(P(\Gamma')) I(\Gamma') \\
&= \Xi_{\mc{P} \backslash \Gamma} + r_\beta \Xi_{\mc{P} \backslash (\Gamma \cup (N_\lbox(\vortex) \cap \mc{P}))}.
\end{align*}
Now finish by dividing through by $\Xi_{\mc{P}}$, and rearranging.
\end{proof}


We now apply Lemmas \ref{lemma:no-minimal-vortex-correlation-function-lower-bd} and \ref{lemma:vortex-removal-identity} to obtain lower (Lemma \ref{lemma:minimal-vortex-correlation-function-lower-bound}) and upper (Lemma \ref{lemma:correlation-function-upper-bound}) bounds on $\rho_{\mc{P}}$.

\begin{lemma}\label{lemma:minimal-vortex-correlation-function-lower-bound}
Suppose $\beta$ is large enough so that $\knotbound \alpha_\beta < 1$, and also
\[ \betathreshnonab \leq r_\beta. \]
Suppose $\vortex$ is a minimal vortex, $\vortex \in \mc{P} \sse \mc{P}_\lbox$, and $\vortex$ is far enough away from the boundary of $\lbox$, such that any cube of side length 25 (say) which contains a plaquette of $\vortex$ is completely contained in $\lbox$. Let $\Gamma \sse N_\lbox(\vortex) \cap \mc{P}$, and let $k$ be the number of minimal vortices in $\Gamma$. Then
\[  \rho_{\mc{P}}(\Gamma) \geq  1 - (k+1) r_\beta . \]
\end{lemma}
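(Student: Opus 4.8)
The plan is to induct on $k$, the number of minimal vortices in $\Gamma$, using Lemma \ref{lemma:vortex-removal-identity} to peel them off one at a time, with Lemma \ref{lemma:no-minimal-vortex-correlation-function-lower-bd} providing the base case. First I would treat the base case $k = 0$: here $\Gamma \sse N_\lbox(\vortex) \cap \mc{P}$ contains no minimal vortices, so Lemma \ref{lemma:no-minimal-vortex-correlation-function-lower-bd} applies directly (its hypotheses about $\vortex$ being a minimal vortex far from the boundary are exactly what we assumed), giving $1 - \rho_{\mc{P}}(\Gamma) \leq \betathreshnonab \leq r_\beta$, hence $\rho_{\mc{P}}(\Gamma) \geq 1 - r_\beta$, which is the claim with $k = 0$.

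For the inductive step, suppose the bound holds whenever the number of minimal vortices in the relevant set is less than $k$, and let $\Gamma \sse N_\lbox(\vortex) \cap \mc{P}$ contain exactly $k \geq 1$ minimal vortices. Pick one of them, call it $\vortex'$, and apply Lemma \ref{lemma:vortex-removal-identity} with the pair $(\Gamma, \vortex')$:
\[ \rho_{\mc{P}}(\Gamma) = \rho_{\mc{P}}(\Gamma \backslash \{\vortex'\}) - r_\beta \, \rho_{\mc{P}}(\Gamma \cup (N_\lbox(\vortex') \cap \mc{P})). \]
The set $\Gamma \backslash \{\vortex'\}$ still lies in $N_\lbox(\vortex) \cap \mc{P}$ and has $k - 1$ minimal vortices, so by the inductive hypothesis $\rho_{\mc{P}}(\Gamma \backslash \{\vortex'\}) \geq 1 - k r_\beta$. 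For the subtracted term, I would simply use the trivial bound $\rho_{\mc{P}}(\Gamma \cup (N_\lbox(\vortex') \cap \mc{P})) \leq 1$ from \eqref{eq:correlation-function-between-0-1}. Combining, $\rho_{\mc{P}}(\Gamma) \geq (1 - k r_\beta) - r_\beta = 1 - (k+1) r_\beta$, completing the induction.

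One subtlety to check carefully: the inductive hypothesis must be stated so that it applies to $\Gamma \backslash \{\vortex'\}$, which is still a subset of $N_\lbox(\vortex) \cap \mc{P}$ — this is automatic since removing an element keeps it a subset — and the hypotheses on $\vortex$ (minimal vortex, far from the boundary so cubes of side length $25$ containing a plaquette of $\vortex$ stay in $\lbox$, and $\knotbound \alpha_\beta < 1$, $\betathreshnonab \leq r_\beta$) are unchanged throughout the induction since they only concern $\vortex$, $\beta$, and $\lbox$, not $\Gamma$. The step I expect to require the most care is confirming that Lemma \ref{lemma:no-minimal-vortex-correlation-function-lower-bd} genuinely covers the base case with no hidden requirement — in particular that "$\Gamma$ does not contain any minimal vortices" is precisely the $k = 0$ condition and that the quantitative conclusion $\betathreshnonab \leq r_\beta$ from our hypothesis is what upgrades its bound to $1 - r_\beta$. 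Everything else is a one-line telescoping argument; there is no genuine analytic obstacle, only bookkeeping.
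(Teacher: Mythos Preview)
Your proof is correct and matches the paper's approach essentially verbatim: the paper also peels off minimal vortices one at a time via Lemma \ref{lemma:vortex-removal-identity}, bounds the subtracted term by $1$ using \eqref{eq:correlation-function-between-0-1}, and invokes Lemma \ref{lemma:no-minimal-vortex-correlation-function-lower-bd} once all minimal vortices are removed. The only cosmetic difference is that the paper phrases it as an iteration rather than a formal induction on $k$.
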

\begin{proof}
Let $\Gamma = \{\vortex_1, \ldots, \vortex_n\}$, with $\vortex_1, \ldots, \vortex_k$ minimal, and $\vortex_{k+1}, \ldots, \vortex_n$ not minimal. By Lemma \ref{lemma:no-minimal-vortex-correlation-function-lower-bd}, it suffices to assume $k \geq 1$, i.e. $\Gamma$ contains a minimal vortex. By Lemma \ref{lemma:vortex-removal-identity}, we have
\begin{align*}
\rho_{\mc{P}}(\Gamma) &= \rho_{\mc{P}}(\Gamma \backslash \{\vortex_1\}) - r_\beta \rho_{\mc{P}} (\Gamma \cup (N_\lbox(\vortex_1) \cap \mc{P})).
\intertext{Now observe that $\rho_{\mc{P}} (\Gamma \cup (N_\lbox(\vortex_1) \cap \mc{P})) \leq 1$, and thus}
&\geq \rho_{\mc{P}}(\Gamma \backslash \{\vortex_1\}) - r_\beta .
\end{align*}
Iterating this, we obtain
\[ \rho_{\mc{P}}(\Gamma) \geq \rho_{\mc{P}}(\Gamma \backslash \{\vortex_1, \ldots, \vortex_k\}) - k r_\beta. \]
Now finish by Lemma \ref{lemma:no-minimal-vortex-correlation-function-lower-bd}.
\end{proof}

\begin{lemma}\label{lemma:correlation-function-upper-bound}
Suppose $\beta$ is large enough so that $\knotbound \alpha_\beta < 1$, and also
\[ \betathreshnonab \leq r_\beta. \]
Suppose $\vortex$ is a minimal vortex, $\vortex \in \mc{P} \sse \mc{P}_\lbox$, and $\vortex$ is far enough away from the boundary of $\lbox$, such that for any minimal vortex $\vortex'$ which is incompatible with $\vortex$, any cube of side length 25 (say) which contains a plaquette of $\vortex'$ is completely contained in $\lbox$. Let $\Gamma \sse N_\lbox(\vortex) \cap \mc{P}$, and let $k$ be the number of minimal vortices in $\Gamma$. Then
\[  \rho_{\mc{P}}(\Gamma) \leq 1 - k r_\beta (1 - 290 r_\beta). \]
\end{lemma}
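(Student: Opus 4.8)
The plan is to run the proof of the lower bound (Lemma~\ref{lemma:minimal-vortex-correlation-function-lower-bound}) in reverse, exchanging which of the two estimates produced by the iteration is kept sharp. Write $\Gamma=\{\vortex_1,\dots,\vortex_n\}$ with $\vortex_1,\dots,\vortex_k$ the minimal vortices of $\Gamma$; we may assume $k\ge 1$, since for $k=0$ the claim is just $\rho_{\mc{P}}(\Gamma)\le 1$, which is \eqref{eq:correlation-function-between-0-1}. Set $\Gamma_j:=\Gamma\setminus\{\vortex_1,\dots,\vortex_{j-1}\}$, so $\vortex_j$ is a minimal vortex of $\Gamma_j$, and apply Lemma~\ref{lemma:vortex-removal-identity} to peel off $\vortex_1,\dots,\vortex_k$ one at a time. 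Telescoping yields
\[ \rho_{\mc{P}}(\Gamma) = \rho_{\mc{P}}\big(\Gamma\setminus\{\vortex_1,\dots,\vortex_k\}\big) - r_\beta\sum_{j=1}^k \rho_{\mc{P}}\big(\Gamma_j\cup(N_\lbox(\vortex_j)\cap\mc{P})\big). \]
Since $\Gamma\setminus\{\vortex_1,\dots,\vortex_k\}$ contains no minimal vortices and $\rho_{\mc{P}}\le 1$, it suffices to show that each correction term is at least $1-290r_\beta$; summing then gives $\rho_{\mc{P}}(\Gamma)\le 1-kr_\beta(1-290r_\beta)$.

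To lower bound the $j$th correction term, note that $\vortex_j$ is a minimal vortex incompatible with $\vortex$, hence far from the boundary of $\lbox$ by hypothesis, and that $\Gamma_j\subseteq N_\lbox(\vortex)\cap\mc{P}$, so $\Gamma_j\cup(N_\lbox(\vortex_j)\cap\mc{P})\subseteq N_\lbox(\vortex,\vortex_j)\cap\mc{P}$. Because $\rho_{\mc{P}}$ is nonincreasing under enlarging its second argument (every term defining $\Xi$ is nonnegative, and $\Xi_{\mc{P}'}$ is monotone in $\mc{P}'$), it is enough to prove: for any two minimal vortices $\vortex,\vortex'$ that are far enough from the boundary that any cube of side length $25$ meeting one of them lies in $\lbox$, one has $\rho_{\mc{P}}(N_\lbox(\vortex,\vortex')\cap\mc{P})\ge 1-290r_\beta$.

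For this I would adapt the proof of Lemma~\ref{lemma:no-minimal-vortex-correlation-function-lower-bd}. Expand
\[ 1-\rho_{\mc{P}}(N_\lbox(\vortex,\vortex')\cap\mc{P}) = \Xi_{\mc{P}}^{-1}\sum_{\substack{\Gamma''\subseteq\mc{P},\ I(\Gamma'')=1\\ \Gamma''\cap(N_\lbox(\vortex,\vortex')\cap\mc{P})\ne\varnothing}}\Phi(P(\Gamma'')), \]
and for each $\Gamma''$ in the sum take the knot decomposition of $P(\Gamma'')$ and select a knot $K$ incompatible with $\vortex$ or with $\vortex'$ (one exists, since $\Gamma''$ meets $N_\lbox(\vortex,\vortex')$). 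A union bound over such $K$ splits into two cases. If $K$ is a minimal vortex, then by Lemma~\ref{lemma:minimal-vortex-incompatible-bound} there are at most $144+144=288$ possibilities, and for each, factoring off $\Phi(K)=r_\beta$ (Corollary~\ref{cor:homomorphism-number-single-minimal-vortex}) by Lemma~\ref{lemma:minimal-vortex-activity-factor-nonabelian}, exactly as in the proof of Lemma~\ref{lemma:knot-prob-bound}, shows that the collections $\Gamma''$ containing $K$ contribute at most $r_\beta$. If $K$ is not a minimal vortex, then $|K|\ge 7$ (a smaller non-minimal knot adjacent to $\vortex$ or $\vortex'$ would, by the boundary hypothesis and Lemma~\ref{lemma:knot-contained-in-cube}, be far enough inside $\lbox$ that $\Phi(K)=0$ by Lemma~\ref{lemma:knot-minimal-size}), and summing $\Phi(K)$ over the non-minimal knots incompatible with $\vortex$, resp.\ with $\vortex'$, using the bound of $21|\vortex|=126$ on the number of plaquettes sharing a $3$-cell with a plaquette of a minimal vortex together with Lemma~\ref{lemma:knot-contain-single-plaquette-sum-bound}, gives at most $\betathreshnonab$ for each, hence at most $2\betathreshnonab\le 2r_\beta$ in total. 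Adding up, $1-\rho_{\mc{P}}(N_\lbox(\vortex,\vortex')\cap\mc{P})\le 288r_\beta+2r_\beta=290r_\beta$.

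The one genuinely new difficulty relative to the lower bound is that the sets $\Gamma_j\cup(N_\lbox(\vortex_j)\cap\mc{P})$, and hence $N_\lbox(\vortex,\vortex')$, really do contain minimal vortices, so the union bound above picks up minimal-knot terms that were absent in Lemma~\ref{lemma:no-minimal-vortex-correlation-function-lower-bd}; bounding their contribution by $r_\beta$ apiece (via the factorization of Lemma~\ref{lemma:minimal-vortex-activity-factor-nonabelian}) and counting at most $288$ of them (via Lemma~\ref{lemma:minimal-vortex-incompatible-bound}) is the crux. A secondary bookkeeping point is that the ``far from the boundary'' hypotheses on $\vortex$ and on each $\vortex_j$ must be used to ensure the small knots that can appear next to them are themselves far enough inside $\lbox$ for Lemma~\ref{lemma:knot-minimal-size} to apply; since those knots have size at most $6$, this only costs an extra constant in the required distance, which the stated side length $25$ absorbs.
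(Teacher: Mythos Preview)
Your proof is correct and follows essentially the same route as the paper: iterate Lemma~\ref{lemma:vortex-removal-identity} to peel off the $k$ minimal vortices, bound the leftover term by $1$, and lower bound each correction term $\rho_{\mc{P}}(\Gamma_j\cup(N_\lbox(\vortex_j)\cap\mc{P}))$ by $1-290r_\beta$. The only difference is in how that last bound is obtained: the paper uses the subadditivity inequality $1-\rho_{\mc{P}}(A\cup B)\le(1-\rho_{\mc{P}}(A))+(1-\rho_{\mc{P}}(B))$ together with Lemma~\ref{lemma:minimal-vortex-correlation-function-lower-bound} applied separately to $\Gamma$ (inside $N_\lbox(\vortex)$) and to $N_\lbox(\vortex_j)\cap\mc{P}$, whereas you use monotonicity to enlarge to $N_\lbox(\vortex,\vortex_j)\cap\mc{P}$ and then redo the knot-level union bound of Lemma~\ref{lemma:no-minimal-vortex-correlation-function-lower-bd} directly for two minimal vortices. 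Both arguments land on the same constant $290$, and both rely on the same boundary hypothesis to dispose of small non-minimal knots via Lemma~\ref{lemma:knot-minimal-size}.
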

\begin{proof}
Let $\Gamma = \{\vortex_1, \ldots, \vortex_n\}$, with $\vortex_1, \ldots, \vortex_k$ minimal, and $\vortex_{k+1}, \ldots, \vortex_n$ not minimal. It suffices to assume $k \geq 1$, i.e. $\Gamma$ contains a minimal vortex. By Lemma \ref{lemma:vortex-removal-identity}, we have
\begin{align*}
\rho_{\mc{P}}(\Gamma) &= \rho_{\mc{P}}(\Gamma \backslash \{\vortex_1\}) - r_\beta \rho_{\mc{P}} (\Gamma \cup (N_\lbox(\vortex_1) \cap \mc{P})).
\end{align*}
Let $\Gamma_1  := N_\lbox(\vortex_1) \cap \mc{P}$. To lower bound $\rho_{\mc{P}} (\Gamma \cup \Gamma_1)$, observe
\[ 1 - \rho_{\mc{P}} (\Gamma \cup \Gamma_1) \leq (1 - \rho_{\mc{P}}(\Gamma)) + (1 - \rho_{\mc{P}}(\Gamma_1)), \]
so that
\[ \rho_{\mc{P}} (\Gamma \cup \Gamma_1) \geq \rho_{\mc{P}}(\Gamma) + \rho_{\mc{P}}(\Gamma_1) - 1.\]
Note by assumption that both $\vortex, \vortex_1$ are far enough away from the boundary of $\lbox$ to satisfy the conditions of Lemma \ref{lemma:minimal-vortex-correlation-function-lower-bound}. We may thus apply Lemma \ref{lemma:minimal-vortex-correlation-function-lower-bound} to both $\rho_{\mc{P}}(\Gamma)$ and $\rho_{\mc{P}}(\Gamma_1)$, along with Lemma \ref{lemma:minimal-vortex-incompatible-bound}, to obtain
\[ \rho_{\mc{P}}(\Gamma \cup \Gamma_1) \geq 1 - 290 r_\beta.\]
We thus obtain (note the assumption on $\beta$ implies $290 r_\beta \leq 1$)
\[ \rho_{\mc{P}}(\Gamma) \leq \rho_{\mc{P}}(\Gamma \backslash \vortex_1) - r_\beta(1 - 290 r_\beta). \] 
Upon iterating, we obtain
\[ \rho_{\mc{P}}(\Gamma) \leq \rho_{\mc{P}}(\Gamma \backslash \{\vortex_1, \ldots, \vortex_k\}) - k r_\beta(1 - 290 r_\beta). \]
Now finish by noting $\rho_{\mc{P}}(\Gamma \backslash \{\vortex_1, \ldots, \vortex_k\}) \leq 1$.
\end{proof}

In the remainder of the subsection, we will abuse notation and think of the loop $\wloop$ as a set of edges, ignoring orientation. We now have enough to prove Propositions \ref{prop:poisson-approximation-non-abelian} and \ref{prop:left-tail-bound-non-abelian}.

\begin{proof}[Proof of Proposition \ref{prop:poisson-approximation-non-abelian}]
The assumption \eqref{eq:beta-cond-poisson-approximation-non-abelian} on $\beta$ implies that the conditions on $\alpha_\beta, r_\beta$ in Lemma \ref{lemma:minimal-vortex-correlation-function-lower-bound} are satisfied. Also, the assumption that the loop $\wloop$ is far away from the boundary of $\lbox$ implies $P(e) \sse \plaquettes$ for any $e \in \wloop$, and moreover, $P(e)$ is far enough away from the boundary of $\lbox$ to satisfy the conditions of Lemma \ref{lemma:minimal-vortex-correlation-function-lower-bound}. We may thus apply Lemma \ref{lemma:minimal-vortex-correlation-function-lower-bound} with $\vortex = P(e)$, for any $e \in \wloop$.

The proof is essentially the same as in the Abelian case. For $e \in \wloop$, let $F_{P(e)}$ be the event that $P(e) \in \randomv$. Then $N_\wloop = \sum_{e \in \wloop} \ind_{F_{P(e)}}$. Let $\lambda := \E N_\wloop$. For $e \in \wloop$, define $B_e := \{e' \in \wloop : P(e) \nsim P(e')\}$. We will apply Theorem \ref{thm:chen-rollin-poisson-approximation}, so let $b_1, b_2, b_3$ be as in the statement of the theorem. To bound $b_1$, observe by Lemma \ref{lemma:minimal-vortex-incompatible-bound}, and the assumption that $\wloop$ is self avoiding, that $|B_e| \leq 144$ for all $e \in \wloop$. Then use Corollary \ref{cor:minimal-vortex-prob-bound-non-abelian} to obtain $b_1 \leq 144 \lambda r_\beta$. Note that $b_2 = 0$.

It remains to bound $b_3$. To do so, for $e_0 \in \wloop$, and $\wloop' \sse \wloop \backslash B_{e_0}$, let $F_{e_0, \wloop'}$ be defined as in equation \eqref{eq:f-e0-wloop}. We want to bound the quantity
\[  |\E [\ind_{F_{P(e_0)}} ~|~ F_{e_0,\wloop'}] - \p(F_{P(e_0)})|, \]
when $\p(F_{e_0, \wloop'}) > 0$. Define
\[\Gamma := N_\lbox(P(e), e \in \wloop') \cup \{P(e') : e' \in \wloop \backslash B_{e_0}, e' \notin \wloop' \}.\]
By Lemma \ref{lemma:prob-as-correlation-function}, we have
\[ \p(F_{P(e_0)}) = r_\beta \rho_{\mc{P}_\lbox}(N_\lbox(P(e_0))), \]
\[ \p(F_{e_0, \wloop'}) = r_\beta^{|\wloop'|} \rho_{\mc{P}_\lbox}(\Gamma) , \]
\[ \p(F_{P(e_0)} \cap F_{e_0, \wloop'}) = r_\beta^{|\wloop'| + 1} \rho_{\mc{P}_\lbox}(\Gamma \cup N_\lbox(P(e_0))). \]
Thus
\[ \E [\ind_{F_{P(e_0)}} ~|~ F_{e_0,\wloop'}] - \p(F_{P(e_0)}) = \p(F_{P(e_0)}) (R_1 / R_2 - 1), \]
with
\[ R_1 = \rho_{\mc{P}_\lbox \backslash \Gamma} (N_\lbox(P(e_0)) \cap (\mc{P}_\lbox \backslash \Gamma)), \]
\[ R_2 = \rho_{\mc{P}_\lbox}(N_\lbox(P(e_0))) .\]
Now apply Lemmas \ref{lemma:minimal-vortex-correlation-function-lower-bound} and \ref{lemma:minimal-vortex-incompatible-bound} to obtain
\[ 0 \leq 1 - R_1 \leq 145 r_\beta, \]
\[ 0 \leq 1 - R_2 \leq 145 r_\beta. \]
The assumption \eqref{eq:beta-cond-poisson-approximation-non-abelian} on $\beta$ implies $145 r_\beta$ is small enough for us to bound
\[ \abs{R_1 / R_2 - 1} \leq 150 r_\beta, \]
and thus
\[ b_3 \leq 150 \lambda r_\beta. \]
Thus by Theorem \ref{thm:chen-rollin-poisson-approximation}, we obtain
\[ d_{TV}(\ms{L}(N_\wloop), \mathrm{Poisson}(\lambda)) \leq 144 r_\beta + 150 \min (\lambda, 1.4 \sqrt{\lambda}) r_\beta. \]
We proceed to bound (recall by Corollary \ref{cor:minimal-vortex-prob-bound-non-abelian} that $\p(F_{P(e)}) \leq r_\beta$ for all $e \in \wloop$)
\[
\abs{\ell r_\beta - \lambda} = \sum_{e \in \wloop} (r_\beta - \p(F_{P(e)})). 
\]
Note $\p(F_{P(e)}) = r_\beta \rho_{\mc{P}_\lbox}(N_\lbox(P(e)))$. Combining this with Lemmas \ref{lemma:minimal-vortex-correlation-function-lower-bound} and \ref{lemma:minimal-vortex-incompatible-bound}, we may upper bound the above by
$ 145 (\ell r_\beta) r_\beta$.
Applying Corollary 1 of \cite{AL2005}, we thus obtain
\[ d_{TV}(\ms{L}(N_\wloop), \mathrm{Poisson}(\ell r_\beta)) \leq 144 r_\beta + 150 \min (\lambda, 1.4 \sqrt{\lambda}) r_\beta + 145 (\ell r_\beta) r_\beta. \]
To finish, proceed as in the proof of Proposition \ref{prop:poisson-approximation-abelian}.
\end{proof}

\begin{proof}[Proof of Proposition \ref{prop:left-tail-bound-non-abelian}]
The assumption \eqref{eq:beta-cond-left-tail-bound-non-abelian} on $\beta$ implies that the conditions on $\alpha_\beta, r_\beta$ of Lemma \ref{lemma:correlation-function-upper-bound} are satisfied. Also, the assumption that the loop $\wloop$ is far away from the boundary of $\lbox$ implies $P(e) \sse \plaquettes$ for any $e \in \wloop$, and moreover, $P(e)$ is far enough away from the boundary of $\lbox$ to satisfy the conditions of Lemma \ref{lemma:correlation-function-upper-bound}. We may thus apply Lemma \ref{lemma:correlation-function-upper-bound} with $\vortex = P(e)$, for any $e \in \wloop$.

For $e \in \wloop$, let $X_e := \ind_{F_{P(e)}}$. From the proof of Proposition \ref{prop:left-tail-bound-abelian}, we see that it suffices to show that for any $0 \leq k \leq \frac{1}{2} \ell r_\beta$, and any $\wloop' \sse \wloop$, $\abs{\wloop'} = k$, we have
\[ \p(X_e = 1, e \in \wloop', X_{e'} = 0, e' \in \wloop \backslash \wloop') \leq r_\beta^k \exp(-0.997\ell r_\beta). \]
It suffices to assume that the vortices $P(e), e \in \wloop'$ are compatible. Let
\[ \Gamma := N_\lbox(P(e), e \in \wloop') \cup \{P(e'): e' \in \wloop \backslash \wloop'\}. \]
By Lemma \ref{lemma:prob-as-correlation-function}, we have
\[ \p(X_e = 1, e \in \wloop', X_{e'} = 0, e' \in \wloop \backslash \wloop') = r_\beta^k \thinspace \rho_{\mc{P}_\lbox}(\Gamma). \]
Number the edges of $\wloop$ by $e_1, \ldots, e_n$, such that $\wloop' = \{e_1, \ldots, e_k\}$. For $1 \leq j \leq k$, define $\Gamma_j := N_\lbox(P(e_i), 1 \leq i \leq j)$, and for $k+1 \leq j \leq \ell$, define $\Gamma_j := \Gamma_k \cup \{P(e_i) : k+1 \leq i \leq j\}$. Define $\Gamma_0 = \varnothing$. We then have
\[\rho_{\mc{P}_\lbox}(\Gamma) = \prod_{j=1}^k \rho_{\mc{P}_\lbox \backslash \Gamma_{j-1}}(\Gamma_j \backslash \Gamma_{j-1}). \]
Let $k_j$ be the number of minimal vortices in $\Gamma_j \backslash \Gamma_{j-1}$. By Lemma \ref{lemma:correlation-function-upper-bound}, we obtain 
\begin{align*}
\rho_{\mc{P}_\lbox \backslash \Gamma_{j-1}}(\Gamma_j \backslash \Gamma_{j-1}) &\leq 1 - k_j r_\beta(1 - 290 r_\beta).
\intertext{The assumption \eqref{eq:beta-cond-left-tail-bound-non-abelian} on $\beta$ implies $290 r_\beta \leq 0.003$. Thus we further have}
\rho_{\mc{P}_\lbox \backslash \Gamma_{j-1}}(\Gamma_j \backslash \Gamma_{j-1}) &\leq \exp(-0.997 k_j r_\beta).
\end{align*}
We thus obtain
\[ \rho_{\mc{P}_\lbox}(\Gamma) \leq \exp(-0.997\sum_{j=1}^k k_j r_\beta). \] 
Now finish by observing that $\sum_{j=1}^k k_j \geq \ell$, since $\Gamma$ at least contains $P(e), e \in \wloop$ (note here we have assumed that $\wloop$ is self avoiding).
\end{proof}

\subsection{Upper bounding the number of knots}\label{section:knot-upper-bound-non-abelian}

Given a plaquette set $P \sse \plaquettes$, define $B(P)$ as a cube of minimal side length in $\lbox$ such that all plaquettes of $P$ are in $S_2(B(P))$, but not in $\partial S_2(B(P))$. If the choice of $B(P)$ is not unique, fix one such cube. For $P, P' \sse \plaquettes$, define the function
\[ J(P, P') := \begin{cases} 1 & P \cap B(P') \neq \varnothing \text{ or } P' \cap B(P) \neq \varnothing \\ 0 & \text{otherwise} \end{cases} .\]
To be clear, we are slightly abusing notation here by writing $P \cap B(P') \neq \varnothing$; what this means is that there is a plaquette $p \in P$ which is contained in $B(P')$. We now inductively define a hierarchy of undirected graphs $G^\scale(P)$, for integers $\scale \geq 0$. First, to define $G^0(P)$, consider the vortex decomposition $P = \vortex_1 \cup \cdots \cup \vortex_{n_0}$. Define $P^0_i := V_i$, $1 \leq i \leq n_0$. The vertex set of $G^0(P)$ is $\{P^0_1, \ldots, P^0_{n_0}\}$. The edge set is 
\[ \{ \{P^0_i, P^0_j\} : i \neq j, J(P^0_i, P^0_j) = 1\}. \]
Now suppose for some $\scale \geq 0$, $G^\scale(P)$\label{notation:G-scale-P} is defined, with vertices $P^\scale_1, \ldots, P^\scale_{n_\scale} \sse \plaquettes$ which are compatible (and thus disjoint), and such that $P = P^\scale_1 \cup \cdots \cup P^\scale_{n_\scale}$. To define $G^{\scale+1}(P)$, first let $n_{\scale+1}$ be the number of connected components of $G^\scale(P)$, with connected components given by the partition $I_1 \cup \cdots \cup I_{n_{\scale+1}} = [n_\scale]$. For $1 \leq i \leq n_{\scale+1}$, define
\[ P^{\scale+1}_i := \bigcup_{j \in I_i} P^\scale_j. \]
The vertex set of $G^{\scale+1}(P)$ is $\{P^{\scale+1}_1, \ldots, P^{\scale+1}_{n_{\scale+1}}\}$, and the edge set is
\[ \{ \{P^{\scale+1}_i, P^{\scale+1}_j\} : i \neq j, J(P^{\scale+1}_i, P^{\scale+1}_j) = 1\}. \]
Observe that if $\scale \geq 1$ is such that $n_\scale = 1$, then $G^{\scale-1}(P)$ is connected. Also, for any $\scale' \leq \scale$, any vertex of $G^{\scale'}(P)$ must be contained in one of the vertices of $G^\scale(P)$. Finally, for all $\scale \geq 0$, the vertices of $G^\scale(P)$ form a partition of $P$.

\begin{lemma}\label{lemma:vertices-of-g-ell-contained-in-cube}
For any $\scale$, and any vertex $P^\scale_i$ of $G^\scale(P)$, the cube $B(P^\scale_i)$ has side length at most $3 |P^\scale_i|$.
\end{lemma}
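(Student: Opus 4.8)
The plan is to prove the statement by induction on $\scale$, using the inductive construction of the hierarchy $G^\scale(P)$. The base case $\scale = 0$ follows from Lemma \ref{lemma:knot-contained-in-cube} applied to the individual vortices $P^0_i = \vortex_i$, or more directly from Lemma \ref{lemma:vortex-contained-in-cube} together with the observation that a vortex of size $m$ fits inside a cube of side length $m \leq 3m$; one should double-check that the cube can be enlarged slightly so that the plaquettes land in $S_2(B)$ but not in $\partial S_2(B)$, which costs at most a bounded additive amount and is absorbed into the factor $3$ (indeed Lemma \ref{lemma:knot-contained-in-cube} is exactly the right black box here for the single-vortex case, since a single vortex is a knot).

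For the inductive step, fix $\scale \geq 0$ and assume the bound holds at level $\scale$. Let $P^{\scale+1}_i = \bigcup_{j \in I_i} P^\scale_j$ where $I_i$ indexes a connected component of $G^\scale(P)$. The key point is that connectivity in $G^\scale(P)$ means: any two vertices $P^\scale_j, P^\scale_{j'}$ with $j, j' \in I_i$ are joined by a path $P^\scale_{j} = Q_0, Q_1, \ldots, Q_t = P^\scale_{j'}$ of vertices of $G^\scale(P)$, where consecutive $Q_a, Q_{a+1}$ satisfy $J(Q_a, Q_{a+1}) = 1$, i.e. $Q_a \cap B(Q_{a+1}) \neq \varnothing$ or $Q_{a+1} \cap B(Q_a) \neq \varnothing$. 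Using the inductive hypothesis that each $B(Q_a)$ has side length at most $3|Q_a|$, the condition $J(Q_a, Q_{a+1}) = 1$ forces the cubes $B(Q_a)$ and $B(Q_{a+1})$ (or one cube and the other plaquette set) to be within $\ell^\infty$ distance at most $\max(3|Q_a|, 3|Q_{a+1}|)$ of each other — more precisely, the union $B(Q_a) \cup B(Q_{a+1})$ sits inside a single cube of side length at most $3|Q_a| + 3|Q_{a+1}|$. Chaining along a spanning tree of the component $I_i$ (which has $|I_i| - 1$ edges and hence involves each vertex's size at most twice in the telescoping), one obtains that all of $P^{\scale+1}_i$ is contained in a cube of side length at most $\sum_{j \in I_i} (\text{contribution from } B(P^\scale_j)) \leq 3\sum_{j \in I_i}|P^\scale_j| = 3|P^{\scale+1}_i|$, where the equality uses that the $P^\scale_j$, $j \in I_i$, are disjoint and partition $P^{\scale+1}_i$. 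One should finally re-inflate this cube by the bounded amount needed to guarantee the plaquettes avoid $\partial S_2(\cdot)$, again absorbed into the factor $3$ (or verified to not be needed, since each $B(P^\scale_j)$ already has this property and their union can be taken inside a cube with the same feature).

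The main obstacle I anticipate is the bookkeeping in the chaining argument: making sure that when one walks along the tree spanning a component, each vertex's diameter contribution is counted with the right multiplicity so that the total stays at $3\sum_{j\in I_i}|P^\scale_j|$ rather than something larger like $3(|I_i|-1)\max_j|P^\scale_j|$. The clean way to handle this is to note that adjacency $J(Q,Q')=1$ means the two cubes $B(Q), B(Q')$ overlap or touch a common plaquette, so their union fits in a cube whose side length is the sum of the two side lengths; then an induction on the size of the tree (peeling off a leaf) shows the union of all the cubes over the tree fits in a cube of side length $\sum_j (\text{side length of } B(P^\scale_j)) \leq 3\sum_j |P^\scale_j|$, which is exactly what is needed. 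The disjointness of the $P^\scale_j$ within a component is what converts $\sum_j |P^\scale_j|$ into $|P^{\scale+1}_i|$, closing the induction.
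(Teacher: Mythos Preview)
Your proposal is correct and follows essentially the same approach as the paper: induct on $\scale$, with the base case handled via Lemma \ref{lemma:vortex-contained-in-cube} (a vortex of size $m$ sits in a cube of side $m$, then extend by one in each direction to get side length $m+2 \leq 3m$), and the inductive step handled by observing that $J(Q,Q')=1$ forces $B(Q)$ and $B(Q')$ to intersect, so their union fits in a cube of side length at most $3|Q|+3|Q'|$, then chaining along a spanning tree of the component.

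One point to correct: you invoke Lemma \ref{lemma:knot-contained-in-cube} for the base case and call it ``exactly the right black box,'' but in the paper that lemma is \emph{proved using} the present lemma (see the proof of Lemma \ref{lemma:knot-contained-in-cube} in Section \ref{section:knot-upper-bound-non-abelian}), so citing it here would be circular. Your alternative route via Lemma \ref{lemma:vortex-contained-in-cube} is the right one, and is what the paper uses.
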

\begin{proof}
We induct on $\scale$. First, for $\scale = 0$, the vertices of $G^\scale(P)$ are vortices. Let $P^0_i$ be a vertex of $G^\scale(P)$. By Lemma \ref{lemma:vortex-contained-in-cube}, there is a cube of side length $|P^0_i|$ which contains $P^0_i$. Extending this cube by (at most) one in all directions, we obtain a desired cube of side length at most $|P^0_i| + 2 \leq 3|P^0_i|$.

The key to the inductive step is the following. Suppose we have $P, P' \sse \plaquettes$, such that $B(P), B(P')$ have side lengths at most $3|P|, 3|P'|$, respectively. Suppose also that $J(P, P') = 1$. This implies that $B(P), B(P')$ must intersect. It is then possible to obtain a cube $B$ of side length at most $3|P| + 3|P'|$, which contains both $B(P)$ and $B(P')$.
\end{proof}

If $K \in \mc{K}$ is a knot, then the graphs $G^\scale(K), \scale \geq 0$ have some crucial additional properties. First, for $P \sse \plaquettes$, define $\scale^*(P) := \min \{ \scale : n_\scale = 1\}$. If $n_\scale > 1$ for all $\scale$, define $\scale^*(P) = \infty$.

\begin{lemma}
Let $K \in \mc{K}$, $\scale \geq 0$. If $n_\scale > 1$, then there are no isolated vertices of $G^\scale(K)$. Consequently, every connected component of $G^\scale(K)$ is of size at least $2$, and thus if also $\scale \leq \scale^*(K)$, then for all vertices $P^\scale_i$ of $G^\scale(K)$, we have $|P^\scale_i| \geq 2^\scale$.
\end{lemma}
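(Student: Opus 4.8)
The plan is to prove the statement by induction on the scale $\scale$, using the definition of the knot decomposition together with the crucial fact (Lemma \ref{lemma:knot-either-minimal-or-no-minimal-components}) that a knot $K$ is either a single minimal vortex, or has a vortex decomposition containing no minimal vortices. Since the statement is only of interest when $n_\scale > 1$, we have in particular $\scale^*(K) \geq 1$, so $K$ is not a single minimal vortex; hence by Lemma \ref{lemma:knot-either-minimal-or-no-minimal-components} no vortex in the vortex decomposition of $K$ is a minimal vortex.

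First I would establish the base case $\scale = 0$. Suppose for contradiction that $G^0(K)$ has an isolated vertex $P^0_i = \vortex_i$, which is one of the vortices in the vortex decomposition of $K$. Being isolated means $J(P^0_i, P^0_j) = 0$ for all $j \neq i$, i.e. no plaquette of $\vortex_i$ lies in $B(\vortex_j)$ and no plaquette of $\vortex_j$ lies in $B(\vortex_i)$, for any other vortex $\vortex_j$. The key claim is that this forces $\vortex_i$ to be well separated (in the sense defined before Lemma \ref{lemma:rectangle-cell-complexes-simply-connected}) from the union of the remaining vortices; one can take the cube $B = B(\vortex_i)$, or a slight enlargement of it, noting that $\vortex_i \sse S_2(B(\vortex_i))$ but no plaquette of $\vortex_i$ is in $\partial S_2(B(\vortex_i))$ by definition of $B(\cdot)$, and that no plaquette of the other vortices meets $B(\vortex_i)$. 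But then $K = \vortex_i \cup (\bigcup_{j \neq i} \vortex_j)$ would be a partition into nonempty well-separated pieces, contradicting the maximality in the definition of the knot decomposition (recall a knot appears as a $K_j$ in the knot decomposition of some plaquette set, and such $K_j$ admit no further well-separated splitting). Some care is needed to check compatibility of $\vortex_i$ with the other vortices (it follows since the $\vortex_j$'s are the compatible vortex decomposition of $K$) and to confirm the cube lies in $\lbox$; these follow from Lemmas \ref{lemma:vortex-contained-in-cube} and \ref{lemma:vertices-of-g-ell-contained-in-cube}.

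Next I would carry out the inductive step. Assume $G^{\scale-1}(K)$ has no isolated vertices (if $n_{\scale-1} > 1$), and suppose $n_\scale > 1$ but $G^\scale(K)$ has an isolated vertex $P^\scale_i$. By the same argument as in the base case, $J(P^\scale_i, P^\scale_j) = 0$ for all $j \neq i$, and since by Lemma \ref{lemma:vertices-of-g-ell-contained-in-cube} every $B(P^\scale_j)$ has finite side length and lies in $\lbox$, the vertex $P^\scale_i$ is well separated from $\bigcup_{j \neq i} P^\scale_j$ by (an enlargement of) $B(P^\scale_i)$ — again using compatibility of the $P^\scale_j$'s. This contradicts maximality of the knot decomposition of $K$, exactly as before. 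Once no connected component is an isolated vertex, each component of $G^\scale(K)$ has at least two vertices, and since each vertex $P^{\scale-1}_j$ of $G^{\scale-1}(K)$ satisfies $|P^{\scale-1}_j| \geq 2^{\scale-1}$ (by the inductive hypothesis, valid because $\scale - 1 < \scale \leq \scale^*(K)$ forces $n_{\scale-1} > 1$), each vertex $P^\scale_i$ of $G^\scale(K)$, being a union of at least two such, has size at least $2 \cdot 2^{\scale-1} = 2^\scale$. The main obstacle is getting the well-separation argument airtight: one must verify that $J(P,P')=0$ really does yield a valid separating rectangle with no plaquettes of either piece on its boundary-relative-to-$\lbox$, which requires choosing the rectangle as $B(P^\scale_i)$ itself (whose defining property is precisely that no plaquette of $P^\scale_i$ lies in $\partial S_2$ of it) and checking the complementary piece lands in $S_2^c$; this is the delicate point connecting the combinatorial function $J$ to the topological notion of well-separated used in the knot decomposition.
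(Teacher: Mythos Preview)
Your proposal is correct and follows essentially the same approach as the paper: the key observation is that if $P^\scale_i$ were isolated, then the cube $B(P^\scale_i)$ itself would well separate $P^\scale_i$ from $K \setminus P^\scale_i$, contradicting the maximality in the definition of the knot decomposition. The paper's proof is more direct, handling the ``no isolated vertices'' claim uniformly for all $\scale$ rather than by induction (induction is only genuinely needed for the size bound $|P^\scale_i| \geq 2^\scale$), and the extra checks you flag---compatibility, the cube lying in $\lbox$, a possible enlargement, and the invocation of Lemma~\ref{lemma:knot-either-minimal-or-no-minimal-components}---are either automatic from the definition of $B(\cdot)$ or unnecessary.
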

\begin{proof}
Let $P^\scale_i$ be a vertex of $G^\scale(K)$. Since $n_\scale > 1$ by assumption, we have $K \backslash P^\scale_i \neq \varnothing$. Then by definition, there cannot exist a cube which well separates $P^\scale_i$ from $K \backslash P^\scale_i$. Therefore we must have $B(P^\scale_i) \cap (K \backslash P^\scale_i) \neq \varnothing$, and thus $P^\scale_i$ cannot be an isolated vertex in $G^\scale(K)$. 
\end{proof}

Note this lemma implies that if $K$ is a knot of size $m$, then $\scale^*(K) \leq \floor{\log_2 m}$, and thus $G^{\floor{\log_2 m} - 1}(K)$ is connected. 

\begin{proof}[Proof of Lemma \ref{lemma:knot-contained-in-cube}]
Given a knot $K \in \mc{K}$, by the previous discussion, we have that $G^{\floor{\log_2 m}}$ has a single vertex. Since the vertices of $G^\scale(K)$ form a partition of $K$ for all $\scale \geq 0$, we thus have that the lone vertex of $G^{\floor{\log_2 m}}$ must be $K$. Now finish by Lemma \ref{lemma:vertices-of-g-ell-contained-in-cube}.
\end{proof}

Let $\mc{D}$ be the collection of $P \sse \plaquettes$ such that $\scale^*(P) < \infty$, and for all $\scale \leq \scale^*(P)$, any vertex of $G^\scale(K)$ is of size at least $2^\scale$. Observe that if $P \in \mc{D}$ and $|P| = m$, then $\scale^*(P) \leq \floor{\log_2 m}$, and consequently $G^{\floor{\log_2 m} - 1}(P)$ is connected. Now define
\[\label{notation:A-m-scale} \mc{A}(m, \scale) := \{ P \in \mc{D} : \abs{P} = m, G^\scale(P) \text{ is connected}\}. \]
By the previous observation, note if $\scale \geq \floor{\log_2 m} - 1$, then $\mc{A}(m, \scale) = \mc{A}(m, \floor{\log_2 m} - 1)$. Also, as previously noted, if $K \in \mc{K}$ is a knot of size $m$, then $K \in \mc{A}(m, \floor{\log_2 m} - 1)$.
Now for $p \in \plaquettes$, define
\[\label{notation:A-m-scale-p} \mc{A}(m, \scale, p) := \{P \in \mc{A}(m, \scale) : P \ni p\}. \]
To upper bound the number of knots of size $m$ which contain $p$, it suffices to upper bound $|\mc{A}(m, \floor{\log_2 m} -1, p)|$. We will do this by inductively upper bounding $|\mc{A}(m, \scale, p)|$, for $\scale \leq \floor{\log_2 m} - 1$. We first start with the base case $\scale = 0$.

\begin{lemma}\label{lemma:ell-equals-zero-bound}
For any $m \geq 1$, and any $p \in \plaquettes$, we have
\[|\mc{A}(m, 0, p)| \leq (\vbdconst)^m (e^{9/e} 2^{19} e^4)^m. \]
\end{lemma}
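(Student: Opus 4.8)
The plan is to count the sets $P \in \mathcal{A}(m,0,p)$ directly via their structure at scale $0$. Recall that $P \in \mathcal{A}(m,0,p)$ means $P \in \mathcal{D}$, $|P| = m$, $p \in P$, and $G^0(P)$ is connected. By definition, $G^0(P)$ has as its vertices the vortices $V_1, \dots, V_{n_0}$ of the vortex decomposition of $P$, with an edge between $V_i$ and $V_j$ whenever $J(V_i, V_j) = 1$. So first I would encode such a $P$ by the data: (i) the integer $n_0 \ge 1$; (ii) a connected graph structure on $[n_0]$ realized as a subgraph of $G^0(P)$ (hence it suffices to track a spanning tree $\mathcal{T}$ on $[n_0]$); (iii) the sizes $m_1, \dots, m_{n_0} \ge 1$ of the vortices, with $\sum m_i = m$; and (iv) the actual vortices $V_1, \dots, V_{n_0}$ themselves, one of which (say $V_1$) contains $p$.

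The counting would proceed along the spanning tree $\mathcal{T}$ in breadth-first order, just as in the proof of Lemma \ref{lemma:vortex-combinatorial-bound}. The vortex $V_1$ containing $p$: by Lemma \ref{lemma:vortex-combinatorial-bound}, the number of vortices of size $m_1$ containing a fixed plaquette is at most $(20e)^{m_1}$. For each subsequent vortex $V_j$ adjacent in $\mathcal{T}$ to an already-chosen vortex $V_i$, the condition $J(V_i, V_j) = 1$ means either $V_j$ meets $B(V_i)$ or $V_i$ meets $B(V_j)$. By Lemma \ref{lemma:vertices-of-g-ell-contained-in-cube} (the $\ell = 0$ case, which is really Lemma \ref{lemma:vortex-contained-in-cube} plus a $+2$ extension), $B(V_i)$ is a cube of side length at most $3m_i$ and $B(V_j)$ of side length at most $3m_j$; so in either case $V_j$ contains a plaquette lying within $\ell^\infty$-distance $O(m_i + m_j)$ of some plaquette of $V_i$. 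The number of candidate "anchor" plaquettes for $V_j$ is then polynomial — crudely, of order $(3m_i)^4 \cdot 6$ choices inside $B(V_i)$, or the analogous count forcing a plaquette of $V_i$ inside $B(V_j)$; I would bound the number of anchor plaquettes by something like $c \cdot m^4$ (absorbing constants), and then Lemma \ref{lemma:vortex-combinatorial-bound} again gives at most $(20e)^{m_j}$ vortices of size $m_j$ through a fixed plaquette. Multiplying over the $n_0$ vortices gives a factor $(20e)^{\sum m_i} = (20e)^m$ times a product of at most $n_0 - 1$ polynomial anchor-choice factors, each $\le c\, m^4$.

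The remaining combinatorial overhead — the choice of $n_0$, the spanning tree $\mathcal{T}$ on $[n_0]$, the composition $m_1 + \dots + m_{n_0} = m$, and the product of $\le n_0$ factors of size $O(m^4)$ — must all be absorbed into the claimed factor $(e^{9/e} 2^{19} e^4)^m$. Here the key elementary estimates are: Cayley's formula gives $n_0^{n_0 - 2}$ trees (as in Lemma \ref{lemma:vortex-combinatorial-bound}), which combined with $n_0! \ge n_0^{n_0} e^{-n_0}$ and $n_0 \le m$ contributes a bounded-base exponential in $m$; the number of compositions of $m$ into $n_0$ parts is $\binom{m-1}{n_0-1} \le 2^m$; and a product of $n_0 \le m$ factors each $\le c\, m^4$ is $\le (c\, m^4)^m$, where the polynomial part $m^{4m}$ is the delicate one. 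The trick for the $m^{4m}$ term is the standard one: $m^{4} \le C^m$ is false, but when it appears raised to a power governed by $n_0$ and we're allowed a base like $2^{19} e^4$, one uses that the polynomial anchor counts really scale with the sizes $m_i$ of the individual vortices (so the product is $\prod (c m_i^4)$-like and one invokes $x^4 \le e^{4x/e \cdot (\text{something})}$ or more simply $m_i^4 \le 2^{19}$ for... ), and more cleanly: $n_0 \le m$ together with $\prod m_i \le (m/n_0)^{n_0} $ by AM-GM keeps this under control, yielding the factor $e^{9/e}$ (from optimizing $x \mapsto x^{9/x}$-type bounds). \textbf{The main obstacle} is precisely this bookkeeping: tracking the anchor-plaquette polynomial factors tightly enough — attaching them to the correct vortex sizes $m_i$ rather than to $m$ — so that the whole product of $n_0$-many polynomial terms, the tree count, and the composition count collapse into a clean single exponential $(e^{9/e} 2^{19} e^4)^m$ with the stated explicit constants, rather than something with $m^{cm}$ left over. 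Everything else is an assembly of Lemma \ref{lemma:vortex-combinatorial-bound}, Lemma \ref{lemma:vortex-contained-in-cube}, Cayley's formula, and $n! \ge n^n e^{-n}$, exactly in the spirit of the earlier combinatorial lemmas.
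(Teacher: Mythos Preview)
Your outline matches the paper's approach closely: encode $P$ via its vortex decomposition, order the vortices, sum over spanning trees on $[n_0]$, and propagate along the tree using the vortex count through a fixed plaquette. You also correctly isolate the obstacle: the polynomial anchor factors must be tied to the individual $m_i$, not to $m$.

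The gap is in how you propose to resolve that obstacle. The anchor bound per tree edge $\{i,j\}$ is \emph{not} of the form $c\,m_j^4$ or even $c\,m_i^4$ alone: Lemma~\ref{lemma:connectivity-graph-edge} gives that the number of vortices $V_j$ of size $m_j$ with $J(V_i,V_j)=1$ is at most $2^{19}\,m_i^4\,m_j^4\,(\vbdconst)^{m_j}$, with \emph{both} endpoint sizes appearing. Multiplying over the $n_0-1$ tree edges therefore yields $(2^{19})^{n_0-1}\prod_i m_i^{4d_i}$, where $d_i$ is the degree of $i$ in the tree --- a \emph{degree-weighted} product, not the $\prod_i m_i^4$ you suggest. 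This matters: Cayley's formula together with $n_0!\ge n_0^{n_0}e^{-n_0}$ is then no longer sufficient, because the bound on $\prod_i m_i^{4d_i}$ depends on the particular tree through its degree sequence.

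The paper's resolution is to replace Cayley's formula by the finer enumeration of trees with a prescribed degree sequence, $\tfrac{(n_0-2)!}{\prod_i(d_i-1)!}$, and then apply a weighted AM--GM twice: first $\prod_i m_i^{4d_i}\le\bigl(\tfrac{m}{2(n_0-1)}\bigr)^{8(n_0-1)}\prod_i d_i^{4d_i}$ (maximising over $m_i$ with $\sum m_i=m$), and second $\prod_i d_i^{4d_i}\le\bigl(\tfrac{2(n_0-1)}{n_0}\bigr)^{8(n_0-1)}$ (maximising over $d_i$ with $\sum d_i=2(n_0-1)$). The leftover $\prod_i\tfrac{1}{(d_i-1)!}$ sums to at most $e^{n_0}$. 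Only after all this does the bound collapse to a product of $(m/n_0)^{c\,n_0}$-type factors, and then $(m/x)^x\le e^{m/e}$ delivers the $e^{9/e}$. Your sketch has all the right surrounding pieces but skips this degree-sequence refinement, without which the bookkeeping you flag as the main obstacle does not close.
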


Before we prove Lemma \ref{lemma:ell-equals-zero-bound}, we need the following preliminary result.

\begin{lemma}\label{lemma:connectivity-graph-edge}
Fix a vortex $\vortex \sse \plaquettes$, and $m' \geq 1$. The number of vortices $\vortex' \sse \plaquettes$, such that $|\vortex'| = m'$, and $J(\vortex, \vortex') = 1$, is at most 
\[ 2^{19} |\vortex|^4 (m')^4 (\vbdconst)^{m'}.\]
\end{lemma}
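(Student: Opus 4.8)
The plan is to split the count according to which of the two disjuncts in the definition of $J(\vortex, \vortex') = 1$ holds: either $\vortex' \cap B(\vortex) \neq \varnothing$ (i.e.\ $\vortex'$ contains a plaquette lying in the cube $B(\vortex)$), or $\vortex \cap B(\vortex') \neq \varnothing$ (i.e.\ $B(\vortex')$ contains a plaquette of $\vortex$). I would bound the number of qualifying $\vortex'$ in each case separately, and then obtain the stated bound by adding the two contributions and checking that the resulting numerical constant is at most $2^{19}$. The basic tool in both cases is Lemma \ref{lemma:vortex-combinatorial-bound}, which says that for any fixed plaquette $p$ the number of vortices of size $m'$ containing $p$ is at most $(\vbdconst)^{m'}$, together with crude counts of how many plaquettes lie in a cube of a given side length (a cube of side length $s$ has $(s+1)^4$ vertices, each incident to a bounded number of plaquettes).

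For the first case, I would use Lemma \ref{lemma:vortex-contained-in-cube} to place $\vortex$ in a cube of side length $|\vortex|$; enlarging this cube by one unit in each available direction shows $B(\vortex)$ may be taken of side length at most $|\vortex| + 2$, hence contains at most $O(|\vortex|^4)$ plaquettes. Summing Lemma \ref{lemma:vortex-combinatorial-bound} over these plaquettes gives a bound of the form $C_1 |\vortex|^4 (\vbdconst)^{m'}$, which since $m' \geq 1$ is dominated by $C_1 |\vortex|^4 (m')^4 (\vbdconst)^{m'}$.

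The second case is the main obstacle, since there $B(\vortex')$ depends on the very object $\vortex'$ being counted. The resolution is that $B(\vortex')$ is nonetheless of controlled size: by Lemma \ref{lemma:vortex-contained-in-cube}, $\vortex'$ lies in a cube of side length $m'$, and after enlarging by one unit in each available direction this cube has side length at most $m' + 2$ and contains $\vortex'$ with no plaquette of $\vortex'$ in its $\partial S_2$ part (any plaquette that cannot be pushed into the interior must lie on the boundary of $\lbox$, which is excluded from $\partial S_2$ by convention), so $B(\vortex')$ may be taken of side length at most $m'+2$. If $B(\vortex')$ contains a plaquette $p$ of $\vortex$, then fixing a vertex $x$ of $p$, both $p$ and all of $\vortex'$ lie in the cube $B(\vortex')$, so $\vortex'$ is contained in the $\ell^\infty$-ball of radius $m'+2$ about $x$, a cube with $O((m')^4)$ plaquettes. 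There are at most $4|\vortex|$ choices of such a vertex $x$, and for each plaquette in the associated ball Lemma \ref{lemma:vortex-combinatorial-bound} again caps the number of size-$m'$ vortices through it by $(\vbdconst)^{m'}$; summing yields $C_2 |\vortex| (m')^4 (\vbdconst)^{m'} \leq C_2 |\vortex|^4 (m')^4 (\vbdconst)^{m'}$.

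Finally I would add the two estimates and verify $C_1 + C_2 \leq 2^{19}$ using the explicit (and deliberately loose) plaquette counts for cubes of side lengths $|\vortex|+2$ and $2m'+5$; this last step is a routine arithmetic check. The only point requiring care is confirming that $B(\cdot)$ always exists with the claimed side-length bounds even near $\partial \lbox$, which follows directly from the definition of $\partial S_2(B)$ omitting plaquettes on the boundary of $\lbox$.
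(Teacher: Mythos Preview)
Your proposal is correct and follows essentially the same approach as the paper: split according to which disjunct of $J(\vortex,\vortex')=1$ holds, bound the number of candidate plaquettes through which $\vortex'$ must pass in each case, and apply Lemma~\ref{lemma:vortex-combinatorial-bound}. The only cosmetic difference is that you bound the side lengths of $B(\vortex)$ and $B(\vortex')$ by $|\vortex|+2$ and $m'+2$ directly via Lemma~\ref{lemma:vortex-contained-in-cube}, whereas the paper quotes the looser $3|\vortex|$ and $3m'$ (implicitly from Lemma~\ref{lemma:vertices-of-g-ell-contained-in-cube}); after the subsequent crude estimates $(|\vortex|+3)^4 \le (4|\vortex|)^4$ and $(2m'+5)^4 \le (7m')^4$ the two arguments yield identical constants $C_1 = 48\cdot 4^4$ and $C_2 = 4\cdot 48\cdot 7^4$, summing to at most $2^{19}$.
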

\begin{proof}
We upper bound the number of $\vortex'$ such that $\vortex' \cap B(\vortex) \neq \varnothing$, and the number of $\vortex'$ such that $\vortex \cap B(\vortex') \neq \varnothing$. For the first case, observe that as $B(\vortex)$ has side length at most $3 |\vortex|$, it has at most $(3|\vortex|+1)^4$ vertices and each vertex is incident to at most $8 \cdot 6 = 48$ plaquettes. Thus $B(\vortex)$ contains at most $48 (3 |\vortex|+1)^4 \leq 48 \cdot 4^4 |\vortex|^4$ plaquettes. Now apply Lemma \ref{lemma:vortex-combinatorial-bound} to upper bound the number of $\vortex'$ such that $|\vortex'| = m'$, and $\vortex' \cap B(\vortex) \neq \varnothing$ by $48 \cdot 4^4 |\vortex|^4 (\vbdconst)^{m'}$.

In the second case, observe if $\vortex \cap B(\vortex') \neq \varnothing$, then $\vortex'$ must contain some plaquette $p$, for which there exists a cube of side length $3m'$ which contains both $p$ and a plaquette of $\vortex$. Any such plaquette $p$ must be contained in a cube of side length $6m'$ centered at a vertex of a plaquette in $\vortex$. The number of plaquettes in a cube of side length $6m'$ is at most $48 (6m'+1)^4 \leq 48 \cdot 7^4 (m')^4$, and the number of vertices in $\vortex$ is at most $4 |\vortex|$. Using these observations along with Lemma \ref{lemma:vortex-combinatorial-bound}, we thus obtain the upper bound $4 \cdot 48 \cdot 7^4 \cdot |\vortex| (m')^4 (\vbdconst)^{m'}$.
To finish, we may further upper bound
\[48 \cdot 4^4 |\vortex|^4 (\vbdconst)^{m'} +  4 \cdot 48 \cdot 7^4 |\vortex| (m')^4 (\vbdconst)^{m'} \leq (48 \cdot 4^4 + 4 \cdot 48 \cdot 7^4) |\vortex|^4 (m')^4 (\vbdconst)^{m'}, \] 
and note $48 \cdot 4^4 + 4 \cdot 48 \cdot 7^4 \leq 2^{19}$.
\end{proof}

\begin{proof}[Proof of Lemma \ref{lemma:ell-equals-zero-bound}]
Given an ordered tuple of plaquette sets $(\plaqset_1, \ldots, \plaqset_n)$, define $H(\plaqset_1, \ldots, \plaqset_n)$ to be the undirected graph with vertex set $[n]$, and edge set $\{\{i, j\} : i \neq j, J(\plaqset_i, \plaqset_j) = 1\}$. (Note $H(\cdot)$ is reminiscent of $G^0(\cdot)$, although one difference is that the vertices of $H(\cdot)$ are integers, whereas the vertices of $G^0(\cdot)$ are plaquette sets. Another difference is that $H$ can encode the connectivity between general plaquette sets, while $G^0$ only encodes the connectivity between vortices.)  Now given $P \in \mc{A}(m, 0, p)$, suppose $G^0(P)$ has vertices $\plaqset^0_1, \ldots, \plaqset^0_{n_0}$, which by definition are compatible vortices. Note then that $P$ corresponds to the collection of ordered tuples
\[ \{ (\plaqset^0_{\pi(1)}, \ldots, \plaqset^0_{\pi(n_0)}) : \pi \text{ a permutation of $[n_0]$} \}. \]
Moreover, since $G^0(P)$ is connected, we have that for any permutation $\pi$, $H(\plaqset^0_{\pi(1)}, \ldots, \plaqset^0_{\pi(n_0)})$ is also connected. Also, $\plaqset^0_{\pi(1)} \cup \cdots \cup \plaqset^0_{\pi(n_0)} = P$ for all $\pi$, and thus $|\plaqset^0_{\pi(1)}| + \cdots + |\plaqset^0_{\pi(n)}| = m$ for all $\pi$. We thus have
\[ |\mc{A}(m, 0, p)| \leq \sum_{n_0=1}^m \frac{1}{n_0!} \sum_{\substack{\plaqset_1, \ldots, \plaqset_{n_0} \\ |\plaqset_1| + \cdots + |\plaqset_{n_0}| = m \\ \exists\, i\, : \, \plaqset_i \ni p}} \ind(H(\plaqset_1, \ldots, \plaqset_{n_0}) \text{ is connected}). \]
Note the sum over $\plaqset_1, \ldots, \plaqset_{n_0}$ also includes the additional restriction that the $\plaqset_1, \ldots, \plaqset_{n_0}$ are compatible vortices. Due to space reasons, we will not explicitly write this (actually, we can even drop the condition that the $\plaqset_i$ are compatible -- we only need the fact that the $\plaqset_i$ are vortices, to eventually apply Lemma \ref{lemma:connectivity-graph-edge}). Now applying Lemma \ref{lemma:vortex-combinatorial-bound} to handle the case $n_0 = 1$, we can upper bound the above display by
\beq \label{eq:a-m-split-into-number-components-bound} (\vbdconst)^m + \sum_{n_0=2}^m \frac{1}{n_0!} \sum_{k=1}^{n_0} \sum_{\substack{\plaqset_1, \ldots, \plaqset_{n_0} \\ |\plaqset_1| + \cdots + |\plaqset_{n_0}| = m \\ \plaqset_k \ni p}} \ind(H(\plaqset_1, \ldots, \plaqset_{n_0}) \text{ is connected}). \eeq
Fix $2 \leq n_0 \leq m$, and set $k = 1$ (say). We proceed to bound
\beq\label{eq:fix-vortex-size}\begin{split}
\sum_{\substack{\plaqset_1, \ldots, \plaqset_{n_0} \\ |\plaqset_1| + \cdots + |\plaqset_{n_0}| = m \\ \plaqset_1 \ni p}}& \ind(H(\plaqset_1, \ldots, \plaqset_{n_0}) \text{ is connected}) \leq \\
&\sum_{\substack{m_1 + \cdots + m_{n_0} = m \\ m_1, \ldots, m_{n_0} \geq 1}} \sum_{\substack{|\plaqset_i| = m_i, 1 \leq i \leq n_0  \\ \plaqset_1 \ni p}} \ind(H(\plaqset_1, \ldots, \plaqset_{n_0}) \text{ is connected}). 
\end{split}\eeq
Fixing $m_1, \ldots, m_{n_0} \geq 1$, observe
\beq\label{eq:introduce-trees} \begin{split}
\sum_{\substack{|\plaqset_i| = m_i, 1 \leq i \leq n_0  \\ \plaqset_1 \ni p}}& \ind(H(\plaqset_1, \ldots, \plaqset_{n_0}) \text{ is connected}) \leq \\
&\sum_{T \text{ on } [n_0]} \sum_{\substack{|\plaqset_i| = m_i, 1 \leq i \leq n_0 \\ \plaqset_1 \ni p}} \ind(H(\plaqset_1, \ldots, \plaqset_{n_0}) \supseteq T). 
\end{split}\eeq
Here $\sum_{T \text{ on } [n_0]}$ denotes a sum over trees with vertex set $[n_0]$, and the expression $H(\plaqset_1, \ldots, \plaqset_{n_0}) \supseteq T$ means $T$ is a subgraph of $H(\plaqset_1, \ldots, \plaqset_{n_0})$. Now fix a tree $T$ with vertex set $[n_0]$. Let the $d_i$ be the degree of vertex $i$ in $T$, for all $1 \leq i \leq n_0$. I claim
\beq\label{eq:sum-of-vortices-bound-claim} \sum_{\substack{|\plaqset_i| = m_i, 1 \leq i \leq n_0  \\ \plaqset_1 \ni p}} \ind(H(\plaqset_1, \ldots, \plaqset_{n_0}) \supseteq T) \leq \prod_{i=1}^{n_0} 2^{19} m_i^{4d_i} (\vbdconst)^{m_i}.\eeq
Given this claim (whose proof we postpone until the end), the rest of the proof is straightforward, but slightly tedious. We obtain
\beq\label{eq:bound-in-terms-of-degrees}\sum_{T \text{ on } [n_0]} \sum_{\substack{|\plaqset_i| = m_i, 1 \leq i \leq n_0 \\ \plaqset_1 \ni p}} \ind(H(\plaqset_1, \ldots, \plaqset_{n_0}) \supseteq T) \leq 2^{19 n_0} (\vbdconst)^m \sum_{T \text{ on } [n_0]} \prod_{i=1}^{n_0} m_i^{4d_i}.  \eeq
Note $d_1 + \cdots + d_{n_0} = 2(n_0 - 1)$, and for any such sequence $d_1, \ldots, d_{n_0}$, the number of trees with this degree sequence is $\frac{(n_0 - 2)!}{(d_1 - 1)! \cdots (d_{n_0} - 1)!}$ (see e.g. \cite{BOL1998}, Chapter VIII, Corollary 21). We may thus bound
\beq\label{eq:introduce-degrees} \sum_{T \text{ on } [n_0]} \prod_{i=1}^{n_0} m_i^{4d_i} \leq (n_0 - 2)! \sum_{\substack{d_1 + \cdots + d_{n_0} = 2(n_0 - 1) \\ d_1, \ldots, d_{n_0} \geq 1}} \prod_{i=1}^{n_0} \frac{m_i^{4d_i}}{(d_i - 1)!}. \eeq
Combining equations \eqref{eq:fix-vortex-size}, \eqref{eq:introduce-trees}, \eqref{eq:bound-in-terms-of-degrees}, and \eqref{eq:introduce-degrees}, we obtain
\beq \label{eq:bound-in-terms-of-degrees-and-m-i}\begin{split}
&\sum_{\substack{\plaqset_1, \ldots, \plaqset_{n_0} \\ |\plaqset_1| + \cdots + |\plaqset_{n_0}| = m \\ \plaqset_1 \ni p}} \ind(H(\plaqset_1, \ldots, \plaqset_{n_0}) \text{ is connected}) \leq \\
&(n_0 - 2)! 2^{19n_0} (\vbdconst)^m \sum_{\substack{d_1 + \cdots + d_{n_0} = 2(n_0-1) \\ d_1, \ldots, d_{n_0} \geq 1}} \sum_{\substack{m_1 + \cdots + m_{n_0} = m \\ m_1, \ldots, m_{n_0} \geq 1}}\prod_{i=1}^{n_0} \frac{m_i^{4d_i}}{(d_i - 1)!}. 
\end{split}\eeq
Fixing $d_1 + \cdots + d_{n_0} = 2(n_0 - 1)$, for any $m_1 + \cdots + m_{n_0} = m$ with $m_1, \ldots, m_{n_0} \geq 1$, we may bound
\[ \prod_{i=1}^{n_0} m_i^{4d_i} \leq \prod_{i=1}^{n_0} \bigg(\frac{md_i}{2(n_0-1)}\bigg)^{4d_i} = \bigg(\frac{m}{2(n_0-1)}\bigg)^{8(n_0 - 1)} \prod_{i=1}^{n_0} d_i^{4d_i}.  \]
Let $C := m / (2(n_0 - 1))$. We thus have
\[\begin{split}
\sum_{\substack{d_1 + \cdots + d_{n_0} = 2(n_0-1) \\ d_1, \ldots, d_{n_0} \geq 1}} &\sum_{\substack{m_1 + \cdots + m_{n_0} = m \\ m_1, \ldots, m_{n_0} \geq 1}} \prod_{i=1}^{n_0} \frac{m_i^{4d_i}}{(d_i - 1)!} \leq \\
&C^{8(n_0 - 1)} \sum_{\substack{d_1 + \cdots + d_{n_0} = 2(n_0 - 1) \\ d_1, \ldots, d_{n_0} \geq 1}} \prod_{i=1}^{n_0} \frac{d_i^{4d_i}}{(d_i - 1)!} \sum_{\substack{m_1 + \cdots + m_{n_0} = m \\ m_1, \ldots, m_{n_0} \geq 1}} 1.
\end{split}\]
We may further upper bound the above by
\beq\label{eq:get-rid-of-m-i-bound} C^{8(n_0 - 1)} \binom{m-1}{n_0-1} \sum_{\substack{d_1 + \cdots + d_{n_0} = 2(n_0-1) \\ d_1, \ldots, d_{n_0} \geq 1}} \prod_{i=1}^{n_0} \frac{d_i^{4d_i}}{(d_i - 1)!}. \eeq
Now for any $d_1 + \cdots + d_{n_0} = 2(n_0 - 1)$ with $d_1, \ldots, d_{n_0} \geq 1$, we may upper bound
\[ \prod_{i=1}^{n_0} d_i^{4d_i} \leq \prod_{i=1}^{n_0} \bigg(\frac{2(n_0-1)}{n_0}\bigg)^{4d_i} = \bigg(\frac{2(n_0 - 1)}{n_0}\bigg)^{8(n_0-1)}. \]
Let $C' := 2(n_0 - 1) / n_0$. We thus have
\beq\label{eq:power-of-degree-bound}\sum_{\substack{d_1 + \cdots + d_{n_0} = 2(n_0-1) \\ d_1, \ldots, d_{n_0}}} \prod_{i=1}^{n_0} \frac{d_i^{4d_i}}{(d_i - 1)!} \leq (C')^{8(n_0-1)} \sum_{\substack{d_1 + \cdots + d_{n_0} = 2(n_0 - 1) \\ d_1, \ldots, d_{n_0} \geq 1}} \prod_{i=1}^{n_0} \frac{1}{(d_i - 1)!}. \eeq
Note
\beq\label{eq:product-of-degree-bound-by-exp} \sum_{\substack{d_1 + \cdots + d_{n_0} = 2(n_0 - 1) \\ d_1,\ldots, d_{n_0} \geq 1}} \prod_{i=1}^{n_0} \frac{1}{(d_i - 1)!} \leq \prod_{i=1}^{n_0} \bigg(\sum_{d_i \geq 1} \frac{1}{(d_i - 1)!} \bigg) = e^{n_0}. \eeq
Combining equations \eqref{eq:bound-in-terms-of-degrees-and-m-i}, \eqref{eq:get-rid-of-m-i-bound}, \eqref{eq:power-of-degree-bound}, and \eqref{eq:product-of-degree-bound-by-exp}, we obtain
\[\begin{split} 
\sum_{\substack{\plaqset_1, \ldots, \plaqset_{n_0} \\ |\plaqset_1| + \cdots + |\plaqset_{n_0}| = m \\ \plaqset_1 \ni p}} &\ind(H(\plaqset_1, \ldots, \plaqset_{n_0}) \text{ is connected}) \leq \\
&(n_0 - 2)! 2^{19n_0} (\vbdconst)^m \binom{m-1}{n_0-1} \bigg(\frac{m}{n_0}\bigg)^{8(n_0-1)} e^{n_0}.
\end{split}\] 
By the same argument, the above inequality still holds with the condition $\plaqset_1 \ni p$ replaced by $\plaqset_k \ni p$, for any $2 \leq k \leq n_0$. Combining this with equation \eqref{eq:a-m-split-into-number-components-bound}, we obtain
\[ |\mc{A}(m, 0, p)| \leq (\vbdconst)^m + (\vbdconst)^m \sum_{n_0=2}^m \frac{(2^{19} e)^{n_0}}{n_0 -1} \bigg(\frac{m}{n_0}\bigg)^{8(n_0-1)}\binom{m-1}{n_0-1}.  \]
We may bound
\[\binom{m-1}{n_0-1} \leq \bigg(\frac{(m-1) e}{n_0-1}\bigg)^{n_0-1}, \]
and for $x > 0$, we have $(m / x)^x \leq e^{m/e}$.
Thus
\begin{align*} 
|\mc{A}(m, 0, p)| &\leq (\vbdconst)^m + (\vbdconst)^m e^{9m/e}\sum_{n_0 = 2}^m \frac{(2^{19} e^2)^{n_0}}{n_0-1}  \\
&\leq (\vbdconst)^m + (\vbdconst)^m e^{9m/e} (m-1) \frac{(2^{19} e^2)^m}{m-1} \\
&\leq (\vbdconst)^m (e^{9/e} 2^{19} e^4)^m. 
\end{align*}
To show the claim \eqref{eq:sum-of-vortices-bound-claim}, fix a tree $T$ on $[n_0]$. First, by Lemma \ref{lemma:vortex-combinatorial-bound}, the number of possible choices of $\plaqset_1$, such that $|\plaqset_1| = m_1$, $\plaqset_1 \ni p$ is at most $(\vbdconst)^{m_1}$. Let $i_1, \ldots, i_{d_1}$ be the neighbors of $1$ in $T$. Having chosen $\plaqset_1$, each $\plaqset_{i_j}$, $1 \leq j \leq d_1$ must be such that $|\plaqset_{i_j}| = m_{i_j}$, and $J(\plaqset_1, \plaqset_{i_j}) = 1$. Applying Lemma \ref{lemma:connectivity-graph-edge}, we thus have that the number of choices of $\plaqset_{i_j}$ is at most $2^{19} m_1^4 m_{i_j}^4 (\vbdconst)^{m_{i_j}}$. Now continue in this manner until we have chosen $\plaqset_i$ for all $1 \leq i \leq n_0$. 
\end{proof}

We now proceed to upper bound $|\mc{A}(m, \scale, p)|$ for general $\scale$. Naturally, the proof will follow by induction $\scale$. The proof of the inductive step will have the same form as the proof of Lemma \ref{lemma:ell-equals-zero-bound}, which we just finished. Recall that the first step in this proof was to associate the vertices $G^0(P)$ with a collection of ordered tuples. The following lemma allows us to do the same thing for the vertices of $G^\scale(P)$, for general $\scale$.

\begin{lemma}\label{lemma:decompose-ell-plus-one-into-ell-components}
Let $m \geq 1$, $\scale \geq 0$. Let $P \in \mc{A}(m, \scale+1)$. Let the vertices of $G^{\scale+1}(P)$ be $\plaqset^{\scale+1}_1, \ldots, \plaqset^{\scale+1}_{n_{\scale+1}}$. Then for all $1 \leq i \leq n_{\scale+1}$, we have $\plaqset^{\scale+1}_i \in \mc{A}(|\plaqset^{\scale+1}_i|, \scale)$.
\end{lemma}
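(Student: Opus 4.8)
Fix $1 \le i \le n_{\scale+1}$ and write $Q := P^{\scale+1}_i$. The plan is to first establish a structural claim comparing the hierarchy of $Q$ with that of $P$, and then read off the two required properties ($G^\scale(Q)$ connected, and $Q \in \mc{D}$) from it. The structural claim I would prove is: \emph{for every $0 \le \scale' \le \scale$, the graph $G^{\scale'}(Q)$ is the subgraph of $G^{\scale'}(P)$ induced on the collection of those vertices of $G^{\scale'}(P)$ that are contained in $Q$.} Two features of the construction make this work. First, $P \mapsto B(P)$ and $(P,P') \mapsto J(P,P')$ depend only on the plaquette sets and on $\lbox$, so the adjacency relation encoded by $J$ is intrinsic and is unchanged when one restricts attention to a sub-collection of plaquette sets. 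Second, $Q$, viewed as a collection of vertices of $G^{\scale'}(P)$, is a union of connected components of $G^{\scale'}(P)$ for each $0 \le \scale' \le \scale$; I would obtain this by downward induction on $\scale'$, starting from the fact that, by construction of $G^{\scale+1}(P)$, $Q$ is exactly one connected component of $G^\scale(P)$, and using that each vertex of $G^{\scale''+1}(P)$ is, as a set of vertices of $G^{\scale''}(P)$, a union of connected components of $G^{\scale''}(P)$.

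Granting these two facts, I would prove the structural claim by induction on $\scale'$. For $\scale' = 0$: since the vertices of $G^{\scale+1}(P)$ are compatible, $Q$ is compatible with $P \backslash Q$, so no $3$-cell meets both $Q$ and $P \backslash Q$; hence the vortex decomposition of $Q$ consists precisely of the vortices of $P$ contained in $Q$, which is the claimed vertex set, and the edges match because $J$ is intrinsic. For the step from $\scale'$ to $\scale'+1 \le \scale$: by the inductive hypothesis together with $Q$ being a union of whole connected components of $G^{\scale'}(P)$, the connected components of $G^{\scale'}(Q)$ are exactly those components of $G^{\scale'}(P)$ lying inside $Q$; consequently the vertices of $G^{\scale'+1}(Q)$ are exactly the vertices of $G^{\scale'+1}(P)$ contained in $Q$, and the edges again match by intrinsicality of $J$.

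From the structural claim the conclusion should follow quickly. Taking $\scale' = \scale$: the vertices of $G^\scale(P)$ contained in $Q$ form, by the construction of $G^{\scale+1}(P)$, a single connected component of $G^\scale(P)$, so $G^\scale(Q)$ is connected; this is the ``$G^\scale$ connected'' half of membership in $\mc{A}(|Q|,\scale)$, and it also forces $G^{\scale+1}(Q)$ to have a single vertex, whence $\scale^*(Q) \le \scale+1 < \infty$. For the size condition defining $\mc{D}$, I would note $\scale^*(Q) \le \scale^*(P)$ (clear when $\scale^*(P) \ge \scale+1$; and when $\scale^*(P) \le \scale$ the graph $G^\scale(P)$ already has the single vertex $P$, forcing $Q = P$ and $\scale^*(Q) = \scale^*(P)$). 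Then for $\scale' \le \min(\scale^*(Q), \scale)$ the vertices of $G^{\scale'}(Q)$ are vertices of $G^{\scale'}(P)$ by the structural claim, and since $\scale' \le \scale^*(P)$, the hypothesis $P \in \mc{D}$ gives each of them size $\ge 2^{\scale'}$; in the only remaining case, $\scale^*(Q) = \scale+1$, one has $G^{\scale+1}(Q) = \{Q\}$ and $|Q| = |P^{\scale+1}_i| \ge 2^{\scale+1}$ because $\scale+1 \le \scale^*(P)$ and $P \in \mc{D}$. Hence $Q \in \mc{D}$, and combined with the connectivity of $G^\scale(Q)$ this yields $Q = P^{\scale+1}_i \in \mc{A}(|P^{\scale+1}_i|,\scale)$.

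The hard part will be the bookkeeping in the structural claim: checking that passing from $P$ to the sub-collection $Q$ commutes with the inductive construction of the hierarchy $G^0(\cdot), \dots, G^\scale(\cdot)$. The intrinsicality of $B(\cdot)$ and $J(\cdot,\cdot)$, together with the fact that $Q$ is cut out as a union of whole connected components at every level up to $\scale$, are exactly what make this go through; once those are isolated, each step of the induction is routine.
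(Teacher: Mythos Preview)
Your proposal is correct and takes essentially the same approach as the paper: both isolate the structural claim that, for $\scale' \le \scale+1$, the hierarchy $G^{\scale'}(Q)$ sits inside $G^{\scale'}(P)$ (the paper phrases this as ``$G^{\scale'}(P)$ is a disjoint union of the graphs $G^{\scale'}(P^{\scale+1}_1), \ldots, G^{\scale'}(P^{\scale+1}_{n_{\scale+1}})$''), and read off connectivity of $G^\scale(Q)$ and the $\mc{D}$ size conditions from it. Your write-up is in fact more careful than the paper's on two points the paper leaves implicit: the reason the induced-subgraph identification holds (intrinsicality of $J$ plus $Q$ being a union of whole components at each level), and the comparison $\scale^*(Q) \le \scale^*(P)$ needed to transfer the $2^{\scale'}$ size bounds.
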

\begin{proof}
Without loss of generality, take $i = 1$, and let $m_1 := |\plaqset^{\scale+1}_1|$. To show $\plaqset^{\scale+1}_1 \in \mc{A}(m_1, \scale)$, we need to show that $G^\scale(\plaqset^{\scale+1}_1)$ is connected, and that for all $\scale' \leq \scale^*(\plaqset^{\scale+1}_1)$, we have that all vertices of $G^{\scale'}(\plaqset^{\scale+1}_1)$ are of size at least $2^{\scale'}$. To show both these conditions, it suffices to show the following claim: for all $\scale' \leq \scale+1$, we have that the vertices of $G^{\scale'}(\plaqset^{\scale+1}_1)$ are also vertices of $G^{\scale'}(P)$.

To show this claim, observe that due to the hierarchical nature of the construction of $G^\scale(P)$, for all $\scale' \leq \scale$, we have that $G^{\scale'}(P)$ is a disjoint union of the graphs $G^{\scale'}(\plaqset^{\scale+1}_1), \ldots, G^{\scale'}(\plaqset^{\scale+1}_{n_{\scale+1}})$. This shows the claim for $\scale' \leq \scale$. For $\scale' = \scale+1$, observe that the connected components of $G^\scale(P)$ are the connected components of $G^\scale(\plaqset^{\scale+1}_1) \ldots, G^\scale(\plaqset^{\scale+1}_{\scale+1})$, and thus as the vertices of $G^{\scale+1}(P)$ are unions of the connected components of $G^\scale(P)$, it follows that the vertices of $G^{\scale+1}(P)$ are the vertices of $G^{\scale+1}(\plaqset^{\scale+1}_1) \ldots, G^{\scale+1}(\plaqset^{\scale+1}_{n_{\scale+1}})$. This implies that $G^{\scale+1}(\plaqset^{\scale+1}_1)$ can only have a single vertex, which then must be $\plaqset^{\scale+1}_1$.
\end{proof}

We will also need the following analogue of Lemma \ref{lemma:connectivity-graph-edge}.

\begin{lemma}\label{lemma:connectivity-graph-edge-general-ell}
Let $\scale \geq 0$. Suppose there is a constant $C$, such that for all $m \geq 1$, $p \in \plaquettes$, we have $|\mc{A}(m, \scale, p)| \leq C^m$. Fix $m, m' \geq 1$, and $P \in \mc{A}(m, \scale)$. Then the number of $P' \in \mc{A}(m', \scale)$ such that $J(P, P') = 1$ is at most
\[ 2^{19} m^4 (m')^4 C^{m'}.\]
\end{lemma}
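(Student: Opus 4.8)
The plan is to mirror the proof of Lemma~\ref{lemma:connectivity-graph-edge}, the $\scale = 0$ analogue of this statement, replacing the appeal to Lemma~\ref{lemma:vortex-combinatorial-bound} by the hypothesis $|\mc{A}(m', \scale, p)| \leq C^{m'}$. Since $J(P, P') = 1$ means either $P \cap B(P') \neq \varnothing$ or $P' \cap B(P) \neq \varnothing$, it suffices to bound the number of $P' \in \mc{A}(m', \scale)$ in each of these two cases separately and add.

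First I would record the size bound on $B(P)$ for $P \in \mc{A}(m, \scale)$. Since $P \in \mc{A}(m,\scale)$ means $G^\scale(P)$ is connected, the graph $G^{\scale+1}(P)$ has a single vertex; as the vertices of $G^{\scale+1}(P)$ partition $P$, this vertex must be $P$ itself, so $P$ is a vertex of $G^{\scale+1}(P)$. Applying Lemma~\ref{lemma:vertices-of-g-ell-contained-in-cube} then gives that $B(P)$ has side length at most $3m$. The same argument applied to $P' \in \mc{A}(m',\scale)$ gives that $B(P')$ has side length at most $3m'$.

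Now for the case $P' \cap B(P) \neq \varnothing$: the cube $B(P)$ has side length at most $3m$, hence it contains at most $48(3m+1)^4 \leq 48 \cdot 4^4 m^4$ plaquettes; for each such plaquette $p$, the number of $P' \in \mc{A}(m', \scale)$ with $p \in P'$ is at most $|\mc{A}(m', \scale, p)| \leq C^{m'}$, so this case contributes at most $48 \cdot 4^4 m^4 C^{m'}$. For the case $P \cap B(P') \neq \varnothing$: every plaquette of $P'$ lies in $B(P')$, which has side length at most $3m'$ and also contains a plaquette of $P$, so every plaquette of $P'$ lies in some cube of side length $6m'$ centered at a vertex of a plaquette of $P$; the number of such candidate plaquettes is at most $4m \cdot 48(6m'+1)^4 \leq 4 \cdot 48 \cdot 7^4 m (m')^4$, and each candidate $p$ is contained in at most $|\mc{A}(m',\scale,p)| \leq C^{m'}$ members of $\mc{A}(m', \scale)$, so this case contributes at most $4 \cdot 48 \cdot 7^4 m (m')^4 C^{m'}$. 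Adding the two contributions and using $m, m' \geq 1$ along with $48 \cdot 4^4 + 4 \cdot 48 \cdot 7^4 \leq 2^{19}$ yields the claimed bound $2^{19} m^4 (m')^4 C^{m'}$.

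The one genuinely delicate point — and the step I would be most careful about — is the case $P \cap B(P') \neq \varnothing$, where the relevant cube $B(P')$ depends on the unknown $P'$: here one must use $P' \in \mc{A}(m', \scale)$ in an essential way (not merely $|P'| = m'$), via the preliminary paragraph above, in order to bound the side length of $B(P')$ and thereby localize all of $P'$ near $P$. Everything else is a routine repackaging of the geometric counting already carried out in the proof of Lemma~\ref{lemma:connectivity-graph-edge}.
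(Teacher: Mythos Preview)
Your proposal is correct and follows exactly the approach the paper takes: first observe that $P \in \mc{A}(m,\scale)$ forces $G^{\scale+1}(P)$ to have the single vertex $P$, so Lemma~\ref{lemma:vertices-of-g-ell-contained-in-cube} bounds the side length of $B(P)$ by $3m$, and then the two-case geometric count from Lemma~\ref{lemma:connectivity-graph-edge} goes through verbatim with the hypothesis $|\mc{A}(m',\scale,p)|\leq C^{m'}$ replacing the appeal to Lemma~\ref{lemma:vortex-combinatorial-bound}. The paper's own proof is just these two sentences, deferring the details to ``the exact same argument'' as in Lemma~\ref{lemma:connectivity-graph-edge}; you have simply written those details out.
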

\begin{proof}
Note for any $P \in \mc{A}(m, \scale)$, the graph $G^{\scale+1}(P)$ has a single vertex, which then must be $P$. Thus by Lemma \ref{lemma:vertices-of-g-ell-contained-in-cube}, the cube $B(P)$ has side length at most $3|P|$. Given this, the argument is the exact same as for Lemma \ref{lemma:connectivity-graph-edge}.
\end{proof}

We now have all the ingredients needed to be able to bound $|\mc{A}(m, \scale, p)|$ for general $\scale$.

\begin{lemma}\label{lemma:bound-on-a-m-p-inductive-step}
Let $\scale \geq 0$. Suppose there is a constant $C$ such that for all $m \geq 1$, $p \in \plaquettes$, we have $|\mc{A}(m, \scale, p)| \leq C^m$. Then for all $m \geq 1$, $p \in \plaquettes$, we have
\[ |\mc{A}(m, \scale+1, p)| \leq \begin{cases} \big(C (2^{19} e^4)^{2^{-(\scale+1)}} e^{9/e}\big)^m & \scale=0 \\ \big(C (2^{19} e^4)^{2^{-(\scale+1)}} (2^{\scale+1})^{9\cdot 2^{-(\scale+1)}}\big)^m & \scale \geq 1\end{cases}. \]
\end{lemma}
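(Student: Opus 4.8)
The plan is to mirror the structure of the proof of Lemma \ref{lemma:ell-equals-zero-bound}, but with the roles of ``vortices'' and the graph $G^0$ replaced by the vertices of $G^{\scale}$ and the graph $G^{\scale+1}$. Fix $m \geq 1$, $p \in \plaquettes$, and take $P \in \mc{A}(m, \scale+1, p)$. By Lemma \ref{lemma:decompose-ell-plus-one-into-ell-components}, if $G^{\scale+1}(P)$ has vertices $\plaqset_1^{\scale+1}, \ldots, \plaqset_{n}^{\scale+1}$, then each $\plaqset_i^{\scale+1} \in \mc{A}(|\plaqset_i^{\scale+1}|, \scale)$, these vertices are compatible (hence disjoint), their union is $P$, and so their sizes sum to $m$. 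Moreover $G^{\scale+1}(P)$ is connected. Thus $P$ corresponds to the collection of ordered tuples $(\plaqset^{\scale+1}_{\pi(1)}, \ldots, \plaqset^{\scale+1}_{\pi(n)})$ over permutations $\pi$, and if $H(\cdot)$ denotes the connectivity graph defined via the function $J$ (as in the proof of Lemma \ref{lemma:ell-equals-zero-bound}), then $H(\plaqset^{\scale+1}_{\pi(1)}, \ldots, \plaqset^{\scale+1}_{\pi(n)})$ is connected for every $\pi$. This gives the bound
\[ |\mc{A}(m, \scale+1, p)| \leq \sum_{n=1}^m \frac{1}{n!} \sum_{\substack{\plaqset_1, \ldots, \plaqset_n \in \mc{A}(\cdot, \scale) \\ |\plaqset_1| + \cdots + |\plaqset_n| = m \\ \exists\, i\, :\, \plaqset_i \ni p}} \ind(H(\plaqset_1, \ldots, \plaqset_n) \text{ connected}), \]
where implicitly each $\plaqset_i$ ranges over $\mc{A}(|\plaqset_i|, \scale)$.

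Next I would run exactly the same chain of estimates as in Lemma \ref{lemma:ell-equals-zero-bound}: isolate the $n=1$ term (bounded by $C^m$ using the hypothesis $|\mc{A}(m, \scale, p)| \leq C^m$), split the remaining sum by which component contains $p$, pass to spanning trees $T$ on $[n]$, and then establish the analogue of claim \eqref{eq:sum-of-vortices-bound-claim}: for a fixed tree $T$ with degree sequence $d_1, \ldots, d_n$ and fixed sizes $m_1, \ldots, m_n$,
\[ \sum_{\substack{\plaqset_i \in \mc{A}(m_i, \scale),\, 1 \leq i \leq n \\ \plaqset_1 \ni p}} \ind(H(\plaqset_1, \ldots, \plaqset_n) \supseteq T) \leq \prod_{i=1}^n 2^{19} m_i^{4 d_i} C^{m_i}. \]
This is proved by traversing $T$ in breadth-first order: the hypothesis bounds the number of choices of $\plaqset_1 \ni p$ with $|\plaqset_1| = m_1$ by $C^{m_1}$, and Lemma \ref{lemma:connectivity-graph-edge-general-ell} bounds the number of choices of each subsequent $\plaqset_{i_j}$ (given its parent $\plaqset_i$) with $|\plaqset_{i_j}| = m_{i_j}$ and $J(\plaqset_i, \plaqset_{i_j}) = 1$ by $2^{19} m_i^4 m_{i_j}^4 C^{m_{i_j}}$; multiplying over the tree yields each vertex's size raised to its degree, giving the displayed product. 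From here the combinatorial bookkeeping — summing over degree sequences via the formula $\frac{(n-2)!}{(d_1-1)!\cdots(d_n-1)!}$ for the number of trees, summing over the $m_i$, using $\binom{m-1}{n-1} \leq (e(m-1)/(n-1))^{n-1}$ and $(m/x)^x \leq e^{m/e}$ — is verbatim the same as in Lemma \ref{lemma:ell-equals-zero-bound}, and produces a bound of the shape $C^m \cdot (2^{19} e^4)^m \cdot (\text{sub-exponential factor})^m$ but now with an extra gain.

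The point where the new case distinction ($\scale = 0$ versus $\scale \geq 1$) enters, and the step I expect to be the real obstacle, is controlling the factor $(2^{19} e^4)^{n}$ that accumulates: in Lemma \ref{lemma:ell-equals-zero-bound} one crudely bounded $(2^{19}e^4)^n \leq (2^{19}e^4)^m$ since $n \leq m$, but here we can do better because each $\plaqset_i^{\scale+1}$, being in $\mc{A}(\cdot, \scale)$ with $\scale^* \leq \scale$, has size at least $2^{\scale}$ (this is exactly the size lower bound built into $\mc{D}$), so $n \leq m / 2^{\scale}$ when $\scale \geq 1$ — whereas for $\scale = 0$ no such bound is available and we still only get $n \leq m$. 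Thus $(2^{19}e^4)^n \leq (2^{19}e^4)^{m \cdot 2^{-(\scale+1)} \cdot 2}$... more precisely one writes $(2^{19}e^4)^n = ((2^{19}e^4)^{n/m})^m$ and uses $n/m \leq 2^{-\scale}$ (for $\scale \geq 1$) or $n/m \leq 1$ (for $\scale = 0$), and similarly the $(m/n)^{8(n-1)}$-type factors get replaced by $(2^\scale)^{9 \cdot 2^{-\scale} \cdot m}$-type bounds in the $\scale \geq 1$ case and by $e^{9m/e}$ in the $\scale = 0$ case. Carefully tracking which exponent gets the $2^{-(\scale+1)}$ saving versus which stays at $2^{-\scale}$, and making sure the arithmetic lines up with the two stated right-hand sides, is the delicate bookkeeping; everything else is a transcription of the earlier argument. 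Once the inductive step is in hand, one iterates it starting from Lemma \ref{lemma:ell-equals-zero-bound} to get the uniform bound $|\mc{A}(m, \floor{\log_2 m}-1, p)| \leq (\knotbound)^m$, which via the observation that every knot $K$ of size $m$ lies in $\mc{A}(m, \floor{\log_2 m}-1, p)$ for each $p \in K$ proves Lemma \ref{lemma:knot-combinatorial-bound}.
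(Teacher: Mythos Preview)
Your overall strategy is correct and matches the paper's: replay the proof of Lemma \ref{lemma:ell-equals-zero-bound} with vortices replaced by elements of $\mc{A}(\cdot,\scale)$, using Lemma \ref{lemma:decompose-ell-plus-one-into-ell-components} to decompose and Lemma \ref{lemma:connectivity-graph-edge-general-ell} in place of Lemma \ref{lemma:connectivity-graph-edge}. The tree-traversal argument for the analogue of \eqref{eq:sum-of-vortices-bound-claim} is also right.

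However, your accounting for the gain in the exponent is off in a way that would prevent you from reaching the stated bound. The vertices you are counting are the vertices of $G^{\scale+1}(P)$, and for $P\in\mc{D}$ with $\scale+1\le \scale^*(P)$ these have size at least $2^{\scale+1}$, not $2^{\scale}$. Hence the number $n$ of components satisfies $n\le m\,2^{-(\scale+1)}$, and this bound is available for \emph{every} $\scale\ge 0$, including $\scale=0$ (where it gives $n\le m/2$). So your claim that ``for $\scale=0$ no such bound is available and we still only get $n\le m$'' is wrong, and with your $n\le m/2^{\scale}$ for $\scale\ge 1$ you would land on $(2^{19}e^4)^{2^{-\scale}}$ rather than the required $(2^{19}e^4)^{2^{-(\scale+1)}}$.

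The actual source of the $\scale=0$ versus $\scale\ge 1$ split is different from what you describe. After the tree-sum bookkeeping one is left with factors of the form $(m/n)^{8(n-1)}$ and $\binom{m-1}{n-1}\le(e(m-1)/(n-1))^{n-1}$, and bounding these leads to controlling $\sup_{1\le x\le m\,2^{-(\scale+1)}}(m/x)^x$. The function $x\mapsto (m/x)^x$ is maximized at $x=m/e$. For $\scale=0$ the range $[1,m/2]$ contains $m/e$, so the supremum is $e^{m/e}$; for $\scale\ge 1$ the range $[1,m/2^{\scale+1}]$ lies strictly left of $m/e$, so the supremum is attained at the right endpoint, giving $(2^{\scale+1})^{m\,2^{-(\scale+1)}}$. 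That is where the two cases in the statement come from, and together with $n\le m\,2^{-(\scale+1)}$ it produces exactly the exponents $2^{-(\scale+1)}$ on both $(2^{19}e^4)$ and $(2^{\scale+1})^9$.
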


Before we prove this lemma, we show its consequences. Upon combining Lemmas \ref{lemma:ell-equals-zero-bound} and \ref{lemma:bound-on-a-m-p-inductive-step}, we obtain by induction the following corollary.

\begin{cor}\label{cor:bound-on-a-m-p-general-ell}
For all $m \geq 1$, $0 \leq \scale \leq \floor{\log_2 m} - 1$, $p \in \plaquettes$, we have 
\[ |\mc{A}(m, \scale, p)| \leq (20 e^{18/e} 2^{38} e^9 2^{27/2})^m \leq (10^{24})^m. \]
\end{cor}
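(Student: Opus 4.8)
The plan is to obtain this as a bookkeeping consequence of Lemmas \ref{lemma:ell-equals-zero-bound} and \ref{lemma:bound-on-a-m-p-inductive-step}, tracking only the exponential growth rate. For $\scale \geq 0$ let $C_\scale$ be a constant with $|\mc{A}(m,\scale,p)| \leq C_\scale^{m}$ for all $m \geq 1$, $p \in \plaquettes$. Lemma \ref{lemma:ell-equals-zero-bound} gives the base value $C_0 := (\vbdconst)\,(e^{9/e} 2^{19} e^4) = 20 \cdot 2^{19} e^{5} e^{9/e}$, and Lemma \ref{lemma:bound-on-a-m-p-inductive-step}, applied inductively in $\scale$, lets us take $C_{\scale+1} = C_\scale\, f_\scale$ with $f_0 = (2^{19} e^4)^{1/2} e^{9/e}$ and $f_\scale = (2^{19} e^4)^{2^{-(\scale+1)}} (2^{\scale+1})^{9 \cdot 2^{-(\scale+1)}}$ for $\scale \geq 1$. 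Every $f_\scale \geq 1$, so $(C_\scale)_{\scale \geq 0}$ is nondecreasing and it suffices to bound the limit $C_\infty := C_0 \prod_{\scale \geq 0} f_\scale$; since the corollary only concerns $\scale \leq \floor{\log_2 m} - 1$ but $C_\infty$ dominates every $C_\scale$, this restriction plays no role.

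The single computation to carry out is the infinite product. Reindexing by $k = \scale + 1$ gives $\prod_{\scale \geq 0} f_\scale = f_0 \prod_{k \geq 2} (2^{19} e^4)^{2^{-k}} (2^{k})^{9 \cdot 2^{-k}}$, and then the elementary identities $\sum_{k \geq 2} 2^{-k} = \tfrac12$ and $\sum_{k \geq 2} k\,2^{-k} = \tfrac32$ (both from $\sum_{k \geq 1} 2^{-k} = 1$, $\sum_{k \geq 1} k\,2^{-k} = 2$) collapse the tail to $(2^{19} e^4)^{1/2} 2^{27/2}$. Multiplying in $f_0$ and $C_0$ yields $C_\infty = C_0\,(2^{19} e^4)\, e^{9/e}\, 2^{27/2} = 20\, e^{18/e}\, 2^{38}\, e^{9}\, 2^{27/2}$, which is exactly the first claimed bound; here the factor $e^{9/e}$ appears precisely twice (from $C_0$ and from $f_0$) and the $\scale \geq 1$ factors produce the $2^{27/2}$. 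The second bound then follows from a crude numerical estimate: $e^{18/e} < 760$, $e^{9} < 8200$, $2^{38} < 2.8 \times 10^{11}$, $2^{27/2} < 1.2 \times 10^{4}$, whose product with $20$ lies below $5 \times 10^{23} < 10^{24}$.

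There is no substantive difficulty here; the only thing requiring care is to match the asymmetric $\scale = 0$ versus $\scale \geq 1$ cases of Lemma \ref{lemma:bound-on-a-m-p-inductive-step} when forming the product, so that neither the $e^{9/e}$ contributions nor the $(2^{\scale+1})^{9 \cdot 2^{-(\scale+1)}}$ contributions are double-counted or dropped. An alternative, slightly more hands-on route that avoids passing to the limit would be to prove $C_\scale \leq 10^{24}$ for every $\scale$ directly by induction, using the partial-sum bounds $\sum_{j=2}^{\scale} 2^{-j} \leq \tfrac12$ and $\sum_{j=2}^{\scale} j\,2^{-j} \leq \tfrac32$; this gives the same constant and makes the $\floor{\log_2 m} - 1$ cutoff literally irrelevant.
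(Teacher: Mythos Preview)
Your proposal is correct and follows exactly the approach indicated by the paper, which simply says the corollary follows ``by induction'' from Lemmas \ref{lemma:ell-equals-zero-bound} and \ref{lemma:bound-on-a-m-p-inductive-step}. You have carried out the bookkeeping that the paper omits, and your computation of $C_\infty = 20\, e^{18/e}\, 2^{38}\, e^{9}\, 2^{27/2}$ via the series identities $\sum_{k \geq 2} 2^{-k} = \tfrac12$ and $\sum_{k \geq 2} k\,2^{-k} = \tfrac32$ recovers precisely the constant in the statement.
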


\begin{proof}[Proof of Lemma \ref{lemma:knot-combinatorial-bound}]
As previously detailed, every knot $K \in \mc{K}$ of size $m$ is in the set $\mc{A}(m, \floor{\log_2 m} - 1)$. Thus for fixed $m \geq 1$, the number of knots $K \in \mc{K}$ of size $m$ which contain $p$ is bounded by $|\mc{A}(m, \floor{\log_2 m} - 1, p)|$. Now apply Corollary \ref{cor:bound-on-a-m-p-general-ell}.
\end{proof}

\begin{proof}[Proof of Lemma \ref{lemma:bound-on-a-m-p-inductive-step}]
Fix $m \geq 1$. Recall that if $\scale +1 > \floor{\log_2 m} - 1$, then $\mc{A}(m, \scale +1) = \mc{A}(m, \floor{\log_2 m} - 1) = \mc{A}(m, \scale)$. Thus we may assume that $\scale + 1 \leq \floor{\log_2 m} - 1$. For $\plaqset_1, \ldots, \plaqset_n \sse \plaquettes$, let $H(\plaqset_1, \ldots, \plaqset_n)$ be the graph as defined in the proof of Lemma \ref{lemma:ell-equals-zero-bound}. By applying Lemma \ref{lemma:decompose-ell-plus-one-into-ell-components}, we may proceed as in the proof of Lemma \ref{lemma:ell-equals-zero-bound} to upper bound $|\mc{A}(m, \scale+1, p)|$ by
\[\begin{split}
C^m +
\sum_{n_{\scale+1} = 2}^{\floor{m2^{-(\scale+1)}}} \frac{1}{(n_{\scale+1})!} 
\sum_{k=1}^{n_{\scale+1}} \sum_{\substack{m_1 + \cdots +\\ m_{n_{\scale+1}} = m \\ m_1, \ldots, m_{n_{\scale+1}} \geq 1}} \sum_{\substack{\plaqset_i \in \mc{A}(m_i, \scale) \\1 \leq i \leq n_{\scale+1}\\ \plaqset_k \ni p}} \ind(H(\plaqset_1, \ldots, \plaqset_{n_{\scale+1}}) \text{ is connected}). 
\end{split}\]
Here we have $n_{\scale+1} \leq m2^{-(\scale+1)}$ because for $P \in \mc{A}(m, \scale+1, p)$, we have that for $\scale' \leq \scale^*(P)$, all vertices of $G^{\scale'}(P)$ are of size at least $2^{\scale'}$. As $\scale+1 \leq \floor{\log_2 m} - 1$, it then follows that all vertices of $G^{\scale+1}(P)$ are of size at least $2^{\scale+1}$, and thus $G^{\scale+1}(P)$ can have at most $m 2^{-(\scale+1)}$ vertices. Note also that in the sum over $m_1 + \cdots + m_{n_{\scale+1}} = m$, we can further restrict $m_i \geq 2^{\scale+1}$, but we can afford to be loose and ignore this constraint. Now by summing over trees as in the proof of Lemma \ref{lemma:ell-equals-zero-bound}, and using Lemma \ref{lemma:connectivity-graph-edge-general-ell} in place of Lemma \ref{lemma:connectivity-graph-edge} to obtain the analogue of the inequality \eqref{eq:sum-of-vortices-bound-claim}, we obtain
\beq\label{eq:a-m-ell-plus-one-p-bound} \abs{\mc{A}(m, \scale+1, p)} \leq C^m + C^m D, \eeq
where
\[ D := \sum_{n_{\scale+1}=2}^{\floor{m 2^{-(\scale+1)}}} \frac{(2^{19} e)^{n_{\scale+1}}}{n_{\scale+1} - 1} \bigg(\frac{m}{n_{\scale+1}}\bigg)^{8(n_{\scale+1} - 1)} \binom{m-1}{n_{\scale+1} - 1}.\]
Now observe
\[ \sup_{1 \leq x \leq \floor{m 2^{-(\scale+1)}}} \bigg(\frac{m}{x}\bigg)^x \leq \begin{cases} e^{m/e} & \scale = 0 \\ (2^{\scale+1})^{m2^{-(\scale+1)}} & \scale \geq 1 \end{cases}.\]
Suppose $\scale \geq 1$. The case $\scale = 0$ will follow similarly. As
\[ \binom{m-1}{n_{\scale+1} - 1} \leq \bigg(\frac{(m-1)e}{n_{\scale+1} - 1}\bigg)^{n_{\scale+1} - 1}, \]
we obtain
\[ D \leq (2^{19} e^2)^{m2^{-(\scale+1)}} (2^{\scale+1})^{9m 2^{-(\scale+1)}} m2^{-(\scale+1)}.\]
Combining this with \eqref{eq:a-m-ell-plus-one-p-bound}, the desired result follows.
\end{proof}

\appendix 

\numberwithin{theorem}{section}

\section{}

We prove the representation theory statements which were made in Section \ref{section:intro}.

\begin{lemma}\label{lemma:exist-unitary-rep}
Let $G$ be a finite group of order at least 3. There exists a faithful unitary representation $\rho$ such that $\norm{A}_{op} < 1$.
\end{lemma}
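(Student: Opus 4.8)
The plan is to take $\rho$ to be an explicit representation, namely the \emph{reduced regular representation}: let $G$ act on $V := \{v \in \mathbb{C}^G : \sum_{h\in G} v_h = 0\}$ by permuting the standard basis, $e_h \mapsto e_{gh}$. This $\rho$ is unitary because the regular representation permutes an orthonormal basis and $V$ is an invariant subspace, being the orthogonal complement of the all-ones vector $\mathbf 1 = \sum_h e_h$. First I would check that $\rho$ is faithful: its character $\chi$ satisfies $\chi = \chi_{\mathrm{reg}} - 1$ (the regular representation splits as $V$ plus the trivial line $\mathbb{C}\mathbf 1$), so $g\in\ker\rho$ forces $\chi_{\mathrm{reg}}(g) = |G|$, i.e. $g = 1$; equivalently, if $\rho(g)$ is the identity on $V$ then $\rho_{\mathrm{reg}}(g)$ is the identity on $V \oplus \mathbb{C}\mathbf 1 = \mathbb{C}^G$, hence $g=1$. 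In particular $\Delta_G > 0$.

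Next I would compute $\Delta_G$ and $G_0$ for this choice. Since $\chi_{\mathrm{reg}}(1) = |G|$ and $\chi_{\mathrm{reg}}(g) = 0$ for $g \neq 1$, we get $\chi(1) = |G| - 1$ and $\chi(g) = -1$ for every $g \neq 1$, so $\Re(\chi(1) - \chi(g)) = |G|$ for all $g\neq 1$. Hence $\Delta_G = |G|$ and, crucially, $G_0 = G \setminus \{1\}$.

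The computation of $A$ is then immediate: $A = \frac{1}{|G|-1}\sum_{g\neq 1}\rho(g) = \frac{1}{|G|-1}\bigl(\sum_{g\in G}\rho(g) - I_V\bigr)$. Since $\sum_{g\in G}\rho_{\mathrm{reg}}(g)$ sends each $e_h$ to $\mathbf 1$, it equals $|G|$ times the orthogonal projection onto $\mathbb{C}\mathbf 1$, and so it vanishes on $V$. Therefore $A = -\frac{1}{|G|-1}\,I_V$, giving $\norm{A}_{op} = \frac{1}{|G|-1} \leq \tfrac12 < 1$ because $|G| \geq 3$. One even gets $\elemmatrix = -\frac{1}{|G|-1} I_V$ for every $\beta$, since $\varphi_\beta(g)$ is constant over $g\neq 1$ here, which is consistent with the remark that $\norm{A}_{op}<1$ should persist to $\elemmatrix$.

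There is no real technical obstacle; the only subtlety is selecting the right representation, and I expect getting that choice right to be the one thing to get correct. The naive candidates all fail: a faithful irreducible need not exist; the full regular representation contains the trivial subrepresentation, which contributes the eigenvalue $1$ to $A$; and the sum of all nontrivial irreducibles can also fail --- for $G = S_3$ its $G_0$ is the set of $3$-cycles, a proper normal subgroup on which the sign representation is trivial, again producing eigenvalue $1$. Stripping off exactly the trivial summand of the regular representation is precisely what forces $G_0$ to be all of $G\setminus\{1\}$ and collapses $A$ to a negative scalar matrix; that single observation is what the argument rests on.
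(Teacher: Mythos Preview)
Your proposal is correct and takes essentially the same approach as the paper: both use the regular representation restricted to the hyperplane $V = \{v \in \mathbb{C}^G : \sum_h v_h = 0\}$, observe that $G_0 = G\setminus\{1\}$ for this choice, and compute that $A$ acts as $-\frac{1}{|G|-1}$ times the identity on $V$. Your write-up is arguably slightly more careful in that you explicitly verify faithfulness and recompute $G_0$ for the restricted representation (rather than implicitly carrying it over from the full regular representation), and your closing remarks on why the naive alternatives fail are a nice addition, but the core argument is identical.
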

\begin{proof}
Let $n := |G| \geq 3$. First, take $\tilde{\rho}$ to be the regular representation of $G$, with character $\tilde{\chi}$. This faithfully represents $G$ as a set of permutation matrices on the vector space $\C^n$, and moreover, we have $\tilde{\chi}(\groupid) = n$, and $\tilde{\chi}(g) = 0$ for all $g \neq \groupid$, so that $G_0 = G \backslash \{1\}$. Enumerate $G = \{g_1, \ldots, g_n\}$, with $g_1 = \groupid$, and let $\pi_1, \ldots, \pi_n$ be the permutations which give $\rho(g_1), \ldots, \rho(g_n)$. Observe that the permutations $\pi_2, \ldots, \pi_n$ (i.e. the permutations which correspond to non-identity elements), have no fixed points. Moreover, for any $2 \leq i \neq i' \leq n$, and any $1 \leq j \leq n$, we have $\pi_i(j) \neq \pi_{i'}(j)$. This implies that
\[ A = \frac{1}{n-1} \sum_{g \neq 1} \rho(g) = \frac{1}{n-1} (\mbf{1} - I),\]
where $\mbf{1}$ is the $n\times n$ matrix with every entry equal to 1, and $I$ is the $n \times n$ identity matrix. Now define the subspace 
\[ V := \bigg\{x \in \C^n : \sum_{i=1}^n x_i = 0\bigg\}. \]
Observe that on $V$, the matrix $A$ acts as $-\frac{1}{n-1}$ times the identity. As $n \geq 3$, we have $\frac{1}{n-1} \leq \frac{1}{2}$. Thus the desired representation $\rho$ may be obtained by taking the subrepresentation of $\tilde{\rho}$ given by the invariant subspace $V$.
\end{proof}

\begin{lemma}\label{lemma:faithful-unitary-irrep}
Let $G$ be a finite group of order at least 3, and let $\rho$ be a faithful unitary representation of $G$. If $\rho$ is irreducible, then $\norm{A}_{op} < 1$.
\end{lemma}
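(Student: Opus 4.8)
The plan is to show that $\|A\|_{op}<1$ when $\rho$ is a faithful irreducible unitary representation of a finite group $G$ of order at least $3$, by using the fact noted in the remarks following Theorem~\ref{thm:main-result}: since $G_0$ is a union of conjugacy classes of $G$, the matrix $A=\frac{1}{|G_0|}\sum_{g\in G_0}\rho(g)$ is an intertwiner of $\rho$, hence by Schur's lemma $A=\lambda I$ for some scalar $\lambda\in\C$. It therefore suffices to show $|\lambda|<1$. Since $\rho$ is unitary, $\|\rho(g)\|_{op}=1$ for every $g$, so $|\lambda|=\|A\|_{op}\le 1$ by the triangle inequality; equality $|\lambda|=1$ would force all the unitary matrices $\rho(g)$, $g\in G_0$, to be aligned, i.e. equal to a common unit scalar multiple of each other. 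I would make this rigorous as follows.

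First, compute $\lambda$ via the trace: $\lambda d = \mathrm{Tr}(A)=\frac{1}{|G_0|}\sum_{g\in G_0}\chi(g)$, so $\lambda=\frac{1}{d|G_0|}\sum_{g\in G_0}\chi(g)$. Now for each $g\in G_0$ we have $|\chi(g)|=|\mathrm{Tr}(\rho(g))|\le d$, with equality if and only if $\rho(g)$ is a scalar matrix $\omega_g I$ with $|\omega_g|=1$ (this is the standard characterization of equality in $|\mathrm{Tr}(U)|\le d$ for a $d\times d$ unitary $U$: the eigenvalues of $\rho(g)$ are unit complex numbers whose average has modulus $\le 1$, with equality iff they all coincide). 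Hence $|\lambda|\le 1$. Suppose for contradiction $|\lambda|=1$; then, by the triangle inequality in $\C$, we need $|\chi(g)|=d$ for all $g\in G_0$ \emph{and} all the complex numbers $\chi(g)$, $g\in G_0$, pointing in the same direction. In particular each $\rho(g)$ with $g\in G_0$ is a scalar matrix.

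The key step is then to derive a contradiction from $\rho(g)$ being scalar for all $g\in G_0$, using irreducibility and $|G|\ge 3$. The point is that $G_0$ generates a nontrivial normal subgroup (indeed $G_0\ne\varnothing$ since $|G|\ge 2$ and $\rho$ faithful means $\Delta_G>0$), and if $\rho(g)$ is scalar for $g\in G_0$ then $\rho$ restricted to the subgroup $H:=\langle G_0\rangle$ consists of scalar matrices; since $\rho$ is irreducible, $H$ acts by scalars, so $H$ is central in $\rho(G)$, but $\rho$ is faithful so $H\subseteq Z(G)$. Because $Z(G)$ acts by scalars under the irreducible $\rho$, and $\rho$ is faithful, $Z(G)$ must be cyclic and $\rho$ restricted to $Z(G)$ is a faithful character. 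Now if $G_0$ consisted of a single element $g$ of order $2$ (the case relevant to $\Z_2$), we would have $|G|=2$ after all if $G_0=\{g\}$ and $g$ central — but $|G|\ge 3$; more carefully, since $G_0$ is a union of conjugacy classes and is contained in the abelian group $Z(G)$, each element of $G_0$ is its own conjugacy class. The cleanest finish: $A=\lambda I$ with $|\lambda|=1$ forces $\rho(g)=\omega I$ for a fixed unit $\omega$ and \emph{all} $g\in G_0$ (same direction and same modulus $d$ up to the scalar $1/d$), but distinct group elements cannot map to the same scalar matrix under the faithful $\rho$ unless $|G_0|=1$; and $|G_0|=1$ with that single element central and non-identity, combined with $|G|\ge 3$ and faithful irreducible $\rho$, forces $d=1$ and $G$ cyclic of order $\ge 3$, in which case $G_0$ is a single primitive-ish root of unity $\zeta\ne 1$ and $\lambda=\zeta$, giving $|\lambda|=1$ — so this last sub-case needs separate care.

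Reassessing, the genuinely delicate point is the order-$3$-or-more hypothesis interacting with the case $|G_0|=1$: here $A=\rho(g)$ for the unique $g\in G_0$, which is a scalar $\omega$ with $|\omega|=1$ only if $d=1$. So I would split into two cases. Case (a): $|G_0|\ge 2$. Then equality $|\lambda|=1$ requires two distinct elements $g\ne g'$ of $G_0$ with $\rho(g)=\rho(g')=\omega I$, contradicting faithfulness. Case (b): $|G_0|=1$, say $G_0=\{g_0\}$. Then $\lambda=\chi(g_0)/d$ and $|\lambda|=1$ iff $\rho(g_0)$ is scalar, i.e. $\rho(g_0)=\omega I$; combined with faithfulness and irreducibility this forces $d=1$ (since a nontrivial central element acting by a scalar in an irreducible rep is fine for any $d$, \emph{but} we also need $G_0$ to be \emph{all} non-identity elements minimizing $\Re(\chi(1)-\chi(g))$, and $|G_0|=1$ with $g_0$ central of some order $n_0$); when $d=1$, $\rho$ is faithful so $G$ is cyclic of order $|G|\ge 3$, whence there are at least two non-identity elements, each with $\Re(1-\chi(g))>0$, and the minimizer set $G_0$ — wait, this would have $|G_0|\ge 1$ and possibly $=1$ if the minimum is attained uniquely, e.g. $G=\Z_3$ where the two non-identity elements are complex conjugates with equal real parts, so actually $|G_0|=2$ there. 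In general for $d=1$ and $|G|\ge 3$, $g$ and $g^{-1}$ have $\chi(g^{-1})=\overline{\chi(g)}$ so equal real parts, forcing $|G_0|\ge 2$ unless $g=g^{-1}$ for the minimizer, i.e. the minimizer has order $2$ and is the unique element of order $2$; but then $|G_0|=1$, $\chi(g_0)=-1$, $\lambda=-1$, $|\lambda|=1$ — this is exactly a $\Z_2$-like obstruction, and it can only happen if $G$ has a \emph{unique} involution that is the strict minimizer, which for $d=1$ faithful means $G=\Z_2$, contradicting $|G|\ge 3$. So in every sub-case we reach a contradiction, completing the proof. The main obstacle is organizing this case analysis on $|G_0|$ and $d$ cleanly; I expect the write-up to lean on Schur's lemma plus the trace computation of $\lambda$, then a short argument that $|\lambda|=1$ is incompatible with $|G|\ge 3$, $\rho$ faithful and irreducible.
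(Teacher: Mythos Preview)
Your approach is essentially the same as the paper's: apply Schur's lemma to get $A=\lambda I$, then rule out $|\lambda|=1$ using faithfulness and $|G|\ge 3$. However, your case analysis is more convoluted than necessary, and there is a genuine gap.

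The simplification you miss is that $A$ is Hermitian. Indeed $G_0$ is closed under inversion (since $\chi(g^{-1})=\overline{\chi(g)}$ gives $\Re\chi(g^{-1})=\Re\chi(g)$), so $A^*=\frac{1}{|G_0|}\sum_{g\in G_0}\rho(g)^{-1}=A$. Hence $\lambda\in[-1,1]$, and the only cases to rule out are $\lambda=1$ and $\lambda=-1$. If $\lambda=1$ then each $\rho(g)=I$ for $g\in G_0$, contradicting faithfulness. If $\lambda=-1$ then each $\rho(g)=-I$ for $g\in G_0$, so $\chi(g)=-d$ and $\Delta_G=2d$. But $2d$ is the \emph{maximum} possible value of $\Re(\chi(1)-\chi(g'))$ over all $g'$ (since $\Re\chi(g')\ge -d$), so every non-identity element attains it, i.e.\ $G_0=G\setminus\{1\}$. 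Since $|G|\ge 3$ this gives $|G_0|\ge 2$, and two distinct elements both mapping to $-I$ contradicts faithfulness. This is exactly the paper's argument.

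The gap in your write-up is Case~(b) with $d\ge 2$: you assert ``forces $d=1$'', immediately recognize this is unjustified, and then only analyze $d=1$ going forward. The argument you give at the end (involutions, cyclic groups) never returns to $d\ge 2$. The missing step is precisely the $\Delta_G=2d$ observation above, which shows that $|G_0|=1$ is actually impossible once $|\lambda|=1$ and $|G|\ge 3$, regardless of $d$ --- so Case~(b) collapses back into Case~(a).
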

\begin{proof}
Observe that for any $h \in G$, we have $h G_0 h^{-1} = G_0$. This implies that for any $h \in G$, we have
\[ \rho(h) A \rho(h^{-1}) = A,\]
i.e. $A$ is an intertwiner. Since $\rho$ is irreducible, we may apply Schur's lemma to obtain that $A$ must be a multiple $\lambda$ of the identity. Since $A$ is Hermitian, and $\norm{A}_{op} \leq 1$, we have $\lambda \in [-1, 1]$. If $|\lambda| = 1$, then we must have that for all $g \in G_0$, $\rho(g)$ is $\lambda$ times the identity. Because $\rho$ is faithful, we have $\lambda \neq 1$. Now if $\lambda = -1$, then by the definition of $G_0$, we must have $G_0 = G \backslash \{1\}$, which by assumption is of size at least 2. This then contradicts the assumption that $\rho$ is faithful. Thus $|\lambda| \neq 1$, i.e. $\lambda \in (-1, 1)$.
\end{proof}

\section{}

We prove the topological statements which were needed in Section \ref{section:non-abelian-case}.

\begin{proof}[Proof of Lemma \ref{lemma:minimal-vortex-cell-complexes-simply-connected}]
Suppose without loss of generality that $e$ is the edge between $(0, 0, 0, 0)$ and $(1, 0, 0, 0)$. We start with $S_e$. Observe that $S_e$ is defined by including all plaquettes which are contained in a 3-cell which contains $e$. Consequently, we may attach all such 3-cells to $S_e$, without changing its fundamental group. Call the resulting space $\tilde{S}_e$. After doing this, we may then attach all 4-cells whose boundary 3-cells are in $\tilde{S}_e$, without changing the fundamental group. The resulting space is the union of the rectangular prisms $[0, 1] \times [-1, 1]^2 \times \{0\}$, $[0, 1] \times [-1, 1] \times \{0\} \times [-1, 1]$, and $[0, 1] \times \{0\} \times [-1, 1]^2$. The fundamental group of any of these prisms is trivial, and the intersection of these three prisms is the line segment $[0, 1] \times \{0\}^3$, which is path connected. Thus by the Seifert-Van Kampen theorem, we obtain that the fundamental group of the union of these spaces is also trivial, and thus also $\pi_1(S_e)$ is trivial.

Now onto $\partial S_e$. As noted right before Lemma \ref{lemma:minimal-vortex-cell-complexes-simply-connected}, recall that $\partial S_e$ is the union of the boundaries of three rectangular prisms. Denote the three boundaries by $B_1, B_2, B_3$. Each $B_i$, $1 \leq i \leq 3$ is simply connected. Recall also that $B_1, B_2$ intersect on the boundary of a rectangle, which is path connected. Thus by Seifert-Van Kampen, $B_1 \cup B_2$ is also simply connected. Now observe that $B_1 \cup B_2$ and $B_3$ intersect on the union of the boundaries of two rectangles, and moreover, these two boundaries have nonempty intersection. Thus the intersection of $B_1 \cup B_2$ and $B_3$ is path connected, and thus again by Seifert-Van Kampen, $\partial S_e = B_1 \cup B_2 \cup B_3$ is simply connected.

Finally, we look at $S_e^c$. Observe $S_e \cap S_e^c = \partial S_e$, while $S_e \cup S_e^c = \twoskel$. Thus by Seifert-Van Kampen, we have that $\pi_1(\twoskel) = \pi_1(S_e) * \pi_1(S_e^c)$, where $*$ denotes free product of groups. As both $\pi_1(\twoskel) = \pi_1(S_e) = \{\groupid\}$, we thus must also have $\pi_1(S_e^c) = \groupid$.
\end{proof}

\begin{proof}[Proof of Lemma \ref{lemma:rectangle-cell-complexes-simply-connected}]
First, to see why $S_2(B)$ is simply connected, we may attach all 3-cells whose boundary 2-cells are all contained in $S_2(B)$, without changing the fundamental group. Call the resulting 3-complex $S_3(B)$. We may then attach all 4-cells whose boundary 3-cells are all contained in $S_3(B)$, without changing its fundamental group. The resulting space is a four dimensional cube in $\R^4$, which is simply connected.

Now onto $\partial S_2(B)$. If $B$ is contained in the interior of $\lbox$, then by attaching 3-cells and 4-cells to $\partial S_2(B)$ as before, we obtain the boundary of a four dimesional cube in $\R^4$, which is simply connected. If instead $B$ intersects the boundary of $\lbox$, then upon attaching 3-cells and 4-cells, we obtain a space which is homeomorphic to a three dimensional unit ball, which is simply connected. Here we have used the assumption that all side lengths of $B$ are strictly less than the side length of $\lbox$.

Finally, we look at $S_2^c(B)$. Observe that $S_2(B) \cap S_2^c(B) = \partial S_2(B)$, which is simply connected. Also observe $S_2(B) \cup S_2^c(B) = \twoskel$, which is simply connected. Thus by Seifert-Van Kampen, we have $\pi_1(\twoskel) = \pi_1(S_2(B)) * \pi_1(S_2^c(B))$. As both $\pi_1(\twoskel) = \pi_1(S_2(B)) = \{\groupid\}$, we must also have $\pi_1(S_2^c(B)) = \{\groupid\}$.
\end{proof}

\begin{lemma}\label{lemma:plaquette-sets-too-small-means-simply-connected}
Let $\plaqset \sse \plaquettes$ be such that for every plaquette $p \in \plaqset$, every 3-cell which contains $p$ is completely contained in $\lbox$. If $|\plaqset| \leq 5$, then $\pi_1(\twoskelminus{\plaqset}) = \{1\}$. If $|\plaqset| = 6$, then $\pi_1(\twoskelminus{\plaqset}) \neq \{\groupid\}$ if and only if $\plaqset = P(e) \sse \plaquettes.$
\end{lemma}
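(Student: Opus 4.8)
The plan is to compute $\pi_1(\twoskelminus{\plaqset})$ by the same cell-attachment technique used in the proofs of Lemmas~\ref{lemma:minimal-vortex-cell-complexes-simply-connected} and \ref{lemma:rectangle-cell-complexes-simply-connected}: $\twoskelminus{\plaqset}$ is the 2-complex consisting of all of $\oneskel$ together with the plaquettes in $\plaquettes \backslash \plaqset$, so we may freely attach all 3-cells of $\lbox$ all of whose boundary plaquettes lie in $\plaquettes \backslash \plaqset$ (equivalently, 3-cells containing no plaquette of $\plaqset$), and then all 4-cells whose boundary 3-cells have all been attached, without changing the fundamental group. The hypothesis on $\plaqset$ — that every 3-cell containing a plaquette of $\plaqset$ lies in $\lbox$ — is exactly what ensures that ``missing'' a 3-cell is always caused by $\plaqset$ and not by the boundary of $\lbox$; this will let us reduce to a purely local count of which 3-cells are obstructed.

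First I would handle the case $|\plaqset| \le 5$. Since $\lbox$ minus finitely many plaquettes, with all admissible 3-cells and 4-cells attached, has the homotopy type of $\lbox$ (a contractible 4-cube) with small ``dents'' carved out, the point is to show the dents do not create loops. Concretely, I would argue that if $|\plaqset| \le 5$ then the union of 3-cells that must be omitted is too small to enclose anything: each plaquette of $\plaqset$ is contained in exactly $4$ 3-cells (all in $\lbox$ by hypothesis), but removing a plaquette only kills those 3-cells, and with $\le 5$ plaquettes removed one can still attach enough 3-cells and 4-cells that the resulting complex deformation retracts onto a simply connected subcomplex. A cleaner route: use Lemma~\ref{lemma:homomorphisms-on-2-complex} together with the already-proven Lemma~\ref{lemma:smaller-than-minimal-vortices-prob-0}/Lemma~\ref{lemma:knot-minimal-size} reasoning — no, those rely on this lemma, so instead I would directly verify that there is no nontrivial homomorphism by the following: any loop in $\oneskel$ that is nontrivial in $\twoskelminus{\plaqset}$ must ``wind around'' a set of omitted cells forming a nonbounding cycle, and a cycle of omitted 3-cells bounding requires (dually, in the 4-cube) a 2-dimensional ``hole'' which needs the full set $P(e)$ of $6$ plaquettes around some edge $e$ to be removed; with $\le 5$ this is impossible, so $\pi_1 = \{1\}$.

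Next, the case $|\plaqset| = 6$. The ``if'' direction: if $\plaqset = P(e_0)$ with $P(e_0)\sse\plaquettes$, then $\twoskelminus{P(e_0)}$ is, up to the cell attachments, $\twoskel$ with the six plaquettes touching $e_0$ deleted; attaching 3-cells and 4-cells we get (by the local picture around $e_0$ described before Lemma~\ref{lemma:minimal-vortex-cell-complexes-simply-connected}) a space with the homotopy type of a 4-cube with an open 2-dimensional ``core'' removed along the edge $e_0$, which is homotopy equivalent to $S^2$ — wait, in four dimensions the link of the edge $e_0$ after removing $P(e_0)$ is a 2-sphere, so $\pi_1$ is still trivial but $\pi_2$ is nontrivial; I must instead recall that the correct statement is that $\pi_1(\twoskelminus{P(e_0)}) = \{1\}$ is false — indeed the Poincaré-dual picture shows the loop $a_{e_0}$ becomes nontrivial, giving $\pi_1 \cong \Z$ (the normal subgroup $N_\plaqset$ is generated by the six relators $C_p$, $p\in P(e_0)$, and these relators, being the boundaries around $e_0$, force $a_{e_0}$ to be central but do not kill it). So $\pi_1 \ne \{1\}$. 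For the ``only if'' direction: suppose $|\plaqset|=6$ but $\plaqset$ is not of the form $P(e)\sse\plaquettes$; then I would show that $\plaqset$ fails to ``surround'' any edge, so that after attaching 3-cells and 4-cells the complex is simply connected, by the same local argument as in the $\le 5$ case — the key combinatorial fact is that a set of $6$ plaquettes whose removal disconnects the 3-cell structure around some edge must be exactly the $6$ faces incident to that edge.

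The main obstacle I anticipate is making the ``surrounding an edge'' intuition rigorous: one needs a clean statement that $\pi_1(\twoskelminus{\plaqset})$ is nontrivial precisely when the removed plaquettes form (in the Poincaré-dual sense) a 2-cycle that is not a boundary, and then a combinatorial classification of which $\le 6$-element plaquette sets do this, given the hypothesis that all relevant 3-cells lie in $\lbox$. I would carry this out by working in the (contractible) ambient 4-cube containing all the 3-cells and 4-cells, using Alexander-type duality or just an explicit deformation retraction onto a wedge of spheres, and checking by hand that only the configuration $\plaqset = P(e)$ among sets of size $\le 6$ produces a nontrivial one-dimensional homotopy class. The rest is routine Seifert–Van Kampen bookkeeping of the sort already carried out in Appendix~B.
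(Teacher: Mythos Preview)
Your geometric approach via cell attachment and Alexander-type duality is in principle viable, but you correctly identify its main obstacle --- making rigorous the claim that fewer than six removed plaquettes (or six not of the form $P(e)$) cannot ``surround an edge'' --- and then do not carry it out. That is precisely the content of the lemma, so the proposal has a genuine gap. Your momentary detour through $S^2$ in the $\plaqset = P(e_0)$ case also shows why the geometric route is delicate in four dimensions.

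The paper's proof is far shorter and purely algebraic. It works directly with the presentation
\[
\pi_1(\twoskelminus{\plaqset}, x_0) \;=\; \langle\, a_e,\ e \in \oneskel \setminus T \ \mid\ C_p,\ p \in \plaquettes \setminus \plaqset \,\rangle
\]
and shows that the \emph{omitted} relators $C_p$, $p \in \plaqset$, are already consequences of the retained ones, so that $\pi_1(\twoskelminus{\plaqset}) = \pi_1(\twoskel) = \{1\}$. The single observation driving this is: for any 3-cell $c$ in $\lbox$, if $C_{p'} = 1$ already holds for five of its six faces, then $C_p = 1$ for the sixth as well, since the boundary loop of $p$ bounds the disk made up of the other five faces of $\partial c$. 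The hypothesis that every 3-cell containing a plaquette of $\plaqset$ lies in $\lbox$ is exactly what guarantees such a 3-cell is available. One then peels inductively: if $|\plaqset| \le 5$, or $|\plaqset| = 6$ but $\plaqset \ne P(e)$, there is always some $p \in \plaqset$ lying in a 3-cell whose other five faces are outside $\plaqset$; conclude $C_p = 1$, drop $p$ from $\plaqset$, and repeat. The only $6$-element configuration for which no such $p$ exists is $P(e)$. This is the same combinatorial fact you were reaching for, but the presentation framework turns it into a one-line redundancy argument rather than a homotopy-theoretic computation.
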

\begin{proof}
Fix a vertex $x_0 \in \vertices$, and a spanning tree $T$ of $\oneskel$. We have the presentation
\[ \pi_1(\twoskelminus{\plaqset}, x_0) = \langle a_e, e \in \oneskel \backslash T ~|~ C_p, p \in \plaquettes \backslash \plaqset\rangle. \]
If we can show that for all $p \in \plaqset$, we have $C_p = \groupid$, then we would have
\[\pi_1(\twoskelminus{\plaqset}, x_0) = \langle a_e, e \in \oneskel \backslash T ~|~ C_p, p \in \plaquettes \rangle = \pi_1(\twoskelminus{\varnothing}, x_0).\] 
But $\twoskelminus{\varnothing} = \twoskel$ is the 2-skeleton of $\lbox$, which is simply connected. Thus our goal is to show that $C_p = \groupid$, for all $p \in \plaqset$.

This follows from the following observation. For $p \in \plaqset$, if there is a 3-cell $c$ which contains $p$, and such that for every other plaquette $p'$ of $c$, we have $C_{p'} = \groupid$, then also $C_p = \groupid$. Here we've used the assumption that since $c$ contains a plaquette of $\plaqset$, $c$ is contained in $\lbox$. Now if $|\plaqset| \leq 5$, then there always exists a plaquette $p \in \plaqset$ for which there is such a 3-cell $c$. The same holds if $|\plaqset| = 6$ but $\plaqset$ is not a minimal vortex.
\end{proof}

The proof of the following lemma is due to Ciprian Manolescu. 

\begin{lemma}\label{lemma:rank-bound}
For any plaquette set $\plaqset \sse \plaquettes$, we have $\mathrm{rk}(\pi_1(\twoskelminus{\plaqset})) \leq |\plaqset|$.
\end{lemma}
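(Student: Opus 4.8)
The plan is to translate the statement into group theory, using that $\twoskel = S_2(\lbox)$ is simply connected (Lemma \ref{lemma:rectangle-cell-complexes-simply-connected} with $B = \lbox$), and then to exploit the cubical structure. Fix a vertex $x_0 \in \vertices$ and a spanning tree $T$ of $\oneskel$. As recalled in Section \ref{section:proofs-non-abelian}, $\pi_1(\twoskelminus{\plaqset}, x_0)$ is presented with generators $\{a_e : e \in \oneskel \backslash T\}$ and relators $\{C_p : p \in \plaquettes \backslash \plaqset\}$, while $\plaqset = \varnothing$ gives the trivial group; so $\twoskel$ is obtained from $\twoskelminus{\plaqset}$ by attaching the $|\plaqset|$ plaquettes of $\plaqset$, and the result is simply connected. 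A first easy consequence is only a \emph{homological} bound: since $\twoskelminus{\plaqset}$ and $\twoskel$ have the same $1$-skeleton and $H_1(\twoskel) = 0$, one gets $H_1(\twoskelminus{\plaqset}) = B_1(\twoskel)/B_1(\twoskelminus{\plaqset})$, which is a quotient of $C_2(\twoskel)/C_2(\twoskelminus{\plaqset}) \cong \Z^{|\plaqset|}$, so $H_1(\twoskelminus{\plaqset})$ is generated by $|\plaqset|$ elements. This is not enough, since the rank of a group can far exceed the rank of its abelianization; by the same token, the observation above shows only that $\pi_1(\twoskelminus{\plaqset})$ is \emph{normally} generated by $|\plaqset|$ elements (the images of $C_p$, $p \in \plaqset$), and the analogous rank statement is genuinely false for general simply connected $2$-complexes (for instance $BS(1,2)$ is normally generated by one element but has rank $2$, and is $\pi_1$ of a simply connected $2$-complex with one $2$-cell removed). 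So the cubical geometry must enter.

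The route I would take is to produce, once and for all, a spanning tree $T$ of $\oneskel$ together with an \emph{injective} assignment $e \mapsto p_e$ from $\oneskel \backslash T$ into $\plaquettes$ such that $p_e$ has $e$ among its four boundary edges and its other three boundary edges lie in $T$; then, since $a_{e'} = \groupid$ in $\pi_1(\oneskel)$ for tree edges $e'$, the relator $C_{p_e}$ equals $a_e^{\pm 1}$. Granting this, for an arbitrary $\plaqset$ the subcomplex $\oneskel \cup \{p_e : p_e \notin \plaqset\}$ of $\twoskelminus{\plaqset}$ has fundamental group obtained from $\pi_1(\oneskel)$ by killing the generators $a_e$ with $p_e \notin \plaqset$, hence is the free group on $\{a_e : p_e \in \plaqset\}$, a set of size at most $|\plaqset|$; attaching the remaining plaquettes of $\twoskelminus{\plaqset}$ only passes to a quotient, so $\mathrm{rk}(\pi_1(\twoskelminus{\plaqset})) \le |\plaqset|$. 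If an assignment with single-generator relators cannot be arranged in the $4$-dimensional grid, it suffices to have a ``triangular'' system $\{p_e\}$, meaning the relators $\{C_{p_e}\}$ can be ordered so that each successively expresses one generator in terms of later ones (this property is stable under deleting relators), and the same argument applies.

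The main obstacle is precisely this combinatorial input: constructing $T$ and the assignment $e \mapsto p_e$ on $\lbox$. I would try either a Hall's-theorem argument on the bipartite ``availability'' graph between non-tree edges and plaquettes, choosing $T$ so that Hall's condition holds, or an explicit recursive construction of $T$ from lower-dimensional slabs; the point is that in four dimensions each edge lies in six plaquettes and the plaquettes outnumber the non-tree edges (roughly two to one), which makes such a system plausible, in contrast to the two-dimensional case where it provably does not exist. Proving the existence of this system rigorously — or finding a substitute argument that promotes ``normally generated by $|\plaqset|$ elements'' to ``generated by $|\plaqset|$ elements'' using flatness of the plaquettes — is the crux of the proof.
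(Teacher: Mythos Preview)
Your diagnosis is accurate: the homological bound is immediate, the real obstruction is the gap between normal generation and generation, and the cubical structure must enter. Your proposed route via a spanning tree $T$ and an injective (or triangular) assignment $e \mapsto p_e$ is a legitimate strategy and the deduction from it is correct, but the construction of this assignment is exactly where the work lies and you have not supplied it. What you need is essentially a discrete Morse function on $\twoskel$ with a single critical $0$-cell and no critical $1$- or $2$-cells (equivalently, a collapsing of $\twoskel$ through its $1$-skeleton to a point); such a thing does exist for the $2$-skeleton of a box and can be written down by lexicographic induction on coordinates, but the Hall-condition and recursive-slab arguments you gesture at are not carried out, and neither is obviously shorter than exhibiting the Morse matching directly. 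As it stands the proposal is a correct reduction to an unproven combinatorial lemma.

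The paper's proof (attributed to Manolescu) takes an entirely different, geometric route that sidesteps this combinatorics. It passes to the dual lattice to identify $\pi_1(\twoskelminus{\plaqset})$ with $\pi_1(B \setminus M)$, where $B$ is a $4$-ball and $M$ is the $2$-complex made of the $|\plaqset|$ dual plaquettes. One then chooses disjoint arcs in $B$ from a basepoint to the center of each plaquette of $M$, missing $M$ elsewhere --- possible by transversality since $1 + 2 < 4$. A regular neighborhood $N$ of this star meets $M$ in $|\plaqset|$ small disks, so $N \setminus M$ is a $4$-ball minus $|\plaqset|$ parallel $2$-disks and has $\pi_1$ free of rank $|\plaqset|$; the complementary piece $(B \setminus N) \setminus M$ is simply connected because $M$ with its centers removed retracts to its $1$-skeleton, and a $1$-complex in $S^3 \times [0,1]$ has simply connected complement (again $1 + 2 < 4$). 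Seifert--Van Kampen then exhibits $\pi_1(\twoskelminus{\plaqset})$ as a quotient of a free group of rank $|\plaqset|$. This argument leans essentially on the ambient dimension $4$ via transversality, whereas your combinatorial approach, once the matching is actually built, would be dimension-agnostic.
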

\begin{proof}
Suppose $\lbox = ([a_1, b_1] \times \cdots \times [a_4, b_4]) \cap \Z^4$. By going to the dual lattice, we have that the fundamental group of $\twoskelminus{\plaqset}$ is the same as the fundamental group of the complement of a 2-complex $M$ made of $|\plaqset|$ plaquettes in $B := [-(a_1+ 1/2), b_1+1/2] \times \cdots \times [-(a_4 + 1/2), b_4 + 1/2]$. Pick a point $x_0 \in B - M$, and take non intersecting paths in $B$ from $x_0$ to the center of each plaquette in $M$, such that the paths don't intersect $M$ anywhere else. This is possible by the transversality theorem \cite{GP1974} (and since $1 + 2 < 4$). These paths form a ``star" $S$. Let $N$ be a small neighborhood of $S$, such that $N$ intersects the plaquettes of $M$ in small disks around the center points. Let $Q := B \backslash N$, and observe that $Q$ is homeomorphic to $S^3 \times [0, 1]$. Now decompose $B \backslash M = (N \backslash M) \cup (Q \backslash M)$. 

Observe that the 2-complex $M$ with the center point of each plaquette removed deformation retracts to its 1-skeleton. Thus $Q \backslash M$ is homotopy equivalent to $S^3 \times [0, 1]$ minus a 1-dimensional space, and thus by the transversality theorem \cite{GP1974} (and since $1 + 2 < 4$), $Q \backslash M$ is simply connected. Moving to $N \backslash M$, observe that this space is homeomorphic to a four dimensional cube with $|\plaqset|$ parallel 2-dimensional hyperplanes removed, and thus $\pi_1(N \backslash M)$ is the free group on $|\plaqset|$ generators. Now finish by the Seifert-Van Kampen theorem.
\end{proof}




\section*{Index of notation}

There are some quantities which are defined in both the Abelian case and the general case. This was done to emphasize the analogies between certain aspects of the proofs in the Abelian and general cases. The result is that some entries of the index point to multiple pages; the earlier page contains the definition in the Abelian case, while the later page contains the definition in the general case. 

\begin{longtable}{lp{0.5in}p{0.610\textwidth}}
    \centering
         Notation & Page & Description  \\
         \hline
         $G$ & \pageref{notation:G} & Gauge group \\
         $\groupid$ & \pageref{notation:group-id} & Identity of $G$ \\
         $\rho$ & \pageref{notation:rho} & Unitary representation $G$ \\
         $\chi$ & \pageref{notation:chi} & Character of $\rho$ \\
         $\lbox$ & \pageref{notation:Lambda} & Finite 4d cube \\
         $p$ & \pageref{notation:plaquette} & Plaquette \\
         $\beta$ & \pageref{notation:beta} & Inverse coupling constant \\
         $\mu_{\lbox, \beta}$ & \pageref{notation:mu-lbox-beta} & Lattice gauge theory \\
         $\wloop$ & \pageref{notation:wloop} & Closed loop \\
         $W_\wloop$ & \pageref{notation:W-wloop} & Wilson loop variable \\
         $\Delta_G$ & \pageref{notation:Delta-G} & Strictly positive real number defined in terms of $G$ \\
         $\varphi_\beta$ & \pageref{notation:varphi-beta} & Function on $G$\\
         $r_\beta$ & \pageref{notation:r-beta} & Sum of $\varphi_\beta(g)$ over $g \in G, g \neq \groupid$\\
         $\elemmatrix$ & \pageref{notation:elemmatrix} & Weighted average of $\rho(g)$ over $g \in G, g \neq 1$, with weight $\varphi_\beta(g)$\\
         $\ell$ & \pageref{notation:ell} & Length of $\wloop$ \\
         $df$ & \pageref{notation:df} & Exterior derivative of $f$ \\
         $\delta f$ & \pageref{notation:delta-f} & Coderivative of $f$ \\
         $\langle f, g \rangle$ & \pageref{notation:inner-product-f-g} & Inner product of differential forms\\
         $N$ & \pageref{notation:N} & Side length of $\lbox$ \\
         $B_\wloop$ & \pageref{notation:B-wloop} & Cube of side length $\ell$ which contains $\wloop$ \\
         $L$ & \pageref{notation:L} & $\ell^\infty$ distance between the boundaries of $B_\wloop$ and $\lbox$ \\
         $\plaqset$ & \pageref{notation:plaqset} & Plaquette set \\
         $\vortex$ & \pageref{notation:vortex} & Vortex \\
         $\Phi$ & \pageref{notation:Phi-abelian}, \pageref{notation:Phi-nonabelian} & Function on plaquette sets \\
         $\Sigma$ & \pageref{notation:Sigma} & Random edge configuration with law $\mu_{\lbox, \beta}$ \\
         $\plaqset(\Sigma)$ & \pageref{notation:P-Sigma} & Random plaquette set, support of $d\Sigma$ \\
         $E$ & \pageref{notation:E-abelian}, \pageref{notation:E-nonabelian} &  Event which captures the typical behavior\\
         $N_\wloop$ & \pageref{notation:N-wloop-abelian}, \pageref{notation:E-nonabelian} & Count of weakly dependent rare events\\
         $\alpha_\beta$ & \pageref{notation:alpha-beta-abelian}, \pageref{notation:alpha-beta-nonabelian} & Upper bound on $\Phi$\\
         $\mc{P}_\lbox$ & \pageref{notation:P-lbox} & Set of nonempty vortices contained in $\lbox$ \\
         $\Xi_{\mc{P}}$ & \pageref{notation:Xi-P-abelian}, \pageref{notation:Xi-P-nonabelian} & Roughly, a restricted partition function \\
         $N_\lbox(V_1, \ldots, V_n)$ & \pageref{notation:N-lbox} & Vortices which are incompatible with at least one of $V_1, \ldots, V_n$ \\
         $\rho_{\mc{P}}$ & \pageref{notation:rho-P-abelian}, \pageref{notation:rho-P-nonabelian} & Reduced correlations \\
         $\oneskel$ & \pageref{notation:oneskel} & 1-skeleton of $\lbox$ \\
         $\twoskel$ & \pageref{notation:twoskel} & 2-skeleton of $\lbox$ \\
         $x_0$ & \pageref{notation:x-0} & Base point of $\oneskel$ \\
         $T$ & \pageref{notation:T} & Spanning tree of $\oneskel$ \\
         $a_e$ & \pageref{notation:a-e} & A closed loop corresponding to $e$ \\
         $\homsym_T^{x_0}(\sigma)$ & \pageref{notation:homsym} & Homomorphism induced by $\sigma$ \\
         $C_p$ & \pageref{notation:C-p} & Closed loop which winds around the boundary of $p$ \\
         $\supp(\sigma)$ & \pageref{notation:supp-sigma} & The set of plaquettes $p$ for which $\sigma_p \neq \groupid$ \\
         $GF(T)$ & \pageref{notation:GF-T} & Set of edge configurations which are identity on $T$ \\ 
         $S_e$ & \pageref{notation:S-e} & A certain 2-complex corresponding to $e$ \\
         $S_2(B)$ & \pageref{notation:S-2-B} & 2-skeleton of $B$ \\
         $\partial S_2(B)$ & \pageref{notation:partial-S-2-B} & Roughly, the 2-skeleton of the boundary of $B$ \\
         $S_2^c(B)$ & \pageref{notation:S-2-c-B} & Roughly, the 2-skeleton of the complement of $B$\\
         $K$ & \pageref{notation:K} & Knot \\
         $\mc{K}$ & \pageref{notation:script-K} & Collection of all knots in $\lbox$\\
         $\Gamma$ & \pageref{notation:Gamma} & Collection of vortices, i.e. a subset of $\mc{P}_\lbox$ \\
         $P(\Gamma)$ & \pageref{notation:P-Gamma} & Plaquette set which is the union of elements of $\Gamma$ \\
         $I(\Gamma)$ & \pageref{notation:I-Gamma} & Indicator which tracks whether the elements of $\Gamma$ are compatible \\
         $\randomv$ & \pageref{notation:randomv} & Random collection of vortices \\
         $\twoskelminus{\plaqset}$ & \pageref{notation:twoskelminus} & 2-complex obtained by deleting the plaquettes of $P$ from $\twoskel$ \\
         $G^\scale(P)$ & \pageref{notation:G-scale-P} & A certain graph induced by $P$ \\
         $\mc{A}(m, \scale)$ & \pageref{notation:A-m-scale} & Plaquette sets for which $G^s(P)$ is connected \\
         $\mc{A}(m, \scale, p)$ & $\pageref{notation:A-m-scale-p}$ & Plaquette sets containing $p$, for which $G^s(P)$ is connected
    \label{tab:notation-index}
\end{longtable}

\section*{Acknowledgements}

I thank my Ph.D. advisor Sourav Chatterjee for suggesting that I begin in this area, as well as for helpful conversations, encouragement, and valuable comments. I thank Persi Diaconis for helpful conversations about group theory and representation theory. I thank Ciprian Manolescu and Hongbin Sun for helping me with algebraic topology; I am particularly indebted to Ciprian Manolescu for providing a proof of Lemma \ref{lemma:rank-bound}. Finally, I would like to thank the anonymous referee for the many valuable suggestions and comments.

\end{document}